\documentclass[12pt,reqno, hidelinks]{amsart}

\usepackage{amscd,amsmath,amsfonts,amsthm,epsfig, graphics, latexsym, mathrsfs}
\usepackage{amssymb}
\usepackage{hyperref}
\usepackage{afterpage}
\usepackage[alphabetic,initials]{amsrefs}

\usepackage{mathrsfs}
\usepackage{amsmath}
\usepackage{amssymb}
\usepackage{amsthm} 
\usepackage{graphicx}
\usepackage{color}
\usepackage{cite}
\usepackage{amsfonts}
\usepackage[all]{xy}
\usepackage{fancyvrb}
\usepackage{tikz}
\usepackage{dsfont}
\usetikzlibrary{matrix}
\usepackage[margin=1.2in]{geometry}
\usetikzlibrary[positioning,patterns]

\newtheorem{Theorem}{Theorem}[section]
\newtheorem{Prop}[Theorem]{Proposition}
\newtheorem{Lemma}[Theorem]{Lemma}
\newtheorem{Cor}[Theorem]{Corollary}

\theoremstyle{definition}

\newcommand{\appsection}[1]{\let\oldthesection\thesection
\renewcommand{\thesection}{Appendix \oldthesection}
\section{#1}\let\thesection\oldthesection}

\theoremstyle{remark}

\newtheorem{Remark}[Theorem]{Remark}

\DeclareFontFamily{OT1}{pzc}{}
\DeclareFontShape{OT1}{pzc}{m}{it}
              {<-> s * [1.1] pzcmi7t}{}
\DeclareMathAlphabet{\mathpzc}{OT1}{pzc}
                                 {m}{it}

\title[A boundary divisor in the moduli space of stable quintic surfaces]
{A boundary divisor in the moduli space of\\  stable quintic surfaces}
    \author{Julie Rana}

\begin{document}
\maketitle

\begin{abstract}

We give a bound on which singularities may appear on Koll\'ar--Shepherd-Barron--Alexeev stable surfaces for a wide range of topological invariants and use this result to describe all stable numerical quintic surfaces (KSBA-stable surfaces with $K^2=\chi=5$) whose unique non Du Val singularity is a Wahl singularity. We then extend the deformation theory of Horikawa in~\cite{horikawa1975} to the log setting in order to describe the boundary divisor of the moduli space $\overline{\mathcal{M}}_{5,5}$ corresponding to these surfaces. Quintic surfaces are the simplest examples of surfaces of general type and the question of describing their moduli is a long-standing question in algebraic geometry. 

\end{abstract}

\section{Introduction}\label{Introduction}

Let $\mathcal{M}_{K^2, \chi}$ be the moduli space of minimal surfaces of general type, and  $\overline{\mathcal{M}}_{K^2, \chi}$ its KSBA compactification~\cite{ksb1988, alexeev1994b}. Here stable surfaces are surfaces with ample canonical class and at most semi log canonical singularities. The moduli spaces $\overline{\mathcal{M}}_{K^2, \chi}$ are complicated; they may have many connected components~\cite{catanese1986} and arbitrary singularities~\cite{vakil2006}. Recently there has been substantial interest in describing singular stable surfaces explicitly, as a means to understanding the structure of the moduli spaces themselves. We are especially interested in those singularities with a one-parameter $\mathbb{Q}$-Gorenstein smoothing, as these may, in the absence of obstructions, give a divisor in the boundary of the moduli space, corresponding to equisingular deformations of the singularity (see, for example,~\cite{hacking2012}). 

An important type of semi log canonical singularity is the cyclic quotient singularity. Those cyclic quotient singularities which admit a one-parameter smoothing are of type $\frac{1}{n^2}(1, na-1)$ where $a$ and $n$ are relatively prime~\cite{ksb1988}. We refer to these as Wahl singularities~\cite{wahl1981}. 
In consideration of the above observation, we focus on surfaces whose unique non Du Val (or ADE) singularity is a Wahl singularity. We begin Section~\ref{wahlsings} with the following simple observation.

\begin{Lemma}\label{kawamata1} Let $W$ be a stable surface whose unique non Du Val singularity is a Wahl singularity, and let $X$ be its minimal resolution. Let $S$ be the minimal model of $X$, obtained by contracting all $(-1)$ curves on $S$. If $K_{S}$ is big and nef, then $K_W^2>K_{S}^2$.
\end{Lemma}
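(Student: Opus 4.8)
\emph{Proof proposal.} The plan is to play the two birational morphisms out of the minimal resolution against each other: $\pi\colon X\to W$ and $\rho\colon X\to S$, the latter being the composite of the successive contractions of $(-1)$-curves. Comparing $K_X$ to both $K_W$ and $K_S$ produces an effective divisor on $X$, and feeding it into the Hodge index theorem yields first $K_W^2\geq K_S^2$ and then, with a little extra care, the strict inequality.

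First I would set up the numerical comparison. Since the only singularities of $W$ are Du Val points together with one Wahl (hence cyclic quotient) singularity, $W$ is a normal $\mathbb{Q}$-Gorenstein surface with $K_W$ ample, so $\pi^*K_W$ is a well-defined nef and big $\mathbb{Q}$-divisor with $(\pi^*K_W)^2=K_W^2>0$. Write $K_X=\pi^*K_W-\Delta$, where $\Delta$ is the (effective, $\pi$-exceptional) discrepancy divisor; because a Wahl singularity with $n\geq 2$ is never canonical, all its resolution discrepancies are strictly negative, so $\Delta$ is supported on its exceptional chain and, crucially, $\Delta\neq 0$. On the other side, $\rho$ is a composition of blow-downs of $(-1)$-curves, so $K_X=\rho^*K_S+Z$ with $Z$ an effective $\rho$-exceptional divisor (possibly zero, if $X=S$). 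Subtracting the two expressions for $K_X$ gives $\pi^*K_W-\rho^*K_S=Z+\Delta=:D$, an effective divisor with $D\geq\Delta\neq 0$.

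Next I intersect with $\rho^*K_S$, which is nef because $K_S$ is nef. As $Z$ is $\rho$-exceptional, $Z\cdot\rho^*K_S=0$, hence $\pi^*K_W\cdot\rho^*K_S=K_S^2+\Delta\cdot\rho^*K_S\geq K_S^2>0$. Now the Hodge index theorem applied to $\pi^*K_W$ (which has positive self-intersection) and $\rho^*K_S$ on the smooth surface $X$ gives $K_W^2\cdot K_S^2=(\pi^*K_W)^2(\rho^*K_S)^2\leq(\pi^*K_W\cdot\rho^*K_S)^2$; combining this with $\pi^*K_W\cdot\rho^*K_S\geq K_S^2>0$ yields $K_W^2\geq K_S^2$. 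For strictness, suppose $K_W^2=K_S^2$; then both inequalities are equalities, so equality in Hodge index forces $\rho^*K_S$ and $\pi^*K_W$ to be numerically proportional, and comparing self-intersections makes the proportionality constant $1$, whence $D=\pi^*K_W-\rho^*K_S\equiv 0$. But $D$ is a nonzero effective divisor on a projective surface, so $D\cdot H>0$ for any ample $H$, a contradiction; therefore $K_W^2>K_S^2$.

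The step I expect to be the only delicate one is the strictness at the end, and it pivots entirely on the fact that $\Delta\neq 0$, i.e.\ that a Wahl singularity is genuinely \emph{not} a Du Val singularity; the preliminary remarks that $W$ is normal and $\mathbb{Q}$-Gorenstein (so that $\pi^*K_W$ makes sense and $K_W^2>0$) also need to be stated carefully. Everything after the construction of $D$ is formal manipulation with nefness, exceptionality, and Hodge index.
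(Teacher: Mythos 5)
Your overall strategy---comparing the two crepant-type decompositions $K_X=\pi^*K_W-\Delta$ and $K_X=\rho^*K_S+Z$, so that $\pi^*K_W=\rho^*K_S+D$ with $D=Z+\Delta$ effective and nonzero---is sound and genuinely different from the paper's proof, which is cohomological (Kawamata--Viehweg vanishing, a restriction sequence showing $h^0(W,mK_W)>h^0(X,mK_X)=h^0(S,mK_S)$, and Riemann--Roch). However, your execution has a real gap at the Hodge index step. The inequality $K_W^2\cdot K_S^2\le(\pi^*K_W\cdot\rho^*K_S)^2$ bounds $K_W^2$ from \emph{above} by $(\pi^*K_W\cdot\rho^*K_S)^2/K_S^2$; combined with $\pi^*K_W\cdot\rho^*K_S\ge K_S^2$ it does not yield $K_W^2\ge K_S^2$ (the numbers $K_W^2=1$, $K_S^2=4$, $\pi^*K_W\cdot\rho^*K_S=5$ satisfy both of your inequalities). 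Consequently the strictness argument also fails: assuming $K_W^2=K_S^2$ does not force equality in the Hodge index inequality, so you cannot conclude that $\pi^*K_W$ and $\rho^*K_S$ are proportional.

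The intersection-theoretic route can be completed, but differently. Expand $K_W^2=\pi^*K_W\cdot(\rho^*K_S+D)=\pi^*K_W\cdot\rho^*K_S+\pi^*K_W\cdot D$ and $\pi^*K_W\cdot\rho^*K_S=(K_X+\Delta)\cdot\rho^*K_S=K_S^2+\Delta\cdot\rho^*K_S$; nefness of the two pullbacks and effectivity of $D$, $\Delta$ immediately give $K_W^2\ge K_S^2$ with no Hodge index needed. For strictness, though, ``$D\ne 0$'' is not enough, because $\pi^*K_W\cdot\Delta=0$ automatically ($\Delta$ is $\pi$-exceptional), so a priori $D$ could be orthogonal to both nef classes. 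One must show that either $\pi^*K_W\cdot Z>0$ or $\rho^*K_S\cdot\Delta>0$: if $Z\ne 0$ it contains a $(-1)$-curve, which is not contracted by $\pi$, and ampleness of $K_W$ (here is where stability of $W$ enters) gives $\pi^*K_W\cdot Z>0$; if $Z=0$ then $X=S$ and $K_X\cdot\Delta=\sum_i(-a_i)(b_i-2)>0$ by adjunction on the Wahl chain, since some $b_i\ge 3$ and all $-a_i>0$. Without one of these inputs the strict inequality does not follow from your setup.
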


We remark that Lemma~\ref{kawamata1} is similar to a result of Kawamata~\cite[2.4, 4.6]{kawamata1992}, but in his case the surface $W$ must be the central fiber of a $\mathbb{Q}$-Gorenstein degeneration whose generic fiber is a smooth connected surface. In this paper, we study the case where the difference $K_W^2-K_{S}^2$ is as small as possible: What happens when $K_W^2=K_{S}^2+1$?

As with all cyclic quotient singularities, the minimal resolution of a Wahl singularity 
consists of a string of exceptional curves with negative self-intersections.  If this string contains $r$ exceptional curves, then we say that the singularity itself has length $r$. It is tempting to try to prove restrictions on the types of Wahl singularities that may appear on a given surface by bounding their lengths. This is possible; in~\cite{lee1999}, Y. Lee shows that if $W$ is a surface of general type whose unique non-Du Val singularity is a Wahl singularity of length $r$, then $r\le 400 (K_{S}^2)^4$, where $S$ is the minimal model of the minimal resolution of $W$. The following result greatly improves Lee's bound, although it applies only to those surfaces for which $K_W^2=K_{S}^2+1$.

\begin{Theorem}\label{r is two - 1} Let $W$ be a surface with a unique Wahl singularity $p$ of length $r$ and at most Du Val singularities elsewhere, and let $S$ be the minimal model of the minimal resolution of $W$. If $K_W$ and $K_{S}$ are big and nef and if $K_{S}^2=K_W^2-1$, then $r=1$ or $2$. That is, $p$ is a $\frac{1}{4}(1,1)$ or $\frac{1}{9}(1,2)$ singularity.
\end{Theorem}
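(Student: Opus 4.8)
The plan is to analyze the minimal resolution $X \to W$ together with the contraction $X \to S$ to the minimal model. Write the Wahl string of $p$ as a chain $E_1, \dots, E_r$ of smooth rational curves, and recall that the minimal resolution of the Wahl singularity $\frac{1}{n^2}(1,na-1)$ has a continued-fraction description: the self-intersections $-b_i$ are obtained by a Riemenschneider-style "zero-continued-fraction" expansion, and the length-$r$ strings arise recursively from the two shortest ones $(-4)$ and $(-3,-2,\dots,-2,-3)$ by the two blow-up operations $[\,2,\textbf{string}\,]$ and $[\textbf{string}',\,2\,]$-type extensions (equivalently $(b_i) \mapsto$ prepend/append and increment an end). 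I would set up the discrepancy calculation: since $p$ is a Wahl singularity, $K_X = f^* K_W - \sum a_i E_i$ with $0 < a_i < 1$, and $K_X^2 = K_W^2 - \delta$ where $\delta = \sum a_i (E_i \cdot (\text{stuff}))$; more usefully, $K_X^2 = K_W^2 - 0$ is false — rather one uses that $\mathbb{Q}$-Gorenstein, so $K_W^2 = K_X^2 + (\text{something determined by the string})$. The cleaner bookkeeping is via $S$: blowing down the $t := $ (number of $(-1)$-curves contracted in $X \to S$) exceptional curves gives $K_S^2 = K_X^2 + t$, and Noether-type / genus bookkeeping relates $t$ to $r$.

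Concretely, the key numerical input is the hypothesis $K_S^2 = K_W^2 - 1$. I would combine this with the standard inequality comparing $K_X^2$ to $K_W^2$ for a Wahl singularity and to $K_S^2$ for the resolution of $(-1)$-curves. The point of Lemma~\ref{kawamata1} (and Kawamata's result) is that $K_W^2 > K_S^2$, consistent with our hypothesis that the gap is exactly $1$; the proof must show this minimal gap forces the string to be very short. I expect the mechanism to be: the chain $E_1 + \dots + E_r$ on $X$, after running the MMP down to $S$, must have its "total discrepancy cost" absorbed by blowing down very few $(-1)$-curves, and each blow-down can only decrease the relevant quantity by a controlled amount; tracking how the self-intersection numbers $-b_i$ of the chain change under each blow-down (a $(-1)$-curve meeting the chain in one or two points, raising neighboring $b_i$'s) shows that eliminating a long Wahl chain requires many blow-downs, each contributing $+1$ to $K_S^2 - K_X^2$, hence forcing $K_S^2 - K_W^2$ to be large and negative unless $r \le 2$. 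One would enumerate: if $r \ge 3$, the string is one of the recursively-generated ones starting from $(-3,-2,\dots,-2,-3)$ or a blow-up thereof, and in every case the Wahl chain cannot be "resolved toward a minimal model" while keeping $K_W^2 - K_S^2 = 1$; this likely reduces to a finite check plus a monotonicity/induction argument on the continued fraction length.

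The main obstacle, I expect, is the passage from $X$ to $S$: the $(-1)$-curves one contracts need not be disjoint from the Wahl chain, and contracting one changes the self-intersections of adjacent chain components, potentially creating new $(-1)$-curves inside the former chain, so one cannot simply count. Controlling this requires a careful analysis of how a $(-1)$-curve can meet a Wahl string — it can meet at most one or two of the $E_i$, and one must rule out or account for configurations where contracting it cascades. I would handle this by working with the pushforward cycle and the self-intersection of $f_* (K_X + \text{boundary})$, or equivalently by using the fact that the Wahl chain has a distinguished structure (it is a "$\mathbb{T}$-chain" / admits no internal $(-1)$-curve after the defining contraction) and tracking the invariant $\sum(b_i - 2) = $ (length of the dual chain) under blow-downs. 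A secondary obstacle is making sure the hypotheses "$K_W$ big and nef" and "$K_S$ big and nef" are fully used — they guarantee $S$ is genuinely a minimal surface of general type so that classical surface geography (e.g. $K_S^2 \ge 1$, Noether's inequality) applies, pinning down the finitely many cases to check; once reduced to $r \le 2$, identifying the two possibilities as $\frac{1}{4}(1,1)$ and $\frac{1}{9}(1,2)$ is immediate from the classification of Wahl singularities of length $1$ and $2$.
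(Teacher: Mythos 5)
Your setup is right as far as it goes: $K_X^2=K_W^2-r$ (Wahl), $K_X^2=K_S^2-n$ with $n$ the number of exceptional divisors of $X\to S$, so the hypothesis forces $n=r-1$, and the whole problem is to show $n=r-1$ is incompatible with $r\ge 3$. But the proposal stops at the point where the actual proof begins. The paper's argument is a double count of the total intersection number $N=\sum_{j=1}^{n}\sum_{i=1}^{r}E_j\cdot C_i$ between the exceptional divisors $E_j$ of $\pi:X\to S$ and the T-string $C_1,\dots,C_r$. The upper bound $N\le r$ comes from writing $K_X=\pi^*K_S+\sum E_j$, pairing with $\sum C_i$, and using the identity $\sum_i(b_i-2)=r+1$ (which is special to Wahl strings and proved by induction on Wahl's recursion) together with $\pi^*K_S\cdot\sum C_i\ge 1$. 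The lower bound $N\ge 2n$ is the hard part: one shows each $E_j$ meets the string with total multiplicity at least $2$, using that nefness of $K_W$ and the discrepancies force $\sum_i C_i\cdot F\ge 2$ for the $(-1)$-curve $F\subset E_j$, followed by a case analysis of the dual graphs ruling out exceptional divisors meeting the string only once. Combining, $2(r-1)\le r$, i.e.\ $r\le 2$. Your proposal contains neither inequality; the suggested mechanism (``each blow-down contributes $+1$ to $K_S^2-K_X^2$, so eliminating a long chain forces the gap to be large'') is not an argument, since the chain is not eliminated by $\pi$ at all --- its image survives on $S$ --- and ``a finite check plus a monotonicity/induction argument'' is exactly the content that is missing. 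Without the Wahl-specific identity $\sum(b_i-2)=r+1$ and the per-divisor lower bound, nothing prevents $r$ from being large.

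Two smaller points. First, your recursion base cases are wrong: all Wahl strings are generated from $[4]$ alone by the two end-operations; the string $[3,2,\dots,2,3]$ is a T-singularity with $d>1$, not a Wahl singularity, and the length-two Wahl strings are $[2,5]$ and $[5,2]$. Second, the hypotheses ``$K_W$ and $K_S$ big and nef'' are not used merely to invoke surface geography; nefness of $K_S$ enters the upper bound and nefness of $K_W$ (via the discrepancy computation $\phi^*K_W\cdot F>0$ for $(-1)$-curves $F$) is essential to the lower bound.
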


Using Horikawa's descriptions of surfaces lying on the Noether line~\cite{horikawa1976}, we improve the result further for surfaces near it:

\begin{Theorem}\label{r is one - 1} With the same hypotheses as in Theorem~\ref{r is two - 1}, assume moreover that $K_W^2=2p_g-3$. If $S$ is of general type then $p$ is a $\frac{1}{4}(1,1)$ singularity. Moreover, if $p$ is a $\frac{1}{4}(1,1)$ singularity and $K_W^2>3$, then $S$ is of general type.
\end{Theorem}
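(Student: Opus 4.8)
By Theorem~\ref{r is two - 1} we already know $r\in\{1,2\}$, so the task is to decide which of the two possibilities occurs, and the plan is to treat the two implications separately. Both rest on the identity $K_S^2=K_W^2-1=2p_g-4$ with $p_g:=p_g(W)=p_g(S)$ — the geometric genera agree because Wahl and Du Val singularities are rational and the minimal resolution $\pi\colon X\to W$ is joined to $S$ by a chain of blow-downs $\phi\colon X\to S$ — together with the discrepancy computation for a length-$r$ Wahl singularity, which gives $K_X^2=K_W^2-r$ and hence that $\phi$ is a composition of exactly $r-1$ blow-ups. So I would first record the two shapes this forces: for $r=1$, $X=S$ and $S$ carries a $(-4)$-curve $E$ contracting to $p$ with $\pi^*K_W=K_S+\frac12 E$; for $r=2$, $\phi$ is a single blow-up with exceptional $(-1)$-curve $\mathcal E$, and $X$ carries the resolution chain $C_1+C_2$ of the $\frac19(1,2)$ point, with $C_1^2=-5$, $C_2^2=-2$, $C_1\cdot C_2=1$ and $\pi^*K_W=K_X+\frac23 C_1+\frac13 C_2$.

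The converse is the short half. Assuming $p$ is a $\frac14(1,1)$ point we are in the case $r=1$, $X=S$. If $S$ were rational or ruled then $p_g(S)=0$, whence $K_W^2=2p_g-3=-3$, impossible since $K_W$ is big; if $S$ had Kodaira dimension $0$ or $1$ then $K_S^2=0$ by minimality, so $K_W^2=K_S^2+1=1$, contradicting $K_W^2>3$. Hence $S$ is of general type. This is exactly where the hypothesis $K_W^2>3$ is used.

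For the forward direction, assume $S$ is of general type and suppose for contradiction that $r=2$, keeping the notation above. I would push the chain down to $S$: put $D_i:=\phi_*C_i$ (irreducible because $C_i\neq\mathcal E$) and write $C_i=\phi^*D_i-m_i\mathcal E$ with $m_i=\operatorname{mult}_q D_i\ge 0$. Adjunction on $C_1,C_2$ gives $K_X\cdot C_1=3$ and $K_X\cdot C_2=0$, hence $K_S\cdot D_1=3-m_1$ and $K_S\cdot D_2=-m_2$. Nefness of $K_S$ forces $m_2=0$ and $m_1\le 3$; nefness of $\pi^*K_W$ gives $\pi^*K_W\cdot\mathcal E=-1+\frac23 m_1\ge 0$, i.e. $m_1\ge 2$; and $m_1=3$ is excluded since then $D_1$ would be orthogonal to $K_S$ with $D_1^2=C_1^2+m_1^2=4>0$, impossible on a minimal surface of general type. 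This pins down $m_1=2$, so $D_1$ is an irreducible curve on $S$ with $D_1^2=-1$, $K_S\cdot D_1=1$, arithmetic genus $1$, and a singular double point at $q$.

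The last step is to contradict the existence of such a $D_1$, and this is the part I expect to be the real obstacle. Since $K_S^2=2p_g-4\ge 1$ we have $p_g\ge 3$, so $S$ lies on the Noether line, and I would invoke Horikawa's classification~\cite{horikawa1976}: $|K_S|$ is base-point free and the canonical map $\varphi\colon S\to Y\subset\mathbb P^{p_g-1}$ is a degree two morphism onto a surface of minimal degree, contracting exactly the $(-2)$-curves. Because $K_S\cdot D_1=1\ne 0$, $\varphi$ does not contract $D_1$, and $\deg(\varphi|_{D_1})\cdot\deg\varphi(D_1)=K_S\cdot D_1=1$ forces $\varphi(D_1)$ to be a line $\ell\cong\mathbb P^1$ with $\varphi|_{D_1}\colon D_1\to\ell$ finite and birational; since $\ell$ is normal, Zariski's main theorem makes it an isomorphism, so $D_1$ is smooth — contradicting $\operatorname{mult}_q D_1=2$. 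Hence $r\ne 2$, i.e. $p$ is a $\frac14(1,1)$ point. The delicate point throughout is this appeal to~\cite{horikawa1976}: the numerical analysis alone leaves open precisely the case $D_1^2=-1$, arithmetic genus $1$, which a singular rational curve realizes, and only the geometry of the canonical map rules it out.
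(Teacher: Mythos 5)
Your proof is correct, and up to the final step it is the paper's proof in different clothing. The converse direction is the same Enriques--Kodaira argument. In the forward direction your numerical reduction is the paper's computation transported to $S$: the paper works on $X$ with the intersection numbers $E\cdot C_i$ of the exceptional $(-1)$-curve against the T-string, while you push forward and track multiplicities $m_i$, but the conclusions $m_2=0$, $2\le m_1\le 3$, and the exclusion of $m_1=3$ are identical in content (the paper rules out $E\cdot C_2=3$ by observing that an irreducible curve orthogonal to a big and nef $K_S$ must be a $(-2)$-curve, which is the same Hodge-index fact you invoke). Where you genuinely diverge is in disposing of the residual case: an irreducible singular curve $D_1$ on the Horikawa surface $S$ with $K_S\cdot D_1=1$, $D_1^2=-1$, $p_a(D_1)=1$. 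The paper runs through the four cases of Horikawa's structure theorem (double covers of $\mathbb{P}^2$ or of $\mathbb{F}_d$), identifies the class of the image of $D_1$ on the base in each case, and derives a contradiction from the shape of the preimage of that class; you instead use the single uniform fact that $|K_S|$ is base-point free, so that $1=\deg(\varphi|_{D_1})\cdot\deg\varphi(D_1)$ forces $\varphi(D_1)$ to be a line and $\varphi|_{D_1}$ to be finite and birational onto it, whence $D_1$ is smooth by normality of the line --- contradicting $\operatorname{mult}_qD_1=2$. Both routes rest on Horikawa's analysis of surfaces on the Noether line (yours on base-point freeness of $|K_S|$, the paper's on the double-cover description), but your version avoids the case split entirely and is arguably cleaner; note that it does not even need the canonical map to have degree $2$, only that it is a morphism.
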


Beginning in Section~\ref{classification section}, we apply this result to the moduli space $\mathcal{M}_{5,5}$ of numerical quintic surfaces, or minimal surfaces with $K^2=\chi=5$. This moduli space was described by Horikawa in~\cite{horikawa1975}, and is a union of two $40$-dimensional irreducible components meeting, transversally at a general point, in a 39-dimensional irreducible variety. Figure~\ref{picture of M55-1} gives a schematic diagram of $\mathcal{M}_{5,5}$. Each component parametrizes \emph{smooth} surfaces with $K^2=\chi=5$, although surfaces in components IIa and IIb are not quintic surfaces in the usual sense. 

\begin{figure}[h!]
  \centering
        \includegraphics[scale=.3]{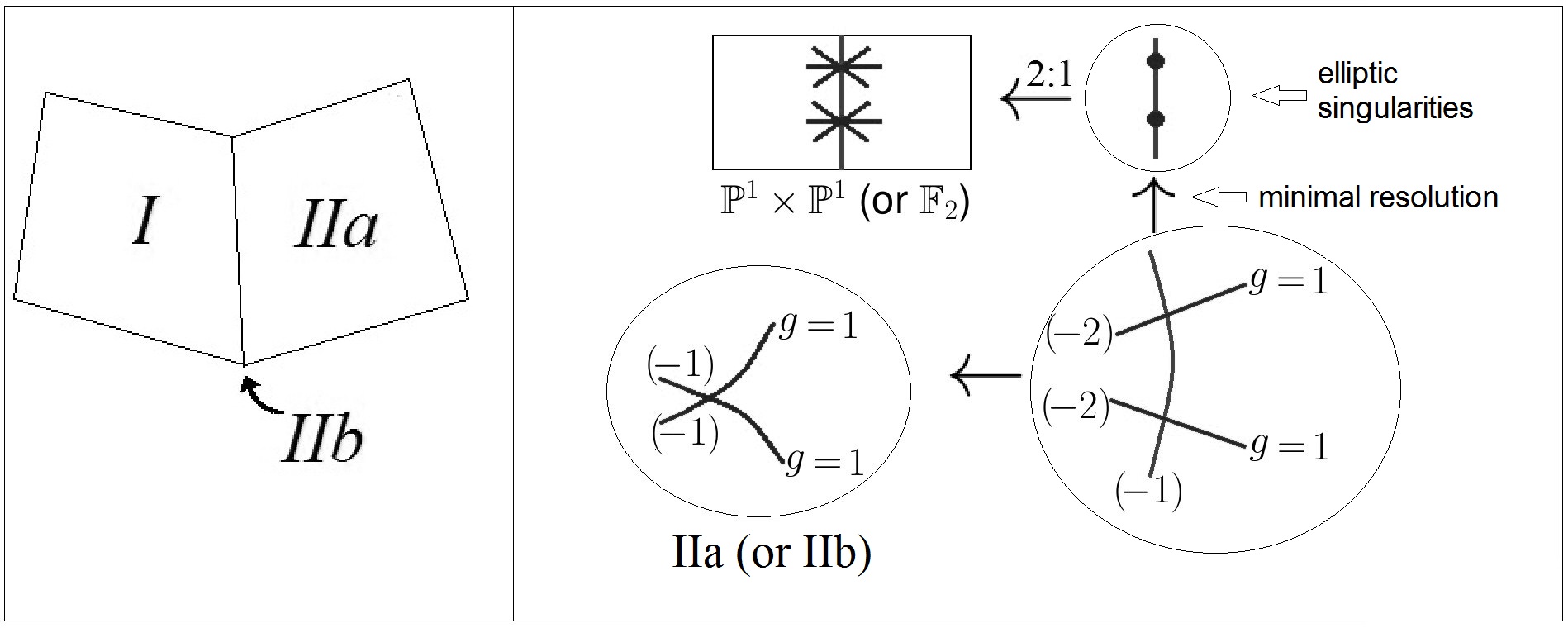}
  \caption{On the left, a visualization of $\mathcal{M}_{5,5}$. Components I and IIa are 40-dimensional; IIb is 39-dimensional. On the right, how to obtain a numerical quintic surface of type IIa or IIb from double covers of $\mathbb{P}^1\times\mathbb{P}^1$, or $\mathbb{F}_2$, respectively. \label{picture of M55-1}}
\end{figure}

Theorem~\ref{r is one - 1}, together with Horikawa's description of surfaces with small $K^2$~\cite{horikawa1976}, suggests that it is possible to describe all stable surfaces lying one above the Noether line whose unique non Du Val singularity is a $\frac{1}{4}(1,1)$ singularity. We do this for the case of stable numerical quintic surfaces by looking at the minimal resolution $X$ of a stable numerical quintic surface. In particular, we prove that the surface $X$, which contains a rational curve of self-intersection $-4$, arises from the double cover of a smooth or nodal quadric, with branch locus intersecting a given curve in one of a few specified ways.  
We note here three such constructions, which we refer to as surfaces of types 1, 2a, and 2b, which will be essential in describing the divisor in $\overline{\mathcal{M}}_{5,5}$ corresponding to surfaces whose unique non Du Val singularity is a $\frac{1}{4}(1,1)$ singularity. The minimal resolution of a surface of type 1 is a double cover of $\mathbb{P}^1\times\mathbb{P}^1$, branched over a sextic intersecting a given diagonal tangentially at $6$ points. The preimage of the diagonal is two $(-4)$-curves, intersecting at 6 points. Contracting one of these $(-4)$-curves gives a stable numerical quintic surface of type 1. The minimal resolutions of type 2a (respectively, 2b) surfaces are themselves minimal resolutions of double covers of $\mathbb{P}^1\times\mathbb{P}^1$ (respectively, a quadric cone), the branch curve of which is a sextic $B$ intersecting a given ruling at two nodes of $B$ and transversally at two other points. 

The minimal models of the stable numerical quintic surfaces we study all arise from double covers of a smooth or nodal quadric surface. Thus, our approach is to describe equisingular deformations of these surfaces by deforming the quadric, together with its branch locus, in such a way that the $(-4)$-curve on $X$ is preserved. In doing so, we are met with an interesting difficulty: the $(-4)$-curve may break on the special fiber. The hope is that one may avoid this by performing a sequence of flops, but this is not immediate. We use representation theoretic tools to prove that such a sequence does exist in a number of important cases.
For instance, we have the following general result, which we use to describe the closure of the locus of type 1 surfaces in $\overline{\mathcal{M}}_{5,5}$.

\begin{Theorem}\label{simultaneous resolution for even intersection - 1}Let $Z$ be a smooth surface, $B$ a divisor on $Z$ with at most Du Val singularities, and $D$ a smooth irreducible divisor on $Z$. Let $(\mathcal{Z,B,D})$ be a family of triples over the unit disk in $\mathbb{C}$, with special fiber $(Z, B, D)$, and such that the divisors $\mathcal{D}_t$ and $\mathcal{B}_t$ are reduced, irreducible and smooth for $t\neq 0$. Suppose that at each point $p\in \mathcal{D}_t\cap \mathcal{B}_t$ over the general fiber, the local intersection $(\mathcal{D}_t\cdot\mathcal{B}_t)_p$ is even. Let $f:\mathcal{Y}\rightarrow \mathcal{Z}$ be the double cover branched over $\mathcal{B}$. Then there exists, after a possible finite base change, a simultaneous resolution of singularities $\psi:\mathcal{X}\rightarrow \mathcal{Y}$ such that the closure of one of the two components of $\psi^{-1}(f^{-1}(\mathcal{D}))_t$ over the general fiber has irreducible special fiber.
\end{Theorem}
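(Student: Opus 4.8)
The plan is to reduce to a local analysis near the singular points of $B=\mathcal B_0$, where one applies the theory of simultaneous resolution of rational double points while keeping track of the freedom to flop.

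\emph{The general fibre.} For $t\neq 0$ the hypothesis that each local intersection $(\mathcal D_t\cdot\mathcal B_t)_p$ is even says exactly that $\mathcal B_t|_{\mathcal D_t}=2E_t$ for an effective divisor $E_t$ on the smooth curve $\mathcal D_t$. Writing $L_t$ for the line bundle defining $f$, the section of $L_t^{\otimes 2}|_{\mathcal D_t}$ cutting out the branch locus of $f^{-1}(\mathcal D_t)\to\mathcal D_t$ is then a perfect square $s_{E_t}^2$ — here one uses $L_t|_{\mathcal D_t}\cong\mathcal O_{\mathcal D_t}(E_t)$, which is automatic when $\mathcal D_t$ is rational and in general holds only up to $2$-torsion in $\operatorname{Pic}(\mathcal D_t)$, an ambiguity I would need to exclude in the cases at hand. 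Hence $f^{-1}(\mathcal D_t)=\{w=s_{E_t}\}\cup\{w=-s_{E_t}\}$ is a union of two smooth curves $C^{(1)}_t,C^{(2)}_t$, each mapping isomorphically to $\mathcal D_t$ and interchanged by the covering involution. Let $\overline{C^{(1)}},\overline{C^{(2)}}\subset\mathcal X$ be the closures of $\bigcup_{t\neq 0}C^{(i)}_t$. Since $\mathcal Y$ is smooth, and $\psi$ an isomorphism, away from the preimage of $\operatorname{Sing}(B)$, each $\overline{C^{(i)}}_0$ is automatically irreducible — indeed isomorphic to $D$ — unless $D$ meets $\operatorname{Sing}(B)$, so from now on I assume it does.

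\emph{Simultaneous resolution.} A double cover of a smooth surface branched along an ADE curve singularity is the surface rational double point of the same type; so $\mathcal Y_0$ has rational double points over $\operatorname{Sing}(B)$ while $\mathcal Y_t$ is smooth for $t\neq 0$. By the Brieskorn--Tyurina simultaneous resolution theorem, after a finite base change of the disk there is $\psi:\mathcal X\to\mathcal Y$ with $\mathcal X$ smooth, $\psi$ an isomorphism over $\mathcal Y\setminus\operatorname{Sing}(\mathcal Y_0)$, and $\mathcal X_0\to\mathcal Y_0$ the minimal resolution; moreover, with the base change fixed, the Weyl group $W$ of the corresponding root system acts transitively on the set of such $\psi$, any two being joined by a chain of flops of the exceptional $\mathbb P^1$'s of $\mathcal X$. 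For a given $\psi$ the cycle $\psi^{-1}(f^{-1}(\mathcal D))_0$ is the sum of the strict transforms of $\overline{C^{(1)}}_0$ and $\overline{C^{(2)}}_0$ together with certain exceptional curves with multiplicities depending on $\psi$; the goal is to choose $\psi$ so that all these multiplicities vanish on the $\overline{C^{(1)}}$-side.

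\emph{The local model and the hard step.} The contributions of distinct points of $D\cap\operatorname{Sing}(B)$ are independent (the relevant flops commute), so it suffices to treat one point $p$. In suitable analytic coordinates $B$ is a standard ADE curve singularity, $\mathcal Y$ (after base change) is a one-parameter smoothing of the rational double point, and — because of the evenness — $f^{-1}(\mathcal D)$ breaks into two Weil divisors lying in opposite classes of the class group. The model case $A_1$ is explicit: $\mathcal Y\cong\{uv=\xi\eta\}$ after base change, with $\overline{C^{(1)}}=\{u=e^{-i\theta}\xi,\ v=e^{i\theta}\eta\}$ and $\overline{C^{(2)}}=\{u=e^{i\theta}\eta,\ v=e^{-i\theta}\xi\}$; blowing up the Weil divisor $\{u=\xi=0\}$, one of the two small resolutions, one checks directly that the strict transform of $\overline{C^{(1)}}$ meets the exceptional $\mathbb P^1$ transversally in one point without containing it, so $\overline{C^{(1)}}_0$ is a single smooth rational curve, hence irreducible, while $\overline{C^{(2)}}_0$ acquires the exceptional $\mathbb P^1$; the flop interchanges the roles. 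This $A_1$ case is the one used in the $(-4)$-curve / $\frac{1}{4}(1,1)$ applications. In general I would argue that the simultaneous resolutions correspond to chambers of the $W$-arrangement, that the exceptional multiplicities appearing in $\overline{C^{(i)}}_0$ are read off from the intersection numbers of the class $[C^{(i)}]$ with the vanishing cycles, and that the evenness of the local intersections forces the relevant defect class to be divisible by $2$ in the root lattice — equivalently, fixed by a suitable element of $W$ — producing a chamber, reached by flops from the given one, in which the multiplicities on the $\overline{C^{(1)}}$-side all vanish; assembling these choices over $D\cap\operatorname{Sing}(B)$ then finishes the proof. Making this representation-theoretic step precise and uniform over all ADE types and all even local intersection numbers is where I expect the real difficulty to lie; controlling the $2$-torsion of $L_t|_{\mathcal D_t}$ in the first step is a secondary technical point.
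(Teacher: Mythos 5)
Your overall framework --- split $f^{-1}(\mathcal{D}_t)$ into two components over the general fiber, take a Brieskorn--Tyurina simultaneous resolution after base change, and move between the finitely many resolutions by flops until the special fiber of $\overline{C^{(1)}}$ carries no exceptional components --- is exactly the paper's strategy, and your explicit $A_1$ computation is correct. But the decisive step, which you yourself flag as the ``real difficulty,'' is genuinely missing, and the mechanism you propose for it (that evenness forces a ``defect class divisible by $2$ in the root lattice, equivalently fixed by a suitable element of $W$'') is not the right one and does not obviously produce the required chamber. What actually closes the argument in the paper is the following. First, one shows (the paper's Lemma~\ref{why flopping helps}) that if $\mathcal{C}_1\cdot E_j<0$ for some exceptional curve $E_j$, then flopping $E_j$ strictly decreases the multiplicity of $E_j$ in $(\mathcal{C}_1)_0$, so the process terminates at a resolution where $(C_1+\sum a_iE_i)\cdot E_j\ge 0$ for all $j$. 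Translated into the weight lattice of the ADE root system spanned by the $E_i$, this says the weight $\omega_{C_1}-\sum a_iE_i$ is \emph{dominant}; by the Bourbaki result quoted as Lemma~\ref{from rep theory}, any such dominant weight is a weight of the irreducible representation $V_{\omega_{C_1}}$. The punchline is then that $\omega_{C_1}$ is always a \emph{minuscule} fundamental weight, hence the \emph{only} dominant weight of $V_{\omega_{C_1}}$, forcing $a_i=0$ for all $i$.

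Establishing minusculity is not a formal consequence of evenness: it requires the case-by-case classification (Figure~\ref{even case resolution}) of how a smooth curve $D$ can meet an ADE singularity of $B$ with even local intersection number while both branches of the preimage stay smooth, and the resulting list of weights $\omega_{C_1}$ is $\omega_i$ (any $i$) for $A_n$, $\omega_1,\omega_{n-1},\omega_n$ for $D_n$, $\omega_1,\omega_5$ for $E_6$, and $\omega_6$ for $E_7$ --- precisely the minuscule fundamental weights ($E_8$ does not occur, consistent with Lemma~\ref{B dot D even} and with $E_8$ having no minuscule weights). Without this computation your chamber-counting picture has no way to rule out a configuration in which \emph{every} small resolution leaves exceptional curves on both $\overline{C^{(1)}}_0$ and $\overline{C^{(2)}}_0$. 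Your secondary worry about $2$-torsion in $\operatorname{Pic}(\mathcal{D}_t)$ is legitimate in the abstract but harmless here: the statement takes the two components over the general fiber as given, and in the intended applications $\mathcal{D}_t$ is rational.
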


In Section~\ref{wahldiv}, we explore the deformation theory of surfaces of types 1 and 2a, as these surfaces correspond to $39$-dimensional loci in $\overline{\mathcal{M}}_{5,5}$. To begin with, we describe explicit $\mathbb{Q}$-Gorenstein smoothings of type 1, 2a, and 2b surfaces to numerical quintic surfaces, showing that these loci lie on the boundary of the components of type I, IIa, and IIb of $\mathcal{M}_{5,5}$, respectively. Note that the smoothings of types 2a and 2b which we describe are simple extensions of an example of Friedman found in~\cite{friedman1983}. For surfaces of types 1 and 2a, we then prove vanishing of the cohomology groups in which obstructions to deformations of these surfaces lie, and conclude that the closures of these loci are smooth Cartier divisors in $\overline{\mathcal{M}}_{5,5}$ at their general points. This implies that  $\overline{\mathcal{M}}_{5,5}$ is smooth generically along these divisors. 

Section~\ref{2b} begins with a proof that obstructions to deformations of 2b surfaces do not vanish. Understanding the deformations of such surfaces proves to be the key to our full description of the divisor in $\overline{\mathcal{M}}_{5,5}$ corresponding to stable numerical quintic surfaces whose unique non Du Val singularity is a $\frac{1}{4}(1,1)$ singularity. Indeed, our study of these surfaces, together with the description of the closures of the 1 and 2a loci, and Horikawa's description of $\mathcal{M}_{5,5}$, allows us to prove the following theorem.

\begin{Theorem}\label{Wahl divisor} The locus of stable numerical quintic surfaces whose unique non Du Val singularity is a $\frac{1}{4}(1,1)$ singularity forms a divisor in $\overline{\mathcal{M}}_{5,5}$ which consists of two 39-dimensional components $\bar{1}$ and $\overline{\mbox{2a}}$ meeting, transversally at a general point, in a 38-dimensional component $\overline{\mbox{2b}}$. These components are the closures of the loci of the surfaces of types 1, 2a, and 2b described above. This divisor is smooth and Cartier at general points of the $\bar{1}$ and $\overline{\mbox{2a}}$ components, and is Cartier at general points of the  $\overline{\mbox{2b}}$ component.  Moreover, the types  $\bar{1}$, $\overline{\mbox{2a}}$, and  $\overline{\mbox{2b}}$ components belong to the closures of the components in $\mathcal{M}_{5,5}$ of types I, IIa, and IIb, respectively.
\end{Theorem}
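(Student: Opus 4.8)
\emph{Proof sketch.} The plan is to assemble the statement from the classification of Section~\ref{classification section}, the explicit smoothings and deformation computations of Section~\ref{wahldiv}, the analysis of type 2b surfaces in Section~\ref{2b}, the simultaneous resolution result Theorem~\ref{simultaneous resolution for even intersection - 1}, and Horikawa's description of $\mathcal{M}_{5,5}$. First I would identify the locus. A numerical quintic has $p_g=4$ and $K_W^2=5=2p_g-3>3$, so for a stable numerical quintic $W$ whose unique non Du Val singularity is a $\frac{1}{4}(1,1)$ point, Theorem~\ref{r is one - 1} applies and the minimal model $S$ of the minimal resolution $X$ of $W$ is of general type; moreover $X=S$ since $K_S^2=K_W^2-1=4$. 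The classification of Section~\ref{classification section} then shows that $X$ carries a $(-4)$-curve and arises from the double cover of a smooth or nodal quadric branched along a sextic meeting a distinguished curve (a diagonal of $\mathbb{P}^1\times\mathbb{P}^1$, or a ruling of $\mathbb{P}^1\times\mathbb{P}^1$, or a ruling of the cone) in one of finitely many prescribed configurations. Among these, types 1, 2a, and 2b carry the largest moduli and every remaining configuration is a degeneration of one of them; hence the locus in question is $\bar 1\cup\overline{\text{2a}}\cup\overline{\text{2b}}$, and since $\overline{\text{2b}}$ will be seen to lie in both of the others, it equals $\bar 1\cup\overline{\text{2a}}$.

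Next I would record the dimensions and the $\mathbb{Q}$-Gorenstein smoothings. Parameter counts for the quadric together with its branch sextic, modulo automorphisms, give $\dim\bar 1=\dim\overline{\text{2a}}=39$ and $\dim\overline{\text{2b}}=38$. The explicit one-parameter $\mathbb{Q}$-Gorenstein smoothings built in Section~\ref{wahldiv} (those for 2a and 2b extending Friedman's example~\cite{friedman1983}) deform a general surface of type 1, 2a, 2b to a smooth numerical quintic of type I, IIa, IIb respectively; this shows $\bar 1$, $\overline{\text{2a}}$, $\overline{\text{2b}}$ lie in the closures in $\mathcal{M}_{5,5}$ of the components I, IIa, IIb, which is the last assertion of the theorem, and in particular exhibits these loci as genuine subvarieties of $\overline{\mathcal{M}}_{5,5}$ of the stated dimensions.

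For the smoothness and Cartier statements along $\bar 1$ and $\overline{\text{2a}}$ I would use the deformation theory of Section~\ref{wahldiv}: via the local-to-global spectral sequence for deformations of the pair consisting of $W$ and its $\frac{1}{4}(1,1)$ singularity, the obstructions lie in an $H^2$-type group, which I would show vanishes for a general surface of type 1 or 2a by pulling back to the quadric, decomposing into eigensheaves under the double cover, and applying Kawamata--Viehweg vanishing. It follows that $\overline{\mathcal{M}}_{5,5}$ is smooth of dimension $40$ at such a point, that the equisingular locus there is cut out by the single $\mathbb{Q}$-Gorenstein smoothing coordinate of the $\frac{1}{4}(1,1)$ point, and hence that $\bar 1$ and $\overline{\text{2a}}$ are smooth Cartier divisors at their general points, with $\overline{\mathcal{M}}_{5,5}$ smooth generically along them.

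Finally I would treat a general point $[W]$ of $\overline{\text{2b}}$. Here the obstruction group does not vanish (Section~\ref{2b}), so I would instead describe the versal base as the zero locus of the obstruction map inside a smooth ambient space and show that the equisingular locus in it is the union of two smooth $39$-dimensional branches meeting transversally along the $38$-dimensional tangent space of $\overline{\text{2b}}$, one limiting into $\bar 1$ and the other into $\overline{\text{2a}}$; Theorem~\ref{simultaneous resolution for even intersection - 1}, together with a sequence of flops, is what guarantees that the type 1 degeneration can be arranged so that the $(-4)$-curve remains irreducible, so that the corresponding limit genuinely lies in $\bar 1$ and still carries a $\frac{1}{4}(1,1)$ singularity rather than a broken $(-4)$-curve. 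This identifies $\bar 1\cap\overline{\text{2a}}$ with $\overline{\text{2b}}$, gives the transversality, and exhibits $\bar 1\cup\overline{\text{2a}}$ near $[W]$ as a normal crossing divisor which --- even though the ambient moduli space is singular there --- is still Cartier, being the zero locus of the product of the two branch equations. I expect the main obstacle to be this deformation-theoretic core: establishing the obstruction vanishing for types 1 and 2a, and carrying out the delicate local analysis at a general 2b point while controlling the possible breaking of the $(-4)$-curve.
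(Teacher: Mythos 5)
Your overall architecture tracks the paper's: identify the locus via Theorem~\ref{r is one - 1} and the classification by triples $(Z,B,D)$, count dimensions, exhibit explicit $\mathbb{Q}$-Gorenstein smoothings into components I, IIa, IIb, prove obstruction vanishing for types 1 and 2a, and then analyze the versal deformation space at a general 2b point. The flop/simultaneous-resolution machinery is also correctly invoked to ensure that degenerations within the classification stay inside $\bar 1$ and $\overline{\mbox{2a}}$ with an irreducible $(-4)$-curve.

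The genuine gap is in the final step, at a general 2b point $[W]$. Knowing that $H^2(X,T_X(\log C))=\mathbb{C}$ only tells you that $\mathrm{Def}^{QG}_{\mathrm{e.s.}}(W)$ is a hypersurface in the $40$-dimensional space $H^1(X,T_X(\log C))$; it does not tell you that this hypersurface is a union of two \emph{smooth} $39$-dimensional branches meeting \emph{transversally}. You assert this structure but supply no mechanism for it, and it cannot be deduced from the generic smoothness of $\bar 1$ and $\overline{\mbox{2a}}$, since that was established only at their general points, which are not 2b points: a priori the two branches could be tangent at $[W]$, or the hypersurface could be irreducible or non-reduced there. The paper's mechanism has two parts. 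First, one locates an unobstructed subfunctor (deformations of the double cover, equivalently of the triple $(\tilde Z,\tilde B,\tilde D)$), which produces a smooth $39$-dimensional component through $[W]$ and forces the Kuranishi function to factor with one factor having nonzero linear term. Second — and this is the technical core that your sketch omits entirely — one must show that the degree-two part of the Kuranishi function, the Schouten bracket
$$[\,,\,]:H^1(X,T_X(\log C))\otimes H^1(X,T_X(\log C))\rightarrow H^2(X,T_X(\log C))\simeq\mathbb{C},$$
is nonzero and not a perfect square. This is done by extending Horikawa's map $\gamma$ to the log setting (the analogues of his Lemmas 24, 29, 31), proving surjectivity of the induced map to $H^1(F_1\amalg F_2,\mathcal{N}_{F_1\amalg F_2})$, and showing that $\gamma(\rho)^2$ fails to lift to $H^0(X,\mathcal{O}(2K))$ for a suitable $\rho$, so that $[\rho,\rho]\neq0$; a $\mathbb{Z}/2\mathbb{Z}$-equivariance argument then pins down which brackets vanish and shows the quadratic form is a product of two distinct linear factors. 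Without this computation the transversality claim, and hence the "two components meeting transversally in $\overline{\mbox{2b}}$" statement, is unproved. A smaller point: the Cartier property at a general 2b point comes from the product decomposition $\mathrm{Def}^{QG}(W)\simeq\mathrm{Def}^{QG}_{\mathrm{e.s.}}(W)\times\mathbb{A}^1$ (using that Friedman's smoothing induces a versal local deformation of the $\frac14(1,1)$ point), with the divisor cut out by the smoothing coordinate — not by "the product of the two branch equations," which describes the equisingular locus itself rather than its defining equation inside the deformation space.
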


We remark that Theorem~\ref{Wahl divisor} answers a question Friedman posed in~\cite{friedman1983}, specifically that of explicitly describing deformations of 2b surfaces. 

The proof of Theorem~\ref{Wahl divisor} is outlined at the beginning of Section~\ref{2b}. We note here a few key facts which we prove in Section~\ref{2b}. The first is that the space of obstructions to $\mathbb{Q}$-Gorenstein deformations of a 2b surface is one-dimensional, and so the moduli space of $\mathbb{Q}$-Gorenstein deformations of 2b surfaces is a hypersurface singularity. Together with our description of the closures of the loci of types 1 and 2a surfaces and Horikawa's description of $\mathcal{M}_{5,5}$, this implies that it is enough to understand the equisingular deformations of a generic 2b surface. To describe these deformations, we locate a subfunctor of the functor of $\mathbb{Q}$-Gorenstein deformations of 2b surfaces, corresponding to deformations of covers. These deformations are unobstructed, so the there is a smooth component in the moduli space of equisingular deformations of a 2b surface. This observation implies that it is enough to show that the second order part of the Kuranishi function, given by the Schouten bracket, does not vanish and is not a square. We describe this bracket by extending the deformation theory of Horikawa in~\cite{horikawa1975}.

\noindent \emph{Acknowledgments.} This paper is a revision of my thesis. I am especially grateful to my advisor, Jenia Tevelev, for his guidance and support throughout. I would also like to thank Eduardo Cattani, Stephen Coughlan, Paul Hacking, and Radu Laza for many helpful discussions.


\section{Restrictions on singularities}\label{wahlsings}

We give bounds on which Wahl singularities may appear on a stable surface with limited invariants.

The two-dimensional quotient singularities which admit $\mathbb{Q}$-Gorenstein smoothings are called T-singularities, and are those cyclic quotient singularities of the form $\frac{1}{dn^2}(1, dna-1)$ where $a$ and $n$ are coprime \cite{ksb1988}. Those which admit only a one-parameter $\mathbb{Q}$-Gorenstein smoothing are T-singularities with $d=1$. They were studied first by Wahl \cite{wahl1981} and so are called \emph{Wahl singularities}. 

The minimal resolution of a surface with a Wahl singularity of the form $\frac{1}{n^2}(1, na-1)$ contains a string of exceptional curves $C_1,\ldots, C_r$ such that
$$ \label{dotsofcs}
C_i\cdot C_j = \left\{ \begin{array}{rl}
 				1 &\mbox{ if $i=j\pm1$} \\
				-b_i & \mbox{if $i=j$}\\
				  0 &\mbox{ otherwise}
      					 \end{array} \right. 
$$
where $[b_1,\cdots, b_r]$ is the Hirzebruch-Jung continued fraction expansion of $\frac{n^2}{na-1}$.  We say that the T-string $C_1,\ldots, C_r$ and the singularity corresponding to it have \emph{length $r$}. 

The T-string of a Wahl singularity has an especially useful iterative description by Wahl.

\begin{Prop}\label{wahltype}\cite{wahl1981} The cyclic quotient singularity $\frac{1}{4}(1,1)$ is a Wahl singularity of length 1 with $b_1=4$. Moreover,  every Wahl singularity has a T-string $C_1,\ldots, C_r$ where $[b_1,\cdots, b_r]$ is one of the following types:
\begin{itemize}
\item[i)] if $[b_1,\ldots, b_{r-1}]$ is a Wahl singularity then
$$[2,b_1,\ldots,b_{r-1}+1]$$
and
$$[b_1+1,b_2,\ldots, b_{r-1}, 2]$$
are also Wahl singularities and 
\item[ii)] The T-string of any Wahl singularity may be found by starting with the resolution $[4]$ and iterating the steps described in $i)$.
\end{itemize}
\end{Prop}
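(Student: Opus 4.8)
\medskip
\noindent\emph{Proof proposal.} The plan is to translate the combinatorial statement into the arithmetic of the pair $(n,a)$. Recall the dictionary: every Wahl singularity is $\frac{1}{n^2}(1,na-1)$ for a unique pair of coprime integers $1\le a<n$ (using the reductions $a\mapsto a+n$ and $a\mapsto n-a$ of the quotient datum), and its minimal resolution chain $[b_1,\dots,b_r]$ is by definition the Hirzebruch--Jung continued fraction of $\tfrac{n^2}{na-1}$. The datum $(2,1)$ gives $\tfrac41=[4]$, which is the base case recorded in the first sentence. So it remains to show that the set of chains of Wahl singularities is exactly the closure of $\{[4]\}$ under
$$\alpha\colon[b_1,\dots,b_r]\mapsto[2,b_1,\dots,b_{r-1},b_r+1],\qquad \beta\colon[b_1,\dots,b_r]\mapsto[b_1+1,b_2,\dots,b_r,2],$$
which splits into: (a) $\alpha$ and $\beta$ carry chains of Wahl singularities to chains of Wahl singularities; (b) every chain of a Wahl singularity is reached from $[4]$ by iterating $\alpha$ and $\beta$.

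For (a) I would use the convergent matrices $M(b)=\bigl(\begin{smallmatrix} b&-1\\ 1&0\end{smallmatrix}\bigr)$ and the standard identity $M(b_1)\cdots M(b_r)=\bigl(\begin{smallmatrix} p_r&-p_{r-1}\\ q_r&-q_{r-1}\end{smallmatrix}\bigr)$, where $p_i/q_i=[b_1,\dots,b_i]$ in lowest terms, $0\le p_{r-1}<p_r$, $0\le q_{r-1}<q_r$, and for the chain of $\frac{1}{n^2}(1,na-1)$ one has $p_r=n^2$, $q_r=na-1$. The first step is to pin down the penultimate convergent: solving $p_{r-1}q_r-p_rq_{r-1}=1$ subject to the size constraints gives, in one line, $p_{r-1}=n(n-a)-1$ and $q_{r-1}=a(n-a)-1$. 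Granting this, a direct computation of $M(b_1+1)M(b_2)\cdots M(b_r)M(2)$ yields numerator $(n+a)^2$ and denominator $(n+a)a-1$; since $\gcd(n+a,a)=1$ and every entry of the resulting string is still $\ge 2$, that string really is the Hirzebruch--Jung expansion of $\tfrac{(n+a)^2}{(n+a)a-1}$, hence the resolution chain of the Wahl singularity with datum $(n+a,a)$. Thus $\beta$ realizes $(n,a)\mapsto(n+a,a)$. For $\alpha$ I would avoid a second matrix computation via reversal symmetry: reversing an HJ chain sends $\tfrac{n^2}{na-1}$ to $\tfrac{n^2}{n(n-a)-1}$ because $(na-1)(n(n-a)-1)\equiv 1\pmod{n^2}$, so the reverse of the chain of $(n,a)$ is the chain of $(n,n-a)$; applying $\beta$ to the latter and reversing back shows $\alpha$ realizes $(n,a)\mapsto(2n-a,n)$. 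This proves (a).

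For (b) I would run a Euclidean descent on $n$. If $n\ge 3$, coprimality forces $a\neq n/2$; if $a<n/2$ then $(n,a)=\beta(n-a,a)$ with $(n-a,a)$ again a Wahl datum and first coordinate $n-a<n$, while if $a>n/2$ then $(n,a)=\alpha(a,2a-n)$ with $(a,2a-n)$ again a Wahl datum and first coordinate $a<n$. Each step strictly decreases the first coordinate, so the process terminates only at $(2,1)=[4]$, which gives (b). Combined with (a), this establishes the Proposition.

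I expect the main obstacle to be the clean identification of the penultimate convergent $(p_{r-1},q_{r-1})$ of a Wahl chain, together with the attendant verification that, after applying $\alpha$ or $\beta$, the new string still consists of entries $\ge 2$ and so really is the resolution chain---rather than merely some continued-fraction expansion---of the new singularity. Both points are elementary but require careful bookkeeping with the continued-fraction data; everything else is either a definition or the descent argument, which is routine.
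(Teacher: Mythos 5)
The paper does not prove this Proposition at all --- it is quoted from Wahl's 1981 paper as a known fact --- so there is no in-text argument to compare against. Your proposal is a correct, self-contained proof, and it is essentially the standard one: encode a Wahl singularity by its datum $(n,a)$, identify the two string operations with $(n,a)\mapsto(n+a,a)$ and $(n,a)\mapsto(2n-a,n)$, and run Euclidean descent on $n$ to reach $(2,1)=[4]$. I checked the two computational claims you flagged as the main risk: the penultimate convergent of $\tfrac{n^2}{na-1}$ is indeed $(p_{r-1},q_{r-1})=(n(n-a)-1,\,a(n-a)-1)$, since $(na-1)(n(n-a)-1)\equiv 1\pmod{n^2}$ and the size constraints pin the solution down uniquely, and the product $M(b_1+1)M(b_2)\cdots M(b_r)M(2)$ does have first column $\bigl((n+a)^2,\ (n+a)a-1\bigr)$. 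The reversal trick for $\alpha$ is also sound, using that the reversed Hirzebruch--Jung string of $\tfrac{n}{q}$ expands $\tfrac{n}{q'}$ with $qq'\equiv 1\pmod n$. One point you present as delicate is actually immediate: after either operation every entry of the new string is still $\ge 2$ (you only prepend or append a $2$ and increment an end entry), and HJ expansions with all entries $\ge 2$ are unique, so the new string is automatically \emph{the} resolution chain of the new datum. Likewise the descent needs only the observation that $\gcd$ and the inequalities $1\le a'<n'$, $n'\ge 2$ are preserved, which you verify. The argument is complete.
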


Because they are quotient singularities, Wahl singularities are log terminal~\cite[4.7]{kollarmori}. 
Thus, if $W$ contains a unique Wahl singularity and is otherwise smooth, and if $X$ is its minimal resolution containing the T-string $C_1,\ldots, C_r$, then we can write
$$K_X=\phi^*K_W+\sum_{i=1}^ra_iC_i$$
where $-1<a_i<0$. There is a very simple relationship between $K_X^2$ and $K_W^2$, also discovered by Wahl.
\begin{Lemma}\label{X W and r}\cite{wahl1981} Let $W$ be a surface with a unique Wahl singularity of length $r$ and at most Du Val singularities otherwise. Let $X$ be is the minimal resolution of $W$. Then $K_X^2=K_W^2-r$.
\end{Lemma}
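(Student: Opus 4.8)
The plan is to turn the statement into a purely combinatorial identity about the discrepancies $a_i$, and then establish that identity by induction on $r$ using Wahl's iterative description in Proposition~\ref{wahltype}.

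First I would set up the reduction. Writing $\phi\colon X\to W$ for the minimal resolution, the exceptional curves over the Du Val points are crepant, so $K_X=\phi^{*}K_W+\sum_{i=1}^{r}a_iC_i$ with $-1<a_i<0$. Since each $C_i$ is $\phi$-exceptional we have $\phi^{*}K_W\cdot C_i=0$, so
\[
K_X^2=(\phi^{*}K_W)^2+\Big(\sum_i a_iC_i\Big)^2=K_W^2+\Big(\sum_i a_iC_i\Big)^2 .
\]
Each $C_i$ is a smooth rational curve with $C_i^2=-b_i$, so adjunction gives $K_X\cdot C_i=-2-C_i^2=b_i-2$, while on the other hand $K_X\cdot C_i=\sum_j a_j(C_i\cdot C_j)$. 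Combining,
\[
\Big(\sum_i a_iC_i\Big)^2=\sum_i a_i\,(K_X\cdot C_i)=\sum_{i=1}^{r}a_i(b_i-2),
\]
so the lemma reduces to the claim that $\sum_{i=1}^{r}a_i(b_i-2)=-r$.

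To prove this I would induct on $r$, carrying along two auxiliary identities: $\sum_i b_i=3r+1$ (immediate, since each of Wahl's two moves adds $3$ to $\sum b_i$ and $1$ to $r$, and $[4]$ gives $4=3\cdot1+1$) and $a_1+a_r=-1$. The base case $r=1$ is the computation $-4a_1=b_1-2=2$. For the inductive step I would analyze how the minimal resolution changes under the move $[b_1,\dots,b_{r-1}]\mapsto[2,b_1,\dots,b_{r-2},b_{r-1}+1]$ (the other move being its mirror image): solving the new adjunction relations $\sum_j a'_j(C'_i\cdot C'_j)=b'_i-2$ shows that the new discrepancies are an affine function $a'_i=\lambda a_i+\mu$ of the old ones, together with a new end value $a'_0=a'_1/2$, where $\lambda=1/(2+a_{r-1})$ and $\mu=\lambda-1$; the linear system is consistent \emph{precisely because} $a_1+a_{r-1}=-1$ for the shorter string, which is why this identity has to be part of the induction hypothesis. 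A direct substitution, using $\sum b_i=3r+1$ for the shorter string, then shows $a'_0+a'_{r-1}=-1$ and that $\sum_i a'_i(b'_i-2)$ is exactly one less than $\sum_i a_i(b_i-2)$, closing the induction. (Alternatively, pairing the adjunction relations with $\sum_i C_i$ gives at once $\sum_i a_i(b_i-2)=-(r+1)-(a_1+a_r)$, so once $a_1+a_r=-1$ and $\sum b_i=3r+1$ are in hand the identity follows with no further computation; one could of course also simply cite \cite{wahl1981}.)

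The routine parts are the reduction and the two bookkeeping identities; the one point that requires care is the inductive step, namely pinning down exactly how the minimal resolution — hence the numbers $C_i\cdot C_j$ and the discrepancies — transforms under Wahl's operations, and verifying the affine formula for the $a_i$. The supporting fact $a_1+a_r=-1$ is what makes the transformation work out, and recognizing that it has to be carried through the induction is the only non-obvious part of setting things up.
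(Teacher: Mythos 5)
Your proposal is correct, but note that the paper does not prove this lemma at all --- it simply cites \cite{wahl1981} --- so there is no in-paper argument to compare against. Your reduction is the standard one and is sound: writing $K_X=\phi^*K_W+\sum a_iC_i$ (the Du Val exceptional curves being crepant and hence absent), intersecting with the $C_i$ and using adjunction turns the statement into the identity $\sum_i a_i(b_i-2)=-r$. Your inductive verification of that identity via the affine transformation law for the discrepancies under Wahl's moves checks out (I verified the consistency condition at the two ends of the string is exactly $a_1+a_{r-1}=-1$, as you say), but it is considerably more work than necessary: your parenthetical ``alternative'' is the better route, and it runs entirely on two facts the paper itself records elsewhere, namely $\sum_{i}(b_i-2)=r+1$ (Equation~\eqref{claim1}, proved by induction from Proposition~\ref{wahltype}) and $a_1+a_r=-1$ (quoted in the proof of Lemma~\ref{technical lemma 2} from \cite[3.2]{kawamata1992}). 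Pairing the adjunction relations with $\sum_iC_i$ and separating the two end curves, whose rows of the intersection matrix sum to $1-b_j$ rather than $2-b_j$, gives $r+1=-\sum_ia_i(b_i-2)-(a_1+a_r)$ and hence the claim in one line. The only genuinely non-routine content is $a_1+a_r=-1$, and you correctly identify that it must either be carried through the induction (where it emerges as the solvability condition of your linear system) or imported from Kawamata; either way the argument is complete.
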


To describe the possible Wahl singularities which may occur on a surface with given invariants, one might hope to bound $r$ in terms of $K_W^2$ and $K_S^2$, where $S$ is the minimal model of $X$. The best known bound to date was discovered by Y. Lee.

\begin{Theorem}\label{lee's bound}\cite[Th. 23]{lee1999} Suppose $W$ is a surface of general type with a unique Wahl singularity 
of length $r$. Let $X$ be its minimal resolution and $S$ the minimal model of $X$. If $K_S$ is ample then $r\le400(K_S^2)^4$. 
\end{Theorem}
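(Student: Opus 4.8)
The plan is to compare the minimal resolution $X$ with its minimal model $S$ and to show that a long T-string $C_1,\ldots,C_r$ on $X$ forces a correspondingly large configuration of negative rational curves on $S$, whose size is then bounded by the Noether and Bogomolov--Miyaoka--Yau inequalities. Write $\sigma\colon X\to S$ as a composition of $k$ blow-downs of $(-1)$-curves, so that $K_X^2=K_S^2-k$; together with Lemma~\ref{X W and r} this gives $K_W^2-K_S^2=r-k$, and hence $k\le r-1$ by Lemma~\ref{kawamata1}. From Proposition~\ref{wahltype}, an easy induction on the two moves there (starting from $[4]$) yields the identity $\sum_{i=1}^{r}(b_i-2)=r+1$, while adjunction on each $C_i\cong\mathbb{P}^1$ gives $K_X\cdot C_i=b_i-2$. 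Writing $K_X=\sigma^{*}K_S+E$ with $E\ge 0$ supported on the $\sigma$-exceptional locus, and putting $\bar C_i=\sigma_{*}C_i$, $\bar C=\sum_i\bar C_i$, these combine into
$$K_S\cdot\bar C=\sum_i K_X\cdot C_i-\sum_i E\cdot C_i=(r+1)-\sum_i E\cdot C_i\le r+1.$$

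Next I would run through the images $\bar C_i$ on $S$. Each is either a point (exactly when $C_i$ is $\sigma$-exceptional, which happens for at most $k\le r-1$ indices), a smooth $(-2)$-curve, or a rational curve with $K_S\cdot\bar C_i\ge 1$. The only mechanism by which $\bar C_i^{2}$ can exceed $C_i^{2}=-b_i$ is a blow-up centre lying on a strict transform of $\bar C_i$, so a curve $C_i$ with large $b_i$ either concentrates many of the $\le r-1$ blow-ups on itself or maps to a curve of large negative self-intersection, hence of large $K_S$-degree, on $S$. Feeding this back into the Wahl identity and the displayed formula, one sees that $\bar C$ is a reduced configuration of rational curves on $S$ whose number of components, weighted by $K_S$-degree and corrected for the at most $r-1$ blow-ups, grows linearly in $r$. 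A standard Bogomolov--Miyaoka--Yau/Miyaoka-type bound on configurations of rational curves of bounded negative self-intersection on a minimal surface of general type---ultimately resting on $b_2^{-}(S)=O(K_S^2)$ via Noether's inequality---then bounds the number of components of such a configuration by a polynomial in $K_S^2$. Tracking the exponents contributed separately by the self-intersections $b_i$ and by the $K_S$-degrees through this estimate produces the stated bound $r\le 400(K_S^2)^4$.

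The hard part is exactly the bookkeeping of the $k\le r-1$ blow-ups: since $k$ may be of order $r$, a naive count of curves on $S$ is circular, and one must argue that the blow-downs cannot account for a large fraction of the T-string unless $S$ itself carries a proportionally large configuration of negative curves. Concretely, one needs to control how a single blow-down can shorten or merge parts of the chain on $S$ (which forces cycles of curves on $X$) and to rule out the extreme in which the entire string, or a large part of it, is $\sigma$-exceptional---this last point being precisely where Lemma~\ref{kawamata1} and the inequality $k\le r-1$ are used. Once these interactions are pinned down, what remains are the usual lattice- and Euler-characteristic-theoretic estimates on negative curves on $S$, iterated to handle both the $b_i$ and the $K_S$-degrees, which is what inflates the bound to a quartic in $K_S^2$.
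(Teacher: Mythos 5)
First, a point of order: the paper does not prove this statement at all --- it is Y.~Lee's theorem, quoted with the citation \cite[Th.~23]{lee1999} precisely so that the author can contrast it with her own, much sharper bound (Theorem~\ref{r is two}) in the special case $K_W^2=K_S^2+1$. So there is no in-paper argument to compare yours against, and your proposal has to stand on its own. The preliminary bookkeeping does: $K_X^2=K_S^2-k$ together with Lemma~\ref{X W and r} and Lemma~\ref{kawamata1} gives $k\le r-1$, and the identity $\sum_i(b_i-2)=r+1$ is exactly the paper's Equation~\eqref{claim1}; this is the same toolkit Section~\ref{wahlsings} uses. But already your displayed inequality is not justified: $K_S\cdot\bar C\le r+1$ needs $\sum_i E\cdot C_i\ge0$, and this can fail because $E_j\cdot C_i=-1$ whenever $C_i$ is the component of $E_j$ mapping onto the $(-1)$-curve $F_j$ --- the paper records precisely this possibility in the proof of Lemma~\ref{technical lemma 2}. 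One salvages only $K_S\cdot\bar C\le 2r$, which is still linear in $r$ but shows the looseness of the bookkeeping.

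The genuine gap is everything after that display. The appeal to ``a standard Bogomolov--Miyaoka--Yau/Miyaoka-type bound on configurations of rational curves of bounded negative self-intersection'' does not apply as stated, because the $b_i$ of a Wahl string of length $r$ are not bounded: they can be as large as $r+3$ (the string $[r+3,2,\ldots,2]$ appearing in the paper's own case analysis), so the self-intersections of the curves you want to count grow with the very quantity you are trying to bound. You identify this circularity yourself (``the hard part is exactly the bookkeeping of the $k\le r-1$ blow-ups'') but then describe what a resolution would have to accomplish rather than carrying it out; and the closing sentence, ``tracking the exponents \ldots\ produces the stated bound $r\le400(K_S^2)^4$,'' is an assertion with no computation behind it --- nothing in the sketch produces the exponent $4$, let alone the constant $400$. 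Lee's actual argument requires the orbifold Bogomolov--Miyaoka--Yau inequality applied to the resolved configuration together with a careful analysis of the Hirzebruch--Jung data, and that quantitative core is what is missing here. For contrast, the paper's Theorem~\ref{r is two} shows how, under the extra hypothesis $K_W^2=K_S^2+1$, the purely intersection-theoretic bookkeeping of Lemmas~\ref{technical lemma 1} and~\ref{technical lemma 2} really can be closed up to yield $r\le2$; in the generality of Lee's theorem that bookkeeping alone does not suffice.
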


We prove a much stronger bound, at the cost of restricting to a smaller class of surfaces.

Let $W$ be a surface with a unique Wahl singularity of length $r$ and possibly Du Val singularities, let $\psi:X\rightarrow W$ be its minimal resolution, and $\pi: X\rightarrow S$ be the minimal model of $X$ as in Figure~\ref{firstmaps}. 
\begin{figure} [h]
\centering
\[
\xymatrix{
&X \ar[ld]_\phi \ar[rd]^\pi \\
W&  &S}
\]
\caption{The surfaces $W$, $X$, and $S$.\label{firstmaps}}
\end{figure}
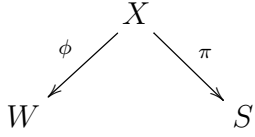
If $\pi$ contracts $n$ $(-1)$-curves, then $K_X^2=K_S^2+n$. By Lemma~\ref{X W and r}, we have $K_X^2=K_W^2-r$. We bound $r$ by investigating the relationship between $n$ and $r$. The following Lemma shows that if $K_W$ and $K_S$ are big and nef, then $r>n$; that is, $K_W^2>K_S^2$.

\begin{Lemma}\label{kawamata}
If $K_W$ and $K_S$ are big and nef then $K_W^2>K_S^2$.
\end{Lemma}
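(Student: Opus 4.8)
The plan is to compare intersection numbers on $X$ using the two birational contractions $\phi: X \to W$ and $\pi: X \to S$. Write $K_X = \phi^* K_W + \Delta$ where $\Delta = \sum_{i=1}^r a_i C_i$ with $-1 < a_i < 0$, so that $\Delta$ is an effective-coefficient-negative combination supported on the T-string; in particular $-\Delta$ is effective and its support is exactly the exceptional locus of $\phi$ over the Wahl point. Since $\phi^* K_W \cdot C_i = 0$ for every exceptional curve $C_i$, we have $\Delta \cdot C_i = K_X \cdot C_i \ge 0$ by adjunction (each $C_i$ is a smooth rational curve with $C_i^2 \le -2$), and more precisely $\Delta^2 = \Delta \cdot K_X = K_X^2 - (\phi^*K_W)^2 = K_X^2 - K_W^2 = -r < 0$ by Lemma~\ref{X W and r}. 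The key point I want is that $K_X \cdot \Delta \le 0$ with equality impossible, i.e. $K_X$ is "not too positive" against the exceptional divisor, while at the same time $K_X = \pi^* K_S + E$ with $E$ the sum of the total transforms of the $n$ contracted $(-1)$-curves, an effective divisor with $E^2 = -n$ and $\pi^* K_S \cdot E = 0$.

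The main step is then a nef/bigness comparison. Since $K_W$ is nef and big, $(\phi^* K_W)^2 = K_W^2 > 0$; since $K_S$ is nef and big, $(\pi^* K_S)^2 = K_S^2 > 0$, and moreover $\pi^* K_S$ is nef on $X$. The relation
\[
\phi^* K_W + \Delta = K_X = \pi^* K_S + E
\]
gives $\phi^* K_W = \pi^* K_S + (E - \Delta)$. Intersecting both sides with the nef big class $\phi^* K_W$ and using $\phi^* K_W \cdot \Delta = 0$ (projection formula, as $\Delta$ is $\phi$-exceptional) yields
\[
K_W^2 = \phi^* K_W \cdot \pi^* K_S + \phi^* K_W \cdot E.
\]
Both terms on the right are $\ge 0$ since $\phi^* K_W$ is nef and $\pi^* K_S$, $E$ are effective; symmetrically, intersecting with $\pi^* K_S$ and using $\pi^* K_S \cdot E = 0$ gives $K_S^2 = \pi^* K_S \cdot \phi^* K_W - \pi^* K_S \cdot \Delta = \pi^* K_S \cdot \phi^* K_W + \pi^*K_S \cdot (-\Delta)$, and since $-\Delta$ is effective and $\pi^* K_S$ is nef, $\pi^* K_S \cdot (-\Delta) \ge 0$, hence $K_S^2 \ge \pi^* K_S \cdot \phi^* K_W$... so I get $K_S^2 \ge \pi^* K_S \cdot \phi^* K_W$ and $K_W^2 \ge \phi^* K_W \cdot \pi^* K_S$, which alone is not enough. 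To get the strict inequality I will instead work with $K_X$ directly: $K_X^2 = K_X \cdot \pi^* K_S + K_X \cdot E = K_X \cdot \pi^* K_S + (K_X \cdot E)$, and $K_X \cdot E = n$ (sum of $(-1)$-curve adjunctions), while $K_X \cdot \pi^* K_S = (\pi^* K_S + E)\cdot \pi^* K_S = K_S^2$. Thus $K_X^2 = K_S^2 + n$, recovering the blow-up formula; similarly $K_X^2 = K_W^2 - r$. So the inequality $K_W^2 > K_S^2$ is equivalent to $r > n$, and the content of the lemma is precisely the assertion $r > n$ under the nef-big hypotheses.

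So the real work is showing $r > n$. Here is the mechanism I expect to use: consider the $\mathbb{Q}$-divisor $L := \pi^* K_S - \phi^* K_W$ on $X$. From $K_X = \phi^* K_W + \Delta = \pi^* K_S + E$ we get $L = \Delta - E$. Now intersect $L$ with $\pi^* K_S$: since $\pi^* K_S$ is nef and $\pi^* K_S \cdot E = 0$, we obtain $\pi^* K_S \cdot L = \pi^* K_S \cdot \Delta$; since $\Delta$ has negative coefficients on curves $C_i$ against which $\pi^* K_S$ is nef, $\pi^* K_S \cdot \Delta \le 0$, so $\pi^* K_S \cdot L \le 0$, i.e. $\pi^* K_S \cdot \phi^* K_W \le K_S^2$. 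Likewise intersecting $L$ with $\phi^* K_W$ (nef, big, and orthogonal to $\Delta$) gives $\phi^* K_W \cdot L = -\phi^* K_W \cdot E \le 0$, i.e. $\pi^* K_S \cdot \phi^* K_W \le K_W^2$. Writing $\alpha := \pi^* K_S \cdot \phi^* K_W$, the Hodge index theorem applied to the two nef big classes $\pi^* K_S$ and $\phi^* K_W$ on the surface $X$ gives $\alpha^2 \ge K_S^2 \cdot K_W^2$, with equality iff $\pi^* K_S$ and $\phi^* K_W$ are numerically proportional. Combined with $0 < \alpha \le K_S^2$ and $0 < \alpha \le K_W^2$, we get $K_S^2 K_W^2 \le \alpha^2 \le \alpha \cdot K_S^2 \le K_W^2 \cdot K_S^2$, forcing equality throughout; in particular $\alpha = K_S^2 = K_W^2$ and $\pi^* K_S \equiv \phi^* K_W$. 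But then $L \equiv 0$, so $\Delta \equiv E$; since $\Delta$ is supported on $\phi$-exceptional curves and $E$ on $\pi$-exceptional curves, and these are effective with disjoint-or-not supports, comparing their self-intersections gives $-r = \Delta^2 = E^2 = -n$, hence $r = n$ and $K_W^2 = K_S^2$. This is the borderline case the lemma excludes in its strict form — so the argument shows $K_W^2 \ge K_S^2$ always and I must rule out equality. Equality forces $\phi^* K_W \equiv \pi^* K_S$, hence $\phi^* K_W \cdot C_i = \pi^* K_S \cdot C_i$ for all $i$; the left side is $0$, so $\pi^* K_S \cdot C_i = 0$ for every curve $C_i$ in the T-string, meaning each $C_i$ is $\pi^* K_S$-trivial, i.e. contracted by $S$'s canonical model. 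Since $C_1,\dots,C_r$ is a connected chain of rational curves with $C_i^2 \le -2$ and $K_S$ is nef and big, this connected $K_S$-trivial configuration must be contracted by the map to the canonical model of $S$ — but $X \to S$ is already the minimal model, so these curves would have to be among the $(-1)$-curves contracted by $\pi$, which is absurd since $(-1)$-curves are not in any T-string (they have self-intersection $-1$, and T-strings consist of curves of self-intersection $\le -2$). This contradiction gives $K_W^2 > K_S^2$.

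The step I expect to be the genuine obstacle is the very last one: cleanly extracting a contradiction from $\phi^* K_W \equiv \pi^* K_S$. The subtlety is that $\phi^* K_W$ being numerically trivial on a curve does not a priori say $\pi^* K_S$ is, unless we are careful that the numerical equivalence is on all of $X$ (which it is, by Hodge index), and then we must argue that a $K_S$-trivial chain of $(\le -2)$-curves cannot exist on a minimal surface of general type $S$ unless that chain is empty — equivalently $r = 0$, contradicting $r \ge 1$. I would phrase this via: the pullback to $X$ of an ample class on the canonical model of $S$ is nef, big, and trivial exactly on the $(-2)$-curves of $X$ that map to Du Val points of the canonical model; a $(-1)$-curve is never $\pi^* K_S$-trivial (it meets $\pi^* K_S$ in $K_S \cdot (\text{its image}) \ge 1$... or handle the case its image is a point separately), and the Wahl T-string contains no $(-2)$-curve that is blown down — actually it may contain $(-2)$-curves, so the cleanest route is: $\phi^* K_W \equiv \pi^* K_S$ implies $K_W^2 = \phi^*K_W \cdot \pi^* K_S = \pi^*K_S \cdot (\phi^* K_W)$; expand $\phi^* K_W = K_X - \Delta$ to get $K_W^2 = K_X \cdot \pi^*K_S - \Delta \cdot \pi^* K_S = K_S^2 - \Delta \cdot \pi^* K_S$, so $\Delta \cdot \pi^* K_S = K_S^2 - K_W^2 \le 0$; but also $K_W^2 = \phi^*K_W \cdot K_X - \phi^* K_W \cdot E = K_W^2 - \phi^*K_W \cdot E$ forces $\phi^* K_W \cdot E = 0$, and since $\phi^* K_W$ is nef and big, $E$ must be $\phi^*K_W$-trivial, which for the effective $E$ supported on $(-1)$-curves (with at least one component if $n \ge 1$) forces those $(-1)$-curves to lie in the $\phi$-exceptional locus — impossible for the same self-intersection reason. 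That handles $n \ge 1$; for $n = 0$ we have $K_X = K_S$ nef and big, and then $K_X \cdot C_i \ge 0$ with $C_i^2 \le -2$ forces $C_i^2 = -2$ and $K_X \cdot C_i = 0$ for all $i$, so $\Delta \cdot C_i = 0$, giving $\Delta^2 = 0$; but $\Delta^2 = -r < 0$, contradiction. Either way $K_W^2 > K_S^2$, as claimed.
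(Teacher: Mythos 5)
Your approach --- pure intersection theory on $X$ plus the Hodge index theorem --- is genuinely different from the paper's proof, which compares $h^0(W,mK_W)$ with $h^0(S,mK_S)$ using Kawamata--Viehweg vanishing, Riemann--Roch, and the base point free theorem (the latter supplying a pluricanonical section not vanishing at the singular point). Your route is in principle viable and more elementary, but as written it contains a sign error that breaks the logic. With $L=\pi^*K_S-\phi^*K_W=\Delta-E$ and $\alpha:=\pi^*K_S\cdot\phi^*K_W$, the computation $\pi^*K_S\cdot L=\pi^*K_S\cdot\Delta\le 0$ says $K_S^2-\alpha\le 0$, i.e.\ $\alpha\ge K_S^2$, not $\alpha\le K_S^2$ as you assert. (With the correct sign the weak inequality is immediate from $K_S^2\le\alpha\le K_W^2$; no Hodge index is needed for that part.) With your sign, the chain $K_S^2K_W^2\le\alpha^2\le\alpha K_S^2\le K_W^2K_S^2$ ``forces equality throughout'' unconditionally, i.e.\ it would prove $K_W^2=K_S^2$ and $\pi^*K_S\equiv\phi^*K_W$ for \emph{every} such $W$, and your subsequent contradiction would then show that no such surface exists --- which is absurd. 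Separately, $K_X\cdot E=-n$, not $n$ (each total transform $E_j$ satisfies $K_X\cdot E_j=E_j^2=-1$), so $K_X^2=K_S^2-n$; you need this correct version for your claim that $K_W^2>K_S^2$ is equivalent to $r>n$.

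The second genuine gap is in ruling out the equality case $\pi^*K_S\equiv\phi^*K_W$. Your proposed contradiction --- that $\phi^*K_W\cdot F=0$ for a $(-1)$-curve $F$ ``forces $F$ to lie in the $\phi$-exceptional locus'' --- does not follow: $K_W$ is only assumed big and nef, not ample, so $K_W\cdot\phi(F)=0$ is a priori possible for a curve $\phi(F)$ of negative self-intersection; this is exactly the configuration the paper has to work to exclude in Lemma~\ref{technical lemma 2}, using additional structure. A clean finish is available within your framework, though: $\pi^*K_S\equiv\phi^*K_W$ says that the $\mathbb{Q}$-divisor $E-\Delta=\sum_j E_j+\sum_i(-a_i)C_i$ is numerically trivial; but it is effective and nonzero (since $r\ge1$ and $-1<a_i<0$), so its intersection with an ample class is strictly positive, a contradiction. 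With the sign corrected and this ending substituted, your argument becomes a valid alternative proof that avoids the base point free theorem entirely.
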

\begin{proof} 

Let $W$ be a surface with a unique Wahl singularity of type $\frac{1}{n^2}(1,na-1)$ at $p$ and at most Du Val singularities elsewhere. Since resolving the Du Val singularities on $W$ does not affect $K_W^2$ and nefness of $K_W$, we can assume without loss of generality that $W$ is smooth away from $p$. Choose $m>0$ such that $n | m$. Then $mK_W$ is Cartier.

Since $K_S$ and $K_W$ are big and nef, we have
$$h^i(S, mK_S)=h^i(S, (m-1)K_S+K_S)=0$$
and
$$h^i(W, mK_W)=h^i(W, (m-1)K_W+K_W)=0$$
for $i>0$ by the Kawamata--Viehweg vanishing theorem. 
In particular, 
$$\chi(S, mK_S) =h^0(S, mK_S) \textrm{ and } \chi(W, mK_W) =h^0(W, mK_W).$$

We claim that 
$$h^0(W, mK_W)>h^0(X, mK_X)=h^0(S, mK_S)$$
for $m$ sufficiently large.
To see this, write 
$$K_X=\phi^*(K_W)+\sum_{i}a_iC_i,$$
where $-1<a_i<0$, because $p$ is log terminal. Choose $m$ sufficiently large and divisible so that the denominators of the $a_i$ divide $m$ for all $i$. Then
$$\phi^*(mK_W)=mK_X+C,$$
where $C=-m\sum_{i}a_iC_i$ is an effective Cartier divisor. Consider the restriction exact sequence
$$0\rightarrow \mathcal{O}(mK_X)\rightarrow\mathcal{O}(\phi^*(mK_W))\rightarrow \mathcal{O}_C\rightarrow 0.$$
To show that $h^0(W, mK_W)>h^0(X, mK_X)$, it suffices to show that the induced map
$$H^0(X, \phi^*(mK_W))\rightarrow H^0(C, \mathcal{O}_C)$$
is nonzero. By the Kawamata-Shokurov base point free theorem,
we can choose a section $s$ of $mK_W$, for $m$ sufficiently large and divisible, such that $s(p)\neq 0$. Thus, the map is indeed nonzero.

Since $p$ has index $n$, the divisor $mK_W$ is Cartier and the usual Riemann--Roch Theorem holds~\cite{reidypg}. 
Thus, 
\begin{eqnarray*}
 \chi(W, \mathcal{O}_W) +\frac{m(m-1)}{2}K_W^2 &=&\chi(W, mK_W)= h^0(W, mK_W)\\
&>& h^0(S, mK_S)= \chi(S, mK_S)\\
&= & \chi(S, \mathcal{O}_S) +\frac{m(m-1)}{2}K_S^2.
\end{eqnarray*}
Since $\psi$ is the resolution of a rational singularity, we have 
$$\chi(W, \mathcal{O}_W)=\chi(X, \mathcal{O}_X)=\chi(S, \mathcal{O}_S),$$
and so $K_W^2>K_S^2$ as we wished to show.
\end{proof}

\begin{Remark} Kawamata makes a similar statement and argument, but requires that $W$ be the central fiber of a $\mathbb{Q}$-Gorenstein degeneration $\mathcal{X}\rightarrow \Delta$ whose generic fiber is a smooth connected surface~\cite[2.4, 4.6]{kawamata1992}.
\end{Remark}

Because it is difficult to give a useful bound on $r$ without any assumptions on $n$, we begin by restricting to the case that $K_W^2=K_S^2+1$. We will then use Noether's inequality together with Lemma~\ref{kawamata} to show that this holds in the case that $W$ is a stable numerical quintic surface.

\begin{Theorem}\label{r is two} Suppose $W$ is a surface with a unique Wahl singularity $p$ of length $r$ and at most Du Val singularities elsewhere. Let $X$ be its minimal resolution, and $\pi:X\rightarrow S$ the minimal model of $X$ as in Figure~\ref{firstmaps}. If $K_W$ and $K_S$ are big and nef, and if $K_W^2=K_S^2+1$, then $p$ is a $\frac{1}{4}(1,1)$, or $\frac{1}{9}(1,2)=\frac{1}{9}(1,5)$ singularity.
\end{Theorem}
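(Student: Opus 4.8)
The plan is to combine the numerical relation $K_X^2 = K_W^2 - r = K_S^2 + (n-1)$ (where $n$ is the number of $(-1)$-curves contracted by $\pi$) with Proposition~\ref{wahltype}, which tells us precisely how the continued fraction $[b_1,\ldots,b_r]$ of a Wahl string is built up by repeated "blow-ups" starting from $[4]$. The hypothesis $K_W^2 = K_S^2+1$ forces $n = r-1$, so the $(-1)$-curves on $X$ are tightly constrained relative to the length of the T-string. The first step is to set up the geometry on $X$: we have the T-string $C_1,\ldots,C_r$ with self-intersections $-b_i$, and $n = r-1$ disjoint $(-1)$-curves $E_1,\ldots,E_{r-1}$ that $\pi$ contracts. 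The key local observation is that contracting a $(-1)$-curve $E_j$ which meets the string can only raise the self-intersection of at most the curves it touches; conversely, each $b_i \ge 2$ with $\sum (b_i - 2) = \text{(something controlled by the continued fraction data, specifically } \sum b_i = 3r - 1 \text{ minus adjustments)}$. Actually the cleanest bookkeeping: along the string, $\sum_{i=1}^r (b_i - 3) = -1 - (\text{number of }2\text{'s that are "extra"})$; I would instead track the quantity $\sum_i (b_i - 2) = r - 1$ coming from $\deg$ of the continued fraction expansion, and show that each blow-up in Wahl's construction that is "seen" by the $(-1)$-curves of $\pi$ accounts for exactly one unit.

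The main step is an inductive/combinatorial argument on $r$ using Wahl's iteration. Suppose $r \ge 3$. By Proposition~\ref{wahltype}(ii), the string $[b_1,\ldots,b_r]$ is obtained from a shorter Wahl string $[b_1',\ldots,b_{r-1}']$ by one of the two operations, so in particular $[b_1,\ldots,b_r]$ either begins with $2$ or ends with $2$. I would track how the minimal model map $\pi: X \to S$ interacts with this: the configuration $C_1,\ldots,C_r$ plus the exceptional curves $E_1,\ldots,E_{r-1}$ of $\pi$ sits inside $X$, and on $S = \pi(X)$ the images $\pi(C_i)$ form a connected curve whose self-intersections have gone up by the number of $E_j$'s meeting each $C_i$. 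Since there are only $r-1$ such $E_j$'s distributed among $r$ curves $C_i$ and the total "deficiency" $\sum(b_i - 1) $ needs to be absorbed (more precisely: on $S$ the string must still be a chain of curves with self-intersection $\ge -1$, and nefness/bigness of $K_S$ rules out too many $(-1)$-curves persisting), a counting argument shows $r - 1$ blow-downs cannot suffice once $r \ge 3$. Concretely, I expect to show: if $r\ge 3$, then absorbing the string into the minimal model requires contracting at least $r$ curves meeting it (because the two "end" $2$'s created by Wahl's steps each demand their own $(-1)$-curve, and the interior $b_i \ge 3$ entries each demand $b_i - 2 \ge 1$ more), contradicting $n = r-1$. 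For $r = 1, 2$ one checks directly that $[4]$ and $[2,3]$ (equivalently $[3,2]$) — the only strings of those lengths — are consistent, giving $\frac{1}{4}(1,1)$ and $\frac{1}{9}(1,2) = \frac{1}{9}(1,5)$.

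The hard part will be making the counting argument rigorous, i.e. proving the inequality "number of $(-1)$-curves on $X$ meeting the T-string $\ge$ some explicit function of the $b_i$ that exceeds $r-1$ when $r \ge 3$" without being able to see $X$ explicitly. The subtlety is that a single $(-1)$-curve $E_j$ could in principle meet the string in a complicated way, or meet it after other curves have been blown down, so the induction must be run on the sequence of blow-downs $X = X_0 \to X_1 \to \cdots \to X_n = S$ rather than on $X$ alone, keeping track at each stage of the proper transform of the chain $C_1,\ldots,C_r$ and of how its self-intersection sequence evolves. Nefness and bigness of $K_S$ enters precisely to guarantee the process terminates at $S$ with no further $(-1)$-curves inside (or meeting) the transformed chain, pinning down the count. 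Lemma~\ref{kawamata} is used only to know $n < r$ in the first place, so that setting $K_W^2 = K_S^2 + 1$ is the genuinely extremal case; everything else is the combinatorics of Hirzebruch--Jung strings under Wahl's recursion.
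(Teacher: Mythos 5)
Your setup is the same as the paper's: from $K_X^2=K_W^2-r=K_S^2+n$ and $K_W^2=K_S^2+1$ you get $n=r-1$, and the proof must then play the number of exceptional curves of $\pi$ against the length of the T-string. But the counting argument that is supposed to finish the job has a genuine gap, and the heuristic you give for it rests on a false premise. You assert that ``on $S$ the string must still be a chain of curves with self-intersection $\ge -1$'' and that each interior entry $b_i\ge 3$ therefore ``demands $b_i-2$ more'' contractions. Nefness of $K_S$ only forces $K_S\cdot\pi(C_i)\ge 0$; the images $\pi(C_i)$ may perfectly well keep large negative self-intersection on the minimal model, so nothing needs to be ``absorbed.'' Moreover, the target inequality has the wrong shape: if you really showed that at least $r$ curves meeting the string must be contracted while only $n=r-1$ are available, that would be a contradiction for \emph{every} $r$, including $r=1,2$, which you then claim to verify as consistent. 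There are also two factual slips: the correct identity is $\sum_i(b_i-2)=r+1$ (not $r-1$), and the length-two Wahl string is $[2,5]$ (equivalently $[5,2]$), corresponding to $\frac19(1,2)$; the string $[2,3]$ is not a T-string at all.

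What actually closes the argument in the paper is a pair of inequalities for the quantity $\sum_{j=1}^n\sum_{i=1}^r E_j\cdot C_i$, where $E_j$ are the total-transform exceptional divisors of $\pi$. The upper bound $\sum_{i,j}E_j\cdot C_i\le r$ comes from $K_X=\pi^*K_S+\sum E_j$, adjunction, and $\sum(b_i-2)=r+1$. The lower bound $\sum_{i,j}E_j\cdot C_i\ge 2n$ is the hard part, and it is exactly what your sketch does not supply: one must show that \emph{each} $E_j$ meets the string with total multiplicity at least $2$, and the only reason the $(-1)$-curves of $\pi$ interact with the string at all is the nefness of $K_W$ -- writing $K_X=\phi^*K_W+\sum a_iC_i$ with $-1<a_i<0$ and using $\phi^*K_W\cdot F>0$ for a $(-1)$-curve $F$ forces $\sum_iC_i\cdot F\ge 2$. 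Your proposal never invokes nefness of $K_W$ in the counting (you relegate it to Lemma~\ref{kawamata}, which is not even needed once $K_W^2=K_S^2+1$ is assumed), so the central input is missing. The paper then needs a nontrivial case analysis on the dual graph of the string relative to each $E_j$ (including ruling out the strings $[r+3,2,\ldots,2]$ and $[r,5,2,\ldots,2]$ by explicit adjunction computations on $S$) before concluding $2(r-1)\le r$, hence $r\le 2$. As written, your argument does not reach that point.
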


\begin{Remark} Although we do not have a specific example, the assumption that $K_S$ is of general type is likely essential. In~\cite{lee-park2011}, Y. Lee and J. Park give an infinite family of examples of $\mathbb{Q}$-Gorenstein degenerations of minimal surfaces of general type with $K^2=2p_g-4$ to surfaces that contain \emph{two} Wahl singularities of type $\frac{1}{(n-2)^2}(1, n-3)$.The central fibers of the minimal resolutions of these families are minimal elliptic surfaces.
\end{Remark}

The proof of Theorem~\ref{r is two} requires two lemmas, but we begin with some notation.

Let us write $\pi$ as a composition of birational maps, each of which contracts a single (-1)-curve to a point $x_j\in X_j$:
$$
\xymatrix{X=X_n\ar[r]^{\pi_n} &X_{n-1}\ar[r]^{\pi_{n-1}}&\cdots\ar[r]^{\pi_2}&X_1\ar[r]^{\pi_1}&X_0=S}
$$
For $j\in\{1,\ldots,n\}$, let $F_j=\pi_{j}^{-1}(x_{j-1})\subset X_{j}$ be the (-1)-curve on $X_{j-1}$ obtained by blowing up the smooth point $x_{j-1}\in X_{j-1}$. Let 
$$E_j=(\pi_j\circ\pi_{j+1}\circ\cdots\circ\pi_{n})^{-1}(x_{j-1})\subset X.$$
We call each $E_j$ an ``exceptional divisor" of $\pi$. With this notation, we can write
$$K_X=\pi^*(K_S)+\sum_{i=1}^nE_j.$$

We note that because the maps $\pi_i$ are birational, the self-intersection of $E_j$ is $(-1)$ and $E_i\cdot E_j=0$ for $i\ne j$. We have $E_n=F$ for some $(-1)$-curve $F$. Moreover, each $E_j$ contains at least one $(-1)$-curve and $E_j$ is not necessarily reduced, but its reduction is a tree of rational curves. Finally, each $E_j$ contains no loops of curves and pairs of curves in $E_j$ intersect at most once.

\begin{Lemma}\label{technical lemma 1} $\sum_{j=1}^n\sum_{i=1}^rE_j\cdot C_i\le r$.
\end{Lemma}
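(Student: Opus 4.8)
The plan is to rewrite the double sum as a single intersection number with canonical classes. Let $Z=\sum_{i=1}^{r}C_{i}$ denote the reduced exceptional cycle of the Wahl singularity (it is in fact the fundamental cycle, but only reducedness is used). From $K_{X}=\pi^{*}K_{S}+\sum_{j=1}^{n}E_{j}$ one gets
\[
\sum_{j=1}^{n}\sum_{i=1}^{r}E_{j}\cdot C_{i}=\Bigl(\sum_{j}E_{j}\Bigr)\cdot Z=(K_{X}-\pi^{*}K_{S})\cdot Z=K_{X}\cdot Z-\pi^{*}K_{S}\cdot Z .
\]
Since each $C_{i}\cong\mathbb{P}^{1}$, adjunction gives $K_{X}\cdot C_{i}=b_{i}-2$, so $K_{X}\cdot Z=\sum_{i}(b_{i}-2)$; and the iterative description of Wahl strings in Proposition~\ref{wahltype} shows, by an immediate induction on $r$, that $\sum_{i}(b_{i}-2)=r+1$ (the value is $2$ for $[4]$ and increases by exactly $1$ under each of the two allowed operations). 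Hence $\sum_{i,j}E_{j}\cdot C_{i}=(r+1)-\pi^{*}K_{S}\cdot Z$, so the Lemma is \emph{equivalent} to the single inequality $\pi^{*}K_{S}\cdot Z\ge 1$.

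Because $K_{S}$ is nef and $X,S$ are smooth, $\pi^{*}K_{S}\cdot Z=K_{S}\cdot\pi_{*}Z$ is a nonnegative integer, so it suffices to show it is nonzero; I would do this by contradiction. Suppose $\pi^{*}K_{S}\cdot C_{i}=0$ for every $i$. The string is not entirely contracted by $\pi$: otherwise $\pi$ would contract at least the $r$ distinct curves $C_{i}$, so $n\ge r$ and (using $E_{j}^{2}=-1$, $E_{i}\cdot E_{j}=0$) $K_{X}^{2}=K_{S}^{2}-n\le K_{S}^{2}-r$, while $K_{X}^{2}=K_{W}^{2}-r$ by Lemma~\ref{X W and r}, forcing $K_{W}^{2}\le K_{S}^{2}$ and contradicting Lemma~\ref{kawamata}. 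So some $C_{i_{0}}$ survives, and $\bar C_{i_{0}}:=\pi_{*}C_{i_{0}}$ satisfies $K_{S}\cdot\bar C_{i_{0}}=0$. As $K_{S}$ is big and nef, the Hodge index theorem forces $\bar C_{i_{0}}^{2}<0$, and then $2p_{a}(\bar C_{i_{0}})-2=\bar C_{i_{0}}^{2}<0$ yields $p_{a}(\bar C_{i_{0}})=0$ and $\bar C_{i_{0}}^{2}=-2$. The same applies to every non-contracted $C_{i}$, so $\pi$ would carry the entire T-string into the union of its $n$ exceptional curves (with $n\le r-1$ by Lemma~\ref{kawamata}) and the $(-2)$-curves of $S$.

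Ruling out this last possibility is, I expect, the main obstacle; everything before it is formal. The mechanism is that contracting a $(-1)$-curve only raises self-intersections, so a link $C_{i}$ with $b_{i}\ge 3$ can have a $(-2)$-image only by meeting the exceptional locus of $\pi$ with total correction $b_{i}-2$; pushing this bookkeeping through the chain — using the recorded tree structure of each $E_{j}$ (no loops, components meeting at most once) together with $n\le r-1$ — should force the $E_{j}$'s to contain more curves than are available. Concretely, in the hypothetical bad case the identity of the first paragraph, applied to each single curve, gives $\sum_{j}E_{j}\cdot C_{i}=b_{i}-2$ for every $i$, hence $\sum_{i,j}E_{j}\cdot C_{i}=r+1$; I would contradict this with the lower bound $\sum_{i,j}E_{j}\cdot C_{i}\ge 2n=2(r-1)$ (relevant once $r\ge2$), which follows from nefness of $K_{W}$: writing $K_{X}=\psi^{*}K_{W}+\sum_{i}a_{i}C_{i}$ with $-1<a_{i}<0$, the effectivity of each $E_{j}$ gives $0\le\psi^{*}K_{W}\cdot E_{j}=-1-\sum_{i}a_{i}(C_{i}\cdot E_{j})$, so $\sum_{i}C_{i}\cdot E_{j}>1$, i.e.\ $\ge 2$ (with a routine check needed when some $C_{i}$ is itself contracted by $\pi$). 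This lower bound is presumably the companion lemma used in the proof of Theorem~\ref{r is two}; in any case, the only nontrivial point is showing that the T-string cannot sink entirely into the $(-2)$-locus of $S$.
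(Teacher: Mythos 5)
Your first paragraph reproduces the paper's proof: adjunction gives $K_X\cdot\sum C_i=\sum(b_i-2)$, the induction from Proposition~\ref{wahltype} gives $\sum(b_i-2)=r+1$, and writing $K_X=\pi^*K_S+\sum E_j$ reduces the lemma to the single inequality $\pi^*K_S\cdot\sum C_i\ge 1$. The paper asserts this last inequality ``since $K_S$ is nef''; you are right that nefness alone yields only $\ge 0$, so the strict positivity is the real content, and your preparatory steps are sound (not every $C_i$ can be contracted since that would force $n\ge r$ against Lemma~\ref{kawamata}, and by the Hodge index theorem any surviving image $\pi(C_i)$ with $K_S\cdot\pi(C_i)=0$ is a $(-2)$-curve).

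The gap is in the proposed contradiction. You want to play $\sum_{i,j}E_j\cdot C_i=r+1$ (which holds in the hypothetical bad case) against the lower bound $\sum_{i,j}E_j\cdot C_i\ge 2n=2(r-1)$. But $2(r-1)>r+1$ only for $r\ge 4$: for $r=2$ this reads $3\ge 2$ and for $r=3$ it reads $4\ge 4$, so no contradiction arises, and these are exactly the values that matter for Theorem~\ref{r is two} ($r=2$ is an allowed outcome and $r=3$ must be excluded). Two further problems with the mechanism itself: the bound $\sum_{i,j}E_j\cdot C_i\ge 2n$ is the companion Lemma~\ref{technical lemma 2}, whose proof in the paper \emph{uses} Lemma~\ref{technical lemma 1}, so importing it wholesale is circular; and your one-line derivation of $\sum_i C_i\cdot E_j\ge 2$ from $0\le\phi^*K_W\cdot E_j=-1-\sum_i a_i(C_i\cdot E_j)$ with $|a_i|<1$ is only valid when every $C_i\cdot E_j\ge 0$ --- when some $C_i\subset E_j$ contributes $C_i\cdot E_j=-1$ the estimate fails, and the ``routine check'' you defer is precisely where the paper's companion lemma performs its lengthy combinatorial analysis of the graphs $\Gamma_E$. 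As written, the positivity $\pi^*K_S\cdot\sum C_i\ge 1$ is not established for $r=2,3$, so the proof is incomplete.
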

\begin{proof}

By adjunction
$$K_X\cdot \sum_{i=1}^rC_i=\sum_{i=1}^r(b_i-2).$$
It is easy to see by induction using Proposition~\ref{wahltype} that
\begin{equation}\label{claim1}\sum_{i=1}^r(b_i-2)= r+1.\end{equation} 

Since $K_S$ is nef, we have
$$\pi^*K_S\cdot \sum_{i=1}^rC_i\ge 1.$$
Therefore,
\begin{equation}\label{KXwithCs}
K_X\cdot \sum_{i=1}^rC_i=\sum_{i=1}^r(\pi^*K_S+\sum_{j=1}^nE_j)\cdot C_i\ge 1+\sum_{i=1}^r\sum_{j=1}^nE_j\cdot C_i
\end{equation}
and so  
$$\sum_{i=1}^r\sum_{j=1}^nE_j\cdot C_i\le K_X\cdot \sum_{i=1}^rC_i-1.$$
Combining this with Equation~\eqref{claim1} gives
\begin{equation*}
\sum_{i=1}^{r}\sum_{j=1}^{n}E_j\cdot C_i\le \sum_{i=1}^{r}C_i\cdot K_X-1=\sum_{i=1}^{r}(b_i-2)-1=r.
\end{equation*}
\end{proof}

\begin{Lemma}\label{technical lemma 2} $\sum_{i=1}^r\sum_{j=1}^{n}E_j\cdot C_i\ge2n.$
\end{Lemma}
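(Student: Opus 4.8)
The plan is to prove the stronger pointwise bound $E_j\cdot\sum_{i=1}^r C_i\ge 2$ for every $j$, and then sum over $j$. The engine behind this is the following observation, which uses only nefness of $K_W$: \emph{every $(-1)$-curve $F$ on $X$ satisfies $F\cdot\sum_{i=1}^r C_i\ge 2$}. To see this, we may assume, exactly as in the proof of Lemma~\ref{kawamata}, that $W$ is smooth away from the Wahl point $p$, so that $\phi$ contracts precisely the chain $C_1,\dots,C_r$. Since $F^2=-1$ while $C_i^2=-b_i\le-2$, the curve $F$ is none of the $C_i$, hence $G:=\phi_*F$ is an irreducible curve on $W$ and $K_W\cdot G\ge 0$. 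Writing $K_X=\phi^*K_W+\sum_i a_iC_i$ with $-1<a_i<0$ (Wahl singularities are log terminal) and using adjunction $K_X\cdot F=-1$, the projection formula gives
$$0\le K_W\cdot G=\phi^*K_W\cdot F=K_X\cdot F-\sum_{i=1}^r a_i(F\cdot C_i)=-1-\sum_{i=1}^r a_i(F\cdot C_i),$$
i.e. $\sum_i(-a_i)(F\cdot C_i)\ge1$. As $0<-a_i<1$ and each $F\cdot C_i\ge0$, this forces $\sum_{i=1}^r F\cdot C_i\ge2$: if the sum were $1$, exactly one term would contribute and $\sum_i(-a_i)(F\cdot C_i)=-a_{i_0}<1$, a contradiction.

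To bound $E_j\cdot\sum_i C_i$ for an arbitrary $j$, I use that each $E_j$ contains at least one $(-1)$-curve of $X$, say $F\subseteq E_j$. Write $E_j=F+R_j$; then $R_j$ is effective and $\pi$-exceptional. Since no component of the chain $C_1,\dots,C_r$ is $\pi$-exceptional (that is, $\pi_*C_i\ne0$ for all $i$), the divisors $R_j$ and $\sum_i C_i$ share no component, so $R_j\cdot\sum_i C_i\ge0$. Combining with the engine,
$$E_j\cdot\sum_{i=1}^r C_i=F\cdot\sum_{i=1}^r C_i+R_j\cdot\sum_{i=1}^r C_i\ge F\cdot\sum_{i=1}^r C_i\ge 2 .$$
Summing over $j=1,\dots,n$ gives $\sum_{i=1}^r\sum_{j=1}^n E_j\cdot C_i\ge 2n$, as desired. (Equivalently, one checks $E_j\cdot\sum_iC_i=\operatorname{mult}_{x_{j-1}}$ of the image of $\sum_iC_i$ on $X_{j-1}$, and the bound says every point blown up by $\pi$ is a point of multiplicity $\ge2$ on the image of the $T$-string.)

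The substantive inputs are thus the discrepancy inequalities $-1<a_i<0$ and the nefness of $K_W$; the rest is bookkeeping. The one point that deserves care — and which I regard as the main obstacle — is the claim that no $C_i$ is $\pi$-exceptional, since if some $C_i$ were contracted by the minimal model map then both the multiplicity interpretation above and the splitting $E_j=F+R_j$ would need to be revisited. I would establish it by factoring a hypothetical contraction of the $\pi$-exceptional part of the chain through $\phi$ and observing that it produces a $(-1)$-curve in the smooth locus of $W$, contradicting nefness of $K_W$ (in the spirit of Lemma~\ref{kawamata}). Alternatively the whole argument can be organized as an induction on $n$: contract one $(-1)$-curve $F$ of $X$, which meets $\sum_iC_i$ at least twice by the engine, pass to $X_{n-1}$ together with the image of the chain, and reduce; there the delicate step is to check that the engine still applies on $X_{n-1}$ after the image chain possibly degenerates (acquiring a node, or a loop in its dual graph, when $F$ meets it exactly twice).
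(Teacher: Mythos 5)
Your ``engine'' is correct and is exactly the paper's inequality~\eqref{F dot C}: every $(-1)$-curve $F$ satisfies $F\cdot\sum_i C_i\ge 2$ (the paper derives it from $\phi^*K_W\cdot F>0$; your version via ``each $-a_i<1$, so one term cannot reach $1$'' also works). The gap is in the second step. Writing $E_j=F+R_j$ and asserting $R_j\cdot\sum_iC_i\ge0$ requires that no $C_i$ be a component of $E_j$, i.e.\ that no curve of the T-string is $\pi$-exceptional. You correctly flag this as the main obstacle, but the justification you sketch does not work: if $C_i\subset E_j$ then $C_i$ is contracted by $\phi$ as well, so $\phi^*K_W\cdot C_i=0$ automatically and no $(-1)$-curve in the smooth locus of $W$ is produced; nefness of $K_W$ gives no contradiction. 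Indeed, the paper's proof explicitly entertains the possibility $C_i\subset E_j$ (these are the ``boxes'' in the graphs $\Gamma_E$), notes that such a component contributes $-1$ or $0$ to $\sum_iC_i\cdot E_j$, and the entire content of the lemma is ruling out the configurations (Cases 1--3) where this could push the total below $2$. A posteriori the conclusion $r\le2$ forces $n\le1$, so that no $C_i$ can lie in an $E_j$ --- but using that as an input is circular.

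Moreover, the mechanism that closes the argument in the paper is not pointwise but global: one only gets $\sum_iC_i\cdot E_j\ge1$ for the bad shapes of $\Gamma_{E_j}$, and the number $k$ of exceptional divisors achieving exactly $1$ is killed by playing the lower bound $2(n-k)+k$ against the upper bound $r$ from Lemma~\ref{technical lemma 1} together with the standing hypothesis $r=n+1$, followed by a case analysis of the specific T-strings $[r+3,2,\ldots,2]$ and $[r,5,2,\ldots,2]$ that could survive. So the hypothesis $K_W^2=K_S^2+1$ is genuinely used inside this lemma, whereas your argument never invokes it --- a sign that the pointwise bound cannot be had this cheaply. Your alternative induction on $n$ has the same problem in a different guise: after contracting one $(-1)$-curve, $X_{n-1}$ is no longer the minimal resolution of a surface with a Wahl singularity, so the discrepancy inequalities $-1<a_i<0$ that power the engine are no longer available for the image of the chain.
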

\begin{proof} 

The claim is obvious for $n=0$. Fix an exceptional divisor $E=E_j$ for some $j$ and a curve $C=C_i$ for some $i$. If $C\subset E$, then $C\cdot E_j=-1$ if and only if 
$$(\pi_j\circ\pi_{j+1}\circ\cdots\circ\pi_n)(C)=x_j$$ 
and 
$$(\pi_{j+1}\circ\pi_{j+2}\circ\cdots\circ\pi_n)(C)=F_j.$$ 
Otherwise, $C\cdot E_j=0$.
Thus, $\sum_{i=1}^rC_i\cdot E\ge-1$. Since we want $\sum_{i=1}^rC_i\cdot E\ge 2$, it suffices to show that there are at least three points of intersection (counted with multiplicity) among curves in the T-string which are not in $E$ and curves in $E$. 

Given a T-string $\mathcal{C}$ containing curves $C_1,\ldots, C_r$, let 
$$
\xymatrix{\bullet\ar@{-}[r]&\bullet\ar@{-}[r]&\cdots\ar@{-}[r]&\bullet\ar@{-}[r]&\bullet}
$$
be the dual graph of the T-string, where the $i^{\textrm{th}}$ vertex corresponds to the curve $C_i$. If $C_i\subset E$, we replace the $i^{\textrm{th}}$ vertex in the above graph by a box, and denote the resulting graph by $\Gamma_{E}$. For instance, if  $\Gamma_{E}$ is
$$
\xymatrix{\Box\ar@{-}[r]&\bullet\ar@{-}[r]&\Box\ar@{-}[r]&\bullet\ar@{-}[r]&\Box}
$$
then there are at least 4 points of intersection among curves in $\mathcal{C}\backslash E$ and curves in $E$. With this notation we can immediately see that if there are less than 3 such intersections then $\Gamma_{E}$ must have one of the following forms:
\begin{enumerate}
\item[1)]\label{badone}
$$
\xymatrix{\bullet\ar@{-}[r]&\cdots\ar@{-}[r]&\bullet\ar@{-}[r]&\Box\ar@{-}[r]&\cdots\ar@{-}[r]&\Box\ar@{-}[r]&\bullet\ar@{-}[r]&\cdots\ar@{-}[r]&\bullet}
$$
\item[2)]\label{badtwo}$$
\xymatrix{\Box\ar@{-}[r]&\cdots\ar@{-}[r]&\Box \ar@{-}[r]&\bullet\ar@{-}[r]&\cdots\ar@{-}[r]&\bullet\ar@{-}[r] &\Box\ar@{-}[r]&\cdots\ar@{-}[r]&\Box}
$$
\item[3)]\label{badthree}$$
\xymatrix{\Box\ar@{-}[r]&\cdots\ar@{-}[r]&\Box \ar@{-}[r]&\bullet\ar@{-}[r]&\cdots\ar@{-}[r]&\bullet}
$$
\end{enumerate}
Since $n\ge 1$, there is a (-1)-curve $F$ in $E$.
Because $C_i^2<-1$ for all $i$, we also have that $C_i\cdot F\ge0$ for each $i$. We claim moreover that $\phi^*K_W\cdot F>0$. Suppose for a contradiction that $\phi^*K_W\cdot F\le0$. Since $K_W$ is nef, this implies that $\phi^*K_W\cdot F=0$. The surface $W$ is a resolution of Du Val singularities on a stable surface $W'$. Let $\theta: W\rightarrow W'$ be the resolution of Du Val singularities. Since $K_{W'}$ is ample, this implies that $F$ is contracted by $\theta$. But then $F$ is a $(-2)$ curve, a contradiction. 

Writing $K_X=\phi^*K_W+\sum_{i=1}^ra_iC_i$, we have
\begin{equation*}
\sum_{i=1}^rC_i\cdot F\ge-\sum_{i=1}^ra_iC_i\cdot F=\phi^*K_W\cdot F-K_X\cdot F=\phi^*K_W\cdot F+1>1.
\end{equation*}
In particular,
\begin{equation}\label{F dot C}\sum_{i=1}^rC_i\cdot F \ge 2.
\end{equation}
Thus $F$ intersects at least two of the curves $C_i$, or one curve $C_i$ with multiplicity at least two. Moreover, if a curve $C_i$ intersecting $F$ is contained in $E$, then $\pi_{k+1}\circ\cdots\circ\pi_n(C_i)=F_k$ for some $k$. Thus, $\pi_{k+1}\circ\cdots\circ\pi_n(C_i)$ is a smooth curve and so $C_i\cdot F=1$.
Because $E$ does not contain loops of curves, we see that in Cases 1 and 3 the curve $F$ must intersect at least one $C_i$ which is not in $E$. In Case 1, this gives our third point of intersection. In Case 3 it gives a second. 

We now have only to deal with Cases 2 and 3, for both of which we now have $$\sum_{i=1}^rC_i\cdot E\ge1.$$ Suppose there are $k$ exceptional curves $E$ such that $\sum_{i=1}^rC_i\cdot E=1$. We claim that $k=0$.

Suppose for a contradiction that $k>0$.
By the above argument and Lemma~\ref{technical lemma 1} we have
$$r\ge\sum_{j=1}^n\sum_{i=1}^rE_j\cdot C_i\ge 2(n-k) +k =2n-k.$$
Since $r=n+1$, we have that $k\ge n-1$. On the other hand, since $E_n=F$ is a single $(-1)$-curve, we have $k\le n-1$. Thus, $k=n-1=r-2$. 
In particular, this implies that $r\ge 3$, that all but two curves in $\mathcal{C}$ are contained in exceptional divisors, and that all exceptional divisors other than $E_n$ satisfy  $\sum_{i=1}^rC_i\cdot E=1$. This means that there is only one (-1) curve which must therefore be contained in all of the exceptional divisors.

Let us begin with Case 2. If the (-1)-curve $F$ intersects both a bullet and a box in $\Gamma_{E_1}$, then since $\Gamma_{E_i}$ is obtained from $\Gamma_{E_1}$ by replacing some boxes with bullets,  this gives the third intersection point for all $E_i$. So we can assume that it intersects two boxes as in Figure~\ref{string1}.
\begin{figure}[h]
\centering
\includegraphics{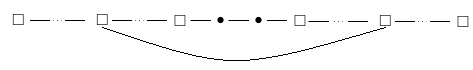}
\caption{$\Gamma_{E_1}$. The curved line along the bottom represents the (-1)-curve $F$.}\label{string1}
\end{figure}

Every exceptional divisor $E_j$ other than $E_n=F$ satisfies $\sum_{i=1}^rC_i\cdot E_j=1$ and must be a subset of $E_1$. Each $E_j$ also contains $F$, so the only possibility is that $F$ intersects $C_1$ and $C_r$. 
However, by~\cite[3.2]{kawamata1992} we have $a_1+a_r=-1$, 
so
$$-1=K_X\cdot F=(\phi^*K_W+\sum_{i=1}^ra_iC_i)\cdot F=\phi^*K_W\cdot F-1.$$
Therefore, $K_W\cdot\phi(F)=0$. Since $\phi(F)$ has positive arithmetic genus and $K_W$ is nef, this is a contradiction.

The final case to consider is Case 3.  Here $\Gamma_{E_1}$ must be of the form:
\begin{figure}[h]
\centering
\includegraphics{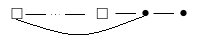}
\end{figure}

\noindent where the curved line along the bottom represents the $(-1)$-curve $F$. Here, $E_1$ is a chain of curves with a $(-1)$-curve at the end. Contracting $F$ under $\pi_n$ gives another $(-1)$-curve, and so $C_r$ is necessarily a $(-2)$-curve. Contracting $\pi_n(C_r)$ under $\pi_{n-1}$ must also give a $(-1)$-curve, so that $C_{r-1}$ must also be a $(-2)$-curve. Continuing in this way, we see that $E_1$ must consist of $n-1$ $(-2)$-curves and a $(-1)$-curve $F$. Thus, $\mathcal{C}$ must correspond to the Wahl singularity with T-string  $[r+3, 2,\ldots, 2]$ or $[r, 5, 2, \ldots, 2]$. 

Suppose first that $b_2=2$. Then using the fact that $K_S$ is nef and that $C_1\cdot F\ge 1$, we have 
$$0=K_X\cdot C_{2}=\pi^*K_S\cdot C_{2}+\sum_{j=1}^nE_j\cdot C_{2}\ge \pi^*K_S\cdot C_2+1\ge 1$$
and we have a contradiction. 

The only Wahl singularity left to consider is that with Hirzebruch-Jung continued fraction $[r, 5, 2, \ldots, 2 ]$. In this case, $\Gamma_{E_1}$ together with $F$ is the graph shown in Figure~\ref{string3}.
\begin{figure}[h]
\centering
\includegraphics{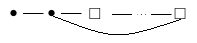}
\caption{The remaining possibility for $\Gamma_{E_1}$. \label{string3}}
\end{figure}

\noindent Since $K_X\cdot C_2=3$ and $C_2\cdot \sum_{j=1}^nE_j\ge n$ we have
\begin{equation*}
0\le \pi^*K_S\cdot C_2= (K_X-\sum_{j=1}^nE_j)\cdot C_2= 3-\sum_{i=1}^nE_j\cdot C_2= 3-n.
\end{equation*}
This gives $n\le 3$, and so $r\le 4$. If  $r=3$, then $C_2^2=-5$. The image $\pi(C_2)$ has self-intersection $0$ and arithmetic genus $1$. Therefore, by adjunction $K_S\cdot \pi(C_2)=0$, contradicting the fact that $K_S$ is big and nef.

Similarly, if $r=4$ then the $\pi(C_2)$ has self-intersection $1$ and arithmetic genus $1$. By adjunction, we have $K_S\cdot \pi(C_2)=-1$, contradicting the fact that $K_S$ is nef.

Since all possibilities lead to a contradiction, we conclude that $k=0$.
\end{proof}

We can now prove Theorem~\ref{r is two}.

\begin{proof}[Proof of Theorem~\ref{r is two}]
We must show that $r\le 2$. By Lemma~\ref{technical lemma 1} we have
$$\sum_{j=1}^n\sum_{i=1}^rE_j\cdot C_i\le r.$$
On the other hand, Lemma~\ref{technical lemma 2} tells us that 
$$\sum_{j=1}^n\sum_{i=1}^rE_j\cdot C_i\ge 2n.$$
Since $n=r-1$, we have that $r\le 2$, so $p$ is a $\frac{1}{4}(1,1)$ or $\frac{1}{9}(1,2)$ singularity.
\end{proof}

Now suppose that $W$ be a stable surface whose unique non Du Val singularity is a Wahl singularity $p$ of length $r$. Let $\phi: X\rightarrow W$ be the minimal resolution of $W$, and let $\pi:  X\rightarrow S$ be the minimal model of $W$, which is obtained from $X$ by contracting $n$ $(-1)$-curves.

\begin{Theorem}\label{r is one} Suppose that $K_W$ is big and nef and satisfies $K_W^2=2p_g-3$.  
If $S$ is of general type then $p$ is a $\frac{1}{4}(1,1)$ singularity. Moreover, if $p$ is a $\frac{1}{4}(1,1)$ singularity and $K_W^2>3$, then $S$ is of general type.
\end{Theorem}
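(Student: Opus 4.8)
\emph{Proof proposal.} The plan is to prove the two implications in turn; each reduces to a short numerical computation together with a single structural input.

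For the implication ``$S$ of general type $\Rightarrow p$ is $\tfrac14(1,1)$'', I would first reduce to the case where $W$ is smooth away from $p$, since resolving the Du Val singularities of $W$ alters neither $K_W^2$, the minimal resolution $X$, the minimal model $S$, nor the hypothesis on $S$. Because $p$ is a rational singularity and $\phi^*K_W=K_X-\sum a_iC_i$ with $0<-a_i<1$, one has $\lfloor\phi^*K_W\rfloor=K_X$, hence $p_g(W)=h^0(X,K_X)=p_g(S)$; call this common value $p_g$. Writing $n$ for the number of $(-1)$-curves contracted by $\pi$, Lemma~\ref{X W and r} together with $K_X^2=K_S^2-n$ gives $K_W^2=K_S^2+r-n$, so the hypothesis $K_W^2=2p_g-3$ reads $K_S^2=2p_g-3-(r-n)$. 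Comparing with Noether's inequality $K_S^2\ge 2p_g(S)-4$ for the minimal surface of general type $S$ forces $r-n\le 1$, while Lemma~\ref{kawamata} (applicable since $K_W$ is ample and $K_S$ is big and nef) gives $K_W^2>K_S^2$, i.e. $r-n\ge 1$. Hence $r=n+1$, so $K_W^2=K_S^2+1$, and Theorem~\ref{r is two} yields $r\le 2$.

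It then remains to rule out $r=2$, and this is the step I expect to be the main obstacle. Suppose $r=2$; then $n=1$, the exceptional curve $F$ of $\pi$ is a $(-1)$-curve with $K_X=\pi^*K_S+F$, and $K_S^2=2p_g(S)-4$, so $S$ lies on the Noether line and $p_g(S)\ge 3$. Since $r=n+1$, Lemmas~\ref{technical lemma 1} and~\ref{technical lemma 2} both apply and give $F\cdot C_1+F\cdot C_2=2$; combining this with $F\cdot C_i\le K_X\cdot C_i$ (from nefness of $\pi^*K_S$) and $[b_1,b_2]=[5,2]$ up to reversal (Proposition~\ref{wahltype}), I would conclude $F\cdot C_1=2$ and $F\cdot C_2=0$. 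Then $D_1:=\pi_*C_1$ is a rational curve on $S$ with a single double point, with $D_1^2=-1$ and $K_S\cdot D_1=1$, so $p_a(D_1)=1$. Now Horikawa's classification of surfaces on the Noether line~\cite{horikawa1976} gives that $|K_S|$ is base point free and $\varphi_{|K_S|}\colon S\to\Sigma\subset\mathbb{P}^{p_g(S)-1}$ is a degree-two morphism onto a surface of minimal degree, so $K_S=\varphi_{|K_S|}^*\mathcal{O}_\Sigma(1)$ with $\mathcal{O}_\Sigma(1)$ ample. From $K_S\cdot D_1=1$ it follows that $\varphi_{|K_S|}$ is birational on $D_1$ with image a line $\bar D_1\cong\mathbb{P}^1$; since a birational morphism of integral curves cannot raise the arithmetic genus, $1=p_a(D_1)\le p_a(\bar D_1)=0$, a contradiction. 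Hence $r=1$. The delicate point is that Horikawa's description must be applied uniformly across all $p_g(S)\ge 3$ (notably base point freeness of $|K_S|$); if $|K_S|$ had base points one would resolve them and run the same argument with the moving part of $K_S$.

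For the converse, assume $p$ is a $\tfrac14(1,1)$ singularity, so $r=1$, and $K_W^2>3$, and suppose for contradiction that $S$ is not of general type. As above $p_g(S)=p_g(X)=p_g(W)$ and $K_W^2=2p_g(W)-3>0$, so $p_g(S)\ge 2$. If $\kappa(X)\le 0$ then $p_g(X)\le 1$, contradicting $p_g(S)\ge 2$. If $\kappa(X)=1$ then $S$ is minimal with $\kappa(S)=1$, so $K_S^2=0$ and $K_W^2=K_S^2+r-n=1-n\le 1<3$, again a contradiction. Therefore $\kappa(X)=2$, i.e. $S$ is of general type.
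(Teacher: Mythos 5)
Your proposal is correct, and its skeleton --- reduce to $K_W^2=K_S^2+1$ via Noether's inequality and Lemma~\ref{kawamata}, invoke Theorem~\ref{r is two} to get $r\le 2$, then use Horikawa's structure theory to exclude $r=2$ --- matches the paper's. The genuine divergence is in how $r=2$ is excluded. Both arguments first pin down that the exceptional $(-1)$-curve meets the $(-5)$-curve of the T-string with multiplicity exactly $2$ (you via Lemmas~\ref{technical lemma 1} and~\ref{technical lemma 2}, the paper via the discrepancy relation $K_X=\phi^*K_W-\tfrac13C_1-\tfrac23C_2$ paired with nefness of $\pi^*K_S$), and both arrive at an integral curve on $S$ with self-intersection $-1$, $K_S$-degree $1$, and arithmetic genus $1$. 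From there the paper runs a four-case analysis over the types in Theorem~\ref{horikawa}, computing the divisor class of the image of this curve on $\mathbb{P}^2$ or $\mathbb{F}_d$ and deriving a contradiction separately in each case. You instead use the uniform consequence of Horikawa's theorem that $|K_S|$ is base point free and $\varphi_{|K_S|}$ is a degree-two morphism onto a surface of minimal degree: then $K_S\cdot D_1=1$ forces $\varphi_{|K_S|}|_{D_1}$ to be finite and birational onto a line, which is impossible since a finite birational morphism of integral curves satisfies $p_a(D_1)\le p_a(\overline{D}_1)=0$ while $p_a(D_1)=1$. This is cleaner: it treats all four Horikawa types at once and avoids the paper's repeated appeal to the assertion that the preimage of a line or ruling is a union of smooth curves meeting transversally. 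The extra input you need (base-point-freeness of $|K_S|$ and the minimal-degree image) does follow from Theorem~\ref{horikawa}, since in every case $K_Y$ is the pullback of a base point free linear system mapping $\mathbb{P}^2$ or $\mathbb{F}_d$ onto a surface of minimal degree and $h^0$ is preserved, so your hedge about possible base points is not needed. Your converse direction agrees with the paper's up to presentation.
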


Noether's inequality (that for surfaces $S$ of general type, we have $K_S^2\ge 2p_g-4$) implies the following corollary of Lemma~\ref{kawamata}.

\begin{Cor}\label{kawamata corollary}
If the surface $W$ satisfies $K_W^2=2p_g-4$, then $S$ is not of general type.
\end{Cor}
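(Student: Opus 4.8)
The plan is to argue by contradiction: assume $S$ \emph{is} of general type, and derive a numerical contradiction by combining Lemma~\ref{kawamata} with Noether's inequality applied to $S$. Since everything needed is already in place, the argument should be short.

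First I would check that the hypotheses of Lemma~\ref{kawamata} are satisfied under this assumption. As $W$ is stable, $K_W$ is ample and in particular big and nef. As $S$ is by construction the minimal model of $X$, the assumption that $S$ is of general type means precisely that $K_S$ is big and nef. Hence Lemma~\ref{kawamata} applies and gives the strict inequality $K_W^2 > K_S^2$.

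Next I would record that the geometric genus is preserved along both maps in Figure~\ref{firstmaps}. The morphism $\pi\colon X\to S$ is a composition of blow-downs of smooth points, so $p_g(X)=p_g(S)$; and $\phi\colon X\to W$ resolves only rational singularities (the Wahl singularity is a quotient singularity, hence log terminal and in particular rational, as are the Du Val points), so $p_g(W)=p_g(X)$, exactly as in the proof of Lemma~\ref{kawamata} where the same identity for $\chi(\mathcal O)$ was used. Thus the integer $p_g$ in the hypothesis $K_W^2=2p_g-4$ equals $p_g(S)$. Since $S$ is then a minimal surface of general type with geometric genus $p_g$, Noether's inequality yields $K_S^2\ge 2p_g-4=K_W^2$, contradicting $K_W^2>K_S^2$. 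Therefore $S$ is not of general type.

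The only step requiring any care — and the only place something could slip — is the identification $p_g(W)=p_g(S)$, i.e.\ that neither resolving the Wahl and Du Val singularities nor contracting the $(-1)$-curves changes $h^0$ of the canonical sheaf; this is where the rationality of the singularities of $W$ is used, and it is already implicit in the proof of Lemma~\ref{kawamata}. Beyond that, the corollary is a formal consequence of Lemma~\ref{kawamata} and Noether's inequality, so I do not expect a substantive obstacle.
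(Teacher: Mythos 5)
Your proof is correct and is exactly the argument the paper intends: the paper states the corollary as an immediate consequence of Lemma~\ref{kawamata} and Noether's inequality, and your write-up simply fills in the (correct) details, including the verification that $p_g$ is preserved under $\phi$ and $\pi$.
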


The significance of the equality $K_W^2=2p_g-3$ in Theorem~\ref{r is one} is that such surfaces lie one above the ``Noether line" $K_W^2=2p_g-4$. That is, $K_W^2$ is the smallest it can be and still have $S$ be of general type.

For the proof of Theorem~\ref{r is one}, we recall Horikawa's description of minimal surfaces of general type with $K^2=2p_g-4$ in ~\cite{horikawa1976}. For $d\ge 0$, the Hirzebruch surface $\mathbb{F}_d$ is the $\mathbb{P}^1$-bundle over $\mathbb{P}^1$ whose zero section $\Delta_0$ has self-intersection $-d$. We denote by $\Gamma$ a generic fiber of $\mathbb{F}_d$ and note that $\mathbb{F}_0=\mathbb{P}^1\times\mathbb{P}^1$.

\begin{Theorem}\label{horikawa}\cite{horikawa1976} Let $S$ be a minimal algebraic surface with $K^2=2p_g-4$ for $p_g\ge 3$.
Then $S$ is the minimal resolution of one of either: 
\begin{enumerate}
\item ($K^2=2$) a double cover of $\mathbb{P}^2$ branched over a curve of degree 8,
\item  ($K^2=8$) a double cover of $\mathbb{P}^2$ branched over a curve of degree 10,
\item a double cover  of $\mathbb{F}_d$, where $p_g\ge \max( d+4, 2d-2)$ and $p_g-d$ is even,  branched over $B\sim 6\Delta_0+(p_g+3d+2)\Gamma$, or
\item ($K^2=4, 6,$ or $8$) a double cover of the Hirzebruch surface $\mathbb{F}_{p_g-2}$ branched over $B\sim6\Delta_0+(4p_g-4)\Gamma$.
\end{enumerate}
In each case, the branch curve has at most ADE singularities.
\end{Theorem}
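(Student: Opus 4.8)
The plan is to recover Horikawa's classification through a careful study of the canonical map, exploiting that the surfaces in question are precisely those attaining equality in Noether's inequality $K^2 \geq 2p_g - 4$, a regime in which the geometry is rigidly controlled by the canonical system. First I would let $\Phi = \Phi_{|K_S|}\colon S\dashrightarrow \mathbb{P}^{p_g-1}$ be the canonical map, write $|K_S| = |M| + Z$ with $Z$ the fixed part and $|M|$ the moving part, resolve the indeterminacy on a blow-up, and set $\Sigma = \overline{\Phi(S)}$, $d_0 = \deg(\Phi)$ (degree onto the image), and $d_1 = \deg \Sigma$. Since $\Sigma$ is a nondegenerate surface in $\mathbb{P}^{p_g-1}$ it has degree at least $p_g-2$, so the standard chain $K_S^2 \geq M^2 \geq d_0 d_1 \geq d_0(p_g-2)$, together with $K_S^2 = 2(p_g-2)$, forces $d_0 \leq 2$.

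The first real step is to exclude $d_0 = 1$. A birational canonical map forces Castelnuovo's inequality $K_S^2 \geq 3p_g - 7$, which contradicts $K_S^2 = 2p_g - 4$ as soon as $p_g \geq 4$; the boundary value $p_g = 3$ I would treat by hand, since there $\mathbb{P}^{p_g-1} = \mathbb{P}^2$ forces $\Sigma = \mathbb{P}^2$ and a birational map would make $S$ rational, contradicting general type. Hence $d_0 = 2$, and tracing the equalities back through the chain shows $Z = 0$, that $|K_S|$ is base-point free, that $\Phi$ is a genuine finite double cover onto its image, and that $\Sigma$ has minimal degree $p_g - 2$. I would then invoke the del Pezzo--Bertini classification of surfaces of minimal degree: $\Sigma$ is the Veronese surface $v_2(\mathbb{P}^2)\subset\mathbb{P}^5$, a rational normal scroll, or a cone over a rational normal curve.

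Each alternative realizes $S$ as the minimal resolution of a double cover $g\colon S \to Y$ of a rational surface, with the branch class pinned down by the double-cover identities $K_S = g^*(K_Y + L)$ where $B \sim 2L$, $p_g(S) = h^0(Y, K_Y + L)$, and $K_S^2 = 2(K_Y + L)^2$. In the Veronese/$\mathbb{P}^2$ case a branch of degree $2b$ gives $K_S^2 = 2(b-3)^2$ and $p_g = \binom{b-1}{2}$, and imposing $K_S^2 = 2p_g - 4$ reduces to $(b-4)(b-5) = 0$, so only $b = 4,5$ survive, producing cases (1) and (2) with $K^2 = 2,8$. For the scroll case $Y = \mathbb{F}_d$, writing $L = 3\Delta_0 + c\Gamma$ yields $K_Y + L = \Delta_0 + (c-d-2)\Gamma$, whence $p_g = 2(c-d-2) - d + 2$ and $K_S^2 = 2(2(c-d-2)-d)$; matching $B = 2L$ forces $2c = p_g + 3d + 2$, giving the branch class of case (3), with the parity $p_g \equiv d \pmod 2$ being integrality of $c$ and the inequalities $p_g \geq \max(d+4, 2d-2)$ coming from effectivity of $L$, irreducibility of the generic member of $|B|$, and ampleness of $K_S$. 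Case (4) is the degeneration in which $\Sigma$ is a cone, corresponding to the extremal scroll $\mathbb{F}_{p_g-2}$ and branch $B \sim 6\Delta_0 + (4p_g-4)\Gamma$. Finally, the assertion that $B$ has at most ADE singularities is forced by minimality of $S$: a non-simple singularity of $B$ would create a non-canonical singularity on the double cover, whose resolution would be non-minimal or fail to be of general type.

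The main obstacle I anticipate is the cone case (4) together with the fine numerical bookkeeping. Handling the cone requires analyzing the branch locus at the vertex, where the target acquires a singularity, and it is exactly this local study that collapses what would otherwise be an infinite family down to $K^2 \in \{4,6,8\}$. The other delicate point is the passage from the rational map $\Phi$ to an honest finite double cover of a rational surface --- controlling the fixed part $Z$ and base points of $|K_S|$, and guaranteeing that the generic branch member carries only ADE singularities so that $S$ remains minimal of general type. By comparison, the scroll inequalities of case (3) are intricate but essentially routine once the double-cover dictionary is in place.
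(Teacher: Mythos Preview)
The paper does not supply a proof of this theorem: it is stated with the citation \cite{horikawa1976} and used as a black box, so there is no in-paper argument to compare against. Your proposal is essentially a correct outline of Horikawa's original proof in \cite{horikawa1976}: analyze the canonical map, use $K_S^2 = 2p_g - 4$ together with $\deg\Sigma \ge p_g - 2$ to force the canonical map to be a degree-two cover of a surface of minimal degree, invoke the del Pezzo--Bertini classification (Veronese, smooth scroll, cone over a rational normal curve), and then pin down the branch class in each case by the double-cover formulas. Your numerics for the $\mathbb{P}^2$ and $\mathbb{F}_d$ cases check out, and your identification of the cone case and the ADE constraint as the delicate points matches what actually happens in Horikawa's paper.

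One small caution on the $d_0 = 1$ exclusion at $p_g = 3$: the argument ``birational canonical map onto $\mathbb{P}^2$ makes $S$ rational'' is not quite right as stated, since a birational map from a surface of general type to $\mathbb{P}^2$ is certainly possible in principle; what actually fails is that $\Phi_{|K_S|}$ birational onto $\mathbb{P}^2$ would force $K_S^2 \ge \deg\Sigma = 1$ with further constraints incompatible with $K_S^2 = 2$, $p_g = 3$ (Horikawa handles this low case directly). This is a minor patch and does not affect the overall strategy.
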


We call a surface as in Theorem~\ref{horikawa} a \emph{Horikawa surface}. These surfaces are key to the proof of Theorem~\ref{r is one}.

\begin{proof}[Proof of Theorem~\ref{r is one}]
By taking a resolution of Du Val singularities $W'\rightarrow W$, we can assume that $W$ has no Du Val singularities. We first show that if $p$ is a $\frac{1}{4}(1,1)$ singularity and $K_W^2\ge3$, then $S$ is of general type. Since $K_W^2\ge 3$ and $K_W^2=2p_g-3$, we have $p_g\ge 3$. Because $p$ has length $1$, we have $K_X^2=K_W^2-1=2p_g-4\ge 2$. Thus, $K_S^2\ge K_X^2\ge 2$. By the Enriques-Kodaira classification, $S$ is of general type. 

Now suppose that $S$ is of general type. Then $S$ satisfies Noether's inequality $K_S^2\ge 2p_g-4$. On the other hand, by Lemma~\ref{kawamata}, we have $K_S^2<K_W^2=2p_g-3$. Therefore $K_S^2=2p_g-4$. Since the maps $\pi$ and $\phi$ in Figure~\ref{firstmaps} do not affect the invariants $p_g$ and $q$, the surface $S$ must be a Horikawa surface. Furthermore, we have that $K_W^2=K_S^2-1$, so by Lemma~\ref{r is two}, the only possible Wahl singularities on $W$ have length 1 or 2. 

If $p\in W$ is a Wahl singularity of length 2, then the resolution of $p$ in $X$ is a T-string $\{C_1, C_2\}$ where, without loss of generality, $C_1^2=-2$ and $C_2^2=-5$. Since $K_X^2=K_W^2-2=K_S^2-1$, the surface $X$ is the blowup of $S$ in a single point. Let $E$ be the exceptional curve of $\pi$. We have:
\begin{equation}\label{KX in terms of W}
K_X=\phi^*K_W-\frac{1}{3}C_1-\frac{2}{3}C_2
\end{equation}
\begin{equation}\label{KX in terms of S}
K_X=\pi^*K_S+E
\end{equation}

We multiply Equation~\eqref{KX in terms of S} with $C_1$ and $C_2$ and use that $K_S$ is nef to find that $E\cdot C_1=0$ and $E\cdot C_2\le 3$. On the other hand, if we multiply Equation~\eqref{KX in terms of W} with $E$ and use that $K_W$ is nef, we see that $E \cdot C_2 \ge 2$. 

If $E\cdot C_2=3$, then $\pi^*K_S\cdot C_2 =0$, so $K_S\cdot \pi(C_2)=0$. Since $K_S$ is bif and nef, the only possibility is that $\pi(C_2)$ is a $(-2)$-curve. But $\pi(C_2)$ is singular, so this is not possible. 

Now suppose that $E\cdot C_2 =2$. Then $K_S\cdot \pi(C_2)=1$ and $\pi(C_2)^2=-1$. This implies that $\pi(C_2)$ is a nodal or cuspidal cubic. We will use the fact that $S$ is a Horikawa surface to show that in fact such a curve cannot exist on $S$.

By Theorem~\ref{horikawa}, the surface $S$ is the minimal resolution of a surface $Y$ with at most Du Val singularities, which is in turn a double cover of $Z$ where $Z$ is either $\mathbb{P}^2$ or a Hirzebruch surface $\mathbb{F}_d$.  Let $\psi: S\rightarrow Y$ be the minimal resolution of $Y$ and $f:Y\rightarrow Z$ the double cover branched over a curve $B$. See Figure~\ref{secondmaps}.
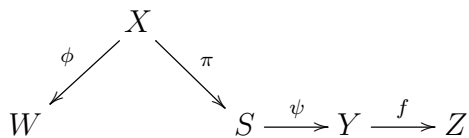
\begin{figure}[h]
\centering
\[
\xymatrix{
&X \ar[ld]_\phi \ar[rd]^\pi \\
W&  &S \ar[r]^\psi & Y\ar[r]^f&Z}
\]
\caption{The surfaces $W$, $X$, $S$, $Y$ and $Z$ and their corresponding maps. Here, $Z$ is either $\mathbb{P}^2$ or $\mathbb{F}_d$ for some $d$.  \label{secondmaps}}
\end{figure}

We must consider four cases, corresponding to the cases in Theorem~\ref{horikawa}. Let $C=\pi(C_2)$, and let $D=f(\psi(C))$ be the image of $C$ on $Z$. 

Case I. ($K^2=2$) Suppose that $Z=\mathbb{P}^2$ and $B\sim 8H$, where $H$ is a hyperplane class. Then 
$K_Y=f^*(-3H+4H)=f^*(H)$, so
\begin{equation*}
1=K_S\cdot C = \psi^*K_Y\cdot C= K_Y\cdot \psi(C)= f^*(H)\cdot \psi(C).
\end{equation*}
Since $f^*H\cdot \psi(C)$ is odd, this implies that $f^*H\cdot \psi(C)=H\cdot D=1$, so $D\sim H$. 

But then $\psi^*(f^*(f(\psi(C))))$ is a union of smooth curves meeting transversally, one component of which is $C$, whereas $C$ is singular.

Case II. ($K^2=8$) If $Z=\mathbb{P}^2$  and $B\sim|10H|$, then $K_Y=f^*(2H)$. In particular $K_Y\cdot F$ is even for any $F$. However, $K_Y\cdot \psi(C)= K_S\cdot C=1$, so this case is impossible.

Case III. Suppose that $Z=\mathbb{F}_d$  and $B\sim|6\Delta_0+(p_g+3d+2)\Gamma|$ where $p_g\ge \max(d+4, 2d-2)$ and $p_g-d$ is even. Then $$K_Y=f^*\left(\Delta_0+\frac{p_g+d-2}{2}\Gamma\right).$$ 
We know $K_Y\cdot\psi( C)=1$, so if $f(\psi(C))\sim (a\Delta_0+b\Gamma)$ where $a$ and $b$ are nonnegative, then   
$$a\frac{m-d-1}{2}+b=1.$$
Since $p_g\ge d+4$ and $f(\psi(C))$ is irreducible, there are two possibilities: $f(\psi(C))\sim \Delta_0$ or $f(\psi(C))\sim \Gamma$. But then in either case, $\psi^{*}(f^{*}(f(\psi(C))))$ is a union of smooth curves meeting transversally, with $C$ as one of the components, whereas $C$ is singular. Therefore, this case is impossible.

Case IV. Suppose that $Z=\mathbb{F}_{p_g-2}$ and $B\sim 6\Delta_0+4(p_g-1)\Gamma$. In this case, $K_Y=f^*(\Delta_0+(p_g-2)\Gamma)$. If $f(\psi(C))\sim (a\Delta_0+b\Gamma)$, where $a$ and $b$ are nonnegative, then intersecting $f(\psi(C))$ with $\Delta_0+(p_g-2)\Gamma$ implies that $b=1$. Since $f(C)$ is irreducible, we have that $a=0$, and so $f(\psi(C))\sim \Gamma$. But again, $\psi^{*}(f^{*}(f(\psi(C))))$ is a union of smooth curves meeting transversally, with $C$ as one of the components, whereas $C$ is singular, and we have a contradiction.

Therefore the only possible length Wahl singularity on $W$ has length 1, so is a $\frac{1}{4}(1,1)$ singularity.
\end{proof}


\section{Stable numerical quintic surfaces with a unique $\frac{1}{4}(1,1)$ singularity}\label{classification section}

A stable numerical quintic surface $W$ is a stable surface with $K^2=5$, $p_g=4$ and $q=0$. We classify all stable numerical quintic surfaces $W$ whose unique non Du Val singularity is a $\frac{1}{4}(1,1)$ singularity. By Theorem~\ref{r is one}, the minimal resolution $\phi: X\rightarrow W$ is a minimal surface such that $K_X^2=K_W^2=4$, $p_g=4$ and $q=0$, so $X$ is a Horikawa surface. Moreover, $X$ contains a $(-4)$-curve $C$, the exceptional divisor of $\phi$. On the other hand, given a Horikawa surface with $K^2=p_g=4$ and $q=0$ and containing a $(-4)$-curve, we can contract $C$ to obtain a stable numerical quintic surface with a unique $\frac{1}{4}(1,1)$ singularity. Thus, the classification of surfaces such as $W$ becomes a question of classifying all Horikawa surfaces with $K^2=p_g=4$ and $q=0$ that contain a $(-4)$-curve.

Theorem~\ref{r is one} suggests that in order to describe surfaces $W$ ``one above the Noether line'' whose unique non Du Val singularity is a $\frac{1}{4}(1,1)$ singularity, we might instead describe pairs $(X, C)$, where $X$ is a Horikawa surface and $C$ is a $(-4)$-curve contained in $X$. Because Horikawa surfaces are all described as minimal resolutions of double covers $f: Y\rightarrow Z$, we can attempt to ``find'' a $(-4)$ curve on a Horikawa surface by describing how such a $(-4)$ curve must arise from a curve on $Z$ intersecting the branch locus in a certain way. 

We begin in~\ref{Double covers} with some notation and basic results about double covers. We then use these results in~\ref{classification of 1/4(1,1)} to describe all stable numerical quintic surfaces whose unique non Du Val singularity is a $\frac{1}{4}(1,1)$ singularity. In~\ref{dimension counts}, we count dimensions of a number of loci in $\overline{\mathcal{M}}_{5,5}$ of such surfaces, and continue in~\ref{1 and 2a closure} to prove that every stable numerical quintic surface whose unique non Du Val singularity is a $\frac{1}{4}(1,1)$ singularity lies in the closure of one of two distinguished loci.

\subsection{Double covers}\label{Double covers}

Let $f: Y\rightarrow Z$ be a double cover of a smooth surface $Z$ branched over a curve $B$ with at most ADE singularities, and let $\psi: X\rightarrow Y$ be the minimal model of $Y$, obtained by resolving all Du Val singularities on $Y$. Then by~\cite[Lemma 5]{horikawa1975}, the surface $X$ is the double cover of a smooth surface $\tilde{Z}$ with smooth branch locus $B'$ obtained as follows:

Let $p=p_0$ be a singular point of $B=B_0$ and let $\sigma_1:Z_1\rightarrow Z=Z_0$ be the blowup of $Z$ at $p$. Let $E_1$ be the exceptional divisor of $\sigma_1$, and let $B_1'=\sigma^*(B)-2E_1$. Define $f_1: Y_1\rightarrow Z_1$ to be the double cover of $Z_1$ branched over $B_1'$. Then there exists a map $\psi_1: Y_1\rightarrow Z_1$ such that the following diagram is commutative.
\[\xymatrix{
Y_1\ar[r]^{f_1} \ar[d]_{\psi_1} & Z_1 \ar[d]^{\sigma_1}\\
Y\ar[r]^f& Z
}\]
If $B_1'$ is smooth, then $Y_1$ is smooth and so we can take $B'=B_1'$, $X=Y_1$, $\tilde{Z}=Z$ and $\tilde{f}=f_1$. Otherwise, repeat the process, taking $p$ to be a singularity of $B_1'$. In this way, we obtain a map $\sigma: \tilde{Z}\rightarrow Z=Z_0$ which is a composition of maps $\sigma_1\circ\cdots\circ\sigma_m$ where $\sigma_i: Z_i\rightarrow Z_{i-1}$ is the blowup of a single smooth point $p_{i-1}\in Z_{i-1}$, where $p_{i-1}$ is a singular point of $B_i'=\sigma_{i}^*(B_{i-1}')-2E_i$.

We remark that the resolution given is not necessarily the log resolution of $B$, because we consider singularities of the curves $B_i'=\sigma_{i}^*(B_{i-1}')-2E_i$, as opposed to non-nodal singularities of the preimage of $B$.

Now suppose that $D$ is a smooth curve contained in $Z$, and let $\tilde{D}$ be the proper transform of $D$ under the map $\sigma$. We denote by $(B\cdot D)_p$ the local intersection multiplicity of $B$ and $D$ at $p\in B\cap D$. If $p\in B\cap D$ is an ADE singularity of $B$, let $D_i$ be the proper transform of $D$ under $\sigma_{1}\circ\cdots\circ\sigma_{i}$, and let $q_i$ be the point of $D_i$ such that $\sigma_{1}\circ\cdots\circ\sigma_{i}(q_i)=p$. Then for some $l>0$ we can rearrange the blowups so that $q_j=p_j$ for $j\le l$ and $q_j\neq p_j$ for $j>l$. That is, $l$ is the smallest integer for which either $B_{l}'$ is smooth at $q_{l}$ or $B_l'$ does not contain $q_l$. In addition, all maps $\sigma_{l+1}, \ldots, \sigma_{m}$ blowup points away from $q_l\in D_l$, so that 
$$(B'\cdot \tilde{D})_{q}=(B_l'\cdot D_l)_{q_l}.$$
We call $l$ the \emph{separation number} of $p$ and note that $l$ depends on both the singularity of $B$ at $p$ as well as how the branches of $B$ at $p$ intersect $D$.

We state here three lemmas, the proofs of which are almost immediate, which will be useful in Theorem~\ref{classify} below.

\begin{Lemma}\label{B dot D changes} Suppose that $p\in B\cap D$ is an ADE singularity of $B$ and that $D$ is smooth. Then $(B_1' \cdot D_1)_{q_1}=(B\cdot D)_p-2$. In particular, if $l$ is the separation number of $p$, then $(B'\cdot \tilde{D})_q=(B\cdot D)_p-2l$.
\end{Lemma}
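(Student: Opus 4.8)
\emph{Proof proposal for Lemma~\ref{B dot D changes}.}

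The plan is to reduce the statement to the behavior under a single blowup and then iterate along the sequence $\sigma_1,\dots,\sigma_l$ that resolves $B$ over $p$ while sitting over $D$. So first suppose $\sigma_1\colon Z_1\to Z$ is the blowup at $p$ with exceptional divisor $E_1$, write $m=\operatorname{mult}_p B\in\{2,3\}$, and let $\widetilde B_1$ denote the \emph{strict} transform of $B$, so that $\sigma_1^*B=\widetilde B_1+mE_1$ and hence $B_1'=\sigma_1^*B-2E_1=\widetilde B_1+(m-2)E_1$. Since $D$ is smooth at $p$, $\sigma_1^*D=D_1+E_1$ with $D_1$ the strict transform, $D_1$ is smooth, $D_1$ meets $E_1$ transversally, and $q_1$ is the unique point of $D_1\cap E_1$; in particular $(E_1\cdot D_1)_{q_1}=1$. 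I would then expand
\begin{equation*}
(B_1'\cdot D_1)_{q_1}=(\widetilde B_1\cdot D_1)_{q_1}+(m-2)\,(E_1\cdot D_1)_{q_1}=(\widetilde B_1\cdot D_1)_{q_1}+(m-2).
\end{equation*}

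The term $(\widetilde B_1\cdot D_1)_{q_1}$ is handled by the standard blowup formula for local intersection multiplicities: $(B\cdot D)_p=\operatorname{mult}_p B\cdot\operatorname{mult}_p D+\sum_{q\in E_1}(\widetilde B_1\cdot \widetilde D_1)_q$. Because $D$ is smooth, $\operatorname{mult}_pD=1$ and $\widetilde D_1=D_1$ meets $E_1$ only at $q_1$, so the entire residual intersection is concentrated there and $(\widetilde B_1\cdot D_1)_{q_1}=(B\cdot D)_p-m$. Substituting gives $(B_1'\cdot D_1)_{q_1}=(B\cdot D)_p-m+(m-2)=(B\cdot D)_p-2$, and note that the multiplicity $m$ cancels, so no case distinction between $A$-, $D$-, and $E$-type singularities is needed.

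For the second assertion, I would iterate this computation along the $l$ blowups of the (rearranged) resolution with center $q_j\in D_j$: at each stage $D_j$ is smooth (the strict transform of a smooth curve under a point blowup stays smooth), $q_j$ is a singular point of $B_j'$ being blown up, and the identity $B_{j+1}'=\widetilde{B_j'}+(\operatorname{mult}_{q_j}B_j'-2)E_{j+1}$ together with the same argument yields $(B_{j+1}'\cdot D_{j+1})_{q_{j+1}}=(B_j'\cdot D_j)_{q_j}-2$. After $l$ steps this gives $(B_l'\cdot D_l)_{q_l}=(B\cdot D)_p-2l$, and since the remaining blowups $\sigma_{l+1},\dots,\sigma_m$ are centered away from $q_l\in D_l$ (as recorded in the construction above), $(B'\cdot\widetilde D)_q=(B_l'\cdot D_l)_{q_l}=(B\cdot D)_p-2l$. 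The only real subtlety — hence the step I would be most careful about — is ensuring that the residual intersection genuinely localizes at the single point $q_j$ at each stage; this is exactly where smoothness of $D$ (and of all its strict transforms $D_j$) is used, and it is what makes the clean formula hold.
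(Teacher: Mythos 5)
Your proof is correct and is essentially the paper's argument: the paper performs the same single-blowup computation directly with total transforms, expanding $\bigl((\sigma^*B-2E_1)\cdot(\sigma^*D-E_1)\bigr)$ and using $E_1^2=-1$, while you use strict transforms together with the classical local formula $(B\cdot D)_p=\operatorname{mult}_p(B)\operatorname{mult}_p(D)+\sum_q(\widetilde B_1\cdot D_1)_q$ --- the two bookkeepings are equivalent and the multiplicity cancels either way. Your explicit iteration for the second assertion is exactly what the paper leaves implicit, resting on the observation (already recorded in the setup) that $\sigma_{l+1},\dots,\sigma_m$ are centered away from $q_l$.
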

\begin{proof} We have
$$(B_1'\cdot D_1)_{q_1}=((\sigma^*B-2E_1)\cdot (\sigma^*D-E)) = (B\cdot D)_p-2,$$
as desired.
\end{proof}

\begin{Lemma}\label{D in B} Suppose that the branch locus $B$ of $f$ is reducible and contains an irreducible smooth curve $D$. Let $\bar{B}=B - D$ and let $p$ be a point of $D\cap \bar{B}$. Let $\bar{B}_1=B_1'-D_1$. Then $(\bar{B}_1\cdot D_1)_{q_1}=(\bar{B}\cdot D)_p-1$. In particular, the separation number of $p$ is equal to the local intersection $(\bar{B}\cdot D)_p$. 
\end{Lemma}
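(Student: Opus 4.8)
The heart of the matter is the local identity $(\bar B_1\cdot D_1)_{q_1}=(\bar B\cdot D)_p-1$, from which the statement on separation numbers follows by iteration. To prove it, the plan is to express $\bar B_1$ and $D_1$ in terms of total transforms. Since $D$ is smooth at $p$ we have $\sigma_1^{*}D=D_1+E_1$, and from $B=D+\bar B$ together with the defining relation $B_1'=\sigma_1^{*}B-2E_1$ we get $\bar B_1=B_1'-D_1=\sigma_1^{*}(D+\bar B)-2E_1-(\sigma_1^{*}D-E_1)=\sigma_1^{*}\bar B-E_1$. Intersecting with $D_1=\sigma_1^{*}D-E_1$ and using the projection formula together with $E_1^{2}=-1$ and $E_1\cdot\sigma_1^{*}D=E_1\cdot\sigma_1^{*}\bar B=0$ gives the global equality $\bar B_1\cdot D_1=\bar B\cdot D-1$.

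To descend to local intersection numbers at $q_1$, note that $\sigma_1$ is an isomorphism away from $E_1$, so $(\bar B_1\cdot D_1)_{q'}=(\bar B\cdot D)_{\sigma_1(q')}$ for every $q'\in D_1\setminus E_1$; moreover $D_1$, being the proper transform of a curve smooth at $p$, meets $E_1$ in the single point $q_1$. Summing local contributions and cancelling those away from $p$ against the global identity leaves $(\bar B_1\cdot D_1)_{q_1}=(\bar B\cdot D)_p-1$. (Equivalently one may compute in a blow-up chart $y=xt$ with $D=\{y=0\}$, $\bar B=\{g=0\}$, $g=x^{m}h(x,t)$, $m=\mathrm{mult}_p\bar B$, where the claim becomes $\mathrm{ord}_x\bigl(x^{m-1}h(x,0)\bigr)=\mathrm{ord}_x g(x,0)-1$.)

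For the separation number, observe first that $D_i\subset B_i'$ for all $i$: this holds at step $1$ by the computation above and propagates inductively, since at each stage one copy of the new exceptional curve is absorbed into $D_i$ and the rest into $\bar B_i=B_i'-D_i$. Hence $B_l'$ always contains $q_l$, so the separation number $l$ is the least index for which $B_l'$ is smooth at $q_l$; as $D_l$ already passes through $q_l$, this occurs precisely when $q_l\notin\bar B_l$, i.e.\ when $(\bar B_l\cdot D_l)_{q_l}=0$. As long as $(\bar B_i\cdot D_i)_{q_i}\ge 1$, both $D_i$ and $\bar B_i$ pass through $q_i$, and since $B$ is reduced and $D$ irreducible they share no component; thus $q_i$ is a singular point of $B_i'$, and the reordering of blow-ups permitted in the definition of the separation number lets us take $p_i=q_i$. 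By the first part the number $(\bar B_i\cdot D_i)_{q_i}$ then drops by exactly $1$ at that step. Starting from $(\bar B\cdot D)_p$ it reaches $0$ after precisely $(\bar B\cdot D)_p$ such blow-ups, and not before, so $l=(\bar B\cdot D)_p$.

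The argument is short; the only points needing care are the exceptional-divisor bookkeeping — tracking that one copy of each $E_i$ lands in $D_i$ and the remainder in $\bar B_i$ — and verifying that $B_i'$ genuinely remains singular at $q_i$ up to the moment of separation, which is what legitimizes the reordering of blow-ups implicit in the definition of the separation number. Neither presents a real obstacle, and the parallel with the proof of Lemma~\ref{B dot D changes} makes the bookkeeping routine.
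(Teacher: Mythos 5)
Your proof is correct and follows essentially the same route as the paper's: both rest on the decomposition $B_1'=(\sigma_1^{*}\bar B-E_1)+(\sigma_1^{*}D-E_1)$ and the resulting drop of the local intersection by exactly one per blow-up. You in fact supply more detail than the paper on the iteration yielding the separation-number claim (tracking $D_i\subset B_i'$ and the termination criterion $(\bar B_l\cdot D_l)_{q_l}=0$), which the paper leaves implicit.
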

\begin{proof} Since $D$ is smooth and $B$ has ADE singularities, any singularity of $B$ has either $2$ or $3$ branches at $p$, of which $D$ is locally a smooth one. If $B$ has two branches at $p$, then $p$ is either an $A_n$ singularity of $B$ for $n$ odd, a $D_n$ singularity of $B$ for $n$ odd, or an $E_7$ singularity of $B$. If $B$ has $3$ branches at $p$, then $p$ is a $D_n$ singularity of $B$ for $n$ even.

In each case, $B_1'=\sigma_1^*(\bar{B})-E_1 +\sigma_1^*(D)-E_1$. Since 
$$((\sigma_1^*\bar{B}-E_1) \cdot (\sigma_1^*D-E_1))_{q_1}=(\bar{B}\cdot D)_{p}-1,$$
we have obtained the desired result.
\end{proof}

Lemma~\ref{D in B} says in particular that if $\bar{B}\cap D$ consists of $k$ distinct points with separation numbers $l_1,\ldots, l_k$, then 
$$\tilde{D}^2=D^2-\sum_{i=1}^kl_i=D^2-(\bar{B}\cdot D).$$

Given $g(y)=y^k(a_k+a_{k+1}y+\textrm{h.o.t.})\in\mathbb{C}[[y]]$, where $a_k\in \mathbb{C}^*$, we call $k$ the \emph{minimal degree} of $g(y)$, and take $k=\infty$ if $g(y)=0$.

\begin{Lemma}\label{B dot D even} Suppose that $p\in B\cap D$ is an $E_8$ singularity of $B$. Then $(B\cdot D)_p$ is either $3$ or $5$.
\end{Lemma}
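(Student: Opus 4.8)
The plan is to work in a local analytic coordinate system at $p$ in which $B$ has its standard $E_8$ normal form, and to track the intersection with $D$ through the sequence of blowups used to separate $D$ from $B$. Recall that an $E_8$ singularity of a plane curve can be written, after an analytic change of coordinates, as $y^3 + x^5 = 0$; more precisely, the germ has multiplicity $3$ and a single branch, and the resolution process $\sigma_i$ used above (blowing up, then subtracting $2E_i$ from the total transform) for $E_8$ goes through a determined sequence of infinitely near points. First I would set up local coordinates $(x,y)$ so that the $E_8$ branch is parametrized as $x = t^3,\ y = t^5 + (\text{h.o.t.})$, and write $D$ locally as the zero set of a unit times $(y - h(x))$ for some power series $h$, or of $x$ in the exceptional cases where $D$ is ``vertical''; the local intersection multiplicity $(B\cdot D)_p$ is then the order of vanishing of the pullback of the defining equation of $D$ along the parametrization $t \mapsto (t^3, t^5 + \cdots)$.

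The key combinatorial point is that the values of $t$-adic order that are achievable by $\mathbb{C}[[t]]$-combinations arising from a smooth curve $D$ meeting $B$ at $p$ are constrained by the numerical semigroup generated by $\langle 3, 5\rangle$ — the semigroup of the $E_8$ branch — together with the smoothness of $D$. Concretely, if $D = \{y = h(x)\}$ with $h$ a power series, then $(B\cdot D)_p = \operatorname{ord}_t\big(t^5 + \cdots - h(t^3)\big)$; since $h(t^3)$ only contributes terms of order divisible by $3$, and the branch contributes the term $t^5$, the possible orders are: $3$ (if $h(0) \neq 0$ so the constant terms disagree at order $0$ — wait, one must be careful: if $D$ passes through $p$ then $h(0)=0$), so with $h(0) = 0$ the order is either $5$ (generic $h$, the $t^5$ term survives), or larger if $h$ is chosen to cancel $t^5$ — but $t^5$ is not in the subring generated by $t^3$, so it can never be cancelled, forcing the order to be exactly $5$ when $D$ is ``horizontal''. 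The remaining case is $D$ tangent to the $x$-axis direction transverse to the branch's tangent, or $D$ vertical ($x$ a coordinate on $D$), giving order $3$. I would enumerate these cases carefully: the smooth curve $D$ through $p$ has a well-defined tangent direction, and comparing it to the (unique, since $E_8$ is unibranch) tangent cone of $B$ yields exactly the dichotomy $(B\cdot D)_p \in \{3,5\}$, with $3$ when the tangent directions differ and $5$ when they agree (the next possible contact value $6 = 3+3$ being unattainable because $6 \notin$ the relevant coset, and values between being blocked by the semigroup).

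The main obstacle I anticipate is making the ``$t^5$ cannot be cancelled'' argument fully rigorous in the presence of the higher-order terms of a general $E_8$ germ (which need not be exactly $y^3 + x^5$ on the nose before normalization) and in handling all tangent directions of $D$ uniformly — in particular ruling out intermediate values like $4$ cleanly. I would address this by invoking the standard fact that the semigroup of an $E_8$ (equivalently $(3,5)$-cusp) branch is $\langle 3,5\rangle = \{0,3,5,6,8,9,\dots\}$, whose smallest elements are $0,3,5$, and that the intersection multiplicity of the branch with any smooth curve not equal to it lies in a coset determined by the leading behavior; combined with the fact that a smooth $D$ contributes, along the branch's parametrization, a series whose order is controlled by whether $D$'s tangent meets the branch's tangent. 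An alternative, perhaps cleaner route I would keep in reserve: run the explicit resolution $\sigma_1, \sigma_2, \dots$ for $E_8$ (whose sequence of blown-up infinitely near points is completely standard), apply Lemma~\ref{B dot D changes} at each step to see how $(B_i' \cdot D_i)$ decreases by $2$ until separation, and observe that the separation number $l$ together with the requirement that $D_l$ be smooth and $B_l'$ have an ADE (in fact, a specific) singularity pins down $(B\cdot D)_p = (B'\cdot\tilde D)_q + 2l$ to the stated values. I expect one of these two approaches to go through with only routine local computation once the $E_8$ parametrization is fixed.
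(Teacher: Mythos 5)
Your proposal is correct and is essentially the paper's argument: both split into the case where $D$ is transverse to the (unique) tangent direction of the unibranch $E_8$ germ, giving $(B\cdot D)_p=\operatorname{mult}_p(B)=3$, and the tangent case, where the normal form $x^3+y^5$ forces the local intersection number to be $\min(3k,5)=5$ for $k\ge 2$. Your parametrization $t\mapsto(t^3,t^5+\cdots)$ and the semigroup $\langle 3,5\rangle$ are just the dual way of phrasing the paper's substitution of $x=f(y)$ into $x^3+y^5$, so no genuinely different idea is involved and the computation closes as you expect.
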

\begin{proof}
Note that $B$ is unibranched and has multiplicity $3$ at $p$. Thus, if the tangent cone of $B$ at $p$ is transversal to $D$, then $(B\cdot D)_p=3$. On the other hand, if the tangent cone of $B$ at $p$ is tangent to $D$, then choose coordinates on $Z$ so that $B$ has local equation $x^3+y^5$. Then $D$ is locally given by $x-f(y)$ where $f(y)$ has minimal degree $k\ge 2$.Then $(B\cdot D)_p$ is the minimal degree of $f(y)^3+y^5$. Since $f(y)$ has minimal degree at least $2$, this implies that $(B\cdot D)_p=5$.
\end{proof}

\subsection{The classification}\label{classification of 1/4(1,1)}

We continue to use the notation of subsection~\ref{Double covers}. 
Let $\Gamma$ be a fiber of $Z$ and $\Delta$ an irreducible curve in the linear system $|\mathcal{O}_{\mathbb{F}_0}(1,1)|$ on $\mathbb{F}_0$ or $|\Delta_0+2\Gamma|$ on $\mathbb{F}_2$. 

\begin{Theorem} \label{classify} 
There is a one-to-one correspondence between stable numerical quintic surfaces with at most Du Val singularities and a unique $\frac{1}{4}(1,1)$ singularity, and triples $(Z, B, D)$, where $Z=\mathbb{F}_d$ for $d=0$ or $2$,  $B\sim 6\Delta$ has at most ADE singularities, and $D\sim \Gamma$ or $D\sim \Delta$ intersects $B$ as follows:
\begin{enumerate}
\item $D\sim \Gamma$, there exists $p\in D\cap B$ such that $(B\cdot D)_p$ is odd, and $B$ has either $1$ or $2$ singularities along $D$ and intersects $D$ transversally elsewhere. Moreover,
\begin{enumerate}
\item if two singularities of $B$ are contained in $D$, then each singularity $p$ has separation number $1$, and either $(B\cdot D)_p=2$ 
or $(B\cdot D)_p=3$.

\item if one singularity $p$ of $B$ is contained in $D$, then $p$ has separation number $2$, and either $(B\cdot D)_p=4$ or $(B\cdot D)_p=5$.

\end{enumerate}
Figures~\ref{distinct points}, \ref{same points 4}, and \ref{same points 5} show all possible ways $B$ and $D$ may intersect in this case.

\item $D\sim \Delta$, $D\not \subset B$,  and for all $p\in D\cap B$, $(B\cdot D)_p$ is even. 

Figure~\ref{even case resolution} details the possible ways $B$ and $D$ may intersect in this case.
\item $D\sim \Delta$ and $D\subset B$. 
\end{enumerate}
\end{Theorem}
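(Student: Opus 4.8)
The plan is to set up the correspondence in both directions. Given a stable numerical quintic surface $W$ with at most Du Val singularities and a unique $\frac{1}{4}(1,1)$ singularity, Theorem~\ref{r is one} and Lemma~\ref{X W and r} force the minimal resolution $X$ to be a Horikawa surface with $K_X^2 = p_g = 4$, $q=0$, containing the $(-4)$-curve $C = \phi^{-1}(p)$. So the whole problem reduces to classifying pairs $(X,C)$. By Theorem~\ref{horikawa}, such an $X$ is the minimal resolution of a double cover $f\colon Y \to Z$ with $Z = \mathbb{F}_d$; first I would pin down which $d$ and which branch class $6\Delta_0 + (\cdots)\Gamma$ can occur, using $K^2 = p_g = 4$, $q=0$. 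The numerics $p_g = 4$, $d+4 \le p_g$, $p_g - d$ even in case (3) of Theorem~\ref{horikawa} leave only $d = 0$, $B \sim 6\Delta_0 + 2\Gamma$; and case (4) gives $d = 2$, $B \sim 6\Delta_0 + 12\Gamma$. Rewriting $6\Delta_0 + 2\Gamma$ on $\mathbb{F}_0$ as $6\Delta$ with $\Delta \in |\mathcal{O}(1,1)|$ (and $6\Delta_0 + 12\Gamma$ on $\mathbb{F}_2$ as $6\Delta$ with $\Delta \in |\Delta_0 + 2\Gamma|$) normalizes the statement. This settles the shape of $(Z,B)$; the content is identifying $D$.

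Next I would produce $D$ from $C$. Push $C$ down: let $C' = \psi_*C$ on $Y$ and $D = f(C')$ on $Z$ (or, tracking through the resolution $\sigma\colon \tilde Z \to Z$ of subsection~\ref{Double covers}, $D$ is the curve on $Z$ whose proper transform $\tilde D$ pulls back under $\tilde f$ to either $C$ or $C + (\text{sum of the }(-2)\text{-curves})$). There are two cases according to whether $C$ maps to a curve that is fixed by the involution (i.e.\ $D \subset B$, possibly after accounting for the branch locus) or not. In the non-fixed case $f^*D = C' + \iota(C')$ is a sum of two curves; computing $C^2 = -4$ via Lemma~\ref{B dot D changes} and the displayed formula $\tilde D^2 = D^2 - (\bar B\cdot D)$ (or $\tilde D^2 = D^2 - \sum l_i$), together with $K_X \cdot C = 2$ coming from adjunction on the $(-4)$-curve, pins $D$ to a small list of classes and forces the congruence constraints on the local intersection numbers. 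One finds $D \sim \Gamma$ or $D \sim \Delta$: the class $D\sim \Delta$ forces all $(B\cdot D)_p$ even (this is case (2), and case (3) is the subcase $D \subset B$); the class $D \sim \Gamma$ forces exactly one local intersection to be odd and, via the $(-4)$ condition $\tilde D^2 = D^2 - \sum l_i = 0 - \sum l_i = -4$ combined with Lemmas~\ref{B dot D changes}, \ref{D in B}, \ref{B dot D even}, leaves only the enumerated possibilities $1(a)$, $1(b)$ — either two ADE singularities on $D$ each of separation number $1$ with local intersection $2$ or $3$, or one ADE singularity of separation number $2$ with local intersection $4$ or $5$. Here I would run through the ADE types that can sit on $D$ with the required separation numbers and intersection multiplicities, using Lemma~\ref{B dot D even} to handle the $E_8$ case and Lemma~\ref{D in B} for the cases where a branch of $B$ coincides with a component, ruling out everything not in the figures.

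For the reverse direction I would start from a triple $(Z,B,D)$ as listed, form the double cover $f\colon Y\to Z$ branched over $B$, take the minimal resolution $X$ (a Horikawa surface with the right invariants by Theorem~\ref{horikawa} read backwards), and check that the prescribed intersection pattern makes the relevant component of the preimage of $D$ a single $(-4)$-curve $C$ — the separation-number bookkeeping of subsection~\ref{Double covers} is exactly what guarantees $\tilde D^2 = -4$ and that $\tilde D$ (or the appropriate component of $\tilde f^{-1}(\tilde D)$) is smooth, irreducible, and disjoint from the exceptional $(-2)$-curves in the right way. Contracting $C$ then yields a stable numerical quintic surface whose unique non–Du Val singularity is the $\frac{1}{4}(1,1)$ obtained by contracting a $(-4)$-curve, and one checks ampleness of $K_W$ using that $C$ is the only curve on which $K_X$ had degree forcing contraction. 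The two constructions are mutually inverse by construction.

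The main obstacle I expect is the exhaustive case analysis in the $D\sim\Gamma$ direction: one must show that the constraint "$\tilde D^2 = -4$, some $(B\cdot D)_p$ odd, $\tilde D$ smooth irreducible" together with the ADE hypothesis on $B$ genuinely forces separation numbers $\le 2$ and the tiny list of local intersection multiplicities, with no exotic singularity (especially among $D_n$, $E_7$, $E_8$) slipping through. Lemmas~\ref{B dot D changes}–\ref{B dot D even} are designed precisely to make this tractable, but assembling them into a clean finite check — and drawing the figures that certify the enumeration is complete — is where the real work lies; the determination of $(Z,B)$ and the reverse construction are comparatively mechanical.
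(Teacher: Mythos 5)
Your overall strategy is the same as the paper's: reduce to pairs $(X,C)$ with $X$ Horikawa, invoke Horikawa's double-cover description to get $(Z,B)$, push $C$ down to $D$, split into the three cases according to whether $f^*D$ is irreducible, splits, or is non-reduced, compute $\Delta\cdot D$ from $K_Y\cdot\bar C=2$ to pin down the class of $D$, and then grind through the ADE local analysis using Lemmas~\ref{B dot D changes}--\ref{B dot D even}. The reverse direction is also handled as in the paper. So the architecture is sound and matches.

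However, the numerical input that drives the entire enumeration in case (1) is miscomputed in your sketch. When some $(B\cdot D)_p$ is odd, $\tilde f^*\tilde D=C$ is irreducible and $C$ is a \emph{degree-two} cover of $\tilde D$, so $C^2=(\tilde f^*\tilde D)^2=2\tilde D^2$, giving $\tilde D^2=-2$, not $-4$. Combined with $D^2=\Gamma^2=0$ and $\tilde D^2=D^2-\sum l_i$, this forces $\sum l_i=2$, which is exactly what produces the dichotomy ``two singular points of separation number $1$'' versus ``one singular point of separation number $2$'', and (together with $B'\cdot\tilde D=2$ from Riemann--Hurwitz applied to $C\rightarrow\tilde D$, which also gives the transversality of $B'$ and $\tilde D$ needed to bound the local multiplicities) yields $(B\cdot D)_p\le 3$ resp.\ $\le 5$. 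Your ``$\tilde D^2=0-\sum l_i=-4$'' would give $\sum l_i=4$ and a substantially larger, incorrect list of local configurations; note also that the relation between $C^2$ and $\tilde D^2$ differs in each of the three cases ($C^2=2\tilde D^2$, the splitting computation where $\tilde D^2$ cancels, and $C^2=\tilde D^2/2$ when $D\subset B$, where indeed $\tilde D^2=-8$), so a single blanket identity cannot be used. Two smaller slips: on $\mathbb{F}_0$ the branch class is $6\Delta_0+(p_g+3d+2)\Gamma=6\Delta_0+6\Gamma=6\Delta$, not $6\Delta_0+2\Gamma$; and since $B\cdot\Gamma=6$ the number of points with odd local intersection is even, so the condition is ``there exists an odd local intersection'' (at least two such points, or one point counted in the infinitely-near configuration), not ``exactly one''.
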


\begin{proof} 
Suppose that $W$ is a stable numerical quintic surface whose unique non Du Val singularity is a $\frac{1}{4}(1,1)$ singularity and let $X$ be its minimal resolution. Then $X$ is a Horikawa surface with $K^2=p_g=4$ and $q=0$, containing a $(-4)$-curve $C$. Let $\hat{\psi}: X\rightarrow \hat{Y}$ be the canonical model of $X$, so that $\hat{Y}$ has at most Du Val singularities. By~\cite{horikawa1976}, $\hat{Y}$ is a double cover of a smooth or singular quadric $\hat{Z}$, with branch locus away from any singularity of $\hat{Z}$. We resolve both $A_1$ singularities of $\hat{Y}$ lying over the singularity of $\hat{Z}$. Then there exists a map $\psi: X\rightarrow Y$, where $Y$ is the double cover $f: Y\rightarrow Z$ of $Z$, where $Z=\mathbb{F}_2$ or $\mathbb{F}_0$, branched over $B\sim 6\Delta$ with at most ADE singularities~\cite{horikawa1976}. We claim that the curve $D=\psi(f(C))$ is linearly equivalent to either $\Delta$ or $\Gamma$.

The canonical class $K_Z$ of $Z$ is linearly equivalent to $-2\Delta$. Let $L$ be a divisor such that $B\sim 2L$. Then since $f$ is a double cover, the canonical class $K_Y$ is given by $f^*(K_Z+L)=f^*(\Delta)$. Thus,  $K_Y\cdot f^*D=2\Delta\cdot D$. 

Let $\bar{C}=\psi(C)\subset Y$. If $D$ is not contained in the branch locus $B$, then $f^*(D)$ is either a union of two curves $\bar{C}$ and $\bar{C}'$ or $f^*D=\bar{C}$, depending upon how the curve $D$ intersects the branch locus $B$. More precisely,  $f^*(D)=\bar{C}+\bar{C}'$ if and only if the multiplicity of $B$ and $D$ is even at each point of intersection. We consider the three cases, $f^*(D)=\bar{C}$, $f^*(D)=\bar{C}+\bar{C}'$, and $D\subset B$, separately.

\textbf{Case I.} Suppose that there exists $p\in D\cap B$ such that $(B\cdot D)_p$ is odd. Then $f^*(D)=\bar{C}$ and we have 
$$2\Delta\cdot D=K_Y\cdot f^*(D)=K_Y\cdot \bar{C}= 2,$$
so $\Delta\cdot D=1$. Since $C$ is irreducible the curve $D$ is also irreducible. Thus, $D\sim\Gamma$. Note that $B\cdot D=6$. 

On the other hand, since $\tilde{f}$ is the double cover of a smooth surface and $C^2=-4$, the curve $\tilde{f}(C)$ is a $(-2)$-curve $\tilde{D}$ on $\tilde{Z}$. Since $\tilde{D}$ has genus $0$ and $\tilde{f}$ is a double cover, the Riemann--Hurwitz formula gives $B'\cdot\tilde{D}=2$. Because $C$ is smooth, the branch divisor $B'$ intersects $\tilde{D}$ transversally. Commutativity of the diagram 
\[\xymatrix{
X\ar[r]^{\tilde{f}} \ar[d]_{\psi} & \tilde{Z} \ar[d]^{\sigma}\\
Y\ar[r]^f& Z
}\]
implies that $\sigma(\tilde{D})=D$. Noting that $D^2=0$ and $\tilde{D}^2=-2$ we see that the map $\sigma$ blows up exactly two points $p_1$ and $p_2$ on $D$, which may be infinitely near. 

Suppose that $p_1$ and $p_2$ are distinct, and let $p=p_1$. Then $p$ has separation number $1$. Moreover, because $C$ is smooth, either $B'$ intersects $\tilde{D}$ transversally at $q$, or $B'$ and $D$ do not intersect at $q$. That is, $(B'\cdot \tilde{D})_q=0$ or $1$. By Lemma~\ref{B dot D changes}, this implies that $(B\cdot D)_p=2$ or $3$. Conversely, if $(B\cdot D)_p=2$ or $3$, then since $B$ is singular, $p$ has separation number $1$.

If $(B\cdot D)_p=2$, then $p$ is an $A_n$ singularity of $B$, and any branches of $B$ at $p$ intersect $D$ transversally. See Figures~\ref{distinct points}(a) and~\ref{distinct points}(b) for the local intersection of $B$ and $D$. 

Now suppose that $(B\cdot D)_p=3$. If $p$ is an $A_n$ singularity of $B$ for $n$ odd, then since $(B\cdot D)_p=3$, one branch of $B$ intersects $D$ transversally at $p$ while the other intersects $D$ at $p$ with multiplicity $2$. For $n >1$, both branches of $B$ are tangent to each other, so this is not possible. Thus, $p$ is an $A_1$ singularity of $B$ and $B$ intersects $D$ at $p$ as in Figure~\ref{distinct points}(c). 

If $p$ is an $A_n$ singularity for $n$ even, then the tangent cone of $B$ at $p$ is tangent to $D$. Choose local coordinates on $Z$ so that $B$ has local equation $x^2-y^{n+1}$ and $D$ has local equation $x-f(y)$ where $f(y)$ has minimal degree $k\ge 2$. Then $(B\cdot D)_p=3$ if and only if $n=2$. In this case, the proper transform $B_1$ of $B$ is smooth and transversal to $D$, as desired. See Figure~\ref{distinct points}(d) for the local picture.

If $B$ has a $D_n$, $E_6$, $E_7$ or $E_8$ singularity at $p$, then since $(B\cdot D)_p=3$, the tangent cone of each branch of $B$ at $p$ must be transversal to $D$. The local intersection of $B$ and $D$ is shown in Figure~\ref{distinct points}(e), (f), (g), (h), and (i).

Figure~\ref{distinct points} summarizes all possible singularities of $B$ along $D$ that may occur if $\sigma$ blows up two distinct points, as well as how the exceptional curves on $\tilde{Z}$ intersect $\tilde{D}$ and the branch divisor $B'$ of $\tilde{f}$.

\begin{figure}
\centering
\includegraphics[scale=.5]{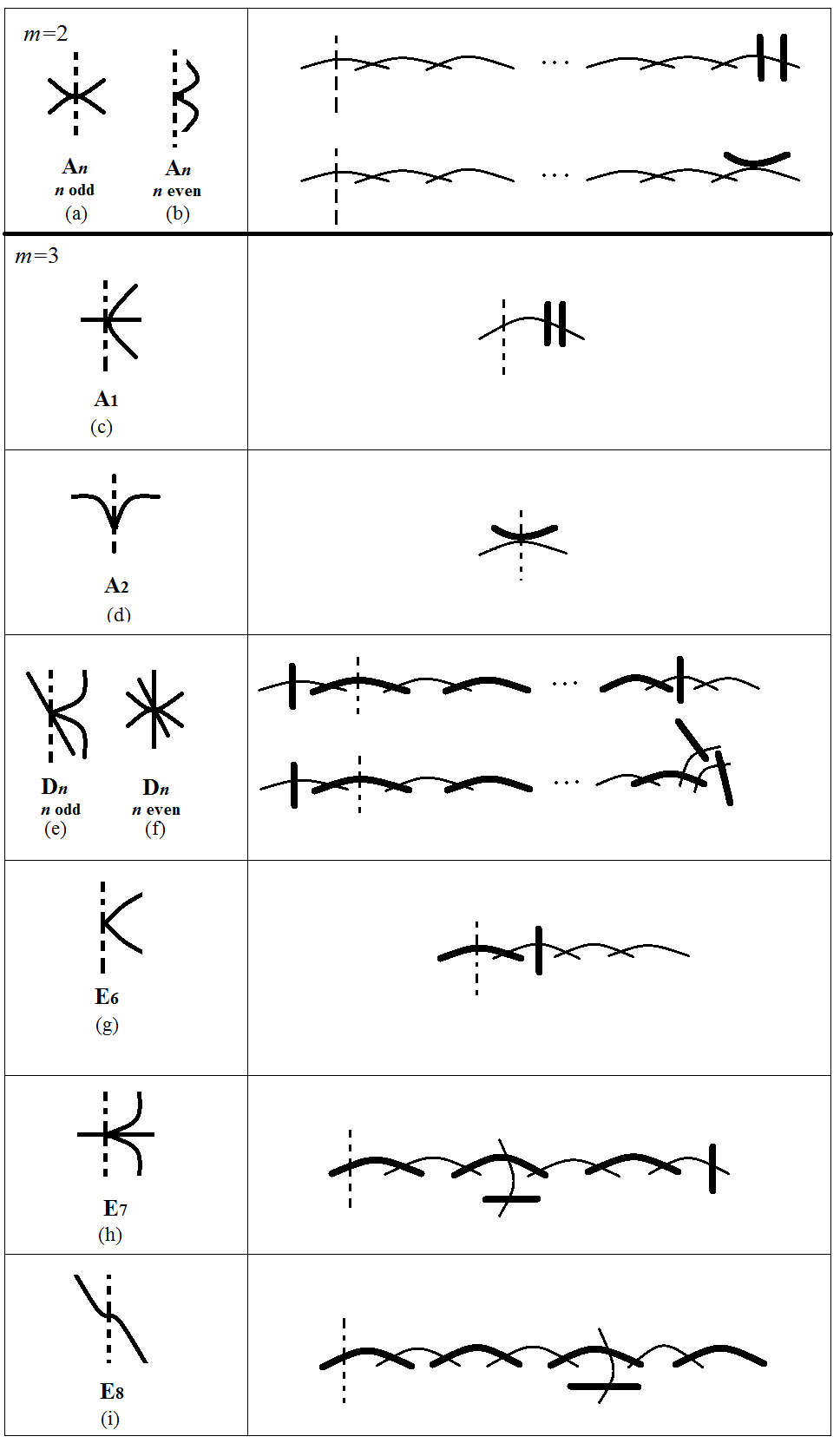}
\caption{On the left, the possible singularities of $B$ along $D\sim \Gamma$ if $p_1\neq p_2$. In each case, the vertical line represents the curve $D$. On the right, the curve $\tilde{D}$, dashed, together with the exceptional divisor of $\sigma$ and the proper transform of $B$. The solid concave down curves denote exceptional divisors. The branch locus of $\tilde{f}$ is denoted by solid bold curves. \label{distinct points}}
\end{figure}

We now consider the case where $p_1$ and $p_2$ are infinitely near. Denote by $p$ the center of the the blowup. Then $p$ has separation number $2$. Moreover, because the curves $\tilde{D}$ and $B'$ are transversal at $q$, we have $(B\cdot D)_p=4$ or $5$. We show that these two properties (that is, $(B\cdot D)_p=4$ or $5$ and $p$ having separation number $2$) hold if and only if $B$ and $D$ intersect at $p$ in one of the ways listed. By Lemma~\ref{B dot D changes}, if $(B\cdot D)_p=4$ or $5$ then $p$ has separation number at most $2$. Thus, it is enough to list all possible intersections with $(B\cdot D)_p=4$ or $5$ and $B_1'$ singular at $q_1$. We note that for $p$ to have separation number greater than 1, at least one branch of $B$ at $p$ must have tangent cone tangent to $D$.

Our method is to consider the possible singularities of $B$ case-by-case, choosing local coordinates at $p$ so that the equation of $B$ is a standard form (for instance $x^2-y^{n+1}$ for an $A_n$ singularity and $y(x^2-y^{n-2})$ for a $D_n$ singularity). In these coordinates, $D$ has local equation $x-f(y)$ or $y-g(x)$ where $f(y)$ (or $g(x)$) has minimal degree $k\ge 2$, and a simple case-by-case calculation tells us which values of $k$ and $n$ will give $(B\cdot D)_p=4$ or $5$. We then determine which of these values will give $B_1'$ singular at $q_1$.

Consider the case $(B\cdot D)_p=4$. Suppose that $p$ is an $A_n$ singularity of $B$ for $n$ odd. If $n=1$, then $B_1$ is smooth, so $p$ has separation number $1$. If $n>1$, then 
$(B\cdot D)_p=4$ if and only if 
\begin{enumerate}
\item[(1)] $n=3$ and $k>2$ (Figure~\ref{same points 4}(a)),
\item[(2)] $n=3$ and $f(y)=ay^2+\mbox{ h.o.t.}$ for $a\neq1$ (Figure~\ref{same points 4}(b)), or
\item[(3)] $n>3$ and $k=2$ (Figure~\ref{same points 4}(c)).
\end{enumerate}
Note that in each case, $B_1$ is singular at $q_1$, so $p$ has separation number $2$.

If $p$ is an $A_n$ singularity of $B$ for $n$ even, then 
$(B\cdot D)_p=4$ if and only if $k=2$ and $n>2$. Since $n>2$, $B_1'$ is singular at $q_1$. Figure~\ref{same points 4}(d) shows the local intersection of $B$ and $D$. 

Suppose that $p$ is a $D_n$ singularity of $B$ for $n$ odd and that the singular branch  of $B$ at $p$ has tangent cone parallel to $D$. Since the smooth branch is transversal to $D$, the singular branch interesects $D$ with multiplicty $3$. 
Since $n$ is odd and $(B\cdot D)_p$ is even,  we have $(B\cdot D)_p =n-1=4$. Thus, $p$ is a $D_5$ singularity of $B$. See Figure~\ref{same points 4}(e) for a visualization of how $B$ and $D$ intersect at $p$. Note that $B_1'$ is singular at $q_1$ as desired.

Now suppose that $p$ is a $D_n$ singularity of $B$ for odd $n$ such that the smooth branch of $B$ at $p$ is tangent to $D$. 
Then $(B\cdot D)_p$ is the minimum of $k+2$ or $k(n-1)$. Since $k>1$ and $n\ge 5$, this implies that $k=2$. The curve $B_1'$ is singular at $q_1$ as desired. See Figure~\ref{same points 4}(f) for the local intersection of $B$ and $D$ at $p$.

If $p$ is a $D_n$ singularity of $B$ for $n$ even, then two branches of $B$ at $p$ are transversal to $D$ and the third is tangent to $D$ with multiplicity $2$. For $n\ge 6$, two branches of $B$ have the same tangent cone, so the branch locus $B$ intersects $D$ at $p$ as in Figure~\ref{same points 4}(g). The local picture for $n=4$ is similar. 

If $p$ is an $E_6$ singularity of $B$, then we can choose coordinates so that $x^3-y^4$ is the local equation of $B$ at $p$ and the local equation of $D$ at $p$ is $x-f(y)$, where $f(y)$ has minimal degree $k\ge 2$. We quickly see that $(B\cdot D)_p=4$ as desired. The intersection of $B$ and $D$ at $p$ is shown in Figure~\ref{same points 4}(h). 

If $p$ is an $E_7$ singularity of $B$, then choose coordinates so that $B$ is locally given by $x(x^2-y^3)$ and $D$ has local equation $x-f(y)$, where $f(y)$ has minimal degree $k\ge 2$. Then $(B\cdot D)_p$ is the minimum of $3k$ and $k+3$. But we require $(B\cdot D)_p=4$, and since $k\ge 2$, this is impossible.

By Lemma~\ref{B dot D even}, $p$ is not an $E_8$ singularity. 

See Figure~\ref{same points 4} for a summary of the ways in which $B$ and $D$ intersect at $p$ if $p_1=p_2$ and $(B\cdot D)_p=4$.

\begin{figure}
\centering
\includegraphics[scale=.6]{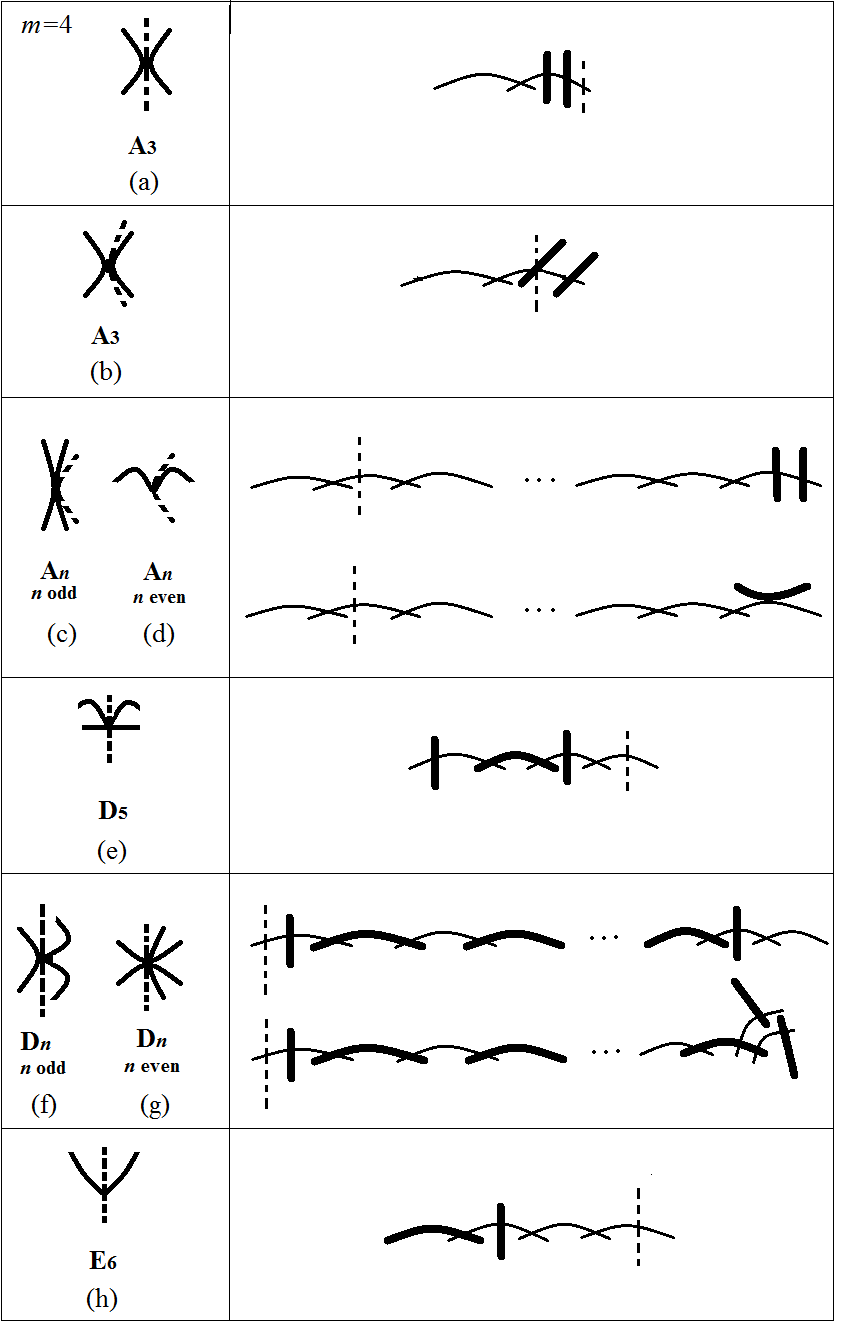}
\caption{On left, the possible singularities of $B$ along $D$ if $p_1= p_2$ and $(B\cdot D)_p=4$. In each case, the dashed line represents $D$. On the right, the curve $\tilde{D}\sim \Gamma$, dashed, together with the exceptional divisor of $\sigma$. The solid concave down curves denote exceptional divisors. The branch locus of $\tilde{f}$ is denoted by solid bold curves. \label{same points 4}}
\end{figure}

We move on to the case $(B\cdot D)_p=5$.  
Suppose that $p$ is an $A_n$ singularity of $B$ where $n$ is odd. If $n=1$, then the singularity of $B$ at $p$ is resolved after a single blowup, so we can assume that $n>1$. Choose coordinates so that the local equation of $B$ at $p$ is $x^2-y^{n+1}$ and the local equation of $D$ at $p$ is $x-f(y)$ where $f(y)=a_ky^k+a_{k+1}y^{k+1}+\mbox{ h.o.t.}$ for some $k\ge 2$. Then in order to have $(B\cdot D)_p$ odd, we must have $n+1=2k$ and $a_k=1$. Thus $(B\cdot D)_p=5=2k+1=n+2$, so $k=2$ and $n=3$. The intersection of $B$ and $D$ at $p$ is shown in Figure~\ref{same points 5}(a).

If $p$ is an $A_n$ singularity of $B$ where $n$ is even, then 
$(B\cdot D)_p$ is the minimum of $2k$ and $n+1$. Thus $(B\cdot D)_p=5$ if and only if $n=4$ and $k\ge 3$. In this case, $B_1$ has an $A_2$ singularity at $q_1$.
See Figure~\ref{same points 5}(b) for the local picture. 

Next, suppose that $p$ is a $D_n$ singularity of $B$ where $n$ is odd and that the tangent cone of the singular branch $S$ is tangent to $D$ at $p$. Then we have $(S\cdot D)_p=4$. Using the same analysis as in previous cases, we  
see that this case occurs if and only if $n\ge 5$ and $k=2$.
See Figure~\ref{same points 5}(c) for the local picture.

If $p$ is a $D_n$ singularity of $B$ for $n$ odd such that the singular branch of $B$ at $p$ has tangent cone transversal to $D$, then the smooth branch is tangent to $D$ at $p$ with multiplicity $3$. See Figure~\ref{same points 5}(d) for the local picture. 

If $p$ is a $D_n$ singularity of $B$ where $n$ is even, then 
either two branches of $B$ are tangent to $D$ at $p$ with multiplicity $2$ each and the third is transversal, or two are transversal to $D$ and the third is tangent to $D$ with multiplicity $3$. In the former case, $p$ is a $D_6$ singularity and $B$ intersects $D$ at $p$ as in Figure~\ref{same points 5}(e). 
In the latter case, $n$ has no further restrictions and the local intersection is shown in Figure~\ref{same points 5}(f). 

We showed above that if $B$ has an $E_6$ singularity at $p$ such that the tangent cone of $B$ at $p$ is tangent to $D$, then $(B\cdot D)_p=4$, so we need not consider the singularity in this case.

If $p$ is an $E_7$ singularity of $B$, then the same analysis as above shows that $D$ is tangent to the tangent cone of $B$ at $p$ with multiplicity $2$. 
See Figure~\ref{same points 5}(g) for the local picture. 

Finally, suppose that $p$ is an $E_8$ singularity of $B$. An analysis of the local equations of $B$ and $D$ as above shows that as long as the tangent cone of $B$ at $p$ is tangent to $D$, we will have $(B\cdot D)_p=5$. In this case, the proper transform $B_1$ of $B$ has a cusp at $q_1$. 
See Figure~\ref{same points 5}(h) for the local picture of $B$ and $D$ at $p$.

See Figure~\ref{same points 5} for a summary of the ways in which $B$ and $D$ intersect at $p$ if $p_1=p_2$ and $(B\cdot D)_p=5$. 
This completes our discussion of Case I.

\begin{figure}
\centering
\includegraphics[scale=.6]{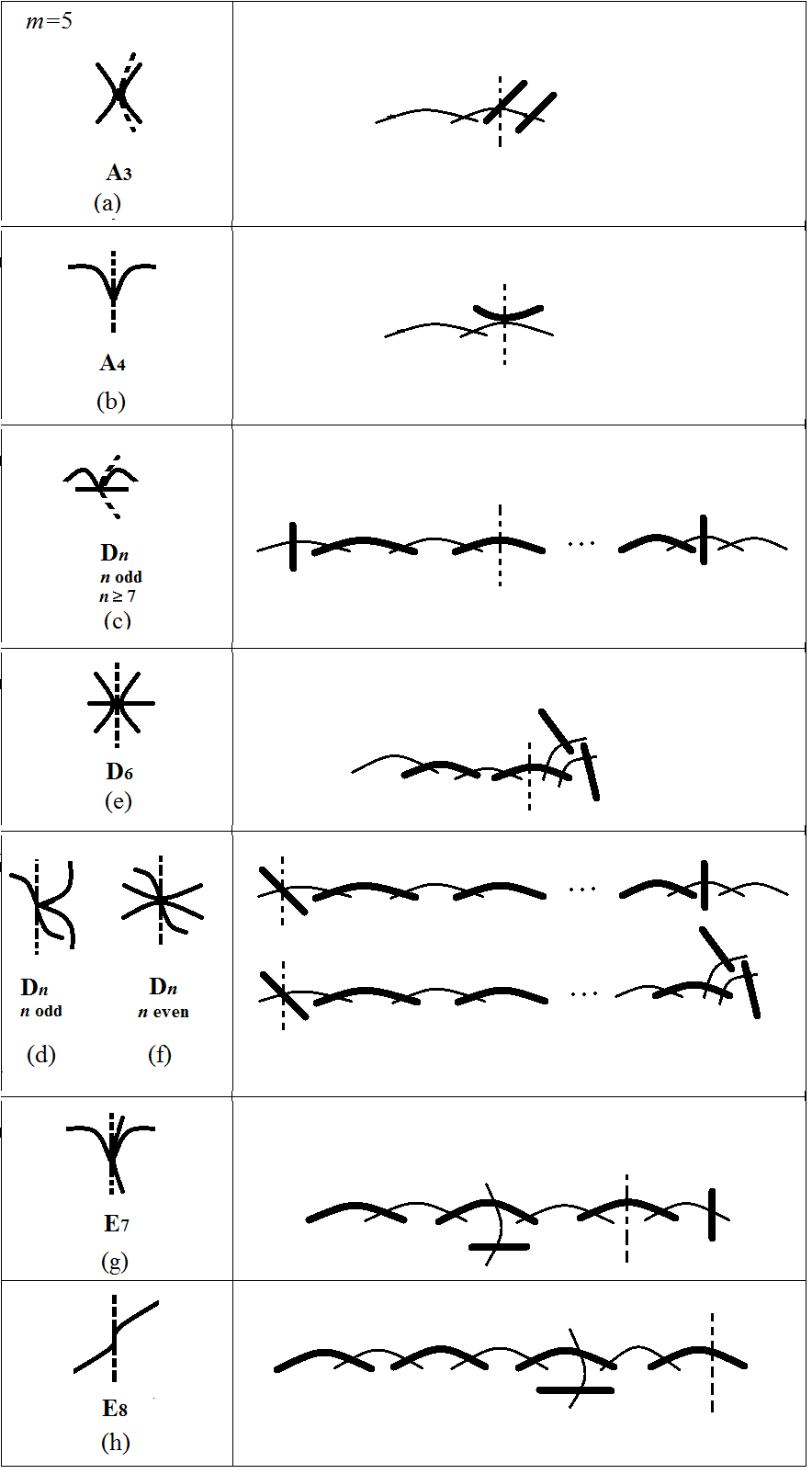}
\caption{On left, the possible singularities of $B$ along $D$ if $p_1= p_2$ and $(B\cdot D)_p=5$. In each case, the dashed line represents $D$. On the right, the curve $\tilde{D}\sim \Gamma$, dashed, together with the exceptional divisor of $\sigma$. The solid concave down curves denote exceptional divisors. The branch locus of $\tilde{f}$ is denoted by solid bold curves. \label{same points 5}}
\end{figure}

\textbf{Case II.} Suppose that $ D\not\subset B$ and $f^*(D)=\bar{C}+\bar{C}'$. Then for each point $p$ of $B\cap D$ the multiplicity $(B\cdot D)_p$ is even, $\bar{C}$ and $\bar{C}'$ are isomorphic, and we have
$$\Delta\cdot D=\frac{1}{2}K_Y\cdot f^*D=\frac{1}{2}K_Y\cdot (\bar{C}+\bar{C}')= 2.$$
Suppose that on $\mathbb{F}_2$, we have $D\sim a\Delta_0+b\Gamma$, where $a$ and $b$ are nonnegative. 
Then $D\cdot\Delta=b$, so that $b=2$. Multiplying $a\Delta_0+2\Gamma$ by $\Delta_0$, we see that in order for a divisor in the linear system $a\Delta_0+2\Gamma$ to be irreducible, we must have $a=1$. Thus, $D\sim \Delta_0+2\Gamma=\Delta$. A similar calculation on $\mathbb{P}^1\times\mathbb{P}^1$ shows that in either case $D\sim \Delta$.

We now show that if $D$ is an irreducible curve in the linear system $\Delta$ such that at each point $p\in D\cap B$ we have $(B\cdot D)_p$ even, then $\tilde{f}^{-1}(\tilde{D})$ is a union of two $(-4)$-curves $C$ and $C'$.

Suppose that $p_1, \ldots p_j$ are the singular points of $B$ lying on $D$. Let $l_i$ be the separation number of $p_i$. Then
\begin{equation*}
(C+C')^2=2\tilde{D}^2=2\left(D^2-\sum_{i=1}^jl_i\right)=2\left(2-\sum_{i=1}^jl_i\right),
\end{equation*}
where the second equality follows from Lemma~\ref{B dot D changes}.
On the other hand
\begin{equation*}
(C+C')^2=2C^2+2C\cdot C'=2C^2+B'\cdot \tilde{D}=2C^2+12-\sum_{i=1}^j2l_i=2C^2+2\left(6-\sum_{i=1}^jl_i\right),
\end{equation*}
where we again use Lemma~\ref{B dot D changes}.
Thus, 
$$C^2+6-\sum_{i=1}^jl_i=2-\sum_{i=1}^jl_i,$$
so $C^2=-4$ as desired.

By Lemma~\ref{B dot D changes}, a singularity $p$ of $B$ on $D$ may be an $A_n$, $D_n$,  $E_6$, or $E_7$ singularity, as long as the branches of $B$ intersect $D$ in such a way that the multiplicity of $B$ and $D$ at $p$ is even. By considering the local equations of each type of ADE singularity, we can determine all possible ways $B$ and $D$ may intersect, and also have both branches of $\tilde{f}^*(\tilde{D})$ over $p$ smooth. As an example, suppose that $B$ has a $D_n$ singularity at $p\in D$. We can choose local coordinates around $p$ so that the local equation of $B$ is $y(x^2+y^{n-2})$. Since $(B\cdot D)_p$ is even, the curve $D$ must be tangent to one of the tangent cones of $B$ at $p$. We can then write  the equation of $D$ at $p$ as either $x-f(y)$ or $y-g(x)$ where $f(y)$ (or $g(x)$) has minimal degree $k\ge 2$. Then $(B\cdot D)_p$ is the minimal degree of either $y(f(x)^2+y^{n-2})$ or $g(x)(x^2+(g(x))^{n-2}$. Using the fact that $(B\cdot D)_p$ is even, we can then determine the possibilities for $k$ and for each such $k$ resolve the singularity of $B$ as described above. This tells us in particular how $\tilde{D}$ intersects the exceptional locus of $\sigma$. The same analysis may be used for the other ADE singularities. See Figure~\ref{even case resolution} for the list of possible singularities as well as how the curve $\tilde{D}$ intersects the exceptional locus of $\sigma$. We remark that this analysis is more general in that it depends only on the fact that both branches of $\tilde{f}^*(\tilde{D})$ over $p$ are smooth, and not on the self-intersection of $\tilde{D}$.

\begin{figure}
\includegraphics[scale=.5]{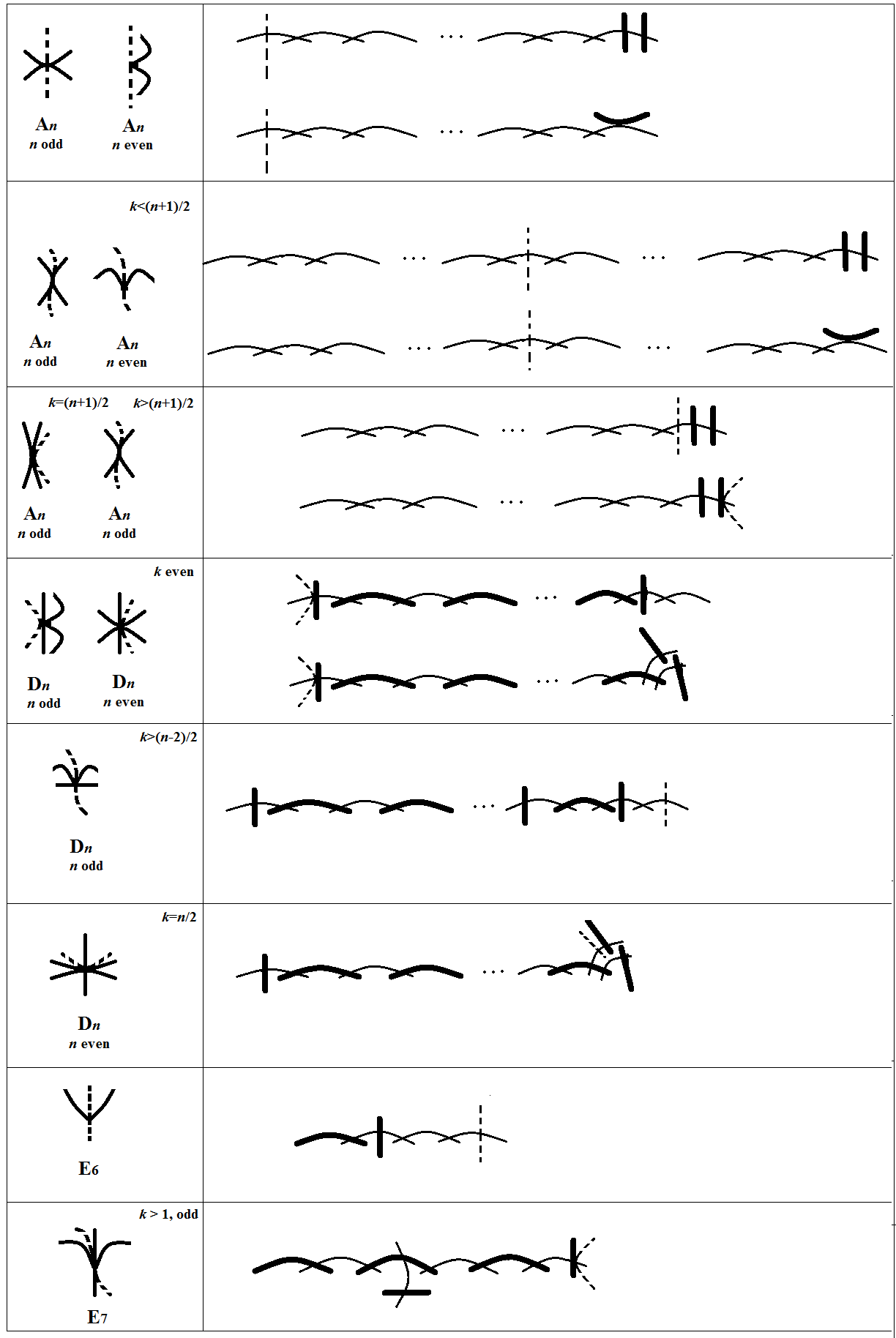}
\caption{On the left, all possible intersections of $B$ and $D$ if $(B\cdot D)_p$ is even. Here, $k$ denotes the multiplicity with which $D$ intersects the tangent cone of the given branch of $B$. On the right, the resolution of $B$ together with the exceptional curves and the proper transform of $D$ (dashed). The solid concave down curves denote exceptional divisors of $\sigma$. The branch locus of $\tilde{f}$ is denoted by solid bold curves. \label{even case resolution}}
\end{figure}

\textbf{Case III.} If $C\subset R$ then $f^*(D)=2C$, and so 
$$2\Delta\cdot D=K_Y\cdot f^*D=K_Y\cdot 2D=4.$$
Since $D$ is irreducible, we must have $D\sim \Delta$. The fact that $D\subset B$ implies that $B= D + \bar{B}$ where $\bar{B}$ is in the linear system 
$|5\Delta|$, so $D\cdot \bar{B}=10$. By Lemma~\ref{D in B}, 
we have
\begin{equation*}
\tilde{D}^2 = D^2-(\bar{B}\cdot  D)= -8
\end{equation*}
as desired.

The generic $\bar{B}$ intersects $D$ intersect in 10 distinct points and so the double cover $Y$ has 10 $A_1$ singularities.
\end{proof}

\subsection{Dimension counts}\label{dimension counts}

We begin by showing that every stable numerical quintic surface $W$ whose unique non Du Val singularity is a $\frac{1}{4}(1,1)$ singularity corresponds, up to isomorphism, to a unique triple $(Z, B, D)$, where $Z$ is a quadric surface, $B\subset  Z$ is a divisor in $|6\Delta|$, and $D\subset Z$ is the image of the $(-4)$-curve on the minimal resolution of $W$. This observation allows us to count the dimensions of a number of important loci in $\overline{\mathcal{M}}_{5,5}$.

\begin{Lemma}\label{triples}
Suppose that $X$ and $X'$ are the minimal resolutions of stable numerical quintic surfaces $W$ and $W'$, each of which has a unique $\frac{1}{4}(1,1)$ singularity and no other non Du Val singularities. Let $C$ and $C'$ be the $(-4)$-curves on $X$ and $X'$, respectively. Let $[W]$ and $[W']$ be the points of $\overline{\mathcal{M}}_{5,5}$ corresponding to $W$ and $W'$, respectively. The following are equivalent:
\begin{enumerate}
\item[1)] $[W]=[W']$.
\item[2)] There is an isomorphism $\theta: X\rightarrow X'$ such that $\theta(C)=C'$.
\item[3)] The triples $(Z, B, D)$ and $(Z', B', D')$ corresponding to $X$ and $X'$ are isomorphic; that is, there is an isomorphism $\eta: Z\rightarrow Z'$ such that $\eta(B)=B'$ and $\eta(D)=D'$.
\end{enumerate}
\end{Lemma}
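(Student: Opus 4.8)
The plan is to prove the cycle of implications $(1)\Rightarrow(2)\Rightarrow(3)\Rightarrow(1)$, exploiting the uniqueness of the various canonical constructions relating $W$, $X$, and $(Z,B,D)$.

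\textbf{The implication $(1)\Rightarrow(2)$.} Suppose $[W]=[W']$, so there is an isomorphism $\lambda:W\to W'$. Since the minimal resolution is functorial (unique up to unique isomorphism over the base), $\lambda$ lifts to an isomorphism $\theta:X\to X'$ commuting with the resolution maps $\phi,\phi'$. The $(-4)$-curve $C$ is precisely the exceptional divisor of $\phi$ over the unique $\frac14(1,1)$ singularity $p\in W$; likewise $C'$ is the exceptional divisor of $\phi'$ over $p'\in W'$. Since $\lambda$ sends the unique non Du Val singularity of $W$ to that of $W'$, i.e. $\lambda(p)=p'$, the lifted map $\theta$ must send $\phi^{-1}(p)=C$ to $(\phi')^{-1}(p')=C'$.

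\textbf{The implication $(2)\Rightarrow(1)$ (convenient to record alongside).} Given $\theta:X\to X'$ with $\theta(C)=C'$: contracting $C$ recovers $W$ and contracting $C'$ recovers $W'$ (each $W$ is obtained from its minimal resolution by contracting exactly the $(-4)$-curve, by Theorem~\ref{r is one} and the discussion opening Section~\ref{classification section}), so $\theta$ descends to an isomorphism $W\to W'$, giving $[W]=[W']$. Together with $(1)\Rightarrow(2)$ this shows $(1)\Leftrightarrow(2)$.

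\textbf{The implication $(2)\Leftrightarrow(3)$.} This is the substantive part, and the main obstacle is that the passage from $X$ to the triple $(Z,B,D)$ involves choices — the canonical model $\hat Y$, the quadric $\hat Z$, the sequence of blowups $\sigma:\tilde Z\to Z$ resolving the branch curve — so I must check that each choice is canonical enough that an isomorphism of pairs $(X,C)$ induces one of triples and conversely. For $(3)\Rightarrow(2)$: an isomorphism $\eta:Z\to Z'$ with $\eta(B)=B'$, $\eta(D)=D'$ induces an isomorphism of the double covers $Y\to Y'$ branched over $B,B'$, compatible with the covering involutions; resolving Du Val singularities (functorially) gives $X\to X'$, and since $C$ is the distinguished component of $f^{-1}(D)$ pulled back to $X$ — by the construction in Theorem~\ref{classify}, $C$ is determined by $D$ (in Cases I and III it is the unique such component; in Case II one must note that $C$ and $C'$ are interchanged by the covering involution, so either choice yields an isomorphism $(X,C)\to(X',C')$ after possibly composing with the involution) — the induced map sends $C$ to $C'$. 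For $(2)\Rightarrow(3)$: given $\theta:X\to X'$ with $\theta(C)=C'$, the canonical model is intrinsic to $X$, so $\theta$ descends to $\hat Y\cong\hat Y'$; the quadric $\hat Z$ is recovered as $\operatorname{Proj}$ of the canonical ring / the target of the bicanonical-type map in Horikawa's description, hence is intrinsic, giving $\hat Z\cong\hat Z'$ with matching branch loci; resolving the node of $\hat Z$ (canonically) yields $Z\cong Z'$ with $\eta(B)=B'$; and since $D=\psi(f(C))$ is the image of $C$ under canonical maps, $\eta(D)=\eta(\psi(f(C)))=\psi'(f'(C'))=D'$. Finally I would remark that the blowup sequence $\sigma$ used inside Theorem~\ref{classify} plays no role here: $(Z,B,D)$ lives downstairs and is what must be matched. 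I expect the only delicate point to be the Case II ambiguity between $C$ and $C'$, handled by composing with the covering involution, and the verification that Horikawa's quadric is canonically attached to $X$ — both of which are standard once spelled out.
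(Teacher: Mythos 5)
Your proposal is correct and follows essentially the same route as the paper: $1)\Leftrightarrow 2)$ via functoriality of the minimal resolution and of the contraction of $C$, and $2)\Rightarrow 3)$ by descending $\theta$ to the canonical model and using that the canonical map $\phi_{K_Y}$ is the double cover of the quadric, so that $Z$, $B$, and $D=\psi(f(C))$ are all intrinsic to the pair $(X,C)$. Your extra care about the Case II ambiguity (the two components of $f^*(D)$ being swapped by the covering involution) is a detail the paper subsumes under ``follows by construction,'' but it does not change the argument.
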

\begin{proof}
1) $\iff$ 2) is clear. 

3) $\Rightarrow$ 2) follows by construction of $X$ and $X'$ from the triples given. For 2) $\Rightarrow$ 3), suppose that $\theta: X\rightarrow X'$ is an isomorphism such that $\theta(C)=C'$.
Let $Y$ and $Y'$ be the canonical models of $X$ and $X'$, respectively, and denote by $\bar{C}$ and $\bar{C'}$ the images of $C$ and $C'$, respectively. Then the isomorphism $\theta$ induces an isomorphism of $Y$ sending $Y$ to $Y'$ and $\bar{C}$ to $\bar{C'}$. The map $\phi_{K_Y}$ is a double cover $f:Y\rightarrow Z$, where $Z$ is either a quadric cone or a smooth quadric. Thus, the isomorphism $\theta$ induces an isomorphism $\eta:Z\rightarrow Z'$. Moreover, if $(Z, B,D)$ and $(Z', B', D')$ are the triples corresponding to $X$ and $X'$ under the correspondence of Theorem~\ref{classify}, then since $\theta(C)=C'$, we have $\eta(B)=B'$ and $\eta(D)=D'$, so the triples are isomorphic. 
\end{proof}

Let $p$ be a $\frac{1}{4}(1,1)$ singularity on a stable numerical quintic surface $W$, let $X$ be its minimal resolution, and let $C$ denote the $(-4)$ curve on $X$. We call $W$ a surface of type
\begin{itemize}
\item 1  if $Z=\mathbb{F}_0$, $D\sim \Delta$ and $B$ and $D$ intersect as in Figure~\ref{BandD}(d).
\item 1'  if $Z=\mathbb{F}_0$, $D\sim \Delta$ and $B$ and $D$ intersect as in Figure~\ref{BandD}(e).
\item 1'' if $Z=\mathbb{F}_2$, $D\sim \Delta$ and $B$ and $D$ intersect as in Figure~\ref{BandD}(d).
\item 1''' if $Z=\mathbb{F}_0$, $D\sim \Delta$, there exists $p\in B\cap D$ with $(B\cdot D)_p=4$, and $B$ and $D$ intersect as in Figure~\ref{BandD}(f).
\item 2a if $Z=\mathbb{F}_0$, $D$ is a fiber, and $B$ and $D$ intersect as in Figure~\ref{BandD}(a).
\item 2a' if $Z=\mathbb{F}_0$, $D$ is a fiber and $B$ and $D$ intersect as in Figure~\ref{BandD}(b). 
\item 2a'' if $Z=\mathbb{F}_0$, $D$ is a fiber, $B$ has an $A_2$ singularity along $D$ and $B$ and $D$ intersect as in Figure~\ref{BandD}(c).
\item 2b if $Z=\mathbb{F}_2$, $D$ is a fiber, and $B$ and $D$ intersect as in Figure~\ref{BandD}(a).
\end{itemize}

\begin{figure}[h]
\centering
\includegraphics[scale=.5]{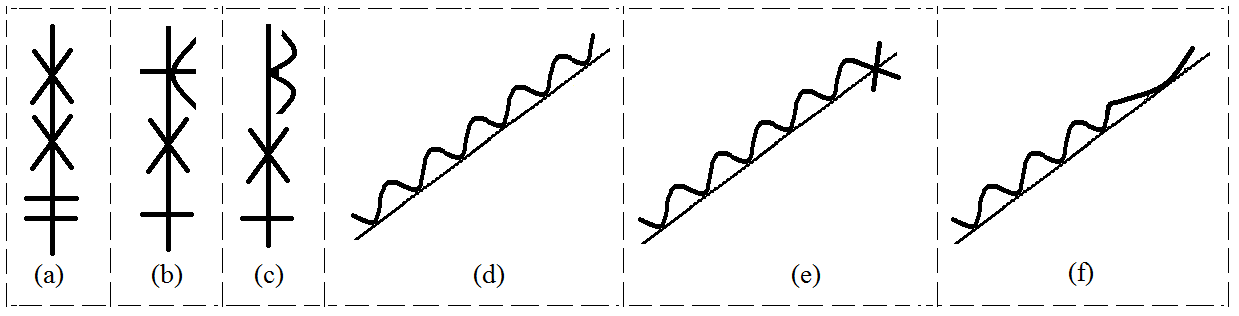}
\caption{Six ways $B$ and $D$ may intersect.\label{BandD}}
\end{figure}

\begin{Lemma}\label{counting} The stable numerical quintic surfaces of types 1 and 2a correspond to $39$-dimensional loci in $\overline{\mathcal{M}}_{5,5}$. Those of types 1', 1'', 1''', 2a', 2a'', and 2b correspond to $38$-dimensional loci in $\overline{\mathcal{M}}_{5,5}$. All other types of stable numerical quintic surfaces with a unique $\frac{1}{4}(1,1)$ singularity correspond to loci of dimension less than 38.
\end{Lemma}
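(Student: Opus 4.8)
The plan is to count, for each type, the dimension of the space of triples $(Z,B,D)$ as in Theorem~\ref{classify}, and then invoke Lemma~\ref{triples} to conclude that this is the dimension of the corresponding locus in $\overline{\mathcal{M}}_{5,5}$. The basic bookkeeping is: the relevant quadric $Z=\mathbb{F}_0$ or $\mathbb{F}_2$ is rigid, $\dim\operatorname{Aut}(\mathbb{F}_0)=6$ and $\dim\operatorname{Aut}(\mathbb{F}_2)=7$; the linear system $|6\Delta|$ on either $\mathbb{F}_0$ or $\mathbb{F}_2$ has dimension $\dim\mathbb{P}H^0(Z,\mathcal{O}(6\Delta))$, which one computes to be $27$ in each case; and the choice of $D$ varies in the (projective) linear system $|\Gamma|$ (dimension $1$) or $|\Delta|$ (dimension $2$). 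So the ``raw'' parameter count for a configuration with no imposed tangency is $27 + \dim|D| - \dim\operatorname{Aut}(Z)$, and then each prescribed local intersection condition on $B$ along $D$ cuts this down by the appropriate number of conditions. The target number is $39$, since $\overline{\mathcal{M}}_{5,5}$ is $40$-dimensional and these are boundary divisors.

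First I would set up the general formula. Moving $D$ in its linear system costs nothing extra since it is accounted for in $\dim|D|$; the real work is computing how many conditions on $B\in|6\Delta|$ are imposed by the requirement that $B$ meet $D$ with the prescribed ADE singularity type and prescribed local intersection multiplicity at the prescribed number of points along $D$. For type 2a: $D\sim\Gamma$ (so $\dim|D|=1$), $B$ has a single node on $D$ (the picture Figure~\ref{BandD}(a), an $A_1$ with one branch tangent, $(B\cdot D)_p=3$, separation number $1$). Requiring $B$ to have a node at a point is $3$ conditions, but the node is free to move along $D$, which gives back $1$; requiring in addition the prescribed tangency of a branch to $D$ is $1$ further condition. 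Net: $27 + 1 - 6 - (3-1) - 1 = 39$ — I will need to double-check this arithmetic against the precise local description in Theorem~\ref{classify}(1)(a), but this is the shape of the computation. For type 1: $D\sim\Delta$ (so $\dim|D|=2$) on $\mathbb{F}_0$, with $B$ tangent to $D$ at $6$ points each with even local intersection $2$ (Figure~\ref{BandD}(d)); each simple tangency is $1$ condition, so $27 + 2 - 6 - 6 = 17$... which is wrong, so in fact the right description (consistent with the type 1 construction in the introduction, a sextic tangent to the diagonal at $6$ points) must be that these are the generic such configurations and one recomputes $h^0$ correctly — I would recount $\dim|6\Delta_0+ \ldots|$ and the number of tangency conditions carefully here; the key point is only that the generic member of this family has dimension $39$.

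Then I would go type by type through 1', 1'', 1''', 2a', 2a'', 2b and show each imposes exactly one more condition than the generic case 1 or 2a, giving dimension $38$: type 1'' is the same configuration as type 1 but on $\mathbb{F}_2$ instead of $\mathbb{F}_0$, and although $\dim\operatorname{Aut}(\mathbb{F}_2)=7>6$, this is compensated, so it lands at $38$ (one should track that $\mathbb{F}_2$ is a degeneration of $\mathbb{F}_0$); types 1', 1''', 2a', 2a'' each upgrade one of the prescribed singularities of $B$ along $D$ to a worse one (e.g. a tacnode in place of a node, or raising $(B\cdot D)_p$ by $2$), each costing one extra condition; and type 2b is type 2a transplanted to $\mathbb{F}_2$, again landing at $38$. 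Finally, for ``all other types,'' I would observe that every remaining configuration permitted by Theorem~\ref{classify} either has $B$ acquiring an even deeper singularity along $D$, or more singular points of $B$ along $D$, or higher local intersection, than in the cases above — each such strengthening is at least one more independent condition on $B$ — so the dimension drops below $38$; a uniform way to phrase this is that imposing ``$B$ has an ADE singularity of $\delta$-invariant $\delta$ at a point, with a branch tangent to $D$ to order $k$'' imposes at least $2\delta + (k-1)$ conditions minus the $\dim|D|$ worth of freedom in placing the point, and one checks the minimum over the residual cases exceeds what occurs in the $38$-dimensional list.

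\textbf{Main obstacle.} The serious point is not the general setup but the precise condition-counting for each ADE singularity type with a prescribed branch tangency: one must verify, for each entry of Figures~\ref{distinct points}, \ref{same points 4}, \ref{same points 5}, \ref{even case resolution}, exactly how many conditions on the sextic $B$ the prescribed local picture imposes, being careful that the conditions defining an $A_n$, $D_n$, $E_k$ singularity together with a tangency of order $k$ to the fixed curve $D$ are independent (no unexpected coincidences that would make the actual codimension smaller), and that the count is uniform as the singular point moves along $D$. Handling $\mathbb{F}_2$ versus $\mathbb{F}_0$ correctly — the extra automorphism versus the fact that $\mathbb{F}_2$ itself sits in the boundary — is the secondary subtlety, and I would resolve it by working throughout with the parameter space of all $(Z,B,D)$ where $Z$ ranges over quadrics (a $9$-dimensional family, $\mathbb{F}_0$ open and $\mathbb{F}_2$ a divisor in it) and subtracting $\dim PGL_4 = 15$ at the end, which sidesteps the bookkeeping of $\operatorname{Aut}(Z)$ entirely.
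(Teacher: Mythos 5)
Your outline coincides with the paper's: identify surfaces with triples $(Z,B,D)$ via Lemma~\ref{triples}, count the dimension of the space of triples for each configuration of Theorem~\ref{classify}, and quotient by $\mathrm{Aut}(Z)$ ($6$-dimensional for $\mathbb{F}_0$, $7$ for the cone). But as written the proposal does not establish the lemma, for two concrete reasons. First, your raw numbers are wrong: on the quadric one has $h^0(Z,\mathcal{O}_Z(6\Delta))=49$ (on $\mathbb{F}_0$ this is $h^0(\mathcal{O}(6,6))=7\cdot 7$), so $\dim|6\Delta|=48$, not $27$ --- your $27$ is the dimension of the space of \emph{plane} sextics --- and $\dim|\Delta|=3$, not $2$, since $h^0(\mathcal{O}(1,1))=4$. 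These two errors are exactly why your sample computations fail: your type~2a expression $27+1-6-(3-1)-1$ equals $19$, not $39$, and you concede that your type~1 count of $17$ is ``wrong.'' With the correct figures both cases do give $39$ (e.g.\ type~1: $48+3-6-6=39$, where $-6$ is six tangency conditions at unspecified points of $D$ and $-6$ is $\dim\mathrm{Aut}(\mathbb{F}_0)$), and the $38$-dimensional and lower cases follow by the same bookkeeping. Retreating to ``the key point is only that the generic member has dimension $39$'' is circular, since that number is precisely what the lemma asserts.

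Second, you correctly identify the independence of the imposed conditions as the serious issue, but you leave it unresolved. The paper's mechanism is the following: for $n\le 5$ the divisor $B-nD$ is big and nef, so $H^1(Z,\mathcal{O}_Z(B-nD))=0$ by Kodaira vanishing and the restriction map $H^0(Z,\mathcal{O}_Z(B))\to H^0(\mathcal{O}_{nD}(B))$ to the $n$-th infinitesimal neighborhood of $D$ is surjective. Hence one may prescribe the terms $s_0,\dots,s_{n-1}$ of the expansion $s|_{nD}=s_0+s_1d+s_2d^2+\cdots$ of the equation of $B$ along $D$ arbitrarily, where $d$ is the equation of $D$. Each local condition (node, $A_2$ point, prescribed tangency order at a point of $B\cap D$) is a condition on finitely many of these $s_i$ with $i\le 4$, so the conditions at distinct points of $D$ are automatically independent and each has its expected codimension. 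Your proposed $\delta$-invariant bound would need a comparable surjectivity statement to be more than a heuristic; without it the claim that ``each strengthening is at least one more independent condition'' is unproven. (Your suggestion of parametrizing by quadrics in $\mathbb{P}^3$ and subtracting $\dim PGL_4=15$ is a legitimate alternative to tracking $\mathrm{Aut}(\mathbb{F}_0)$ versus $\mathrm{Aut}(\mathbb{F}_2)$, and yields the same answers, but it does not address either of the issues above.)
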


\begin{proof}

Lemma~\ref{triples} implies that each triple $(Z,B,D)$ of Theorem~\ref{classify} corresponds to a unique stable numerical quintic surface, up to automorphisms of $Z$. We count the dimension of such triples in the given cases. The main difficulty is to check that requiring that the branch divisor obtain different types of singularities at different points imposes independent conditions on $B$. 

To create a triple $(Z, B, D)$:

\noindent 1. Fix a smooth or singular quadric $Z$.

\noindent 2. Choose a divisor $D\sim\Delta$ or $D\sim \Gamma$. Then by Riemann-Roch, since $K_Z=-2\Delta$ and $\Delta^2=2$, we have $h^0(Z, \mathcal{O}(D))=4$ if $D\sim \Delta$ and $h^0(Z, \mathcal{O}(D))=2$ if $D\sim \Gamma$. Projectivizing gives a 3-dimensional space of choices if $D\sim \Delta$ and a $1$-dimensional space if $D\sim \Gamma$. 

\noindent 3. Choose $k$ points on $D$ (through which $B$ will eventually pass). Since $D\simeq \mathbb{P}^1$, we have a $k$-dimensional space of choices for these points. (If $Z$ is a cone, we can choose the $k$ points so that none of them are the singularity of $Z$.)

\noindent 4. Choose a divisor $B$:
\begin{enumerate}
\item[4a.] To obtain $D\not\subset B$, choose $B\sim6\Delta$. Again by Riemann-Roch, $h^0(Z, \mathcal{O}(B))=49$. Projectiving gives a 48 dimensional space of possible branch curves $B$.
\item[4b.] To obtain $D\subset B$, choose $B'\sim 5\Delta$. By Riemann-Roch, $h^0(Z, \mathcal{O}(B'))=36$. Projectivizing gives a $35$-dimensional space of possible branch curves $B'$. By abuse of notation, take $B=B'$ (and note that the resulting triple will be of the form $(Z, B'+D, D)$, or with our abuse of notation, $(Z, B+D, D)$).
\end{enumerate}

\noindent 5. Consider the restriction exact sequence 
$$0\rightarrow \mathcal{O}_Z(B-D)\rightarrow\mathcal{O}_Z(B)\rightarrow\mathcal{O}_D(B)\rightarrow 0.$$
By Kodaira vanishing, $H^1(Z, \mathcal{O}_Z(B-D))=0$. Thus, the map 
$$H^0(Z, \mathcal{O}_Z(B))\rightarrow H^0(D,\mathcal{O}_D(B))$$ 
is surjective, and so we can find a curve $B\in |6\Delta|$ (or $B'\in |5\Delta|$) such that the restriction of $B$ to $D$ passes through any $m$ points on $D$, counted with multiplicities, where $m=(B\cdot D)$.
Thus, the requirement that $B$ pass through the given $m$ points, counted with multiplicities, is a codimension $m$ condition.

\noindent 6. The group of automorphisms of $Z$ is $6$-dimensional if $Z$ is smooth and $7$-dimensional if $Z$ is a cone. Thus, modding out by automorphisms of $Z$ is either a codimension $6$ condition or codimension $7$ condition.

Triples $(Z, B, D)$ where $D\subset B$ give a locus of dimension at most $3+10+35-10-6=32$, so we can assume for the rest of the proof that $D\not\subset B$.

\noindent 7. So far there is no guarantee that the most general $B$ is smooth at any given point, nor is it immediate that imposing the condition that $B$ obtain a certain mild singularity at a given point does not impose conditions on $B$ at the other $k-1$ points. Provided the multiplicity at each point is small enough, the fact that these conditions \emph{are} linearly independent follows from the fact that $B$ is sufficiently big. That is, for $n\le 5$, the divisor $B-nD$ is big and nef, so the cohomology group $H^1(Z, \mathcal{O}_Z(B-nD))$ is zero by Kodaira vanishing. Thus, the map 
$$H^0(Z, \mathcal{O}_Z(B))\rightarrow H^0(D,\mathcal{O}_{nD}(B))$$ 
induced by the restriction $\mathcal{O}_Z(B)\rightarrow\mathcal{O}_{nD}(B)$ is surjective. This means that we can choose $B$ in such a way that we can require the degree $1, 2,\ldots, n-1$ parts of the ``Taylor expansion" of its equation 
$$s|_{nD}=s_0+s_1d+s_2d^2+...$$
to be of any form we desire, where $d\in H^0(Z, \mathcal{O}_Z(D))$ is the equation of $D$ and $s_i\in H^0(D, \mathcal{O}_{D}(B-iD))$. 

Suppose we want to impose the condition that $B$ acquires a node at a given point $p$ for which $(B\cdot D)_p=2$. This is equivalent to requiring that the linear term in its Taylor expansion vanish at $p$, and that the discriminant of the quadratic term be non-vanishing at $p$. Therefore, this condition has expected codimension $1$. Since this is a requirement on the degree $1$ and $2$ parts of the Taylor expansion, taking $n=3$ implies that the requirements that $B$ be either smooth or obtain at most a node at each of its points are linearly independent conditions. That is, the condition that $B$ acquire a node at a point with multiplicity $2$ is indeed a codimension $1$ condition. 

Similarly, the requirement that $B$ acquire an $A_2$ singularity at a point $p$ for which $(B\cdot D)_p=2$ is equivalent to requiring that the linear term in its Taylor expansion vanish at $p$, the discriminant of the quadratic term also vanish at $p$, and the cubic term be nonvanishing. Since this is a requirement on the part of the Taylor expansion of degrees $1$, $2$, and $3$, taking $n=4$ implies that $B$ acquiring an $A_2$ singularity at the desired point is a codimension $2$ condition.

Requiring $B$ to have a node at a point $p$ for which $(B\cdot D)_p=3$ is equivalent to forcing the linear term in its Taylor expansion to vanish at $p$, and the coefficient of one monomial in the quadratic term to vanish at $p$. Again, this is a requirement on the degree $2$ part of the Taylor expansion, so taking $n=3$ implies that this is a codimension $2$ condition that does not impose conditions on the other points of $B\cap D$.

Let $l$ be the dimension of the set of triples such that $|B\cap D|=k$ (set theoretically). Then
$$ l=
\begin{cases}
33+k & \mbox{if } D\in|\Delta| \mbox{ on } \mathbb{F}_0\\
32+k & \mbox{if } D\in|\Delta| \mbox{ on } \mathbb{F}_2\\
37+k & \mbox{if } D\in|\Gamma| \mbox{ on } \mathbb{F}_0\\
36+k & \mbox{if } D\in|\Gamma| \mbox{ on } \mathbb{F}_2\\
\end{cases}.
$$
Thus, if $m$ is the codimension of the set of triples such that $B$ has prescribed singularities, then in order for the set of such triples to have dimension $38$ or $39$, we have 
\begin{eqnarray*}
m= k-6 \mbox{ or } m=k-5 & \mbox{if } D\in|\Delta| \mbox{ on } \mathbb{F}_0\\
m=k-6 \mbox{\hspace{1cm}} & \mbox{if } D\in|\Delta| \mbox{ on } \mathbb{F}_2\\
m=k-1 \mbox{ or } m=k-2 & \mbox{if } D\in|\Gamma| \mbox{ on } \mathbb{F}_0\\
m=k-2 \mbox{ or } m=k-3 & \mbox{if } D\in|\Gamma| \mbox{ on } \mathbb{F}_2
\end{eqnarray*}.

In particular, we see that if $D\sim \Delta$, then since $k\le 6$, we have $m=0$ or $1$. If $D\sim \Gamma$, then since $k\le4$, we have $m\le 3$.

For instance, the dimension of the locus of type 1 surfaces is $3+6+48-12-6=39$, and of type 1' surfaces is $3+6+48-12-1-6=38$. 

The dimension of the locus of type 2a surfaces is $1+4+48-6-1-1-6=39$, and of type 2b surfaces is $1+4+48-6-1-1-7=38$. 

Working through each of the remaining possibilities in Theorem~\ref{classify} gives the desired result.
\end{proof}

\subsection{The closures of the loci of type 1 and 2a surfaces}\label{1 and 2a closure}

The goal of this subsection is to prove the following. 

\begin{Theorem}\label{types 1 and 2a are main} Let $W$ be a stable numerical quintic surface  corresponding to the triple $(Z, B, D)$, and let $[W]$ denote its corresponding point in $\overline{\mathcal{M}}_{5,5}$. If $D\sim \Gamma$, then $[W]$ is in the closure of the locus of 2a surfaces. If $D\sim \Delta$, then $[W]$ is in the closure of the locus of surfaces of type 1. Thus, the closures of the loci of surfaces of types 1 and 2a contain all surfaces whose unique non Du Val singularity is a $\frac{1}{4}(1,1)$ singularity.
\end{Theorem}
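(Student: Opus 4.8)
The plan is to show that every triple $(Z,B,D)$ of Theorem~\ref{classify} can be deformed, within the space of triples, to one of the two distinguished types, in such a way that the associated stable surface degenerates isotrivially enough that the corresponding point in $\overline{\mathcal{M}}_{5,5}$ lies in the closure of the type 1 (resp.\ 2a) locus. By Lemma~\ref{triples} it suffices to work entirely on the level of triples, and by the dimension count in Lemma~\ref{counting} the two distinguished types are precisely the ones whose loci have the maximal dimension $39$. So the strategy is: given an arbitrary triple, exhibit a one-parameter family $(\mathcal{Z},\mathcal{B},\mathcal{D})$ over a disk whose general member is of type 1 (if $D\sim\Delta$) or type 2a (if $D\sim\Gamma$) and whose special member is the given triple, then argue that the induced family of surfaces $\mathcal{W}\to\Delta$ is a $\mathbb{Q}$-Gorenstein family with the $\frac14(1,1)$ singularity preserved, so that $[W]$ is a limit of type 1 (resp.\ 2a) points.

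First I would split along the dichotomy $D\sim\Gamma$ versus $D\sim\Delta$, since these never deform into each other (the linear equivalence class of $D$ is a deformation invariant, being determined by $K_Y\cdot f^*D$). In the case $D\sim\Gamma$, I would take the list of possible intersection configurations of $B$ and $D$ supplied by Theorem~\ref{classify}(1) — enumerated explicitly in Figures~\ref{distinct points}, \ref{same points 4}, \ref{same points 5} — and in each case deform $B$, holding $Z$ and $D$ fixed, so that the singularities of $B$ along $D$ dissolve: an $A_n$ singularity on $D$ with $(B\cdot D)_p$ odd degenerates to a configuration where $B$ is smooth and meets $D$ transversally at the generic point together with one tangential contact of the right parity — i.e.\ to the type 2a picture of Figure~\ref{BandD}(a). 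The point is that in the versal deformation of each ADE singularity of $B$ one can move to a nearby curve which, along $D$, has exactly the type 2a configuration, because the ``number of intersection points counted with multiplicity'' $(B\cdot D)=6$ is fixed and the constraint ``$(B\cdot D)_p$ odd at some $p$'' is open. Similarly, when $D\sim\Delta$ I would deform $B$ so that its configuration along $D$ becomes the six-fold tangency of type 1 (Figure~\ref{BandD}(d)); the parity constraint ``$(B\cdot D)_p$ even for all $p$'' is preserved under such deformations, and the generic member of the resulting family has all six local intersections equal to $2$.

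The mechanism guaranteeing that these deformations of $B$ exist, and exist without forcing unwanted behaviour elsewhere, is exactly the surjectivity statement already used in the proof of Lemma~\ref{counting}: since $B-nD$ is big and nef for $n\le 5$, Kodaira vanishing gives $H^1(Z,\mathcal{O}_Z(B-nD))=0$, so $H^0(Z,\mathcal{O}_Z(B))\to H^0(nD,\mathcal{O}_{nD}(B))$ is surjective, and one has free control over the low-order ``Taylor expansion'' of the equation of $B$ along $D$. Deforming these Taylor coefficients to the generic (type 1 or type 2a) values, while keeping the coefficients of $B$ away from $D$ generic, produces the required family of triples; taking the associated double covers $f_t:\mathcal{Y}_t\to\mathcal{Z}$, resolving Du Val singularities in family (after a possible base change), contracting the resulting $(-4)$-curve, and invoking the openness of the condition that the central fiber have a single $\frac14(1,1)$ singularity and ADE elsewhere, realizes $[W]$ as the limit in $\overline{\mathcal{M}}_{5,5}$ of type 1 (resp.\ 2a) surfaces.

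The main obstacle I anticipate is the same one flagged in the introduction: ensuring that the $(-4)$-curve $C$ on the minimal resolution $X$ does not break as one passes to the special fiber — i.e.\ that the component of $\psi^{-1}(f^{-1}(\mathcal{D}))$ containing the $(-4)$-curve stays irreducible along the degeneration. For $D\sim\Delta$ this is precisely where Theorem~\ref{simultaneous resolution for even intersection - 1} is invoked: the hypothesis that every local intersection $(\mathcal{D}_t\cdot\mathcal{B}_t)_p$ be even holds throughout the family (it is the defining parity condition of Case II), so the theorem provides, after base change, a simultaneous resolution in which one component of the preimage of $\mathcal{D}$ has irreducible special fiber — and that component is the sought-after $(-4)$-curve. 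For $D\sim\Gamma$ a parallel but more delicate argument is needed because $f^*D=\bar C$ is already irreducible upstairs; here one must check that the single $(-4)$-curve arising from the odd-intersection analysis in Case I deforms without breaking, which should follow because $\bar C$ is irreducible on the general fiber and its limit is a connected curve of arithmetic genus $0$ with the correct self-intersection $-4$, hence irreducible. Verifying these irreducibility claims uniformly across the (fairly long) list of ADE configurations in Figures~\ref{distinct points}–\ref{same points 5} and Figure~\ref{even case resolution} is the technical heart of the argument; everything else is bookkeeping with Riemann–Roch and the classification of Theorem~\ref{classify}.
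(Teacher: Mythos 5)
Your overall architecture matches the paper's: build a family of triples $(\mathcal{Z},\mathcal{B},\mathcal{D})$ with the given triple as special fiber and general fiber of type 1 or 2a (this is the paper's Lemma~\ref{triples connect}, proved exactly as you suggest, via Kodaira vanishing and surjectivity of restriction to $D$), take the double cover, simultaneously resolve after a base change, arrange that the closure of the $(-4)$-curve has irreducible special fiber, and contract. You also correctly isolate the real difficulty --- the possible breaking of the $(-4)$-curve on the special fiber --- and your treatment of the $D\sim\Delta$ case by invoking Theorem~\ref{simultaneous resolution for even intersection - 1} is exactly what the paper does.

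The gap is in your $D\sim\Gamma$ case. You claim the limit of the $(-4)$-curves is irreducible ``because it is a connected curve of arithmetic genus $0$ with self-intersection $-4$.'' That inference is false: if $C$ is a $(-4)$-curve and $E$ is a $(-2)$-curve with $C\cdot E=1$, then $(C+E)^2=-4$ and $K\cdot(C+E)=2$, so $p_a(C+E)=0$; the numerical invariants of the limit cannot distinguish an irreducible $(-4)$-curve from one that has absorbed exceptional $(-2)$-curves of the simultaneous resolution. Indeed the whole point of the paper's Lemma~\ref{why flopping helps} and Lemma~\ref{the odd case} is that for an arbitrary choice of simultaneous resolution the special fiber of $\mathcal{C}$ genuinely can be of the form $C+\sum a_iE_i$ with some $a_i>0$, and one must pass to a different small resolution by flopping. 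The argument that the flopping process terminates with all $a_i=0$ is not soft: one passes to the weight lattice of the Du Val configuration, shows that if no flop is available then the weight $\omega_C-\sum a_iE_i$ is dominant, and then uses the case-by-case computation (over all the configurations in Figures~\ref{distinct points}, \ref{same points 4}, \ref{same points 5}) that $\omega_C^2=4$, forcing $\sum a_iE_i=0$. Without this (or some substitute), your proof of the $D\sim\Gamma$ half does not go through. A secondary, smaller point: your appeal to ``openness of the condition that the central fiber have a single $\frac{1}{4}(1,1)$ singularity'' is backwards-sounding --- what is actually needed is that after the flops the section of $(-4)$-curves can be contracted in the family, which is what produces the $\frac{1}{4}(1,1)$ singularity on every fiber including the special one.
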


We begin by showing that if $W$ is a surface corresponding to the triple $(Z, B, D)$ for $D\sim \Delta$ (respectively, $D\sim\Gamma$) then there is a family of triples $(\mathcal{Z}, \mathcal{B},\mathcal{D})$ with special fiber $(Z, B, D)$ whose general fiber is a triple corresponding to a surface of type 1 (respectively, 2a). We then take a double cover of $\mathcal{Z}$ branched over $\mathcal{B}$, followed (after a possible finite base change) by a simultaneous resolution of Du Val singularities. Ideally, this would give the minimal resolution of the desired family of stable numerical quintic surfaces, and then contracting a section of $(-4)$ curves would give the family itself. However, we may first need to do a sequence of flops in order to guarantee existence of a family of $(-4)$-curves with irreducible special fiber. The bulk of the work is showing that such a sequence exists. 

\begin{Lemma}\label{triples connect} Let $(Z, B, D)$ be a triple corresponding to a stable numerical quintic surface. If $D\sim \Delta$ (respectively, $D\sim\Gamma$), then $(Z,B,D)$ is the special fiber of a family of triples $(\mathcal{Z}, \mathcal{B}, \mathcal{D})$ with general fiber a triple corresponding to a stable numerical quintic surface of type 1 (respectively, 2a). 
\end{Lemma}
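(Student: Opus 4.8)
The plan is to construct the family $(\mathcal{Z},\mathcal{B},\mathcal{D})$ explicitly by moving the branch divisor in a suitable linear system while keeping $D$ fixed (or moving it in its own linear system), forcing the degeneration of $B\cap D$ to the prescribed configuration over the special fiber and keeping $B\cap D$ generic over the general fiber. First I would set $\mathcal{Z}=Z\times\Delta$ (the trivial family over the unit disk) when $Z=\mathbb{F}_0$, and likewise $\mathcal{D}=D\times\Delta$ with $D$ a fixed member of $|\Delta|$ or $|\Gamma|$; when $Z=\mathbb{F}_2$ I would instead take $\mathcal{Z}$ to be a one-parameter smoothing of the quadric cone to $\mathbb{F}_0$ (the standard deformation of $\mathbb{F}_2$), with $\mathcal{D}$ a section of $|\Delta|$ or $|\Gamma|$ over it, so that the general fiber lands on $\mathbb{F}_0$; in all cases the general fiber will be of type 1 (for $D\sim\Delta$) or 2a (for $D\sim\Gamma$), since those are the generic types in Lemma~\ref{counting}. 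The divisor $\mathcal{B}$ is then chosen inside the relative linear system $|6\mathcal{D}|$ (rel. $|5\mathcal{D}|+\mathcal{D}$ in Case III) on $\mathcal{Z}$: using the surjectivity of the restriction maps $H^0(\mathcal{Z},\mathcal{O}(6\mathcal{D}))\to H^0(n\mathcal{D},\mathcal{O}(6\mathcal{D}))$ that underlies the dimension count in Lemma~\ref{counting} (Kodaira vanishing for $6D-nD$ big and nef, $n\le 5$), I can prescribe the Taylor expansion of the equation of $\mathcal{B}$ along $\mathcal{D}$ to agree with the given $(Z,B,D)$ at $t=0$ and to be generic for $t\ne 0$.

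The key steps, in order: (1) fix the ambient family $\mathcal{Z}\to\Delta$ and the section $\mathcal{D}$ as above, depending on whether $Z$ is smooth or a cone; (2) using the surjectivity-on-jets statement from the proof of Lemma~\ref{counting}, choose $\mathcal{B}\in|6\mathcal{D}|$ (or $\mathcal{B}=\mathcal{D}+\mathcal{B}'$ with $\mathcal{B}'\in|5\mathcal{D}|$ in Case III) whose restriction to $\mathcal{D}$ has the prescribed special jet at $t=0$ and generic jets for $t\ne 0$, and whose singularities along $\mathcal{D}_0$ are exactly those of the given $B$; (3) check that for generic $t\ne 0$ the divisor $\mathcal{B}_t$ has at most ADE singularities (by Bertini away from the finitely many imposed tangency points, and by the local analysis of Theorem~\ref{classify}/the figures at those points) and meets $\mathcal{D}_t$ in the generic type-1 resp. type-2a configuration; (4) conclude that $(\mathcal{Z}_t,\mathcal{B}_t,\mathcal{D}_t)$ is a triple as in Theorem~\ref{classify} of type 1 (resp. 2a) for $t\ne 0$, and equals $(Z,B,D)$ at $t=0$. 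Note the lemma only asserts existence of the family of \emph{triples}; producing the corresponding family of stable surfaces (double cover, simultaneous resolution, and contracting the $(-4)$-curves) is deferred to Theorem~\ref{simultaneous resolution for even intersection - 1} and the rest of Section~\ref{1 and 2a closure}, so I would not carry that out here.

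The main obstacle I expect is step (3): ensuring that deforming $B$ from the special configuration to a generic one does not create a non-ADE singularity on $\mathcal{B}_t$ and does not accidentally push $B\cap D$ out of the type-1 or type-2a stratum — e.g.\ that a prescribed tangency of order $4$ at a point does not degenerate a nearby ADE singularity, or that two imposed conditions at distinct points of $D$ remain independent in the family. This is handled by the same Taylor-expansion/jet-surjectivity argument as in Lemma~\ref{counting}: since the conditions cut out smooth strata of the space of triples of the expected codimension, a general one-parameter arc through the given point $(Z,B,D)$ meets the open dense (type 1 or type 2a) stratum, and along such an arc the branch curve retains only ADE singularities because the non-ADE locus has codimension $\ge 2$ in $|6\Delta|$ and the arc can be chosen to avoid it. A minor additional point in the $Z=\mathbb{F}_2$ case is that $6\Delta$ and the sublinear systems used remain base-point free and big+nef on the total space of the smoothing $\mathcal{Z}$, so that the relevant $H^1$ still vanishes; this follows since $\mathcal{O}_{\mathcal{Z}}(\mathcal{D})$ restricts to $\mathcal{O}(\Delta)$ on every fiber and $\Delta$ is ample on both $\mathbb{F}_0$ and the cone.
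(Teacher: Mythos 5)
Your proposal is correct in spirit and rests on the same mechanism as the paper's proof --- deform $B$ inside $|6\Delta|$ while essentially fixing $D$, using surjectivity of the restriction $H^0(Z,\mathcal{O}_Z(6\Delta))\rightarrow H^0(D,\mathcal{O}_D(B))$ --- but the implementation differs. The paper does not prescribe jets at all: it only controls the divisor $B|_D$ on $D\simeq\mathbb{P}^1$, and its one trick is that the squaring map $i:\mathbb{P}(H^0(D,\mathcal{O}_D(6)))\rightarrow\mathbb{P}(H^0(D,\mathcal{O}_D(12)))$, $E\mapsto 2E$, is a closed map; combined with openness of the Du Val locus $T$, this gives directly that every $B$ with everywhere-even intersection along $D$ is a limit of $B$'s tangent to $D$ at six distinct points. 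Your jet-prescription route proves more than is needed for the $\Delta$ case but is arguably more robust, and it has one genuine advantage: you explicitly deform $\mathbb{F}_2$ to $\mathbb{F}_0$ when $Z=\mathbb{F}_2$, which the lemma requires (types 1 and 2a live on $\mathbb{F}_0$) but which the paper's written proof, carried out on a fixed $Z$, silently omits.

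Two cautions. First, in the $D\sim\Gamma$ case the phrase ``generic jets for $t\neq 0$'' is the wrong condition: among Du Val curves whose restriction to $D$ is $2p_1+2p_2+p_3+p_4$, the \emph{generic} one is smooth and merely tangent to $D$ at $p_1,p_2$, which does not yield a $(-4)$-curve at all; the type 2a configuration requires $\mathcal{B}_t$ to have \emph{nodes} at $p_1,p_2$, so this is a condition you must impose (as in step 7 of Lemma~\ref{counting}), not one you get for free from genericity. Second, the surjectivity onto $n$-jets along $D$ that you cite from Lemma~\ref{counting} fails for $D\sim\Gamma$ on $\mathbb{F}_2$ already for small $n$ (e.g.\ $H^1(\mathbb{F}_2,\mathcal{O}(6\Delta-n\Gamma))\neq 0$ for $n\geq 2$), so on the special fiber you should either restrict to the $n=1$ restriction map (which does suffice, as in the paper) or perform the jet argument only after passing to the $\mathbb{F}_0$ fibers. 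Neither point invalidates your approach, but both need to be said for the argument to close.
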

\begin{proof}
Suppose that $D\sim \Delta$ and consider the restriction sequence
$$0\rightarrow \mathcal{O}_Z(6\Delta-D)\rightarrow \mathcal{O}_Z(6\Delta)\rightarrow\mathcal{O}_D(12)\rightarrow 0.$$
Since $H^1(Z, \mathcal{O}_Z(6\Delta-D))=0$, the induced map
$$r: H^0(Z, \mathcal{O}_Z(6\Delta))\rightarrow H^0(D, \mathcal{O}_D(12))$$
is surjective. 

Let $T\subset H^0(Z, \mathcal{O}_Z(6\Delta))$ be the locus of effective divisors with at most Du Val singularities, and let $U\subset V= H^0(D,\mathcal{O}_D(6))$ be the locus of effective divisors consisting of six distinct points. Note that $U$ is open in $V$ and $T$ is open in $H^0(Z, \mathcal{O}_Z(6\Delta))$. Let $V\rightarrow H^0(D, \mathcal{O}_D(12))$ be the map sending a section to its square. Projectivizing, we obtain an injective map $i: \mathbb{P}(V)\rightarrow \mathbb{P}(H^0(D, \mathcal{O}_D(12)))$ which sends an effective divisor $E$ to the divisor $2E$. Being a projective map, the map $i$ is a closed. Therefore,
$$\overline{r^{-1}(i(U))\cap T}\cap T=r^{-1}(i(V))\cap T.$$
That is, the locus of branch divisors with at most Du Val singularities that intersect $D$ in any six points, with even multiplicities, is the closure of the locus of branch divisors with at most Du Val singularities that intersect $D$ in six distinct points with multiplicity two at each point.

A similar argument holds for $D\sim \Gamma$, where we consider instead the restriction map 
$$\mathcal{O}_Z(6\Delta)\rightarrow\mathcal{O}_D(6),$$
take $U\subset H^0(D, \mathcal{O}_D(6))$ to be the locus of effective divisors consisting of four distinct points, two of which have multiplicity two, and take $i$ to be the identity map.
\end{proof}

Suppose that $p$ is a Du Val singularity of $B$ on $D$, and let $\sum E_i \subset X$ be the exceptional divisor of $\phi$ over $p$. Let $Q=\sum \mathbb{Z}E_i$ denote the corresponding root lattice of type $A$, $D$, or $E$, with positive-definite inner product $\circ$. Note that $\circ$ is the negative of the intersection product $\cdot$ on the Picard group of $X$. That is,
$$E_i\cdot E_j =-E_i\circ E_j.$$
Let $P=\textrm{Hom}(Q, \mathbb{Z})=\sum \mathbb{Q}\omega_i$ be the corresponding weight lattice, where $\omega_i$ are the fundamental weights, satisfying $\omega_i(E_j)=\delta_{ij}$.
By means of the inner product, we identify $P$ with a lattice in $Q\otimes\mathbb{Q}$, and in this way think of $Q$ as a sublattice of $P$. In particular, we may extend in a unique way the inner product on $Q$ to $P$. 

Given any effective divisor $C\in \textrm{Pic}(X)$, we can associate to $C$ the weight $\omega_C$ satisfying
$$\omega_C(E_i)=C\cdot E_i$$
for all $i$. 
As an example, if $C$ is a divisor such that for some $j$ we have $C\cdot E_j = 1$ and $C\cdot E_i=0$ for $i\neq j$, then under this correspondence, we have $\omega_C=\omega_j$.

We will use the classification of Theorem~\ref{classify} to describe all ways the $(-4)$-curve $C\subset X$ intersects the exceptional divisor $\sum E_i$. We label the dual graphs of the exceptional divisor of a Du Val singularity as follows:

\begin{center}
\begin{tikzpicture}

	\draw (0,0) -- (1,0);
         \draw (2,0) -- (2.7,0);
         \draw (1,0) -- (2,0);
	\draw (3.3, 0) -- (4,0);
	\draw (4,0) -- (5,0);
	
	\node[label={1}] at (0,0) {$\bullet$};
	\node at (1,0) {$\bullet$};
	\node at (2,0) {$\bullet$};
	\node at (4,0) {$\bullet$};
	\node[label={$n$}] at (5,0) {$\bullet$};

	\node at (3,0) {$\cdots$};
	\node at (-.5,0) {$A_{n}$};

	\draw (7,0) -- (9.7,0);
	\draw (10.3, 0) -- (11,0);
	\draw (11,0) -- (12,-.5);
	\draw (11,0) -- (12,.5);
	
	\node[label={1}] at (7,0) {$\bullet$};
	\node at (8,0) {$\bullet$};
	\node at (9,0) {$\bullet$};
	\node [label={$n-2$}]at (11,0) {$\bullet$};
	\node[label={$n$}] at (12,-.5) {$\bullet$};
	\node[label={$n-1$}] at (12,.5) {$\bullet$};

	\node at (10,0) {$\cdots$};
	\node at (6.5,0) {$D_{n}$};
	 
\end{tikzpicture}

\begin{tikzpicture}

	\draw (0,0) -- (4,0);
	\draw (2,0) -- (2,1);
	
	\node[label={1}] at (0,0) {$\bullet$};
	\node at (1,0) {$\bullet$};
	\node at (2,0) {$\bullet$};
	\node[label={6}] at (2,1) {$\bullet$};
	\node at (3,0) {$\bullet$};
	\node[label={5}] at (4,0) {$\bullet$};

	\node at (-.5,0) {$E_6$};

	\draw (7,0) -- (12,0);
	\draw (9,0) -- (9,1);
	
	\node[label={1}] at (7,0) {$\bullet$};
	\node at (8,0) {$\bullet$};
	\node at (9,0) {$\bullet$};
	\node[label={7}] at (9,1) {$\bullet$};
	\node at (10,0) {$\bullet$};
	\node at (11,0) {$\bullet$};
	\node[label={6}] at (12,0) {$\bullet$};
	
	\node at (6.5,0) {$E_7$};
	 
\end{tikzpicture}

\begin{tikzpicture}

	\draw (0,0) -- (6,0);
	\draw (2,0) -- (2,1);
	
	\node[label={1}] at (0,0) {$\bullet$};
	\node at (1,0) {$\bullet$};
	\node at (2,0) {$\bullet$};
	\node at (3,0) {$\bullet$};
	\node[label={8}] at (2,1) {$\bullet$};
	\node at (4,0) {$\bullet$};
	\node at (5,0) {$\bullet$};
	\node[label={7}] at (6,0) {$\bullet$};
	
	\node at (-1,0) {$E_8$};
	 
\end{tikzpicture}
\end{center}

It will be useful to have a description of the root systems $A_n$, $D_n$, and $E_n$. 

For $A_n$, let $e_0, \ldots, e_n$ be an orthonormal basis of $\mathbb{R}^{n+1}$. A basis of simple roots $E_1, \ldots, E_n$ of $A_n$ is given by $E_i=e_{i-1}-e_{i}$. 

For $D_n$, we let $e_1, \ldots, e_n$ be an orthonormal basis of $\mathbb{R}^n$. A basis of simple roots is given by $E_i=e_{i}-e_{i+1}$ for $i =1, \ldots n-1$ and $E_n=e_{n-1}+e_n$.

To describe the root system of $E_n$, let $S$ be a Del Pezzo surface given by the blowup of $\mathbb{P}^2$ in $n$ points in general position. Then the root lattice $E_n$ can be described as the orthogonal complement of the canonical class $K_S$ in  $\textrm{Pic}(S)$, after a sign change. Let $h, e_1, \ldots e_n$ be a basis of $\textrm{Pic}(S)$, where $e_i$ are classes of the exceptional curves on $S$, and $h$ is the pullback of a hyperplane section. We note that $K_S=-3h+\sum_{i=1}^n e_i$. With respect to the pairing $\circ$ satisfying $e_i\circ e_j=\delta_{ij}$, $e_i\circ h=0$, and $h^2=-1$ (the negative of the intersection pairing on $S$), we can describe the simple roots of $E_n$ as 
$$E_i=e_i-e_{i+1} \textrm{ for } i=1,\ldots, n-1$$
$$E_n=h-e_1-e_2-e_3.$$

Our main tool for proving the existence of the desired sequence of flops is the following.

\begin{Lemma}\label{why flopping helps}
Let $\mathcal{X}$ be a family of surfaces over the unit disk in $\mathbb{C}$ with special fiber $X$. Let $\mathcal{D}\subset \mathcal{X}$ be a divisor whose restriction to $X$ is an effective sum of curves $\mathcal{D}|_X=C+\sum a_iE_i\subset X$, where the $E_i$ are distinct $(-2)$-curves, 
and $C\cdot E_i\ge 0$ for all $i$. Suppose there exists $j$ such that $\mathcal{D}\cdot E_j <0$. Let $\phi: \mathcal{X}'\rightarrow \mathcal{X}$ be the flop of $E_j$. Then 
$$(\phi^*\mathcal{D})|_X=C+a_j'E_j +\sum_{i\neq j} a_i E_i $$
where $a_j>a_j'\ge 0$. That is, flopping the curve $E_j$ results in a divisor $\phi^*\mathcal{D}$ with at least one fewer $(-2)$-curve on the special fiber.
\end{Lemma}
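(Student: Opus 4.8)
The plan is to realize $\phi$ explicitly as a blow-up followed by a blow-down and then to read off the coefficient $a_j'$ from a multiplicity computation along the exceptional divisor. To begin with, note that $\mathcal{D}\cdot E_j<0$ already forces $E_j\subset\mathcal{D}$ and $a_j\ge 1$, since an effective divisor not containing the irreducible curve $E_j$ meets it non-negatively. Since $X$ is a fibre of $\mathcal{X}\to\Delta$, the line bundle $\mathcal{O}_{\mathcal{X}}(X)|_{E_j}$ is trivial, so by adjunction $K_{\mathcal{X}}\cdot E_j=K_X\cdot E_j=0$; thus $E_j$ is $K_{\mathcal{X}}$-trivial, and in the floppable case $N_{E_j/\mathcal{X}}\cong\mathcal{O}(-1)^{\oplus 2}$ the flop is obtained from the blow-up $p\colon W\to\mathcal{X}$ of the smooth curve $E_j$, with exceptional divisor $\mathcal{E}\cong\mathbb{P}^1\times\mathbb{P}^1$, by contracting the second ruling of $\mathcal{E}$ via a morphism $q\colon W\to\mathcal{X}'$ with $q(\mathcal{E})=E_j'$. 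Since $p$ and $q$ are isomorphisms away from $E_j$, the central fibre $X'$ of $\mathcal{X}'\to\Delta$ is a smooth surface agreeing with $X$ outside $E_j$, with $E_j$ replaced by the smooth rational curve $E_j'$, and $(\phi^*\mathcal{D})|_{X'}$ is effective and does not contain $X'$. Identifying $X'$ with $X$ and $E_j'$ with $E_j$ via these isomorphisms, we may therefore write $(\phi^*\mathcal{D})|_X=C+a_j'E_j+\sum_{i\neq j}a_iE_i$ with $a_j'\ge 0$ and the remaining coefficients unchanged, so the content of the lemma reduces to the inequality $a_j'<a_j$.

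To prove that inequality I would compare multiplicities along $\mathcal{E}$. Write $p^*\mathcal{D}=\widetilde{\mathcal{D}}+m\,\mathcal{E}$ and $q^*(\phi^*\mathcal{D})=\widetilde{\mathcal{D}}+m'\,\mathcal{E}$, where $\widetilde{\mathcal{D}}$ is the common strict transform, and $p^*X=\widetilde X+\mathcal{E}$, $q^*X'=\widetilde X+\mathcal{E}$, where $\widetilde X$ is the common strict transform of the central fibre (the coefficient of $\mathcal{E}$ is $1$ by smoothness of $X$ and $X'$ along $E_j$ and $E_j'$). Because blowing up the Cartier divisor $E_j\subset X$ does nothing, the induced maps $p|_{\widetilde X}\colon\widetilde X\to X$ and $q|_{\widetilde X}\colon\widetilde X\to X'$ are isomorphisms, carrying $\widehat{E_j}:=\widetilde X\cap\mathcal{E}$ to $E_j$ and to $E_j'$ respectively; restricting $\widetilde{\mathcal{D}}=p^*\mathcal{D}-m\,\mathcal{E}$ to $\widetilde X$ (using that $\mathcal{E}$ meets $\widetilde X$ transversally along $\widehat{E_j}$, and that pull-back commutes with restriction for Cartier divisors) shows that the coefficient of $\widehat{E_j}$ in $\widetilde{\mathcal{D}}|_{\widetilde X}$ is $a_j-m$, while the same computation on the $q$-side gives that it is $a_j'-m'$; hence $a_j-m=a_j'-m'$. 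On the other hand, intersecting $p^*\mathcal{D}$ and $q^*(\phi^*\mathcal{D})$ with a ruling $\ell'$ of the contraction $q|_{\mathcal{E}}$ — which $q$ contracts to a point, which $p$ maps isomorphically onto $E_j$, and which satisfies $\mathcal{E}\cdot\ell'=-1$ — gives $\widetilde{\mathcal{D}}\cdot\ell'=m'$ and $\mathcal{D}\cdot E_j=m'-m$. Combining the two identities yields $a_j'-a_j=m'-m=\mathcal{D}\cdot E_j<0$, so $a_j'=a_j+\mathcal{D}\cdot E_j$ lies in $[0,a_j)$; in particular the coefficient of $E_j$ has strictly decreased, and if $a_j'=0$ the special fibre of $\phi^*\mathcal{D}$ has one fewer $(-2)$-curve.

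The step I expect to be the main obstacle is the local bookkeeping around $\mathcal{E}$: checking that the strict transform $\widetilde X$ of the central fibre is genuinely common to the two contractions $p$ and $q$, that the isomorphisms $X\cong\widetilde X\cong X'$ are compatible with the distinguished curves $E_j$ and $E_j'$, and that the intersection numbers $\mathcal{E}\cdot(\text{ruling})=-1$ and $p_*(\text{a }q\text{-ruling})=[E_j]$ are as claimed — together with a clean treatment of the floppable normal-bundle hypothesis. Once these facts are in place, the coefficient equality $a_j-m=a_j'-m'$ coming from restriction to $\widetilde X$ and the relation $\mathcal{D}\cdot E_j=m'-m$ coming from intersecting with a ruling are immediate, and the lemma follows.
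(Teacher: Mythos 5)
Your argument is correct in the case it treats, and it actually proves more than the lemma asks for: the exact formula $a_j'=a_j+\mathcal{D}\cdot E_j$. But it takes a genuinely different route from the paper, and it has one real gap in generality. The paper's proof never opens up the flop at all. It uses only three facts: (i) $\phi$ is an isomorphism in codimension one, so the coefficients $a_i$ for $i\neq j$ are unchanged and $a_j'\ge 0$ by effectivity; (ii) the defining sign-flip property of a flop, namely $\mathcal{D}\cdot E_j<0$ implies $\phi^*\mathcal{D}\cdot E_j>0$; and (iii) the one-line computation on the special fiber
$$\bigl(C+a_j'E_j+\textstyle\sum_{i\neq j}a_iE_i\bigr)\cdot E_j=(\mathcal{D}\cdot E_j)+2a_j-2a_j'>0,$$
which, since $\mathcal{D}\cdot E_j<0$, forces $a_j>a_j'$. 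What the paper's route buys is independence from the local analytic structure of the flop; what your route buys is the sharper identity $a_j'=a_j+\mathcal{D}\cdot E_j$ and an explicit picture of where the dropped multiplicity goes.

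The gap: you assume $N_{E_j/\mathcal{X}}\cong\mathcal{O}(-1)^{\oplus 2}$, so that the flop is the Atiyah flop (one blow-up, one blow-down, $\mathcal{E}\cong\mathbb{P}^1\times\mathbb{P}^1$). That is not forced by the hypotheses. Since $E_j$ lies in the fibre $X$ with $N_{E_j/X}=\mathcal{O}(-2)$ and $N_{X/\mathcal{X}}|_{E_j}=\mathcal{O}$, the normal bundle is an extension of $\mathcal{O}$ by $\mathcal{O}(-2)$, hence either $\mathcal{O}(-1)^{\oplus 2}$ (non-split) or $\mathcal{O}\oplus\mathcal{O}(-2)$ (split). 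In the split case $E_j$ can still be floppable (Reid's pagoda of finite width), but the flop then factors through a longer chain of blow-ups, $\mathcal{E}$ is not $\mathbb{P}^1\times\mathbb{P}^1$, and your multiplicity bookkeeping along a single exceptional divisor does not directly apply. Either restrict your claim by verifying that the $(0,-2)$ case cannot occur for the curves being flopped, or replace the explicit model by the abstract sign-flip property as in the paper; the latter also disposes of the remaining "local bookkeeping" you flag (that $X'\cong X$ compatibly with $E_j'\leftrightarrow E_j$ and the intersection numbers), which both arguments need but which is standard for flops over the contraction of $E_j$.
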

\begin{proof} Suppose that $a=\mathcal{D}\cdot E_j$ is negative. Flopping $E_j$ does not change the coefficients $a_i$ for $i\neq j$, and so 
$$(\phi^*\mathcal{D})|_X=(C+a_j'E_j+\sum_{i\neq j} a_i E_i)$$
for some nonnegative integer $a_j'$. Moreover, because $\mathcal{D}\cdot E_j<0$, flopping $E_j$ gives $\phi^*\mathcal{D}\cdot E_j> 0$. Thus
$$(C+a_j'E_j+\sum_{i\neq j} a_i E_i)\cdot E_j = a-(-2a_j)+(-2a_j')> 0,$$
so
$$a+2(a_j-a_j')> 0.$$
Since $a<0$, we have that $a_j> a_j'$.
\end{proof}

\begin{Lemma}\label{the odd case}
Let $(\mathcal{Z}, \mathcal{B},\mathcal{D})$ be a family of triples over the unit disk in $\mathbb{C}$, with general fiber corresponding to a stable numerical quintic surface of type 2a or 2b and such that $D=\mathcal{D}_0$ is a ruling. Let $\mathcal{Y}$ be the double cover of $\mathcal{Z}$ branched over $\mathcal{B}$. Then there exists, after a possible finite base change, a simultaneous resolution of singularities $\mathcal{X}\rightarrow\mathcal{Y}$ such the closure of the $(-4)$-curve on the general fiber is irreducible. 
\end{Lemma}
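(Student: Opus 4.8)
The plan is to realise $\mathcal X$ as a simultaneous resolution of the double cover $\mathcal Y\to\mathcal Z$ and then to correct the choice of resolution by a finite sequence of flops, using Lemma~\ref{why flopping helps} to force the number of superfluous $(-2)$-curves in the special fibre to zero. First I would form $f\colon\mathcal Y\to\mathcal Z$, the double cover branched over $\mathcal B$. Since each $\mathcal B_t$ has at most Du Val singularities (Theorem~\ref{classify}), $\mathcal Y$ has compound Du Val singularities along a curve over $\mathrm{Sing}(\mathcal B)$ and every fibre $Y_t$ has at most Du Val singularities; by the theorem on simultaneous resolution of families of rational double points (Brieskorn, Tyurina; cf.\ also Artin), after a finite base change $t\mapsto t^N$ there is a threefold $\mathcal X$ with a proper birational $\psi\colon\mathcal X\to\mathcal Y$ restricting on every fibre to the minimal resolution $X_t\to Y_t$. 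In particular $X:=\mathcal X_0$ is the minimal resolution of $Y:=\mathcal Y_0$ and $\psi|_X$ contracts precisely the $(-2)$-curves $E_1,\dots,E_m$ resolving the Du Val points of $Y$. Flopping one of the $E_i$ (in the sense of Lemma~\ref{why flopping helps}) again produces a simultaneous resolution of $\mathcal Y$ with the same generic fibre and the same special fibre $X$, so the $E_i$ are available for the correction argument below.

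Next I would track the $(-4)$-curve. For $t\neq 0$ the classification of type $2$a/$2$b surfaces produces a unique irreducible $(-4)$-curve $C_t=\tilde f_t^{-1}(\tilde{\mathcal D}_t)$ on $X_t$, which is irreducible because $\mathcal D_t\sim\Gamma$ meets $\mathcal B_t$ with odd local multiplicity at its transversal points, so $f^{-1}(\mathcal D_t)$ does not split. Let $\mathcal C\subset\mathcal X$ be the closure of $\bigcup_{t\neq0}C_t$ and $\mathcal C_0$ its flat limit. Since $\psi(\mathcal C)=f^{-1}(\mathcal D)$ and $f^{-1}(D)$ is again irreducible (there is still a point of odd local multiplicity, by Theorem~\ref{classify}), we may write $\mathcal C_0=C+\sum_i a_iE_i$ where $C$ is the irreducible proper transform of $f^{-1}(D)$ on $X$, $a_i\ge 0$, and $C\cdot E_i\ge 0$ (as $C$ is irreducible and $C\neq E_i$). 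Moreover $\mathcal C_0$ is connected (Stein factorisation of $\mathcal C\to\Delta$), and $\mathcal C_0^2=C_t^2=-4$ and $K_X\cdot\mathcal C_0=K_{X_t}\cdot C_t=2$ by deformation invariance of intersection numbers; since $K_X\cdot E_i=0$ this forces $K_X\cdot C=2$, hence $C^2=2p_a(C)-4\ge -4$.

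Now suppose $\sum_i a_i>0$, so the limit is still reducible. By connectedness some $E_j$ with $a_j>0$ meets $C$, whence $\mathcal C_0\cdot C\ge C^2+1\ge -3$, and therefore
\[
\sum_j a_j\,(\mathcal C_0\cdot E_j)=\mathcal C_0\cdot(\mathcal C_0-C)=\mathcal C_0^2-\mathcal C_0\cdot C\le -1<0 ,
\]
so there is an index $j$ with $a_j>0$ and $\mathcal C\cdot E_j=\mathcal C_0\cdot E_j<0$. Applying Lemma~\ref{why flopping helps} to flop $E_j$ yields a new simultaneous resolution on which the limit becomes $C+a_j'E_j+\sum_{i\neq j}a_iE_i$ with $0\le a_j'<a_j$, so $\sum_i a_i$ strictly decreases. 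The hypotheses persist: the new special fibre is again the minimal resolution of $Y$ with its $(-2)$-curves, $C$ still dominates $f^{-1}(D)$ and so remains irreducible, and $K_X\cdot C=2$ is unchanged. Since $\sum_i a_i\in\mathbb Z_{\ge0}$ strictly decreases, after finitely many flops we reach a simultaneous resolution with $\mathcal C_0=C$ irreducible, which is the assertion. A possibly further finite base change may be needed to begin the process, as in the statement.

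The numerical input ($K_X\cdot C=2$, hence $C^2\ge -4$, hence the existence at each stage of a floppable $(-2)$-curve meeting $\mathcal C$ negatively) is comparatively soft. I expect the main obstacle to be the structural bookkeeping around the flops: verifying that each flop stays inside the class of simultaneous resolutions of $\mathcal Y$ and that the resulting limit again has the shape ``an irreducible curve plus $(-2)$-curves'' — in particular ruling out the possibility that a flop splits $C$ itself rather than merely lowering a coefficient $a_i$. This is precisely the combinatorics of the root lattices of the exceptional curves over the Du Val points of $Y$, which the weight/Weyl-lattice formalism set up just before the lemma is designed to control, and is where the bulk of the work will lie.
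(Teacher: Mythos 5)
Your argument is correct, but it replaces the paper's key step with a genuinely different and more elementary one. The paper writes the \emph{full} special fiber $\psi^{-1}(f^{-1}(\mathcal{D}))_0=C+\sum a_iE_i+\sum b_iE_i+\sum c_iE_i$ (including the limits of the two families of $(-2)$-curves over the nodes of $\mathcal{B}_t$), uses the projection formula to get $\psi^*(f^*\mathcal{D})\cdot E_j=0$, passes to the weight lattice, and shows that dominance of $\omega_C-\sum a_iE_i$ forces $\omega_C^2\ge 4$ with equality iff all $a_i=0$; the identity $\omega_C^2=4$ is then verified case by case from the classification in Theorem~\ref{classify} (the table of weights). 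You instead never touch the root-lattice formalism: you use deformation invariance of $\mathcal{C}^2=-4$ and $K_{\mathcal{X}/\Delta}\cdot\mathcal{C}=2$, adjunction on the irreducible component $C$ to get $C^2\ge-4$, and Zariski connectedness of $\mathcal{C}_0$ to force $\mathcal{C}_0\cdot C\ge C^2+1$, whence $\mathcal{C}_0\cdot\sum a_iE_i\le-1$ and a floppable curve exists whenever $\sum a_i>0$; the rest (Lemma~\ref{why flopping helps} plus descent on $\sum a_i$) is the same in both proofs. Your route is shorter and avoids the case analysis entirely, needing only that $f_0^{-1}(D_0)$ is irreducible (the odd intersection point guaranteed by Theorem~\ref{classify}) and that $K_{Y_0}\cdot f_0^{-1}(D_0)=2$; the paper's weight-theoretic method is heavier here but is the one that generalizes to the even-intersection case of Theorem~\ref{simultaneous resolution for even intersection}, where minusculeness rather than a numerical coincidence is the operative fact. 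Two small points you should make explicit: the coefficient of $C$ in $\mathcal{C}_0$ is $1$ (this follows from $K_{X_0}\cdot C=K_{Y_0}\cdot\bar C=2=K_{X_0}\cdot\mathcal{C}_0$, and your adjunction bound silently uses it), and the $E_j$ you flop is indeed floppable because any exceptional curve that persists in a family $\mathcal{E}$ over $\Delta$ satisfies $\mathcal{C}\cdot\mathcal{E}_0=C_t\cdot\mathcal{E}_t\ge0$, so a curve with $\mathcal{C}\cdot E_j<0$ is necessarily one of the vanishing cycles contracted by $\psi$.
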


\begin{proof}

Let $(Z, B, D)$ be a triple corresponding to a stable numerical quintic surface, and suppose that $D$ is a ruling. Let $(\mathcal{Z}, \mathcal{B}, \mathcal{D})$ be a family of triples whose general fiber is of type 2a or 2b, and whose special fiber is the triple $(Z, B, D)$. Consider the maps
\[\xymatrix{
\mathcal{X} \ar[r]^{\psi} & \mathcal{Y}\ar[r]^f& \mathcal{Z}
}\]
where $f:\mathcal{Y}\rightarrow \mathcal{Z}$ is the double cover branched over $\mathcal{B}$, and  $\psi:\mathcal{X}\rightarrow \mathcal{Y}$ is any simultaneous resolution of Du Val singularities, which exists after a possible finite base change. Let $\mathcal{C}$ be the closure of the $(-4)$-curve on the general fiber of $\mathcal{X}$. Recall that $\mathcal{B}$ has two nodes lying on the general fiber of $\mathcal{D}$, and thus the general fiber of $\mathcal{X}$ contains two $(-2)$ curves intersecting the general fiber of $\mathcal{C}$. Let $\mathcal{E}_1$ and $\mathcal{E}_2$ be the closures of these $(-2)$-curves.
We claim that there exists a sequence of flops of $\mathcal{X}$ so that $\mathcal{C}$ has irreducible special fiber.

Let $C$ be the $(-4)$-curve on $X=\mathcal{X}_0$ and let $\sum E_i\subset X$ exceptional divisor over the singularities of $B$ on $D$. Then
$$\psi^{-1}(f^{-1}(\mathcal{D}))_0=C+\sum a_i E_i +\sum b_i E_i +\sum c_i E_i,$$
where
$$C+\sum a_i E_i=\mathcal{C}_0, \; \; \; \sum b_iE_i=(\mathcal{E}_1)_0, \; \; \; \sum c_iE_i=(\mathcal{E}_2)_0,$$
and the $a_i, b_i, c_i$ are nonnegative integers. Since the general fibers of $\mathcal{E}_1$ and $\mathcal{E}_2$ are disjoint $(-2)$-curves, the divisors $\sum b_iE_i$ and $\sum c_i E_i$
satisfy $(\sum b_iE_i)\cdot(\sum c_i E_i)=0$ and $(\sum b_iE_i)^2=(\sum c_i E_i)^2=-2$. Moreover
\begin{equation}\label{C + stuff is weight 0}
(C+\sum a_i E_i +\sum b_i E_i +\sum c_i E_i)\cdot E_j =0 \textrm{ for all $j$.}
\end{equation}
We pass to the weight lattice $\Lambda$ of the corresponding singularity (or singularities) on $Y=\mathcal{Y}_0$. Note that if $B$ has one singularity on $D$, then $\Lambda$ is irreducible. But if $B$ has two singularities, then the lattice $\Lambda$ is a direct sum $P\oplus P'$ of irreducible lattices, each corresponding to a singularity of $B$ on $D$. In either case, Equation~\eqref{C + stuff is weight 0} implies that the weight $$\omega_C-\sum a_i E_i -\sum b_i E_i -\sum c_i E_i\in \Lambda$$ is equal to $0$. Thus, as weights, we have
$$\omega_C-\sum a_i E_i =\sum b_i E_i +\sum c_i E_i.$$

We prove that if $(C+\sum a_i E_i)\cdot E_j \ge 0$ for all $j$, then $a_i=0$ for all $i$. Together with this, Lemma~\ref{why flopping helps} implies that if $a_i\neq 0$ for some $i$, then there exists a flop of $\mathcal{X}$ after which $\mathcal{C}_0$ will contain at least one fewer $(-2)$-curve. 

Suppose that for all $j$, we have $(C+\sum a_i E_i)\cdot E_j \ge 0$.  Passing again to the weight lattice, this is the statement that 
$$\left(\omega_C-\sum a_i E_i\circ E_j\right)\ge 0$$
for all $j$. The last inequality is equivalent to the statement that the weight 
$$\omega_C-\sum a_i E_i=\sum b_i E_i +\sum c_i E_i$$
is dominant. We therefore have 
\begin{eqnarray*}
\omega_C^2&=&\left(\sum b_i E_i +\sum c_i E_i\right)^2+2\left(\sum b_i E_i +\sum c_i E_i\right)\circ\left(\sum a_i E_i\right) +\left(\sum a_i E_i\right)^2\\
&=& 4 +2\left(\sum b_i E_i +\sum c_i E_i\right)\circ\left(\sum a_i E_i\right) +\left(\sum a_i E_i\right)^2\\
&\ge& 4,
\end{eqnarray*}
where equality holds if and only if $a_i=0$ for all $i$. Thus, our proof is complete once we show that $\omega_C^2=4$ independent of the singularity (or singularities) of $B$ on $D$. 

By Theorem~\ref{classify}, and referring to Figures~\ref{distinct points}, \ref{same points 4}, and~\ref{same points 5}, we have the following table detailing how $C$ intersects the curves $E_i$. We write $\omega_C|_P$ to denote the weight $\omega_C$ restricted to the irreducible lattice $P$ corresponding to the singularity of $B$, and note that if $\Lambda=P$ is irreducible (i.e., if $B$ has only one singularity on $D$), then $\omega_C|_{\Lambda}=\omega_C|_{P}=\omega_C$. Otherwise, $\omega_C=\omega_C|_{P}+\omega_C|_{P'}$ where $\omega_C|_{P}$ and $\omega_C|_{P'}$ are orthogonal weights in $\Lambda$, each corresponding to a singularity of $B$ on $D$.

\begin{table}[h!]\label{odd case table}
\begin{tabular}{c|c|c|c}
$(B\cdot D)_p$ & Singularity & Nonzero intersections $C\cdot E_i$  & $\omega_C|_P$\\
\hline
2 & $A_n$ & $C\cdot E_1=C\cdot E_n=1$ & $e_0-e_n$\\
\hline
3 & $A_1$ & $C\cdot E_1=1$ & $e_0-e_1$\\
3 & $A_2$ &  $C\cdot E_1=C\cdot E_2 =1$&$e_0-e_2$ \\
3 & $D_n$ & $C\cdot E_2=1$ & $e_1+e_2$\\
3 & $E_6$ &  $C\cdot E_6 =1$ & $-2h+\sum_{i=1}^6e_i$ \\
3 & $E_7$ &  $C\cdot E_1=1$ & $2h-\sum_{i=1}^7e_i$\\
3 & $E_8$ &  $C\cdot E_7=1$ & $3h-2e_8-\sum_{i=1}^7e_i$\\
\hline
4 & $A_n$, $n\ge4$ &  $C\cdot E_2=C\cdot E_{n-1}=1$ & $e_0+e_1-e_{n-1}-e_n$ \\
4 & $D_5$ &   $C\cdot E_4=C\cdot E_{5}=1$ & $e_1+e_2+e_3+e_4$\\
4 & $D_n$ &   $C\cdot E_1=2$ & $2e_1$\\
4 & $E_6$ &   $C\cdot E_1=C\cdot E_{5}=1$ & $2h+2e_1+\sum_{i=2}^5$ \\
\hline
5 & $A_3$ &   $C\cdot E_2=1$ & $e_0+e_1-e_2-e_3$\\
5 & $A_4$ &   $C\cdot E_2=C\cdot E_3=1$ & $e_0+e_1-e_3-e_4$ \\
5 & $D_n$, $n\ge 6$ &  $C\cdot E_4=1$ & $e_1+e_2+e_3+e_4$\\
5 & $D_n$ &  $C\cdot E_1=2$ & $2e_1$\\
5& $E_7$ & $C\cdot E_5=1$ & $3h-2e_6-2e_7-\sum_{i=1}^5e_i$\\
5 & $E_8$ &   $C\cdot E_1=1$ & $5h-e_1-2\sum_{i=2}^8e_i$\\
\end{tabular}
\end{table}

One quickly checks that if $(B\cdot D)_p$ is $2$ or $3$, then $(\omega_C|_P)^2=2$ and if $(B\cdot D)_p$ is $4$ or $5$, then $(\omega_C|_P)^2=4$. In the former case, $\omega_C$ is the sum of two perpendicular vectors $\omega_C|_P$ and $\omega_C|_{P'}$, and thus
$$\omega_C^2=(\omega_C|_P + \omega_C|_{P'})^2=4.$$ This completes the proof.
\end{proof}

\begin{Remark}
The proof of Lemma~\ref{the odd case} relies on the fact that in each case, the weight $\omega_C$ is dominant with square $4$. It is interesting to note that the vectors listed in Table~\ref{odd case table} are in fact all vectors with square $4$ or $2$ corresponding to dominant weights, up to the action of the corresponding Weyl group. 
\end{Remark}

In the case where $D\sim \Delta$ is a diagonal, have a more general statement. The proof will require the following.

\begin{Lemma}\label{from rep theory}~\cite[VIII, 7.3]{bourbaki} Let $\omega$ be a weight and suppose that $\omega-\sum b_j E_j$ is dominant. Then the weight $\omega-\sum b_j E_j$ is a (dominant) weight of the irreducible representation $V_{\omega}$ of highest weight $\omega$ of the corresponding Lie algebra.
\end{Lemma}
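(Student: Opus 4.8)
Lemma~\ref{from rep theory} is a standard fact from the representation theory of semisimple Lie algebras, and in the paper I would simply invoke \cite[VIII, 7.3]{bourbaki}; here is the argument one would give. First observe that for $V_\omega$ to make sense as the irreducible module of highest weight $\omega$ we need $\omega$ to be dominant, and that the integers $b_j$ occurring in the situations where this lemma is applied are non-negative, so $\beta:=\sum_j b_jE_j$ lies in the positive root cone $Q_+$. Thus what must be shown is precisely that the set of dominant weights of $V_\omega$ is \emph{saturated}: every dominant weight $\mu$ with $\omega-\mu\in Q_+$ occurs as a weight of $V_\omega$.

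The plan is to descend from $\omega$ to $\mu$ one simple root at a time. I would construct a chain $\omega=\nu_0,\nu_1,\dots,\nu_N=\mu$ with $\nu_{\ell+1}=\nu_\ell-E_{j(\ell)}$, each $\nu_\ell$ a weight of $V_\omega$ and each of $\nu_\ell-\mu$ and $\omega-\nu_\ell$ in $Q_+$; since $\mathrm{ht}(\omega-\nu_\ell)=\ell$ and $\mathrm{ht}(\nu_\ell-\mu)=N-\ell$ for $N=\mathrm{ht}(\omega-\mu)$, such a chain is automatically forced to terminate at $\mu$. The $(\ell+1)$-st step would use two standard inputs. The first is a linear-algebra observation: because the $W$-invariant form $\circ$ is positive definite and $\nu_\ell\ne\mu$, one has $0<(\nu_\ell-\mu)\circ(\nu_\ell-\mu)=\sum_k c_k\,\big(E_k\circ(\nu_\ell-\mu)\big)$, where the $c_k\ge 0$ are the simple-root coordinates of $\nu_\ell-\mu$; hence some index $k$ satisfies $c_k>0$ and $E_k\circ(\nu_\ell-\mu)>0$, and since $\mu$ is dominant $E_k\circ\mu\ge0$, so $\langle\nu_\ell,E_k^\vee\rangle>0$. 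The second is an $\mathfrak{sl}_2$ argument: restricting $V_\omega$ to the $\mathfrak{sl}_2$-triple attached to $E_k$, the $E_k$-string through $\nu_\ell$ is an unbroken interval of weights, and its image under the reflection $s_{E_k}$ yields the relation that a positive value of $\langle\nu_\ell,E_k^\vee\rangle$ forces $\nu_\ell-E_k$ to lie in that string; i.e.\ $\nu_\ell-E_k$ is again a weight of $V_\omega$. Taking $j(\ell)=k$ and $\nu_{\ell+1}=\nu_\ell-E_k$, the inequality $c_k>0$ keeps $\nu_{\ell+1}-\mu$ in $Q_+$, while $\omega-\nu_{\ell+1}\in Q_+$ is clear. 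Iterating yields the lemma.

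I expect the only real subtlety to be assembling these two standard inputs correctly: checking that every $E_k$-root string through a weight of a finite-dimensional module is an unbroken interval and that $s_{E_k}$ reverses it, which is what makes the $\mathfrak{sl}_2$ step applicable to $\nu_\ell$ without assuming $\nu_\ell$ is extremal; and checking that the descent is forced through $\mu$ and does not overshoot, which is exactly why one must track the coordinates $c_k$ and only subtract $E_k$ while its coordinate in $\nu_\ell-\mu$ is strictly positive. Note that the intermediate weights $\nu_\ell$ need not be dominant, so no Weyl-group conjugation is needed along the way; dominance is used exactly once, for the endpoint $\mu$, in passing from $E_k\circ(\nu_\ell-\mu)>0$ to $\langle\nu_\ell,E_k^\vee\rangle>0$. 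Since all of this is textbook, in the final write-up I would compress the argument to the citation \cite[VIII, 7.3]{bourbaki}.
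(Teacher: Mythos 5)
The paper gives no proof of this lemma at all --- it is stated purely as a citation to \cite[VIII, 7.3]{bourbaki} --- so there is nothing to compare against except the reference itself. Your supplied argument is the standard, correct proof of saturation of the weight system of $V_\omega$ (descent by simple roots using the positive-definiteness of $\circ$ to find $E_k$ with $\langle\nu_\ell,E_k^\vee\rangle>0$, then the unbroken $\mathfrak{sl}_2$-string to show $\nu_\ell-E_k$ is again a weight), and your observation that $\omega$ must itself be dominant for $V_\omega$ to be defined is consistent with how the lemma is applied in the paper, where $\omega=\omega_{C_1}$ satisfies $C_1\cdot E_i\ge 0$.
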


A weight $\omega$ is \emph{minuscule} if it is the only dominant weight of $V_\omega$. 

\begin{Theorem}\label{simultaneous resolution for even intersection} Let $Z$ be a smooth surface, $B$ a divisor on $Z$ with at most Du Val singularities, and $D$ a smooth irreducible divisor on $Z$. Let $(\mathcal{Z,B,D})$ be a family of triples over the unit disk in $\mathbb{C}$, with special fiber $(Z, B, D)$, and such that the divisors $\mathcal{D}_t$ and $\mathcal{B}_t$ are reduced, irreducible and smooth for $t\neq 0$. Suppose that at each point $p\in \mathcal{D}_t\cap \mathcal{B}_t$ over the general fiber, the local intersection $(\mathcal{D}_t\cdot\mathcal{B}_t)_p$ is even. Let $f:\mathcal{Y}\rightarrow \mathcal{Z}$ be the double cover branched over $\mathcal{B}$. Then there exists, after a possible finite base change, a simultaneous resolution of singularities $\psi:\mathcal{X}\rightarrow \mathcal{Y}$ such that the closure of one of the two components of $\psi^{-1}(f^{-1}(\mathcal{D}))_t$ over the general fiber has irreducible special fiber.
\end{Theorem}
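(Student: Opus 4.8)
\medskip

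\noindent\emph{Proof proposal.} The plan is to run the argument of Lemma~\ref{the odd case}, with the r\^ole of the two $(-2)$-curves $\mathcal E_1,\mathcal E_2$ now played by the second component $\mathcal C'$ of $f^{-1}(\mathcal D)$, and with the representation theory of Lemma~\ref{from rep theory} (together with the notion of minuscule weight) supplying the input needed to close the case where flopping alone does not obviously help.

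First I would set up the weight identity. After a finite base change there is a simultaneous resolution $\psi\colon\mathcal X\to\mathcal Y$ of the (isolated) singularities of $\mathcal Y$, which lie over the finitely many singular points of $\mathcal B=\mathcal B_0$. Because $\mathcal B_t$ is smooth for $t\ne 0$, over the general fibre $f^{-1}(\mathcal D_t)=\mathcal C_t\sqcup\mathcal C_t'$ splits into two smooth irreducible curves (all local intersections of $\mathcal D_t$ with $\mathcal B_t$ being even), and $\psi$ is an isomorphism there; hence $\psi^*f^*\mathcal D=\mathcal C+\mathcal C'$, where $\mathcal C,\mathcal C'$ are the closures of $\mathcal C_t,\mathcal C_t'$. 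On the special fibre write $\mathcal C_0=C+\sum a_iE_i$ and $\mathcal C_0'=C'+\sum a_i'E_i$, where $C$ (resp.\ $C'$) is the proper transform of the irreducible curve $\bar C_0$ (resp.\ of its image under the covering involution $\tau$ of $f$), the $E_i$ are the $(-2)$-curves over the Du Val singularities of $B$ lying on $D$, and $a_i,a_i'\ge 0$; note $C\cdot E_i\ge 0$ since $C$ is irreducible and $C\ne E_i$. By the projection formula $(\mathcal C_0+\mathcal C_0')\cdot E_j=\psi^*f^*\mathcal D\cdot E_j=0$ for every $j$, so in the weight lattice $\Lambda=\bigoplus_q P_q$ (sum over the singular points $q$ of $B$ on $D$) we obtain $\omega_{\mathcal C_0}=-\,\omega_{\mathcal C_0'}$.

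Next I would flop to reach a dominant weight. If some $\mathcal C_0\cdot E_j<0$, then by Lemma~\ref{why flopping helps} flopping $E_j$ (a flopping curve, since $K_{\mathcal X}\cdot E_j=K_X\cdot E_j=0$) strictly decreases the number of $(-2)$-curves occurring in $\mathcal C_0$, while the identity $\omega_{\mathcal C_0}+\omega_{\mathcal C_0'}=0$ persists. After finitely many flops one reaches a model in which either $\mathcal C_0=C$ is irreducible — and we are done — or $\omega_{\mathcal C_0}$ is dominant. In the latter case $\omega_{\mathcal C_0}=\omega_C-\sum a_iE_i$ is, by Lemma~\ref{from rep theory}, a dominant weight of the irreducible representation $V_{\omega_C}$. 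The crucial point is that $\omega_C$ is a \emph{minuscule} weight: $\bar C_0$ is one of the two conjugate ``halves'' through $q$ of the Cartier divisor $f^*D$, so its class in $Cl(Y,q)\cong P_q/Q_q$ is a minuscule class (all fundamental weights are minuscule in type $A$; the relevant classes in types $D_n$, $E_6$, $E_7$ are again the minuscule ones; and by Lemma~\ref{B dot D even} an $E_8$ singularity — the only simply-laced type with no nonzero minuscule weight — cannot lie on $D$, since there $(B\cdot D)_p$ would be odd), and the explicit values $C\cdot E_i$, read off as in Figure~\ref{even case resolution}, confirm that $\omega_C$ is the minuscule weight itself. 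Since a minuscule weight is the unique dominant weight of its representation, $\omega_{\mathcal C_0}=\omega_C$, forcing $\sum a_iE_i=0$, i.e.\ $\mathcal C_0=C$ irreducible.

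The step I expect to be the main obstacle is precisely this minuscule verification: one must examine, case by case, the Du Val singularities that $B$ can acquire along $D$ under the even-intersection hypothesis and check that in each case the proper transform $C$ meets the exceptional configuration in exactly the pattern of a minuscule fundamental weight, together with the exclusion of $E_8$ via the parity in Lemma~\ref{B dot D even}. The remainder — the simultaneous resolution, the weight identity, and the termination of the flopping process — is a direct transcription of the argument already carried out for Lemma~\ref{the odd case}.
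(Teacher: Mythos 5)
Your proposal is correct and follows essentially the same route as the paper: flop via Lemma~\ref{why flopping helps} until the weight $\omega_{C}-\sum a_iE_i$ is dominant, invoke Lemma~\ref{from rep theory}, and conclude from the case-by-case verification (Figure~\ref{even case resolution}) that $\omega_{C}$ is a minuscule fundamental weight, so all $a_i=0$. Your additional remarks (the identity $\omega_{\mathcal{C}_0}=-\omega_{\mathcal{C}_0'}$, and the interpretation of minusculeness via the local class group together with the $E_8$ exclusion from Lemma~\ref{B dot D even}) are consistent with, but not needed beyond, the paper's argument.
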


\begin{proof}
We can work locally, so we assume that $B$ has a unique singularity at $p\in B\cap D$. Let
$f:\mathcal{Y}\rightarrow \mathcal{Z}$ be the double cover branched over $\mathcal{B}$ and  $\psi:\mathcal{X}\rightarrow\mathcal{Y}$ a simultaneous resolution of Du Val singularities of $\mathcal{Y}$, which exists after a possible finite base change. Let $\mathcal{C}_1$ and $\mathcal{C}_2$ be the closures in $\mathcal{X}$ of the two components of $\psi^{-1}(f^{-1}(\mathcal{D}))$ over the general fiber. 
Note that
$${f}^{-1}(\psi^{-1}(\mathcal{D}))_0=\left(C_1+\sum a_i E_i\right) +\left(C_2+\sum b_i E_i\right)$$
where $\sum E_i\subset X$ is the exceptional divisor over $p$, $C_1+\sum a_i E_i$ is the special fiber of $\mathcal{C}_1$, $C_2+\sum b_i E_i$ is the special fiber of $\mathcal{C}_2$,
and the $a_i$ and $b_i$ are nonnegative integers. 
If the special fiber of either $\mathcal{C}_1$ or $\mathcal{C}_2$ is irreducible, that is, if either all $a_i$ or all $b_i$ are zero, then we are done. Otherwise, we claim that there exists a sequence of flops $\phi:\tilde{\mathcal{X}}\rightarrow\mathcal{X}$ so that $\phi^*(\mathcal{C}_1)$ has irreducible special fiber. As in the proof of Lemma~\ref{the odd case}, we show that if $(C_1+\sum a_i E_i)\cdot E_j\ge0$ for all $j$, then $a_i=0$ for all $i$. We then apply Lemma~\ref{why flopping helps} to prove the claim.

Suppose that $(C_1+\sum a_i E_i)\cdot E_j\ge0$ for all $j$. Passing to the weight lattice, this is equivalent to the statement that the corresponding weight is dominant. By Lemma~\ref{from rep theory}, we have that $\omega=\omega_{C_1}-\sum a_i E_i$ is a dominant weight of the irreducible representation $V_{\omega_{C_1}}$. If $\omega_{C_1}$ is minuscule, then this implies that $a_i=0$ for all $i$ and we are done. 
We refer the reader to Figure~\ref{even case resolution}, which details all possible ways that $B$ and $D$ may intersect at $p$ together with how $\tilde{D}$ intersects the exceptional curves on $\tilde{Z}$ and the branch divisor of $\tilde{f}$. This gives the following table of weights $\omega_{C_1}$, depending on the singularity of $B$ at $p$.

\begin{center}
\begin{tabular}{c|c}
Singularity & $\omega_{C_1}$\\
\hline
$A_n$ & $\omega_i$ for any $i$ \\
$D_n$ & $\omega_1, \omega_{n-1}, \omega_n$ \\
$E_6$ & $\omega_1, \omega_5$ \\
$E_7$ & $\omega_6$
\end{tabular}
\end{center}

One can check that these are all minuscule fundamental weights. 
\end{proof}

\begin{Remark} The proof of Theorem~\ref{simultaneous resolution for even intersection} relies on the definition of a minuscule fundamental weight. It is interesting to note that the table at the end of the proof contains all minuscule fundamental weights of the simple Lie algebras.
\end{Remark}

We can now prove the main theorem of this section.

\begin{proof}[Proof of Theorem~\ref{types 1 and 2a are main}]

Let $W$ be a stable numerical quintic surface  corresponding to the triple $(Z, B, D)$, and let $[W]$ denote its corresponding point in $\overline{\mathcal{M}}_{5,5}$. By Lemma~\ref{triples connect}, there exists a family of triples $(\mathcal{Z, B, D})$ over the unit disk in $\mathbb{C}$ such that $(\mathcal{Z, B, D})_0=(Z, B, D)$ and if $D\sim \Gamma$ (respectively, $D\sim \Delta$), then the general fiber of $(\mathcal{Z, B, D})$ is a triple corresponding to a stable numerical quintic surface of type 2a (respectively, type 1).
Let $\mathcal{Y}$ be the double cover of $\mathcal{Z}$ branched over $\mathcal{D}$, and let $\tilde{\mathcal{X}}\rightarrow \mathcal{Y}$ be a simultaneous resolution of singularities of $\mathcal{Z}$, which exists after a possible finite base change. Let $\mathcal{C}$ be the closure of (one of the) $(-4)$-curve(s) on the general fiber of $\tilde{\mathcal{X}}$. By Lemma~\ref{the odd case} and Theorem~\ref{simultaneous resolution for even intersection}, there exists a sequence of flops $\phi:\mathcal{X}\rightarrow\tilde{\mathcal{X}}$ so that $\phi^*(\mathcal{C})$ has irreducible special fiber. Contracting $\phi^*(\mathcal{C})$ results in the desired family of stable numerical quintic surfaces.
\end{proof}


\section{Deformations of surfaces of types 1 and 2a}\label{wahldiv}

We describe the components of $\overline{\mathcal{M}}_{5,5}$ corresponding to surfaces of types 1 and 2a and show that their closures are generically Cartier divisors in the boundary of the type I and IIa components of $\mathcal{M}_{5,5}$. In Lemma~\ref{counting}, we showed that these components are both $39$-dimensional. 
In~\ref{families}, we construct explicit $\mathbb{Q}$-Gorenstein families of numerical quintic surfaces to show that these components are in the boundary of the respective components on $\mathcal{M}_{5,5}$. In~\ref{sheaf calculations}, we prove a number of technical results which we use in~\ref{1 and 2a} to show that the closures $\bar{1}$ and $\overline{\mbox{2a}}$ of these components are generically Cartier divisors. In particular, we show that the cohomology groups controlling obstructions to $\mathbb{Q}$-Gorenstein deformations of surfaces of types 1 and 2a vanish.

\subsection{Families of stable quintic surfaces}\label{families}

\subsubsection{Type 1}\label{Type 1}
We describe a family of quintic surfaces degenerating to a stable numerical quintic surface of type 1.

\begin{Theorem}\label{quintic example}
Consider the family $(\mathcal{X}, T)$ of surfaces 
$$S_t=\{q^2l+tqf+t^2g=0\}\subset \mathbb{P}^3 \times T_t$$
where $T$ is the unit disk in $\mathbb{C}$ and $f$ and $g$ are forms of degrees $3$, and $5$, respectively, such that
\begin{itemize}
\item $S_t$ is a smooth quintic surface for $t\in T^*$
\item $S_0$ is the union of a smooth double quadric $Z$ given by $q=0$ and a plane $L$ given by $l=0$ intersecting transversally.
\end{itemize} 
For general $f$ and $g$, the KSBA stable limit of the family $(\mathcal{X},T)$ is a stable numerical quintic surface of type 1. Conversely, any stable numerical quintic surface of type $1$ is the stable limit of such a family. 
\end{Theorem}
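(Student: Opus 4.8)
The strategy is to produce an explicit semistable model of the total space $\mathcal X\subset\mathbb P^3\times T$ and read the stable limit off it. For the forward implication, I would first dispose of the routine points by Bertini: for each fixed $t\ne 0$ the linear system $\{\,q^2l+tqf+t^2g\ :\ f\in H^0(\mathcal O_{\mathbb P^3}(3)),\ g\in H^0(\mathcal O_{\mathbb P^3}(5))\,\}$ of quintics is base point free, so $S_t$ is a smooth quintic for general $t$, while $S_0=2Z+L$. The key construction is to blow up $\mathbb P^3\times T$ along the surface $Z\times\{0\}=\{q=t=0\}$, with exceptional divisor $E$, and take the strict transform $\widetilde{\mathcal X}$ of $\mathcal X$; in the chart $t=qv$ one has $\widetilde{\mathcal X}=\{l+vf+v^2g=0\}$, whose (now reduced) central fibre is the normal crossing union $\widetilde{\mathcal X}_0=\widetilde Z\cup\widetilde L$, where $\widetilde Z=\widetilde{\mathcal X}\cap E$ is the double cover of $Z\cong\mathbb F_0$ branched along $B:=\{f^2-4lg=0\}|_Z\sim 6\Delta$ (the branch locus being the discriminant of the quadratic $v^2g+vf+l$ in $v$), and $\widetilde L\cong L\cong\mathbb P^2$ is the strict transform of the plane. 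One then checks — varying $f$ and $g$ independently, since the system $f^2-4lg$ has reducible members and a bare Bertini does not apply — that for general $f,g$ the sextic $B$ is smooth, $\widetilde{\mathcal X}$ is smooth, and $B|_{\Delta}=2\{f|_{\Delta}=0\}$ is twice a reduced degree-$6$ divisor, so $B$ is tangent to the diagonal $\Delta=\{l=0\}|_Z$ at $6$ distinct points and transverse to it elsewhere.

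The second step identifies the configuration. Since every local intersection of $B$ with $\Delta$ is even, Case~II of Theorem~\ref{classify} applies: the preimage of $\Delta$ in the smooth surface $\widetilde Z$ splits as $\Delta'+\Delta''$ with $\Delta',\Delta''$ two $(-4)$-curves meeting transversally at the $6$ tangency points, and $\Delta'=\widetilde Z\cap\widetilde L$ is exactly the curve along which $\widetilde{\mathcal X}_0$ is glued. On the other side $\Delta'\subset\widetilde L\cong\mathbb P^2$ is a smooth conic, so $\mathcal O(\widetilde L)|_{\widetilde L}=\mathcal O_{\mathbb P^2}(-2)$ and, by adjunction, $K_{\widetilde{\mathcal X}}|_{\widetilde L}=\mathcal O_{\mathbb P^2}(-1)$. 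In particular $\widetilde Z$ is the minimal resolution of a type~$1$ surface and $\Delta'$ is its $(-4)$-curve: the pair corresponds under Theorem~\ref{classify} to the triple $(Z,B,D)$ with $Z=\mathbb F_0$, $D\sim\Delta$, and $B,D$ meeting as in Figure~\ref{BandD}(d).

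The third, and central, step is the relative minimal model program on $\widetilde{\mathcal X}\to T$. The divisor $\widetilde L\cong\mathbb P^2$ is swept out by $K_{\widetilde{\mathcal X}}$-negative lines, so there is a divisorial extremal contraction $\widetilde{\mathcal X}\to\mathcal X'$ over $T$ that collapses $\widetilde L$ to a terminal $\tfrac12(1,1,1)$ point; because $\Delta'$ is a conic in that $\mathbb P^2$ while $\Delta'\subset\widetilde Z$ is a $(-4)$-curve, it simultaneously contracts $\Delta'$ on the central fibre to a $\tfrac14(1,1)$ point (this reproduces the standard local model of a one-parameter $\mathbb Q$-Gorenstein smoothing of a $\tfrac14(1,1)$ singularity). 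The central fibre of $\mathcal X'\to T$ is therefore $W$, the surface obtained from $\widetilde Z$ by contracting $\Delta'$; by Lemma~\ref{X W and r}, $K_W^2=K_{\widetilde Z}^2+1=5$ and $\chi(\mathcal O_W)=\chi(\mathcal O_{\widetilde Z})=5$, and $K_W$ is ample since it pulls back to $K_{\widetilde Z}+\tfrac12\Delta'$ with $K_{\widetilde Z}$ ample. Thus $W$ is a stable numerical quintic surface whose unique non Du Val singularity is a $\tfrac14(1,1)$ point, of type~$1$ by the previous step, and — being the central fibre of the relative canonical model of the (birationally modified) family — it is the KSBA stable limit of $(\mathcal X,T)$. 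I expect this MMP/contraction step, together with the verifications that $\widetilde{\mathcal X}$ is smooth and that $\widetilde L$ is the only component to be removed, to be the principal obstacle.

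For the converse, I would start from a type~$1$ surface $W$ with associated triple $(Z,B,D)$: $Z=\mathbb F_0$, $D\sim\Delta$, and $B\sim 6\Delta$ smooth and tangent to $D$ at $6$ distinct points. Fix an embedding $Z=\{q=0\}\subset\mathbb P^3$ and the unique plane $L=\{l=0\}$ with $L\cap Z=D$. Since $B|_D$ is twice the divisor of the $6$ tangency points and $H^0(\mathbb P^3,\mathcal O(3))\to H^0(D,\mathcal O_D(3))$ is surjective, choose a cubic $f$ with $f|_D^2=B|_D$; then $f^2|_Z-B$ vanishes on $D$, hence $f^2|_Z-B=l|_Z\cdot h$ for some $h\in H^0(Z,\mathcal O_Z(5))$, and surjectivity of $H^0(\mathbb P^3,\mathcal O(5))\to H^0(Z,\mathcal O_Z(5))$ yields a quintic form $g$ with $4\,g|_Z=h$, so that $B=\{f^2-4lg=0\}|_Z$. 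By the forward analysis the family $\{q^2l+tqf+t^2g=0\}$ then has KSBA stable limit the type~$1$ surface attached to $(Z,B,D)$, which is isomorphic to $W$. Finally, replacing $f$ by $f+qf'$ and $g$ by $g+qg'$ changes neither $B$ nor the stable limit, so a general such choice may be taken; a Bertini argument for the resulting base point free linear system of quintics then ensures $S_t$ is a smooth quintic surface for $0<|t|\ll 1$, completing the proof.
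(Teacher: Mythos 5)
Your argument is correct and reaches the same stable model as the paper, but the route through the central technical step is genuinely different. The paper constructs the good model by normalizing the total space $\mathcal{X}$, which it does by a stratified local-analytic computation: completing squares at points of $Z\setminus B$, of $B\setminus L$, of $(L\cap Z)\setminus B$, and of the six points of $L\cap B$, identifying pinch points and checking by hand that the normalization separates the branches, that $Z^{\nu}$ is the double cover of $Z$ branched over the discriminant $B=\{f^2-4lg=0\}$, and that $L^{\nu}$ and $Z^{\nu}$ meet transversally. You obtain the same model in one global step by blowing up $\mathbb{P}^3\times T$ along $Z\times\{0\}=\{q=t=0\}$: the substitution $t=qv$ turns the equation into $q^2(l+vf+v^2g)$, so the strict transform is visibly a family whose central fiber is the conic-in-$v$ double cover of $Z$ (branched over the same discriminant) glued to $\widetilde L\cong\mathbb{P}^2$ along $\{q=l=v=0\}$, and the identification of the gluing curve with one of the two $(-4)$-curves over $D$ is immediate. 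This buys a cleaner and more checkable smoothness/normal-crossing verification (a single Jacobian computation for general $f,g$, which you correctly flag as the point needing a direct check since Bertini does not apply to the system $f^2-4lg$), at the cost of having to know in advance the right center to blow up. Your treatment of the contraction is also slightly sharper than the paper's: rather than computing $K|_{L}\sim -H$ and contracting $L$ and the $(-4)$-curve separately, you recognize $\widetilde L$ as a $\mathbb{P}^2$ with normal bundle $\mathcal{O}(-2)$, contract it to a $\frac{1}{2}(1,1,1)$ point of the threefold, and observe that this is exactly the standard local model of the one-parameter $\mathbb{Q}$-Gorenstein smoothing of the resulting $\frac{1}{4}(1,1)$ point, which makes the relative-canonical-model/stable-limit identification transparent. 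The converse is the same argument as the paper's up to the order of the lifting (you lift $f$ along $D$ first and then factor out $l$, where the paper lifts the sextic $V$ first and then reads off $f^2$ on $L$); both rest on the same cohomology vanishing.
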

\begin{proof} The singular locus of $\mathcal{X}$ is the surface $Z$, so $\mathcal{X}$ is not normal. To compute the stable limit we first normalize the family. After normalization and an extremal contraction, we will see that the family of surfaces obtained has reduced special fiber and ample canonical class.

Let $\nu:\mathcal{X}^{\nu}\rightarrow \mathcal{X}$ be the normalization of $\mathcal{X}$. We determine the structure of $\mathcal{X}^{\nu}$. First note that the normalization is an isomorphism away from $Z$.

Let $U$ be a complex analytic neighborhood in $\mathcal{X}$ of a point $p \in Z$. Then on $U$, we can write 
$$q|_U=q_1+q_2, \;\; l|_U=l_0+l_1, \;\; f|_U=\sum_{i=0}^3 f_{i}, \;\; g|_U=\sum_{i=0}^5 g_{i}$$
where the subscripts indicate the degree of each term in linear coordinates centered at $p$. Giving $t$ weight 1, we can write the equation of $\mathcal{X}\cap U$ as  
$$q_1^2l_0+tq_1f_{0}+t^2g_{0}+\textrm{higher order terms}.$$
Let $B\subset Z$ be the ``discriminant curve" given by $\{f^2-4lg=0\}\subset Z\cap U$. If $p\not \in B$, then the equation of $\mathcal{X}\cap U$ factors into the product of two linear terms which are not equal. That is, $(p\in\mathcal{X})$ is locally analytically isomorphic to a threefold $\mathcal{Y}=(xy=0)\subset \mathbb{A}^4$. Thus, over the open set $Z\backslash B $,  the special fiber $\mathcal{X}_0^\nu$ is an unramified double cover of $Z\backslash B$.

Now consider a point $p\in B$. The equation of $\mathcal{X}\cap U$ may be written locally analytically as
$$
h=\begin{cases}
(q+\frac{1}{2}f_{0} t)^2+\textrm{h.o.t.} & \mbox{if } p\not\in L\\
t^2+\textrm{h.o.t.} & \mbox{if } p\in L
\end{cases}
$$
Thus, in order to determine the structure of $\mathcal{X}^{\nu}$ near $p$, we must consider the degree three part of $h$: 
$$h_3=q_1^2l_1+2q_1q_2l_0+tq_1f_{1}+tq_2f_{0}+t^2g_{1}.$$

Suppose first that $p\not\in L$. Then we may assume that $l_0=1$ and complete the square in the first few terms of $h$:
\begin{eqnarray*}
h&=&(q_1+\frac{1}{2}tf_{0})^2 + 2q_2(q_1+\frac{1}{2}tf_{0}) +q_1^2l_1+tq_1f_{1} + t^2g_{1} +\textrm{h.o.t.}\\
&=&(q_1+\frac{1}{2}tf_{0} +q_2)^2 +q_1^2l_1+tq_1f_{1} +t^2g_{1}+\textrm{h.o.t.}
\end{eqnarray*}
Let $y=q_1+\frac{1}{2}tf_{0}$ and note that $y$ is a linear form. This last equation now becomes
$$h=(y +q_2)^2 +y^2\alpha+yt\beta +t^2\gamma+\textrm{h.o.t.}$$
where 
$$\alpha=l_1,$$
$$\beta=f_{1}-l_1f_{0},$$ and 
$$\gamma=g_{1}-\frac{1}{2}f_{0}(f_{1}+\frac{1}{2}l_1)$$ are linear forms. Finally we can rewrite this as
\begin{eqnarray*}
h&=&(y +q_2)^2 +(y+q_2)(y\alpha+t\beta) -q_2(y\alpha+t\beta)+t^2\gamma + \textrm{h.o.t.}\\
&=&[(y+q_2)+\frac{1}{2}(y\alpha+t\beta)]^2+t^2\gamma+ \textrm{h.o.t.}\\
&=& z^2+t^2\gamma +\textrm{h.o.t.}
\end{eqnarray*}
where $z$ is a linear form.  Thus, in a complex analytic neighborhood of any point $p\in B\backslash L$, the threefold $\mathcal{X}$ is locally analytically isomorphic to the threefold $\mathcal{Y}=\{ z^2-t^2\gamma=0\}\subset\mathbb{A}^4_{\gamma, t, z, s}$ which is the product of $\mathbb{A}^1$ with the Whitney umbrella, or pinch point. The normalization of $\mathcal{Y}$ is $\mathbb{A}^3_{u,v,w}$ with normalization map  $(u,v,w)\mapsto(u^2, v, uv, w)$.
Here, the quadric $Z$ corresponds to the locus $(z=t=0)\subset\mathcal{Y}$, so away from $L$, the normalization $\mathcal{X}_0^{\nu}$ of $\mathcal{X}_0$ is the double cover of the smooth quadric $Z$, ramified along the discriminant curve $B$.  
Since $B\subset Z$ is a curve of degree 12, the surface $\mathcal{X}_0^{\nu}$ is the double cover of $Z\simeq \mathbb{P}^1\times\mathbb{P}^1$, 
ramified along a divisor in the linear system $|6\Delta|$.

Next, we consider a point $p\in L$. We begin by assuming that $p\in L\cap Z\backslash B$. Then $l_0=0$ and $f_{0}\neq 0$, so we can assume that $f_{0}=1$ and we have
$$h=tq_1+t^2g_{0}+q_1^2l_1+tq_1f_{1}+tq_2+t^2g_{1}+ \textrm{ h.o.t.}.$$
By choosing $g$ sufficiently general, we can assume that $g_{0}\neq 0$ and so take $g_{0}=1$. 
Thus, $h$ factors as
\begin{eqnarray*}
h&=& tq_1+t^2+q_1^2l_1+tq_1f_{1}+tq_2+t^2g_{1}+ \textrm{ h.o.t.}\\
&=& (t+q_1l_1+ \textrm{ h.o.t.} )\cdot(t+q_1 -q_1l_1+\textrm{ h.o.t.})
\end{eqnarray*}
The linear term of each factor is unique up to multiplication by a nonzero constant. In particular, the second factor does not vanish identically along $L$. Since $h(p)=0$ the first term must vanish along $L$. Thus, the normalization of $(p\in \mathcal{X})$ is an unramified double cover of $Z\backslash B$, of which one component (the component corresponding to the first factor of $g$ above) contains the entire proper transform of $L\backslash B$. 

For the six points $p\in L\cap B$, we have $l_0=0$ and $f_{0}=0$. We suppose first that $p$ is a smooth point of $B$. Then we can assume that $g_{0}=1$ and so we can write the local equation of $\mathcal{X}$ as
$$h=t^2+tq_1f_{1}+q_1^2l_1+t^2g_{1}+\textrm{ h.o.t.}$$
Completing the square gives
$$h=(t+\frac{1}{2}q_1f_{1})^2+q_1^2l_1+t^2g_{1}+\textrm{ h.o.t.}$$
Let $\alpha=t+\frac{1}{2}q_1f_{1}$ and note that we can write $t=\alpha-\frac{1}{2}q_1f_{1}$. Then $h$ can be rewritten in terms of $\alpha$ as
\begin{eqnarray*}
h&=&\alpha^2+q_1^2l_1+(\alpha-\frac{1}{2}q_1f_{1})^2g_{1}+\textrm{ h.o.t.}\\
&=& \alpha^2(1+g_{1})+q_1^2l_1+\textrm{ h.o.t.}\\
&=& y^2+q_1^2l_1+\textrm{ h.o.t.}
\end{eqnarray*}
Thus, the threefold $\mathcal{X}$ is again locally analytically isomorphic to the threefold $\mathcal{Y}=\{y^2-x^2z=0\}\subset\mathbb{A}^4_{x,y,z, s}$ which is the product of $\mathbb{A}^1$ with the Whitney umbrella. The normalization of $\mathcal{Y}$ is $\mathbb{A}^3_{u,v,w}$ with normalization map  $(u,v,w)\mapsto(u, uv, v^2, w)$.  In the coordinates of $\mathbb{A}^4_{x,y,z,s}$ the plane $L$ corresponds to the plane $P=(z=y=0)\subset \mathcal{Y}$. Because the normalization is an isomorphism over this locus, we have $P^{\nu}$ is the plane given by $v=0$. The quadric $Z$ corresponds to the locus $(x=y=0)\subset \mathcal{Y}$, which under the normalization becomes the plane $u=0$. Thus, we see that the proper transforms $L^{\nu}$ and $Z^{\nu}$ of $L$ and $Z$ intersect transversally after the normalization.

The plane $L$ intersects the quadric $Z$ in a conic $D$. Thus, for general $q$, $l$ and $B$, the curve $D=L\cap Z$ intersects the locus $B\cap Z$ tangentially at 6 points. Taking the double cover of $Z$ branched over $B$ gives a smooth surface $\tilde{W}$ with a smooth $(-4)$-curve $C$ given by the intersection of the plane $L$ with the surface $\mathcal{X}_0^{\nu}$.

We now show that an extremal contraction of $L^{\nu}$ results in a family of surfaces with ample canonical class. The canonical class $K_{X_0}$ is given by $K_{\mathcal{X}^{\nu}}|_{X_0}$. Since $K_{\mathcal{X_0}^{\nu}}|_{\tilde{W}}= K_{\tilde{W}}+C$ and 
$$K_{\mathcal{X}^{\nu}}|_{L}=K_{L}+C\sim -2H+H\sim -H,$$
we see that $L\subset \mathcal{X}^{\nu}$ can be contracted and that the surface $W$ obtained after contracting $C\subset \tilde{W}$ gives the stable limit. Note moreover that $C$ is a $(-4)$-curve on $\tilde{W}$, so this contraction produces a $\frac{1}{4}(1,1)$ singularity on $W$. By construction, the stable limit of the family is a stable numerical quintic surface $W$ with a $\frac{1}{4}(1,1)$ singularity of type 1 if $B$ is smooth, and of type 1' if $B$ has a node on $L\cap Z$. 

We claim that any stable numerical quintic surface of type 1 may be obtained as the stable limit of such a family. By Lemma~\ref{triples}, it suffices to show that given any triple $(Z, B, D)$ (where $Z$ is a fixed smooth quadric, $B\sim 6\Delta$ and $D\sim\Delta$ are smooth, and such that $B$ intersects $D$ with multiplicity $2$ at $6$ points) we can find a family of the desired form whose stable limit is a stable numerical quintic surface $W$ corresponding to $(Z, B, D)$ under the correspondence of Theorem~\ref{classify}.

Fix such a triple. Then $Z$ is isomorphic to a smooth quadric in $\mathbb{P}^3$ given by $q=0$. Let $l$ be the equation of the hyperplane $L$ in $\mathbb{P}^3$ such that $L\cap Z=D$. We claim that $B$ is also given by $V\cap Z$, where $V$ is a hypersurface of degree $6$ in $\mathbb{P}^3$. To see this, let $H$ be a general hyperplane section of $\mathbb{P}^3$ and consider the exact sequence
$$0\rightarrow\mathcal{O}_{\mathbb{P}^3}(-Z+6H)\rightarrow \mathcal{O}_{\mathbb{P}^3}(6H)\rightarrow\mathcal{O}_Z(6H)\rightarrow 0.$$
Since $H^1(\mathbb{P}^3, \mathcal{O}_{\mathbb{P}^3}(-Z+6H))=H^1(\mathbb{P}^3, \mathcal{O}_{\mathbb{P}^3}(4H))=0$, we see that global sections of $\mathcal{O}_{\mathbb{P}^3}(6H)$ surject onto global sections of $\mathcal{O}_Z(6H)$. Noting that $\mathcal{O}_Z(6H)\simeq\mathcal{O}_Z(6\Delta)$, this implies that the element $B\in |6\Delta|$ can be lifted to a hypersurface $V$ of degree $6$ in $\mathbb{P}^3$, proving the claim.

Next consider the exact sequence
$$0\rightarrow \mathcal{O}_Z(V-L)\rightarrow\mathcal{O}_Z(V)\rightarrow \mathcal{O}_{Z\cap L}(V)\rightarrow 0.$$
Since $B$ intersects $D$ at $6$ points with multiplicity $2$ each, this implies that the equation of $V|_{L}$ is of the form $f^2$, where the six points of $B\cap D$ are given by $f=q=0$. Therefore $V$ can be chosen to have equation $f^2-lg$, where $g$ is a general form of degree $5$. Then taking 
$$S_t=\{q^2l+tqf+t^2g=0\}\subset\Delta_t\times \mathbb{P}^3$$
gives the desired family.
\end{proof}

\begin{Remark} We remark that the family given in Theorem~\ref{quintic example} is one case of a more general degeneration of Castelnuovo surfaces (minimal surfaces of general type with $K^2=2p_g-7$ whose canonical maps are birational onto their images) described by Ashikaga and Konno~\cite[2.3]{ashikaga-konno1991}. Indeed, the family they give is the minimal resolution of the $\mathbb{Q}$-Gorenstein family we described in the proof of Theorem~\ref{quintic example}. 
\end{Remark}

\subsubsection{Types 2a and 2b}\label{Types 2a and 2b}

Friedman~\cite{friedman1983} constructed a family of stable numerical quintic surfaces with general fiber a numerical quintic surface of type IIb and special fiber a stable numerical quintic surface of type 2b. His construction easily generalizes to give a family of stable numerical quintic surfaces whose general fiber is a numerical quintic surface of type IIa and with special fiber a stable numerical quintic surface of type 2a. 

\begin{Theorem}\label{Friedman}\cite{friedman1983} There is a $\mathbb{Q}$-Gorenstein deformation $\mathcal{X}\rightarrow T$, where $T$ is the unit disk in $\mathbb{C}$, with general fiber $\mathcal{X}_t$, $t\neq 0$, a smooth numerical quintic surface of type IIa (respectively, IIb) and special fiber $\mathcal{X}_0$ a stable numerical quintic surface with a $\frac{1}{4}(1,1)$ singularity of type 2a (respectively, 2b).
Furthermore, this deformation induces a versal local $\mathbb{Q}$-Gorenstein deformation of a $\frac{1}{4}(1,1)$ singularity.
\end{Theorem}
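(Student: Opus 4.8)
The plan is to recall Friedman's family for the $2$b case, produce its $2$a analogue by an evident substitution, and then verify the four assertions packaged in the statement: that the general fiber is a smooth numerical quintic surface in the component IIa (resp.\ IIb) of $\mathcal{M}_{5,5}$, that the special fiber is a surface of type 2a (resp.\ 2b) in the sense of the classification following Theorem~\ref{classify}, that the family is $\mathbb{Q}$-Gorenstein, and that the local deformation it induces at the $\frac14(1,1)$ point is versal. Friedman's $2$b family can be written down explicitly, in the same style as the family in Theorem~\ref{quintic example}, as a one-parameter family built from double covers of the quadric cone $\mathbb{F}_2$ in which a generically smooth branch curve in $|6\Delta|$ is degenerated so that its central member acquires two nodes lying on a fixed ruling $D$; a sequence of standard birational modifications over the disk (a simultaneous resolution after a possible finite base change, followed by an extremal contraction, exactly as in the analysis of $D$ in Theorem~\ref{classify}) then produces $\mathcal{X}\to T$ with the $\frac14(1,1)$ singularity concentrated on $\mathcal{X}_0$. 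I would carry out the same construction with $\mathbb{F}_0=\mathbb{P}^1\times\mathbb{P}^1$ in place of $\mathbb{F}_2$ and a ruling of $\mathbb{F}_0$ in place of that of $\mathbb{F}_2$; the only feature of $\mathbb{F}_2$ that is used is the existence of a ruling meeting the generic branch curve transversally, and $\mathbb{F}_0$ shares it.

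Granting the construction, identifying the fibers is the routine part. The special fiber is, by construction together with Theorem~\ref{classify}, a stable numerical quintic surface whose unique non Du Val singularity is a $\frac14(1,1)$ singularity of the stated type. For the general fiber I would, as in the proof of Theorem~\ref{quintic example}, pass to the normalization and perform the extremal contraction to see that $\mathcal{X}_t$ is a smooth minimal surface with $K^2=\chi=5$ and $q=0$, i.e.\ a numerical quintic; Horikawa's description of $\mathcal{M}_{5,5}$ in~\cite{horikawa1975}, together with the fact that $\mathcal{X}_t$ arises from a double cover of $\mathbb{P}^1\times\mathbb{P}^1$ (resp.\ $\mathbb{F}_2$) as in Figure~\ref{picture of M55-1}, then places it in the component IIa (resp.\ IIb).

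The substantive content is $\mathbb{Q}$-Gorenstein-ness and versality, both local at the $\frac14(1,1)$ point. For the first, I would extract the local equations of $\mathcal{X}$ near that point and check that $\mathcal{X}$ is normal with only a terminal $\frac12(1,1,1)$ quotient singularity there --- equivalently, that a neighborhood is the $\mathbb{Z}/2$-quotient of a smooth three-fold --- so that $K_{\mathcal{X}/T}$ is $\mathbb{Q}$-Cartier and $\mathcal{X}\to T$ is $\mathbb{Q}$-Gorenstein. For versality, recall that the miniversal $\mathbb{Q}$-Gorenstein deformation of $\frac14(1,1)$ has smooth, one-dimensional base, so versality is equivalent to the Kodaira--Spencer map $T_0T\to T^1_{\mathrm{QG}}\cong\mathbb{C}$ being nonzero; since $\mathcal{X}_t$ is smooth near the distinguished point for $t\neq 0$, the induced deformation is at least nontrivial, and it only remains to see that the smoothing is achieved to first order in $t$, i.e.\ that the restricted family has the normal form $\{z^2=xy+t\}/(\mathbb{Z}/2)$ rather than $\{z^2=xy+t^k\}/(\mathbb{Z}/2)$ for some $k>1$. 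This I would read off by arranging the degenerating branch curve so that, near the two nodes, the local family of branch curves is $\{\,xy+t\cdot(\text{unit})+\cdots=0\,\}$, and then completing the square exactly as in the proof of Theorem~\ref{quintic example} to put the double cover into the form $\{z^2=xy+t\}$, the residual $\mathbb{Z}/2$ being the involution that contracts the $(-4)$-curve. I expect this normal-form computation at the singular point to be the main obstacle: the degeneration of the branch data, the simultaneous resolution, and the relative contraction must be arranged carefully enough that the deformation of the quotient singularity induced on $\mathcal{X}_0$ is visibly parametrized by $t$ itself. Once that is in hand, $\mathbb{Q}$-Gorenstein-ness and versality both follow immediately.
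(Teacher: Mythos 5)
There is a genuine gap, and it is in the construction of the family itself. You propose to realize Friedman's family by fixing the base $\mathbb{F}_2$ (resp.\ $\mathbb{F}_0$) and degenerating a smooth branch curve in $|6\Delta|$ so that the central member acquires two nodes on a ruling $D$. But the general fiber of that family is a smooth double cover of $\mathbb{F}_2$ branched over a smooth member of $|6\Delta|$, i.e.\ a Horikawa surface with $K^2=4=2p_g-4$ --- not a numerical quintic with $K^2=5$. Since $K^2$ is locally constant in a $\mathbb{Q}$-Gorenstein family of stable surfaces, no birational surgery over the disk can convert this into a smoothing of the 2b surface $W$, which has $K_W^2=5$: after simultaneous resolution, the closure of $f_t^{-1}(D_t)$ over the punctured disk is a family of genus-two curves of self-intersection $0$ whose limit lies in the class $C+\bar{E}_1+\bar{E}_2$, and there is no extremal contraction of the total space that contracts the $(-4)$-curve $C$ on the special fiber alone while leaving the general fibers untouched. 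What your construction produces is a deformation of the minimal resolution $X$ that destroys the $(-4)$-curve; this is not a $\mathbb{Q}$-Gorenstein deformation of $W$ at all.

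The actual family has a different shape, parallel to Theorem~\ref{quintic example} rather than to the branch-curve analysis in Theorem~\ref{classify}: one degenerates smooth numerical quintics of type IIb (resp.\ IIa) so that the special fiber of a semistable model is \emph{reducible}, of the form $V\cup W_1$ with $V$ the minimal resolution of the 2b (resp.\ 2a) surface and $W_1\cong\mathbb{P}^2$ glued to $V$ transversally along the $(-4)$-curve $C$, realized as a conic in $\mathbb{P}^2$; contracting $W_1$ (the analogue of contracting the plane $L$ in Theorem~\ref{quintic example}) produces the stable family and creates the $\frac{1}{4}(1,1)$ point on the central fiber only. The paper itself does not reprove this construction --- it cites Friedman and only supplies the versality argument --- and that argument is the one part of your proposal that is essentially correct and matches the paper: versality is read off from the fact that the semistable special fiber has two components meeting transversally along $C$, so the induced local deformation $(xy=z^2+t^{\alpha})\subset\frac{1}{2}(1,1,1)\times\mathbb{A}^1_t$ has axial multiplicity $\alpha=1$, which is exactly the first-order normal-form criterion you propose to verify. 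But that verification presupposes the correct global family, which your construction does not provide.
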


We omit the proof of Theorem~\ref{Friedman}, but make two important observations. The first is that Friedman's construction is a degeneration of a family of IIb surfaces to a 2b surface. The construction of a family of IIa surfaces degenerating to a 2a surface is similar. For details, see \cite{mythesis}.

Secondly, we remark that Friedman's family induces a versal local $\mathbb{Q}$-Gorenstein deformation of the $\frac{1}{4}(1,1)$ singularity on the special fiber. To see this, note that if 
$(p\in W)$ is a germ of a $\frac{1}{4}(1,1)$ singularity, then $(p\in W)$ is analytically isomorphic to the singularity
$$(xy=z^2)\subset \frac{1}{2}(1,1,1).$$
Moreover, any deformation of $(p\in X)$ is analytically isomorphic to a deformation of the form
$$(xy=z^2+t^{\alpha})\subset \frac{1}{2}(1,1,1)\times\mathbb{A}^1_t,$$
for some integer $\alpha>0$ called the \emph{axial multiplicity} of the deformation. The resolution of the total space of such a deformation consists of two components intersecting with multiplicity $\alpha$. A versal local $\mathbb{Q}$-Gorenstein deformation of $(p\in X)$ has axial multiplicity $1$; that is, its resolution consists of two components meeting transversally. 
The special fiber of Friedman's family consists of two components meeting transversally and therefore induces a versal local $\mathbb{Q}$-Gorenstein deformation of the $\frac{1}{4}(1,1)$ singularity.

\begin{Remark} In~\cite[Corollary 1.2]{friedman1983}, Friedman uses Horikawa's description of the moduli space $\mathcal{M}_{5,5}$ to deduce the existence of a $\mathbb{Q}$-Gorenstein family $\tilde{\mathcal{X}}\rightarrow T$ of smooth quintic surfaces whose special fiber is an ``accordion'' of surfaces $V\cup W_1 \cup W_2 \cup \cdots \cup W_n$ where $V$ is the minimal resolution of a stable quintic surface of type 2b, $W_1, \ldots, W_{n-1}$ are copies of $\mathbb{F}_4$, and $W_n$ is a copy of $\mathbb{P}^2$, intersecting transversally as in Figure~\ref{non versal smoothing}.

\begin{figure}[h!]
  \centering
        \includegraphics{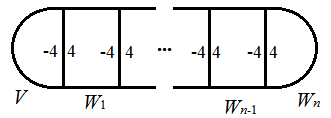}
  \caption{\label{non versal smoothing}}
\end{figure}

Here, the canonical class $K_{\tilde{\mathcal{X}}}$ is not ample, and the stable limit of $\tilde{\mathcal{X}}\rightarrow T$ is obtained by contracting the surfaces $W_1,..., W_n$. We now recognize the resulting special fiber as a stable numerical quintic surface of type 2b. Thus, Friedman's family is a $\mathbb{Q}$-Gorenstein smoothing of a 2b surface to a quintic surface. This family gives a local deformation of the $\frac{1}{4}(1,1)$ singularity with axial multiplicity $n$, so unless $n=1$, the induced deformation is not versal. Theorem~\ref{the best} in Section~\ref{main proof} implies that if $W$ is a 2b surface, then there exists a $\mathbb{Q}$-Gorenstein smoothing of $W$ to a quintic surface with $n=1$. 

Friedman also raises the question of describing deformations of 2b surfaces explicitly. Theorem~\ref{the best} answers this question.
\end{Remark}

\subsection{Some sheaf calculations}\label{sheaf calculations}

Let $X$ be a smooth surface and $D=\sum_{i=1}^k D_i$ a divisor in $X$ with simple normal crossings (in particular, each component divisor $D_i$ is smooth). Let $\Omega_X^1(\log D)$ denote the sheaf of logarithmic differentials. This sits in the short exact sequence of sheaves
$$0\rightarrow \Omega_X^1\rightarrow \Omega_X^1(\log D)\rightarrow \bigoplus_{i=1}^k\mathcal{O}_{D_i}  \rightarrow 0$$
where the map $\Omega_X^1(\log D)\rightarrow \bigoplus_{i=1}^k\mathcal{O}_{D_i}$ is the residue map. 

Now let $W$ be a surface whose only non Du Val singularity is a Wahl singularity and let $X$ be its minimal resolution. If $C$ is the exceptional divisor on $X$, then one can show that obstructions to $\mathbb{Q}$-Gorenstein deformations of $W$ lie in the cohomology group $H^2(X, T_X(\log C))$~\cite{lee-park2007}.  Thus, if $H^2(X, T_X(\log C))=0$, then the locus of such surfaces is generically smooth in $\overline{\mathcal{M}}_{K^2, \chi}$. The calculations of $H^2(X, T_X(\log C))$ in Theorems~\ref{vanishing for type 1}, \ref{vanishing for type 2a}, and~\ref{obstruction} require the following lemmas. 

\begin{Lemma}\label{blowup of two points} Let $\sigma: Y\rightarrow Z$ be the blowup of a smooth surface at a point $p$ lying in the smooth locus of a divisor $D\subset Z$ with normal crossings.  Let $\tilde{D}\subset Y$ be the proper transform of $D$. Then $\sigma_*\Omega_Y^1(\log \tilde{D})=\Omega_Z^1(\log D)\otimes \mathfrak{M}_{p}$, where $\mathfrak{M}_{p}$ is the ideal sheaf of $p$ on $Z$.
\end{Lemma}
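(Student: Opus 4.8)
\emph{Proof proposal.} The plan is to reduce the identity to a purely local statement at $p$. Away from $p$ the map $\sigma$ is an isomorphism carrying $\Omega^1_Y(\log\tilde D)$ onto $\Omega^1_Z(\log D)$; moreover both $\sigma_*\Omega^1_Y(\log\tilde D)$ and $\Omega^1_Z(\log D)\otimes\mathfrak M_p$ are coherent subsheaves of $\Omega^1_Z(\log D)$ that agree on $Z\setminus\{p\}$ — the first because $\Omega^1_Y(\log\tilde D)$ is torsion free, so a section over $\sigma^{-1}(U)$ is determined by its restriction to $U\setminus\{p\}$, which then extends to a section of the locally free sheaf $\Omega^1_Z(\log D)$ over $U$; the second because $\mathfrak M_p\hookrightarrow\mathcal O_Z$ and $\Omega^1_Z(\log D)$ is locally free. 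Hence it suffices to compare stalks at $p$. Since $p$ lies in the smooth locus of the simple normal crossings divisor $D$, I would choose local analytic coordinates $(x,y)$ on $Z$ centered at $p$ in which the component of $D$ through $p$ is $\{y=0\}$ and no other component of $D$ meets a neighborhood of $p$; then near $p$ the sheaf $\Omega^1_Z(\log D)$ is free on the frame $dx,\ \frac{dy}{y}$, and $\Omega^1_Z(\log D)\otimes\mathfrak M_p$ is the submodule of forms whose coefficients in this frame lie in $\mathfrak M_p=(x,y)$.

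Next I would run the computation in the two standard charts of $Y=\operatorname{Bl}_pZ$. On the chart $U_1$ with $\sigma(u,v)=(u,uv)$ one has $E\cap U_1=\{u=0\}$ and $\tilde D\cap U_1=\{v=0\}$, so $\Omega^1_{U_1}(\log\tilde D)$ is free on $du,\ \frac{dv}{v}$ and, for a local section $\omega=g\,dx+h\,\frac{dy}{y}$ of $\Omega^1_Z(\log D)$,
$$\sigma^*\omega=\Bigl(g(u,uv)+\tfrac{h(u,uv)}{u}\Bigr)\,du+h(u,uv)\,\tfrac{dv}{v}.$$
On the chart $U_2$ with $\sigma(s,t)=(st,t)$ one has $E\cap U_2=\{t=0\}$ while $\tilde D\cap U_2=\varnothing$, so $\Omega^1_{U_2}(\log\tilde D)=\Omega^1_{U_2}$ is free on $ds,\ dt$ and
$$\sigma^*\omega=g(st,t)\,t\,ds+\Bigl(g(st,t)\,s+\tfrac{h(st,t)}{t}\Bigr)\,dt.$$
In each chart the only potential obstruction to $\sigma^*\omega$ being regular is the term $h(u,uv)/u$, respectively $h(st,t)/t$; writing $h=\sum c_{ij}x^iy^j$ one sees $h(u,uv)\equiv c_{00}$ modulo $(u)$ and $h(st,t)\equiv c_{00}$ modulo $(t)$, so both terms are regular precisely when $c_{00}=h(0,0)=0$, i.e. $h\in\mathfrak M_p$, and no condition is imposed on $g$. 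Therefore $\sigma_*\Omega^1_Y(\log\tilde D)$ consists near $p$ of exactly the forms $g\,dx+h\,\frac{dy}{y}$ with $h\in\mathfrak M_p$; matching this against the local framing of $\Omega^1_Z(\log D)$ above, together with the canonical identification on $Z\setminus\{p\}$, gives the asserted equality, which glues since it is built from a canonical isomorphism off $p$.

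I do not expect a deep obstacle: the argument is essentially bookkeeping once the reduction to stalks at $p$ is in place. The step that needs genuine care is the behaviour on $U_2$, where $\tilde D$ is absent after the blow-up (the whole preimage of $D$ in that chart is the exceptional curve $E$, which is not part of the log divisor), so one must verify there that regularity of $\sigma^*\omega$ forces exactly $h\in\mathfrak M_p$ and nothing more — in particular that it places no constraint on the coefficient $g$ — and then reconcile this chart-wise description with the local frame of $\Omega^1_Z(\log D)$. Tracking which monomials of $h$ become divisible by the exceptional parameter in each chart makes both the chart computations and the final identification of generators transparent.
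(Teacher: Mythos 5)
Your reduction to a stalk computation at $p$ and your two-chart calculation are correct, and they follow essentially the same route as the paper's own proof. The genuine gap is the final ``matching'' sentence: the module you have computed is not $\Omega_Z^1(\log D)\otimes \mathfrak{M}_{p}$. By your own (correct) description, $\Omega_Z^1(\log D)\otimes \mathfrak{M}_{p}$ consists of the forms $g\,dx+h\,\frac{dy}{y}$ with both $g$ and $h$ in $\mathfrak{M}_p$, whereas your charts impose only $h\in\mathfrak{M}_p$ and, as you yourself note, no condition on $g$. So what you have actually proved is that locally $\sigma_*\Omega_Y^1(\log\tilde D)=\mathcal{O}_Z\cdot dx\oplus \mathfrak{M}_p\cdot\frac{dy}{y}$, which strictly contains $\mathfrak{M}_p\cdot dx\oplus\mathfrak{M}_p\cdot\frac{dy}{y}$; the form $dx$ witnesses the difference. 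This discrepancy cannot be repaired by more careful bookkeeping: since $\Omega_Y^1\subset\Omega_Y^1(\log\tilde D)$ and $\sigma_*\Omega_Y^1=\Omega_Z^1$ for the blow-up of a smooth point, the pushforward always contains all of $\Omega_Z^1$, while $\Omega_Z^1(\log D)\otimes\mathfrak{M}_p$ does not contain $dx$. In other words, your computation is right and it contradicts the equality as stated; the conclusion it supports is that $\sigma_*\Omega_Y^1(\log\tilde D)$ is the subsheaf of $\Omega_Z^1(\log D)$ of forms whose residue along the component of $D$ through $p$ vanishes at $p$, i.e.\ locally $\mathcal{O}\cdot dx\oplus\mathfrak{M}_p\cdot\frac{dy}{y}$.

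For comparison, the paper's proof runs the same two-chart gluing computation (in coordinates where $D=\{z=0\}$) and reaches the stated formula only by asserting that the coefficient it calls $g'$ satisfies $g'(0,0)=0$; no argument is given for this, and your calculation shows it fails: the form $dw$ corresponds to $f'=w$, $g'=1$, and it does lie in the pushforward, since $\sigma^*(dw)=y\,dx+x\,dy$ on one chart and $dv$ on the other. So the missing step in your write-up is not a trick you overlooked but a genuinely false identification, and closing the gap honestly forces a weaker statement of the lemma. Since the lemma is invoked later (in Lemmas~\ref{where the obstruction happens} and~\ref{zeta is surjective}) to compute spaces of global sections, those computations would need to be rechecked with the corrected pushforward.
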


\begin{proof}
It suffices to show the equality in a neighborhood of the exceptional divisor $E$. Let $V\subset Z$ be a coordinate neighborhood around $p$. Choose coordinates $(z,w)$ on $V$ so that $p$ is at the origin and the local equation of $D$ is $z$. Then $\sigma^{-1}(V)$ is covered by two neighborhoods $U_1$ and $U_2$.
Choose coordinates $(x,y)$  on $U_1$ so that $\sigma(x,y)=(x,xy)$ and the local equation of $E\cap U_1$ is $x$. Note that $\tilde D$ does not appear in $U_1$. Let coordinates on $U_2$  be $(u,v)$ so that $\sigma(u,v)=(uv, v)$. On $U_2$, the local equation of $E$ is $v$ and the local equation of $\tilde D$ is $u$. See Figure~\ref{blowup picture}.

\begin{figure}
\centering
\includegraphics[scale=.8]{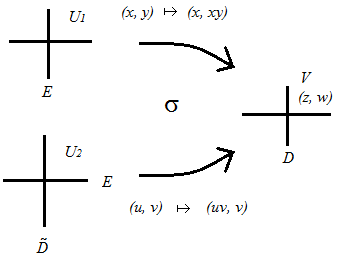}
\caption{The map $\sigma$.\label{blowup picture}}
\end{figure}

On $U_1$, we have
\begin{eqnarray}
\Omega_Y^1(\log \tilde D )(U_1)&=&\displaystyle{\left\{f \left(z,\frac{w}{z}\right)dz+g\left(z,\frac{w}{z}\right)d\left(\frac{w}{z}\right)\,  \middle \vert  \, f, g \in \mathcal{O}_{Z}(V)\right\}}\\
&=&\left\{\left[f\left(z,\frac{w}{z}\right)-\frac{w}{z^2}g\left(z, \frac{w}{z}\right)\right]dz+\frac{1}{z}g\left(z,\frac{w}{z}\right)dw\, \middle \vert \, f, g \in \mathcal{O}_{Z}(V)\right\}.\label{coh on Z}
\end{eqnarray}
On $U_2$ 
\begin{eqnarray*}
\Omega_Y^1(\log \tilde D)(U_2)&=&\left\{p\left(\frac{z}{w},w\right)\frac{d\left(\frac{z}{w}\right)}{\frac{z}{w}}+q\left(\frac{z}{w},w\right)dw\, \middle \vert \, p, q  \in \mathcal{O}_{Z}(V)\right\}\\
&=&\left\{\frac{1}{z}p\left(\frac{z}{w},w\right)dz+\left[q\left(\frac{z}{w},w\right)-\frac{1}{w}p\left(\frac{z}{w},w\right)\right]dw\, \middle \vert \, p, q  \in \mathcal{O}_{Z}(V)\right\}.
\end{eqnarray*}
These sections glue to a section of $\sigma_*(\Omega_Y^1(\log \tilde{D}))$ over $V$ if coefficients of $dz$ and $dw$ are equal:
\begin{equation}\label{coeff of dw}
\frac{1}{z}g\left(z,\frac{w}{z}\right)=q\left(\frac{z}{w},w\right)-\frac{1}{w}p\left(\frac{z}{w},w\right)
\end{equation}
\begin{equation}\label{coeff of dz}
\frac{1}{z}p\left(\frac{z}{w},w\right)=f\left(z,\frac{w}{z}\right)-\frac{w}{z^2}g\left(z, \frac{w}{z}\right)
\end{equation}
Replacing $\frac{1}{z}g(z,\frac{w}{z})$ in Equation~\eqref{coeff of dz} with its equivalent expression coming from Equation~\eqref{coeff of dw} yields the equality
$$f\left(z, \frac{w}{z}\right)=\frac{w}{z}q\left(\frac{z}{w},w\right).$$
From this last expression, we see that 
$$f\left(z, \frac{w}{z}\right)=\frac{1}{z}f'(z,w)$$ 
and 
$$q\left(\frac{z}{w},w\right)=\frac{1}{w}f'(z,w)$$
where $f'(z,w)$ is a polynomial with $f'(0,0)=0$. Plugging these into Equation~\eqref{coeff of dw} and multiplying through by $zw$ gives
$$wg\left(z,\frac{w}{z}\right)=z\left(f'\left(z,w\right)-p\left(\frac{z}{w},w\right)\right).$$
Since the right hand side is a polynomial in $z$, we can write $g\left(z, \frac{w}{z}\right)=zg'(z,w)$ for some polynomial $g'$ with $g'(0,0)=0$, and rewrite the above equality as
$$wg'(z,w)=f'(z,w)-p\left(\frac{z}{w},w\right).$$
Therefore, $p\left(\frac{z}{w},w\right)=wg'(z,w)-f'(z,w)$. We now have expressions for $f$, $g$, $p$, and $q$ as polynomials in $z$ and $w$, which we can use in Equation~\eqref{coh on Z}. This gives us
\begin{eqnarray*}
\sigma_*(\Omega_Y^1(\log \tilde{D}))(V)&=&\left\{\left[\frac{1}{z}f'(z,w)-\frac{w}{z}g'(z, w)\right]dz+ \, g'(z,w)dw\, \middle \vert \,  f', g'  \in \mathcal{O}_{Z}(V)\right\}\\
&=& \left\{f'(z,w)\frac{dz}{z} + g'(z,w)dw\, \middle \vert \, f', g'   \in \mathcal{O}_{Z}(V)\right\}
\end{eqnarray*}
where the only restrictions on $f'(z,w)$ and $g'(z,w)$ are that neither has a constant term; that is, they both lie in the maximal ideal $\mathfrak{M}_p=(z,w)\subset \mathcal{O}_Z(V)\simeq\mathbb{C}[z,w]$.  Thus,
$$\sigma_*(\Omega_Y^1(\log \tilde D))=\Omega_Z^1(\log D)\otimes \mathfrak{M}_{p}.$$
\end{proof}

\begin{Lemma}\label{decomposition into eigenspaces} Let $f:X\rightarrow Y$ be a double cover of a smooth surface $Y$, and let $B$ denote its smooth branch divisor. Let $C=f^{-1}(D)$ be the preimage of a smooth curve $D$ on $Y$, and suppose that $D$ intersects $B$ transversally. Then  
$$f_*(\Omega_X^1(\log C))=\Omega_Y^1(\log D)\oplus\Omega_Y^1((\log D+B)(-L))$$
and 
$$f_*(T_X(\log C))=T_Y(\log(D+B))\oplus T_Y(\log D)(-L)$$
where $B\sim 2L$.
Moreover, these decompositions break the sheaves into their invariant and anti-invariant subspace under the action of $\mathbb{Z}/2\mathbb{Z}$ by deck transformations.
\end{Lemma}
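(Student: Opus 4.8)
The plan is to compute $f_*$ of the relevant logarithmic sheaves by a local calculation near the branch divisor $B$, exploiting the $\mathbb{Z}/2\mathbb{Z}$ action on $X$ by deck transformations. First I would observe that, since $f$ is a finite morphism of degree $2$ between smooth surfaces with $f$ étale away from $B$, the pushforward $f_*\mathcal{O}_X$ splits canonically as $\mathcal{O}_Y \oplus \mathcal{O}_Y(-L)$ into the $+1$ and $-1$ eigenspaces for the involution. Everything in sight is equivariant, so each of $f_*(\Omega_X^1(\log C))$ and $f_*(T_X(\log C))$ will likewise split into invariant and anti-invariant parts, and it suffices to identify these two summands in each case. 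Away from $B$ the statement is just the étale-descent isomorphism $\Omega_X^1(\log C) \cong f^*\Omega_Y^1(\log D)$ (using that $C = f^{-1}(D)$ and $f$ is unramified there), whose pushforward gives $\Omega_Y^1(\log D) \oplus \Omega_Y^1(\log D)(-L)$; so the content is entirely concentrated along $B$, where the extra logarithmic pole along $B$ (resp. the extra vanishing) must be accounted for.

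The key local model: near a point of $B$ choose coordinates $(u,v)$ on $Y$ with $B = \{u = 0\}$, so that $X$ is locally $\{s^2 = u\}$ and $f(s,v) = (s^2, v)$, with involution $s \mapsto -s$. Then $C = f^{-1}(D)$; away from $B\cap D$ the curve $D$ is disjoint from $B$ and contributes nothing new, while at a point of $B\cap D$ the transversality hypothesis lets us take $D = \{v = 0\}$, so $C = \{v = 0\}$ upstairs and meets the ramification curve $\{s=0\}$ transversally. I would then write a general section of $\Omega_X^1(\log C)$ locally as $a(s,v)\,\frac{ds}{?}\, + b(s,v)\,\frac{dv}{v}$ — more precisely, $\Omega_X^1(\log C)$ is freely generated over $\mathcal{O}_X$ by $ds$ and $\frac{dv}{v}$ near such a point (and by $ds, dv$ away from $D$) — decompose $a,b$ into even and odd parts in $s$, and push down. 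Using $ds = \frac{1}{2s}d(s^2)= \frac{1}{2s}du$ and that even functions of $s$ descend to $\mathcal{O}_Y$ while odd functions descend to $s\cdot\mathcal{O}_Y \cong \mathcal{O}_Y(-L)$, one reads off that the invariant part acquires a genuine log pole along $B$ — giving $\Omega_Y^1(\log(D+B))$ — while the anti-invariant part picks up the twist by $-L$ and the pole structure $\Omega_Y^1(\log(D+B))(-L)$; but the relation $\frac{du}{u} = 2\frac{ds}{s}$ shows the anti-invariant summand's log pole along $B$ is redundant modulo the twist, collapsing it to $\Omega_Y^1(\log D)(-L)$. Reconciling these local computations with the global statement $f_*(\Omega_X^1(\log C)) = \Omega_Y^1(\log D) \oplus \Omega_Y^1(\log(D+B))(-L)$ requires care with which summand is which (the displayed statement writes the invariant part as $\Omega_Y^1(\log D)$ — I would double-check the eigenspace bookkeeping against the known case $C = \emptyset$, where this must reduce to the classical $f_*\Omega_X^1 = \Omega_Y^1 \oplus \Omega_Y^1(\log B)(-L)$). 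Having settled the $\Omega^1$ statement, the $T_X(\log C)$ statement follows by the same local analysis applied to the dual sheaf — $T_X(\log C)$ is locally generated by $s\,\partial_s$ (the log vector field along the ramification/branch locus) and $v\,\partial_v$ — or, more cleanly, by dualizing: $T_X(\log C) = \mathcal{H}om(\Omega_X^1(\log C), \mathcal{O}_X)$, and pushing forward a $\mathcal{H}om$ along the finite flat map $f$ together with the eigenspace decomposition converts the two summands for $\Omega^1$ into the claimed two summands $T_Y(\log(D+B)) \oplus T_Y(\log D)(-L)$ for $T$, where the roles of $\log(D+B)$ and the $-L$ twist swap between the invariant and anti-invariant parts because dualizing exchanges "extra pole along $B$" with "extra zero along $B$".

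The main obstacle I anticipate is the bookkeeping at the points of $B \cap D$, where two logarithmic loci meet: one must check that imposing a log pole along $C$ upstairs, combined with the ramification along $B$, really does produce exactly $\log(D+B)$ downstairs and not some larger or smaller sheaf, and in particular that no spurious contribution appears at the transversal intersection points $B\cap D$ (this is exactly where the transversality hypothesis is used). A clean way to organize this is to stratify $Y$ as $(Y \setminus (B\cup D)) \cup (B\setminus D) \cup (D \setminus B) \cup (B\cap D)$, verify the asserted isomorphism of reflexive sheaves on each open stratum, check it extends over the codimension-one strata, and then invoke that both sides are reflexive (being pushforwards of locally free sheaves under a finite flat map, hence $S_2$) to conclude the isomorphism holds globally — the finite set $B \cap D$ then costs nothing. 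I would also flag the matching of eigenspace labels with the literature convention as a point to state explicitly rather than leave implicit.
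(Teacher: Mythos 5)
Your overall strategy is exactly the one the paper uses: decompose $f_*$ of the log sheaves into $\mathbb{Z}/2\mathbb{Z}$-eigenspaces and identify each summand by a local computation in coordinates adapted to $B$ and $D$ at a point of $B\cap D$. However, the central computation comes out wrong. In your coordinates $(s,v)$ upstairs with $f(s,v)=(s^2,v)=(u,v)$, $B=\{u=0\}$, $D=\{v=0\}$, hence $C=\{v=0\}$, the sheaf $\Omega_X^1(\log C)$ is generated by $ds$ and $dv/v$. Since the involution sends $s\mapsto -s$, the form $ds$ is \emph{anti}-invariant. The invariant part of the pushforward is therefore generated by $s\,ds=\tfrac12\,du$ and $dv/v$, i.e.\ it equals $\Omega_Y^1(\log D)$ with \emph{no} pole along $B$; the anti-invariant part is generated by $ds=\tfrac{s}{2}\,\tfrac{du}{u}$ and $s\,\tfrac{dv}{v}$, i.e.\ $s\cdot\Omega_Y^1(\log(B+D))\cong\Omega_Y^1(\log(B+D))(-L)$. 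You assert the opposite on both counts: that the invariant part acquires the log pole along $B$, and that the anti-invariant part's pole ``collapses'' to give $\Omega_Y^1(\log D)(-L)$. The collapse claim is false: $ds=\tfrac{1}{2u}(s\,du)$ is not a regular-function multiple of $s\,du$, so the pole along $B$ in the anti-invariant summand is genuinely present. You flag the mismatch with the displayed statement yourself, but you leave it unresolved, so as written your argument establishes the wrong decomposition.

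There is a second, related error on the tangent side: you take $s\,\partial_s$ and $v\,\partial_v$ as local generators of $T_X(\log C)$, thereby imposing a log condition along the ramification curve $\{s=0\}$. But the ramification curve is not a component of $C$ (transversality of $B$ and $D$ rules out $D\subset B$), so the correct generators are $\partial_s$ and $v\,\partial_v$. With these, the invariant part is generated by $s\,\partial_s=2u\,\partial_u$ and $v\,\partial_v$, giving $T_Y(\log(B+D))$, and the anti-invariant part by $\partial_s=2s\,\partial_u$ and $s\,v\,\partial_v$, giving $T_Y(\log D)(-L)$ --- which is the statement of the lemma. Your ``dualize and swap'' heuristic points in the right direction but cannot repair the argument once the $\Omega^1$ side is misidentified. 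The paper's proof is precisely your strategy with this bookkeeping done correctly; it then simply observes that the local modules glue to the asserted global sheaves, which your stratification-and-reflexivity remark would also justify.
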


\begin{Remark} Lemma~\ref{decomposition into eigenspaces} is an extension of the double cover version of~\cite[Lemma 4.2]{pardini1991} to the log tangent sheaf.
\end{Remark}

\begin{proof}
In order to compute $f_*\Omega_X^1(\log C)$, note that it admits an action of $\mathbb{Z}/2\mathbb{Z}$ via deck transformations, so we can decompose it into its invariant and anti-invariant eigenspaces. 

Let $V$ be an open neighborhood of $p\in D \cap B$ and choose coordinates $(z,w)$ on $V$ so that $p$ is at the origin and the local equation of $D$ is $z$ and the local equation of $B$ is $w$. Then we have an open neighborhood $U$ of $f^{-1}(p)$ with local coordinates $(x,y)$ so that $f(x,y)=(x, y^2)$. Note that the ramification locus $R$ of $f$ has local equation $y$ and the curve $C$ on $X$ has local equation $x$. See Figure~\ref{double cover picture}.

\begin{figure}[!h]
\centering
\includegraphics[scale=.8]{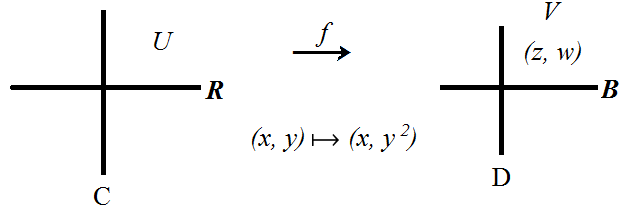}
\caption{The map $f$.\label{double cover picture}}
\end{figure}

On $U$ we have 
$$\Omega_X^1(\log C)(U)=\left<\frac{dx}{x},\,dy\right>_{\mathcal{O}_X(U)}.$$
Noting that $\mathcal{O}_Y(V)\simeq \mathbb{C}[x,y^2]$, we have
$$
f_*(\Omega_X^1(\log C))(V)=
\left<\frac{dx}{x},\,y\frac{dx}{x},\,dy, \,y dy \right>_{\mathcal{O}_Y(V)}
$$
The action of $\mathbb{Z}/2\mathbb{Z}$ sends $(x,y)$ to $(x,-y)$. Therefore the invariant subspace of $f_*(\Omega_X^1(\log C))(V)$ is
$$
f_*(\Omega_X^1(\log C))_+(V)=\left<\frac{dx}{x}, ydy\right>_{\mathcal{O}_Y(V)}=\left<\frac{dz}{z},  dw\right>_{ \mathcal{O}_Y(V)} = \Omega_Y^1(\log D)(V).
$$
The anti-invariant subspace of $f_*(\Omega_X^1(\log C))(V)$ is
$$
f_*(\Omega_X^1(\log C))_-(V)=\left< y\frac{dx}{x}, dy\right>_{\mathcal{O}_Y(V)}=
=y\left<\frac{dz}{z}, \frac{dw}{w}\right>_{\mathcal{O}_Y(V)}\\
=\Omega_Y^1((\log D+B)(-L))(V).
$$
One checks easily that these modules extend to the expected sheaves over all of $Y$. 
The proof for the log tangent bundle is similar.
\end{proof}

\subsection{Smooth boundary components of $\overline{\mathcal{M}}_{5,5}$}\label{1 and 2a}

We show that loci corresponding to surfaces of type 1 and 2a give generically smooth loci in the moduli space $\overline{\mathcal{M}}_{5,5}$. In both cases, we obtain this result by proving the vanishing of the cohomology group in which obstructions to $\mathbb{Q}$-Gorenstein deformations lie. By Theorem~\ref{counting}, the type 1 and 2a loci are $39$-dimensional, so we conclude that the closure of the 1 and 2a loci are generically smooth Cartier divisors in $\overline{\mathcal{M}}_{5,5}$.

\subsubsection{The type 1 component}

For this subsection, let $W$ be a stable numerical quintic surface of type 1 or 1'' and denote by $X$ its minimal resolution. Let $f:X\rightarrow Z$  be the double cover, where $Z= \mathbb{P}^1\times\mathbb{P}^1$ or $\mathbb{F}_2$, and $f$ is branched over a smooth curve $B\sim 6\Delta$, tangent to $D\sim \Delta$ at six points. Then $f^*(D)=C_1+C_2$ and the curves $C_1$ and $C_2$ are $(-4)$ curves on $X$. Let $R=f^*B$ denote the ramification locus of $f$, and let $L\subset Z$ be a curve such that $B\sim 2L$. 

In order to show that deformations of $W$ are unobstructed, it suffices to show that $H^2(W, T_W)=0$. Equivalently, as described above, we show that $H^2(X,T_X(\log (C_1)))=0$.  

\begin{Theorem}\label{vanishing for type 1} Let $X$ be the minimal resolution of a stable numerical quintic surface of type 1 or 1'', and let $C_1$ and $C_2$ be the $(-4)$-curves on $X$. Then $H^2(X,T_X(\log (C_1)))=0$.
\end{Theorem}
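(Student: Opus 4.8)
The plan is to use the decomposition from Lemma~\ref{decomposition into eigenspaces}. Since $D$ is tangent to $B$ at six points, $D$ does not meet $B$ transversally, so I first need to pass to a setting where Lemma~\ref{decomposition into eigenspaces} applies. I would do this by blowing up: let $\sigma:\tilde Z\to Z$ be the sequence of six blowups (one infinitely-near pair is not needed here since each tangency has separation number one in the type 1 case) resolving the tangencies of $B$ and $D$, with $\tilde B$ the proper transform of $B$ and $\tilde D$ that of $D$; then $\tilde B$ and $\tilde D$ are disjoint, and $X\to \tilde Z$ is the double cover $\tilde f$ branched over $\tilde B$, with $C_1=\tilde f^{-1}(\tilde D)$ now a connected curve (one component of $f^*D$) meeting the ramification locus in the right way. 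Applying Lemma~\ref{decomposition into eigenspaces} with $C=C_1$, $D=\tilde D$ (which is disjoint from $\tilde B$, so "intersects $\tilde B$ transversally" holds vacuously), I get
$$f_*\bigl(T_X(\log C_1)\bigr)=T_{\tilde Z}\bigl(\log(\tilde D+\tilde B)\bigr)\oplus T_{\tilde Z}(\log \tilde D)(-\tilde L),$$
where $\tilde B\sim 2\tilde L$. Since $f$ is finite, $H^2(X,T_X(\log C_1))=H^2(\tilde Z,T_{\tilde Z}(\log(\tilde D+\tilde B)))\oplus H^2(\tilde Z,T_{\tilde Z}(\log \tilde D)(-\tilde L))$, so it suffices to kill each summand.

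For each summand I would use Serre duality on the rational surface $\tilde Z$: $H^2(\tilde Z,\mathcal F)\cong H^0(\tilde Z,\mathcal F^\vee\otimes\omega_{\tilde Z})^\vee$. Now $T_{\tilde Z}(\log(\tilde D+\tilde B))^\vee=\Omega^1_{\tilde Z}(\log(\tilde D+\tilde B))$ and $T_{\tilde Z}(\log\tilde D)(-\tilde L)^\vee=\Omega^1_{\tilde Z}(\log\tilde D)(\tilde L)$, so I must show
$$H^0\bigl(\tilde Z,\Omega^1_{\tilde Z}(\log(\tilde D+\tilde B))\otimes\omega_{\tilde Z}\bigr)=0\quad\text{and}\quad H^0\bigl(\tilde Z,\Omega^1_{\tilde Z}(\log\tilde D)(\tilde L)\otimes\omega_{\tilde Z}\bigr)=0.$$
To handle the logarithmic part I would use the residue sequences
$$0\to\Omega^1_{\tilde Z}(\log\tilde D)\to\Omega^1_{\tilde Z}(\log(\tilde D+\tilde B))\to\mathcal O_{\tilde B}\to 0,\qquad 0\to\Omega^1_{\tilde Z}\to\Omega^1_{\tilde Z}(\log\tilde D)\to\mathcal O_{\tilde D}\to 0,$$
twist them by the relevant line bundles, and reduce to showing vanishing of $H^0$ of $\Omega^1_{\tilde Z}$ twisted by various divisors of the form $K_{\tilde Z}+(\text{effective, but of low degree})$ and of the structure sheaves of $\tilde B$ and $\tilde D$ twisted by those divisors restricted. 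Concretely one pulls everything back through $\sigma$: on $Z=\mathbb P^1\times\mathbb P^1$ (or $\mathbb F_2$), $\Omega^1_Z$ splits (resp. is an extension of line bundles), $K_Z=-2\Delta$, $B\sim 6\Delta$, $L\sim 3\Delta$, $D\sim\Delta$, and one tracks the six exceptional divisors $E_i$ of $\sigma$ with their coefficients. The key positivity input is that after the twist the line bundles appearing all have negative degree on every curve in sight (the canonical twist $-2\Delta$ on $Z$ more than compensates the modest positive contributions of $\tilde L$ and of $\mathcal O_{\tilde D},\mathcal O_{\tilde B}$), together with the fact that $\tilde D$ and $\tilde B$ are smooth rational (resp. of computable genus) curves on which the restricted bundles have negative degree.

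The main obstacle I anticipate is the bookkeeping of the exceptional divisors: showing that after blowing up the six tangency points and twisting, the relevant sheaves on $\tilde Z$ still have no sections requires carefully checking the coefficients of the $E_i$ (these come from Lemma~\ref{blowup of two points}, i.e. $\sigma_*\Omega^1_{\tilde Z}(\log\tilde D)$ involves the ideal sheaves of the blown-up points), and verifying that the twisting line bundle $\tilde L$, which picks up $-\tilde L\cdot$(something) along the $E_i$, does not accidentally produce an effective divisor with sections. The cleanest route is probably to push forward to $Z$ via $\sigma_*$ and reduce the two vanishing statements to statements about $H^0$ of $\Omega^1_Z(\log D)$, $\Omega^1_Z$, $\mathcal O_D$, $\mathcal O_B$ on $Z$ itself, tensored with ideal sheaves of the six points and with line bundles of the form $\mathcal O_Z(a\Delta)$ for $a<0$; all of these vanish for degree reasons since $\Omega^1_Z$ has no sections twisted by a negative bundle and $\mathcal O_D(a\Delta),\mathcal O_B(a\Delta)$ have negative degree for $a<0$. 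I would also note that the same argument, essentially verbatim, handles the $\mathbb F_2$ case (type 1''), the only change being the slightly different splitting type of $\Omega^1_{\mathbb F_2}$, which still has the required negativity after the canonical twist.
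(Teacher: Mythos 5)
There is a genuine gap, and it occurs at the very first step. In the type 1 (and 1'') case the branch curve $B$ is \emph{smooth}; the tangency points of $B$ and $D$ are not singularities of $B$, so the canonical-resolution procedure of Section~\ref{Double covers} blows up nothing and $X$ is the double cover of $Z$ itself. There is no map $X\rightarrow\tilde Z$ to the blowup of $Z$ at the tangency points: the pulled-back branch divisor $\sigma^*B-E$ is not $2$-divisible in $\mathrm{Pic}(\tilde Z)$, and more concretely nothing in $X$ can surject onto the exceptional curves since $f:X\rightarrow Z$ contracts no curves. So the decomposition you write down for $f_*\bigl(T_X(\log C_1)\bigr)$ is based on a nonexistent cover. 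There is also a second, independent problem: even over $Z$ itself the sheaf $T_X(\log C_1)$ admits no eigenspace decomposition, because the deck transformation swaps $C_1$ and $C_2$ and therefore does not preserve $T_X(\log C_1)$. Lemma~\ref{decomposition into eigenspaces} can only ever be applied to the symmetric sheaf built from the full preimage $f^{-1}(D)=C_1+C_2$.

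The paper's proof handles both difficulties. It works with the equivariant sheaf $\Omega^1_X(\log(C_1+C_2))(K)$, shows by an explicit local computation at the points of $C_1\cap C_2\cap R$ (exactly where transversality fails and Lemma~\ref{decomposition into eigenspaces} does not apply) that the anti-invariant part of its pushforward sits inside $\Omega^1_Z(\log B)(-2\Delta)$, and kills $H^0$ of the twist by the residue sequence. The crucial final step, for which your proposal has no substitute, is the passage from $\log(C_1+C_2)$ back to $\log C_1$: any section of $\Omega^1_X(\log C_1)(K)\subset\Omega^1_X(\log(C_1+C_2))(K)$ must be invariant, but an invariant form whose polar locus is contained in $C_1$ alone cannot actually have a pole along $C_1$ (the involution would move the pole to $C_2$), so it lies in $H^0(X,\Omega^1_X(K))\simeq H^2(X,T_X)^{\vee}$, which vanishes by Horikawa. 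Without an argument of this kind the asymmetry between $C_1$ and $C_1+C_2$ is never addressed, and the proposed proof cannot be completed along the lines sketched.
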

\begin{proof} 
By Serre duality, it is enough to show that $H^2(X,\Omega_X^1(\log (C_1))(K))=0,$ where $K=K_X$. 

The double cover $f:S\rightarrow Z$ gives rise to an action of $\mathbb{Z}/2\mathbb{Z}$ on $H^0(X, \Omega_X^1(\log(C_1+C_2))(K))$ via deck transformations. To begin with, we prove that the anti-invariant subspace
$H^0(X, \Omega_X^1\log(C_1+C_2)(K))_-$
vanishes. 

By the projection formula, noting that $K\sim f^*(\Delta)$, we have
$$f_*(\Omega_X^1\log(C_1+C_2)(K))=(f_*\Omega_X^1\log(C_1+C_2))(\Delta).$$
We claim that $$f_*\Omega_X^1(\log(C_1+C_2))_-\subset \Omega_{Z}(\log B)(-2\Delta).$$

To compute $f_*(\Omega_X^1\log(C_1+C_2))_-$, we need only consider a point in $C_1\cap C_2\cap R$.  Indeed, suppose that $U$ is a neighborhood of $p\in X$ such that $U\cap C_1\cap C_2\cap R=\emptyset$, and let $V$ denote the image of $U$ under $f$. By Lemma~\ref{decomposition into eigenspaces}, we have 
\begin{eqnarray*}
f_*(\Omega_X^1\log(C_1+C_2))_-(V)&=&\Omega_{Z}^1(\log (B+D))(-3\Delta)(V)\\
&\subset& \Omega_{Z}^1(\log B)(-2\Delta)(V),
\end{eqnarray*}
because $D\cap V=\emptyset$.

Now let $U$ be an open subset of $X$ containing $p\in C_1\cap C_2\cap R$, and let $V$ an open neighborhood of $f(p)$. Choose coordinates $(x,y)$ on $U$  so that $p$ is at the origin and the local equation of $R$ is $y$. We can then choose coordinates $(w,z)$ on $V$ such that the local equation of $B$ is $z$ and the local equation of $D$ is $z-w^2$. Then the local equations of $C_1$ and $C_2$ are $y-x$ and $y+x$. With these coordinates, the cover $f$ is given by the function $(x,y)\mapsto (x, y^2)$. See Figure~\ref{map to Z}.

\begin{figure}[!h]
\centering
        \includegraphics[scale=.7]{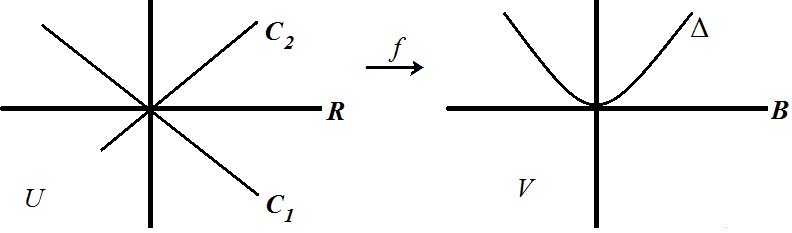}
  \caption{The map $f$. \label{map to Z}}
\end{figure}

The $\mathcal{O}_X(U)$-module  $\Omega_X^1\log(C_1+C_2)(U)$ is  generated by $\left\{\frac{d(y-x)}{y-x},\,\frac{d(y+x)}{y+x}\right\}$. As a module over $\mathcal{O}_Y(V)$, we have that $f_* \Omega_X^1\log(C_1+C_2)(V)$ is generated by
$$\left\{\frac{d(y-x)}{y-x}, d(y-x), \,\frac{d(y+x)}{y+x}, \, d(y+x)\right\}$$
Since the action of $\mathbb{Z}/2\mathbb{Z}$ sends $y$ to $-y$, we see quickly that the anti-invariant submodule is generated as an $\mathcal{O}_Y(V)$--module by
\begin{eqnarray*}
\left\{\frac{d(y-x)}{y-x}+\frac{d(y+x)}{y+x}, \, dy\right\}
&\subset&\left\{\frac{1}{y^2-x^2}(-2ydx+2xdy), \, \frac{1}{y^2-x^2} dy\right\}\\
&=&\frac{y}{z-w^2}\left\{-2dw, \, \frac{dz}{z}\right\}
\end{eqnarray*}
This last module we recognize as $\Omega_{Z}^1(\log B)(-3\Delta+D)(V)=\Omega_{Z}^1(\log B)(-2\Delta)(V)$. 
Thus, 
$$f_*\Omega^1_X(\log(C_1+C_2))_-\subset\Omega_{Z}^1(\log (B))(-2\Delta).$$

By the projection formula, using that $K\sim f^*\Delta$, we have
$$f_*\Omega^1_X(\log(C_1+C_2))(K)_-\subset \Omega_{Z}^1(\log B)(-\Delta).$$
To show that $H^0(Z, \Omega_{Z}^1(\log B)(-\Delta))=0$, consider the exact sequence
$$0\rightarrow \Omega_{Z}^1\rightarrow \Omega_{Z}^1(\log B)\rightarrow \mathcal{O}_B\rightarrow 0$$
where $\Omega_{Z}^1(\log B)\rightarrow \mathcal{O}_B$ is the residue map. Twisting by $-\Delta$ gives the exact sequence
$$0\rightarrow \Omega_{Z}^1(-\Delta)\rightarrow \Omega_{Z}^1(\log B)(-\Delta)\rightarrow \mathcal{O}_B(-\Delta)\rightarrow 0.$$
Looking at the corresponding long exact sequence in cohomology, it remains to show that $H^0(Z, \Omega_{Z}^1(-\Delta))=0$ and $H^0(B, \mathcal{O}_B(-\Delta))=0$. Both of these are obvious, the first because $H^0(Z, \Omega_{Z}^1(-\Delta))\subset H^0(Z, \Omega_{Z}^1)=0$ and the second because $-\Delta\cdot B=-12<0$. Thus, 
$$H^0(X, \Omega_X^1\log(C_1+C_2)(K))_-=0,$$
as we wished to show.

Now consider a one-form $\alpha\in\Omega_X^1(\log C_1)(K)$. Since
$$\Omega_X^1(\log C_1)(K)\subset\Omega_X^1(\log C_1+C_2)(K)$$
and the latter sheaf has no anti-invariant part, the one-form $\alpha$ must be invariant. But the action of $\mathbb{Z}/2\mathbb{Z}$ on cohomology interchanges $C_1$ and $C_2$, so $\alpha$ must not have a pole along $C_1$.
Thus,  
$$\alpha\in H^0(X, \Omega_X^1(K))\simeq H^2(X, T_X)^{\vee}.$$
By Horikawa~\cite{horikawa1976}, we have $H^2(X, T_X)=0$, and so $\alpha=0$, completing the proof.
\end{proof}

In Section~\ref{dimension counts}, we showed that the locus of stable quintic surfaces of type 1 is $39$-dimensional, so Theorems~\ref{quintic example} and~\ref{vanishing for type 1} imply the following:

\begin{Cor}\label{1 is divisor} The closure of the locus of surfaces of type 1 is a generically smooth Cartier divisor in $\overline{\mathcal{M}}_{5,5}$, lying in the closure of the type I component of $\mathcal{M}_{5,5}$. 
\end{Cor}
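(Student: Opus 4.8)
The plan is to read the corollary directly off Theorems~\ref{quintic example} and~\ref{vanishing for type 1}, using only standard deformation theory of stable surfaces. Fix a general surface $W$ in the type 1 locus, with minimal resolution $X$ and distinguished $(-4)$-curve $C_1$, and write $[W]\in\overline{\mathcal{M}}_{5,5}$ for its moduli point. As recalled in Section~\ref{sheaf calculations}, obstructions to $\mathbb{Q}$-Gorenstein deformations of $W$ lie in $H^2(X,T_X(\log C_1))$, and this group vanishes by Theorem~\ref{vanishing for type 1}. Hence the semiuniversal $\mathbb{Q}$-Gorenstein deformation space of $W$ is smooth, and since $\mathrm{Aut}(W)$ is finite (and trivial for general $W$) the space $\overline{\mathcal{M}}_{5,5}$ is smooth at $[W]$.

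Next I would pin down the two dimensions that make this a divisor statement. On the one hand, a general smooth fibre of the family of Theorem~\ref{quintic example} is an honest quintic in $\mathbb{P}^3$, hence a point of the type I component of $\mathcal{M}_{5,5}$, so by the converse half of that theorem $[W]$ lies in the closure of the type I component; since that component is $40$-dimensional and $\overline{\mathcal{M}}_{5,5}$, being the KSBA compactification of the $40$-dimensional space $\mathcal{M}_{5,5}$, is at most $40$-dimensional, the local dimension of $\overline{\mathcal{M}}_{5,5}$ at $[W]$ is exactly $40$. Smoothness then forces $\overline{\mathcal{M}}_{5,5}$ to be locally irreducible at $[W]$, so it coincides there with the closure of the type I component. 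On the other hand, because $W$ is general in its locus, every stable numerical quintic surface near $[W]$ carrying a $\tfrac14(1,1)$ singularity is again of type 1, so $\bar 1$ agrees near $[W]$ with the equisingular deformation space of $W$, equivalently (after passing to minimal resolutions) with $\mathrm{Def}(X,C_1)$; this functor is again governed by $T_X(\log C_1)$, with obstructions in $H^2(X,T_X(\log C_1))=0$, so $\bar 1$ is smooth at $[W]$, and by Lemma~\ref{counting} it has dimension $39$.

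It then remains only to assemble these facts: $\bar 1$ is, near $[W]$, a smooth codimension-one subvariety of the smooth $40$-fold $\overline{\mathcal{M}}_{5,5}$; a codimension-one subvariety of a smooth (hence locally factorial) variety is Cartier, so $\bar 1$ is a Cartier divisor of $\overline{\mathcal{M}}_{5,5}$ which is smooth at its general point, and it lies in the closure of the type I component by the previous paragraph. The genuinely analytic work — the vanishing $H^2(X,T_X(\log C_1))=0$ and the construction of the smoothing family — has already been carried out in Theorems~\ref{vanishing for type 1} and~\ref{quintic example}, so the only point in this corollary that needs care is the bookkeeping: passing from the $\mathbb{Q}$-Gorenstein deformation functor to the coarse space (handled by finiteness/triviality of $\mathrm{Aut}(W)$) and identifying the local model of $\bar 1$ with $\mathrm{Def}(X,C_1)$, which is what upgrades the inclusion $\bar 1\subsetneq\overline{\mathcal{M}}_{5,5}$ to a genuine smooth Cartier divisor statement.
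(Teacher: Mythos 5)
Your proposal is correct and follows essentially the same route as the paper, which derives the corollary directly from the dimension count of Lemma~\ref{counting}, the smoothing family of Theorem~\ref{quintic example}, and the vanishing $H^2(X,T_X(\log C_1))=0$ of Theorem~\ref{vanishing for type 1}; you have simply made explicit the standard deformation-theoretic bookkeeping (unobstructedness of $\mathrm{Def}^{QG}(W)$ and of $\mathrm{Def}(X,C_1)$, the dimension comparison $39$ versus $40$, and smooth codimension one implying Cartier) that the paper leaves implicit.
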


\subsubsection{The 2a component}\label{2a component}

Let $W$ be a stable numerical quintic surface of type 2a, 2a', or 2a'' and let $S$ denote its minimal resolution. Then there is a map $\tilde{f}: X \rightarrow \tilde{Z}$, which is the double cover of the blowup of $Z=\mathbb{P}^1\times\mathbb{P}^1$ in two points $p$ and $q$ lying on a fiber $D$. The branch locus $\tilde{B}$ of $\tilde{f}$ is the proper transform of an irreducible curve $B \sim 6\Delta$ which has either a node or an $A_2$ singularity at each of $p$ and $q$ and is smooth elsewhere. Denote by $\Gamma_1$ and $\Gamma_2$ generic rulings of $\tilde{Z}$ so that $\Gamma_2\sim \tilde{D}+E_1+E_2$, where $\tilde{D}$ is the proper transform of $D\subset Z$.

\begin{Theorem}\label{vanishing for type 2a} Let $W$ be a stable numerical quintic surface of type 2a, 2a', or 2a'', let $X$ be its minimal resolution and $C$ the $(-4)$-curve on $X$. Then $H^2(X, T_X(\log C)))=0.$
\end{Theorem}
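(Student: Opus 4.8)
The plan is to mimic the structure of the proof of Theorem~\ref{vanishing for type 1}, replacing the double cover of $\mathbb{P}^1\times\mathbb{P}^1$ with the double cover $\tilde f\colon X\to\tilde Z$ of the blowup $\tilde Z$ of $\mathbb{P}^1\times\mathbb{P}^1$ at two points $p,q$ on a ruling $D$. By Serre duality it suffices to show $H^0(X,\Omega^1_X(\log C)(K))=0$, where $K=K_X$. The key structural fact is that on $\tilde Z$ the proper transform $\tilde D$ is a $(-2)$-curve disjoint from $\tilde B$ (this is how the $(-4)$-curve $C$ arises: $\tilde f^{*}\tilde D=C$, but we also get a companion divisor; more precisely $\tilde f^{-1}(\tilde D+E_1+E_2)$ will involve the exceptional curves). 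First I would set up, via Lemma~\ref{decomposition into eigenspaces} applied to $\tilde f$ with branch divisor $\tilde B$ and the reduced curve $\tilde D$ (which meets $\tilde B$ in no points, or transversally after the resolution), the eigenspace decomposition
$$f_*\bigl(\Omega^1_X(\log C)(K)\bigr)=\Omega^1_{\tilde Z}(\log \tilde D)(\tilde L')\ \oplus\ \Omega^1_{\tilde Z}(\log(\tilde D+\tilde B))(\tilde L'-\tilde L),$$
where $K_X=\tilde f^{*}(\tilde L')$ for the appropriate divisor class $\tilde L'$ on $\tilde Z$ computed from $K_{\tilde Z}+\tilde L$ with $\tilde B\sim 2\tilde L$, and $\tilde L$ is the half-branch class. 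The point is to identify $\tilde L'$ explicitly: since $K_X=\tilde f^{*}(K_{\tilde Z}+\tilde L)$ and $\tilde Z$ is the blowup of $\mathbb{P}^1\times\mathbb{P}^1$ at two points, $K_{\tilde Z}=-2\Delta_0-2\Gamma+E_1+E_2$ (in suitable notation) and $\tilde L$ is the proper transform of the half-branch class with the correct $-E_i$ multiplicities coming from the node/$A_2$ at $p$ and $q$.

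Then, as in Theorem~\ref{vanishing for type 1}, I would argue that a section $\alpha\in H^0(X,\Omega^1_X(\log C)(K))$ lies inside $H^0(X,\Omega^1_X(\log(\tilde f^{*}\tilde D))(K))$ — i.e. inside the sheaf pulled back from the full reducible preimage — and use the $\mathbb{Z}/2\mathbb{Z}$-action to show it must be invariant (since deck transformations swap the two $(-4)$-curves, or fix $C$ and force no pole). So it reduces to showing the invariant part vanishes, namely $H^0(\tilde Z,\Omega^1_{\tilde Z}(\log \tilde D)(\tilde L'))=0$ for the relevant twist; and separately that the ordinary holomorphic one-form space $H^0(X,\Omega^1_X(K))\cong H^2(X,T_X)^\vee$ vanishes, which follows from Horikawa~\cite{horikawa1976} since $X$ is a Horikawa surface with $K^2=p_g=4$, exactly as before. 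The residue sequence
$$0\to\Omega^1_{\tilde Z}(\tilde L')\to\Omega^1_{\tilde Z}(\log\tilde D)(\tilde L')\to\mathcal{O}_{\tilde D}(\tilde L')\to 0$$
reduces the vanishing to (i) $H^0(\tilde Z,\Omega^1_{\tilde Z}(\tilde L'))=0$, which one gets by pulling back along $\sigma\colon\tilde Z\to\mathbb{P}^1\times\mathbb{P}^1$ and using that $\sigma_*\Omega^1_{\tilde Z}\subset\Omega^1_{\mathbb{P}^1\times\mathbb{P}^1}$ together with negativity of the relevant twist (here I may need Lemma~\ref{blowup of two points} to control $\sigma_*\Omega^1_{\tilde Z}(\log\tilde D)$ and compare with $\Omega^1_{\mathbb{P}^1\times\mathbb{P}^1}(\log D)\otimes\mathfrak M_p\otimes\mathfrak M_q$), and (ii) $\deg_{\tilde D}\mathcal{O}_{\tilde D}(\tilde L')<0$, which is a direct intersection-number check on the $(-2)$-curve $\tilde D$ using $\tilde L'\cdot\tilde D$; since $\tilde D\cdot\tilde B=0$ and $\tilde D$ is a $(-2)$-curve, $K_X\cdot C=0$ and the twist restricted to $\tilde D$ will be negative of the expected degree.

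The main obstacle I anticipate is the careful bookkeeping of divisor classes on the blown-up surface $\tilde Z$: unlike the type 1 case where everything happens on $\mathbb{P}^1\times\mathbb{P}^1$ with $K\sim f^{*}\Delta$ clean, here $\tilde Z$ carries the exceptional curves $E_1,E_2$, the branch curve $\tilde B=\sigma^{*}B-2E_1-2E_2$ (node case) or with higher-order corrections ($A_2$ case, requiring a further blowup), and $\tilde D=\sigma^{*}D-E_1-E_2$ is a $(-2)$-curve — so $\tilde L$, $K_{\tilde Z}+\tilde L$, and the twists appearing in the eigenspace decomposition all need to be tracked with the correct $E_i$-coefficients, and one must verify the crucial transversality/disjointness of $\tilde D$ and $\tilde B$ that makes Lemma~\ref{decomposition into eigenspaces} applicable. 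A secondary subtlety is the $2a''$ case, where $B$ has an $A_2$ rather than a node at one of the points, which forces an extra blowup in the construction of $\tilde Z$; I would handle this by the same argument on the further-blown-up surface, noting the local analysis only changes the multiplicities and not the vanishing conclusion. Once the classes are pinned down, steps (i) and (ii) are routine, and the invariance argument and the appeal to Horikawa's $H^2(X,T_X)=0$ go through verbatim.
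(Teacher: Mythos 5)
Your skeleton (Serre duality, the eigenspace decomposition of $f_*(\Omega_X^1(\log C)(K_X))$ via Lemma~\ref{decomposition into eigenspaces}, and Lemma~\ref{blowup of two points} for the pushforward to $\mathbb{P}^1\times\mathbb{P}^1$) matches the paper, but there are two genuine gaps, both traceable to incorrect intersection numbers. First, $\tilde{D}$ is not disjoint from $\tilde{B}$: after blowing up the two nodes one has $\tilde{B}\cdot\tilde{D}=6-2-2=2$, with two transversal intersection points. This is precisely why $C=\tilde{f}^{*}\tilde{D}$ is a single irreducible $(-4)$-curve (a connected double cover of $\mathbb{P}^1$ branched at two points) rather than two swapped $(-2)$-curves, and consequently the type-1 trick does not transfer: the involution fixes $C$, so an invariant section of $\Omega^1_X(\log C)(K_X)$ can perfectly well have a log pole along $C$, and there is no reduction to $H^0(X,\Omega^1_X(K_X))$. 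Both eigensummands, $\Omega^1_{\tilde{Z}}(\log\tilde{D})(K_{\tilde{Z}}+\tilde{L})$ and $\Omega^1_{\tilde{Z}}(\log(\tilde{D}+\tilde{B}))(K_{\tilde{Z}})$, must therefore be killed separately. Your proposal never really addresses the anti-invariant one; in the paper this is Lemma~\ref{another lemma for type 2a}, and it is the delicate step, since $K_{\tilde{Z}}\cdot\tilde{D}=0$ means the residue term has a nonzero section and one must prove injectivity of the connecting homomorphism via the first Chern class map and the linear independence of $\tilde{D}$ and $\tilde{B}$ in the Picard group of $\tilde{Z}$.

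Second, your step (ii) for the invariant summand fails numerically: the twist is $K_{\tilde{Z}}+\tilde{L}=\sigma^{*}\Delta$, so $\deg_{\tilde{D}}\mathcal{O}_{\tilde{D}}(K_{\tilde{Z}}+\tilde{L})=\Delta\cdot D=1\ge 0$ (equivalently $K_X\cdot C=2$, not $0$), so $\mathcal{O}_{\tilde{D}}(K_{\tilde{Z}}+\tilde{L})$ has a two-dimensional space of sections and the residue sequence on $\tilde{Z}$ alone gives nothing. The paper instead pushes the whole invariant summand down to $\mathbb{P}^1\times\mathbb{P}^1$, obtaining $\Omega^1(\log D)\otimes\mathfrak{M}_{p,q}\otimes\mathcal{O}(\Delta)=(\mathcal{O}(0,1)\oplus\mathcal{O}(1,-1))\otimes\mathfrak{M}_{p,q}$ from the splitting of $\Omega^1_{\mathbb{P}^1\times\mathbb{P}^1}(\log D)$ into pullbacks from the two factors; the vanishing then comes from the ideal sheaf, since the only divisor in $|\mathcal{O}(0,1)|$ through both $p$ and $q$ would have to be a ruling of the opposite family containing two distinct points of the fiber $D$, which does not exist. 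You would need to supply both of these arguments to complete the proof.
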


We begin with a lemma.

\begin{Lemma}\label{another lemma for type 2a}  $H^0(\tilde{Z}, \Omega_{\tilde{Z}}^1(\log\tilde{D}+\tilde{B})(K_{\tilde{Z}}))=0.$
\end{Lemma}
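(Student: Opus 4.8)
The plan is to compute the cohomology group directly using the residue exact sequence for the log divisor $\tilde{D}+\tilde{B}$ on $\tilde{Z}$, reducing everything to (a) a vanishing for $\Omega^1_{\tilde{Z}}$ twisted by the adjoint class $K_{\tilde{Z}}$ and (b) vanishing of sections of line bundles on the smooth curves $\tilde{D}$ and $\tilde{B}$. First I would write down the canonical class $K_{\tilde{Z}}$: since $\tilde{Z}$ is the blowup of $Z=\mathbb{P}^1\times\mathbb{P}^1$ at the two points $p,q$ on the fiber $D$, we have $K_{\tilde{Z}}=\sigma^*K_Z+E_1+E_2=-2\sigma^*\Delta+E_1+E_2$, where I am using the notation of Section~\ref{classification section}. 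The log differential sheaf sits in the short exact sequence
$$0\rightarrow \Omega^1_{\tilde{Z}}\rightarrow \Omega^1_{\tilde{Z}}(\log(\tilde{D}+\tilde{B}))\rightarrow \mathcal{O}_{\tilde{D}}\oplus\mathcal{O}_{\tilde{B}}\rightarrow 0,$$
and twisting by $K_{\tilde{Z}}$ and taking the long exact sequence in cohomology, it suffices to show $H^0(\tilde{Z},\Omega^1_{\tilde{Z}}(K_{\tilde{Z}}))=0$, $H^0(\tilde{D},\mathcal{O}_{\tilde{D}}(K_{\tilde{Z}}))=0$, and $H^0(\tilde{B},\mathcal{O}_{\tilde{B}}(K_{\tilde{Z}}))=0$, where the connecting map into $H^1(\tilde{Z},\Omega^1_{\tilde{Z}}(K_{\tilde{Z}}))$ only needs to be understood if the curve terms are nonzero.

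For the three vanishings: the two curve terms are handled by degree. Restricting $K_{\tilde{Z}}$ to $\tilde{D}$: since $\tilde{D}$ is the proper transform of the fiber $D$ through $p$ and $q$, we get $\tilde{D}=\sigma^*D-E_1-E_2$, so $K_{\tilde{Z}}\cdot\tilde{D}=(-2\sigma^*\Delta+E_1+E_2)\cdot(\sigma^*D-E_1-E_2)=-2(\Delta\cdot D)-2=-4<0$, and since $\tilde{D}\cong\mathbb{P}^1$ this kills $H^0$. For $\tilde{B}$: the genus of $\tilde{B}$ equals the geometric genus of $B$, and $K_{\tilde{Z}}\cdot\tilde{B}=(K_{\tilde{Z}}+\tilde{B})\cdot\tilde{B}-\tilde{B}^2=(2g(\tilde{B})-2)-\tilde{B}^2$; I would compute $\tilde{B}=\sigma^*B-2E_1-2E_2$ (since $B$ has a node, or $A_2$ singularity resolved similarly, at each point, but note only the first blowup changes $\tilde B$ here in the node case), giving $\tilde{B}^2=B^2-8=72-8=64$ and $\deg(K_{\tilde{Z}}|_{\tilde{B}})$ comes out strictly negative — I would check this arithmetic carefully, since this is precisely where the hypothesis that $B$ has only nodes/$A_2$ points on $D$ enters, and conclude $H^0(\tilde{B},\mathcal{O}_{\tilde{B}}(K_{\tilde{Z}}))=0$. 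The term $H^0(\tilde{Z},\Omega^1_{\tilde{Z}}(K_{\tilde{Z}}))$ I would attack either by pulling back to $Z$ via Lemma~\ref{blowup of two points}-type pushforward arguments — $\sigma_*\Omega^1_{\tilde{Z}}$ relates to $\Omega^1_Z$ twisted down by the ideal of $\{p,q\}$ — and then using $\Omega^1_{\mathbb{P}^1\times\mathbb{P}^1}=\mathcal{O}(-2,0)\oplus\mathcal{O}(0,-2)$ together with $K_{\tilde{Z}}$ being (pulled back from) a negative multiple of $\Delta$, so that the twist stays sufficiently negative; alternatively one can argue directly that any such form would give a global log form on $X$ contradicting $H^0(X,\Omega^1_X)=q=0$ as in the proof of Theorem~\ref{vanishing for type 1}.

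The main obstacle I anticipate is the term $H^0(\tilde{Z},\Omega^1_{\tilde{Z}}(K_{\tilde{Z}}))$: unlike the curve terms it is not a pure degree count, and $\Omega^1$ of a blown-up surface is not a sum of line bundles, so the cleanest route is probably to use the projection $\sigma:\tilde{Z}\to Z$ and the fact that $K_{\tilde{Z}}=\sigma^*(-2\Delta)+E_1+E_2$ together with $R^0\sigma_*$ and the exact sequence for $\Omega^1_{\tilde{Z}}$ versus $\sigma^*\Omega^1_Z$. A secondary subtlety is that in the $A_2$ case (surface of type 2a$''$) the resolution $\sigma$ is a composition of more than one blowup at each singular point, so $\tilde{B}$ and the exceptional configuration are slightly more complicated; I would treat the node case in full and then indicate that the $A_2$ case only makes $\tilde{B}\cdot K_{\tilde Z}$ more negative and $\tilde{D}$ is unaffected, so the same conclusion holds. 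Once all three groups vanish, the long exact sequence gives $H^0(\tilde{Z},\Omega^1_{\tilde{Z}}(\log(\tilde D+\tilde B))(K_{\tilde Z}))=0$, which is the claim.
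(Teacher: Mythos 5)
Your overall strategy (the residue sequence for $\tilde{D}+\tilde{B}$ twisted by $K_{\tilde{Z}}$, plus $-K_{\tilde{Z}}$ effective and $q(\tilde{Z})=0$ to kill the $\Omega^1_{\tilde{Z}}(K_{\tilde{Z}})$ term) matches the paper's, but there is a genuine gap at the step you hoped to skip. Your computation of $K_{\tilde{Z}}\cdot\tilde{D}$ has a sign error: with $K_{\tilde{Z}}=-2\sigma^*\Delta+E_1+E_2$ and $\tilde{D}=\sigma^*D-E_1-E_2$, the cross term is $(E_1+E_2)\cdot(-E_1-E_2)=-(E_1^2+E_2^2)=+2$, not $-2$, so $K_{\tilde{Z}}\cdot\tilde{D}=-2+2=0$ (equivalently, by adjunction $\tilde{D}$ is a smooth rational curve with $\tilde{D}^2=-2$, hence $K_{\tilde{Z}}\cdot\tilde{D}=0$). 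Consequently $\mathcal{O}_{\tilde{D}}(K_{\tilde{Z}})\cong\mathcal{O}_{\mathbb{P}^1}$ and $H^0(\tilde{D},\mathcal{O}_{\tilde{D}}(K_{\tilde{Z}}))=\mathbb{C}$, so the curve term in the long exact sequence does \emph{not} vanish and you cannot avoid analyzing the connecting homomorphism.

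The missing idea, which is the real content of the paper's proof, is the injectivity of that connecting map
$$\delta:H^0\bigl(\tilde{Z},(\mathcal{O}_{\tilde{D}}\oplus\mathcal{O}_{\tilde{B}})(K_{\tilde{Z}})\bigr)\rightarrow H^1\bigl(\tilde{Z},\Omega^1_{\tilde{Z}}(K_{\tilde{Z}})\bigr).$$
The paper handles this by tensoring with a nonzero section $s$ of $\mathcal{O}(-K_{\tilde{Z}})$ (which exists because $-K_{\tilde{Z}}$ is effective) to map the twisted residue sequence to the untwisted one, whose connecting homomorphism is the first Chern class map $1_{\tilde{D}}\mapsto c_1(\tilde{D})$, $1_{\tilde{B}}\mapsto c_1(\tilde{B})$; injectivity then follows from the linear independence of $[\tilde{D}]$ and $[\tilde{B}]$ in $\operatorname{Pic}(\tilde{Z})$, and the vertical multiplication-by-$s$ map is injective since $s\neq 0$. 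Without some such argument your long exact sequence only gives $h^0\le 1$, not the vanishing claimed. (Your treatment of the $\tilde{B}$ term and of $H^0(\tilde{Z},\Omega^1_{\tilde{Z}}(K_{\tilde{Z}}))$ is fine.)
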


\begin{proof}
We have the following exact sequence of sheaves on $\tilde{Z}$:
$$0\rightarrow \Omega_{\tilde{Z}}^1\rightarrow \Omega_{\tilde{Z}}^1(\log(\tilde{D}+\tilde{ B}))\rightarrow\mathcal{O}_{\tilde{D}}\oplus\mathcal{O}_{\tilde{B}}\rightarrow 0$$
where $\Omega_{\tilde{Z}}^1(\log{\tilde{D}+\tilde B})\rightarrow\mathcal{O}_{\tilde{D}+\tilde{B}}$ is the residue map. Twisting by $K_{\tilde{Z}}$ gives the exact sequence
\begin{equation}\label{SES with K}
0\rightarrow \Omega_{\tilde{Z}}^1(K_{\tilde{Z}})\rightarrow \Omega_{\tilde{Z}}^1(\log{\tilde{D}+\tilde B})(K_{\tilde{Z}})\rightarrow(\mathcal{O}_{\tilde{D}}\oplus\mathcal{O}_{\tilde{B}})(K_{\tilde{Z}})\rightarrow 0.
\end{equation}
Note that 
\begin{equation}
K_{\tilde{Z}}=\sigma^*(K_{\mathbb{P}^1\times\mathbb{P}^1})+E_1+E_1=-2\Gamma_1-2\Gamma_2+E_1+E_2\sim -2\Gamma_1-2\tilde{D}-E_1-E_2,
\end{equation}
and so $-K_{\tilde{Z}}$ is effective. Thus $H^0(\tilde{Z}, \Omega_{\tilde{Z}}^1(K_{\tilde{Z}}))\subset H^0(\tilde{Z}, \Omega_{\tilde{Z}}^1)$. Since the irregularity of ${\tilde{Z}}$ is zero, we have
$H^0({\tilde{Z}}, \Omega_{\tilde{Z}}^1)(K_{\tilde{Z}})=0$. Moreover, noting that $\sigma^*(B)=\tilde{B}+2E_1+2E_2$ and $\sigma^*(K_Z)=K_{\tilde{Z}}-E_1-E_2$, we have
$$K_{\tilde{Z}}\cdot\tilde{B}=-24<0$$
and
$$K_{\tilde{Z}}\cdot \tilde{D}=0.$$

Therefore $H^0(\tilde{Z}, (\mathcal{O}_{\tilde{D}}\oplus\mathcal{O}_{\tilde{B}})(K_{\tilde{Z}}))=\mathbb{C}$, so the cohomology group
$$H^0(\tilde{Z}, \Omega_{\tilde{Z}}^1(\log(\tilde{D}+\tilde{B}))(K_{\tilde{Z}}))$$
is $0$ if and only if the connecting homomorphism 
$$\delta:H^0(\tilde{Z}, (\mathcal{O}_{\tilde{D}}\oplus\mathcal{O}_{\tilde{B}})(K_{\tilde{Z}}))\rightarrow H^1(\tilde{Z}, \Omega_{\tilde{Z}}^1(K_{\tilde{Z}}))$$
is injective.

Since $-K_{\tilde{Z}}$ is effective, we have a section $s\in H^0(\tilde{Z}, \Omega_{\tilde{Z}}^1(-K_{\tilde{Z}}))$, so we have a map from the short exact sequence~\eqref{SES with K} to the short exact sequence
$$0\rightarrow \Omega_{\tilde{Z}}^1\rightarrow \Omega_{\tilde{Z}}^1(\log(\tilde{D}+\tilde{B}))\rightarrow\mathcal{O}_{\tilde{D}}\oplus\mathcal{O}_{\tilde{B}}\rightarrow 0.$$
where the map is given by tensoring with $s$.
The connecting homomorphism 
$$\delta_2: H^0(\tilde{Z}, \mathcal{O}_{\tilde{D}}\oplus\mathcal{O}_{\tilde{B}})\rightarrow H^1(\tilde{Z}, \Omega_{\tilde{Z}}^1)$$ 
of the corresponding short exact sequence
is the first Chern class map. That is,  if $1_{\tilde{D}}$ and $1_{\tilde{B}}$ are generators of $H^0(\tilde{Z}, \mathcal{O}_{\tilde{D}}\oplus\mathcal{O}_{\tilde{B}})$, then $\delta_2(1_{\tilde{D}})=c_1(\tilde{D})$ and $\delta_2(1_{\tilde{B}})=c_1(B)$. Thus, the map $\delta_2$ is injective if and only if the curves $\tilde{D}$ and $\tilde{B}$ are linearly independent in the Picard group of $\tilde{Z}$. Recalling that Pic$(\tilde{Z})$ is generated by $\Gamma_1$, $\Gamma_2$, $E_1$ and $E_2$, and that $\tilde{B}\sim 6\Gamma_1+6\Gamma_2-2E_1-2E_2$ and $\tilde{D}\sim \Gamma_2-E_1-E_2$, we see that the two divisors are indeed linearly independent.

Thus, we have a diagram
\[\xymatrix{
H^0(\tilde{Z}, (\mathcal{O}_{\tilde{D}}\oplus\mathcal{O}_{\tilde{B}})(K_{\tilde{Z}}))\ar[r]^{\delta} \ar[d]_{\otimes s}  & H^1(\tilde{Z}, \Omega_{\tilde{Z}}^1(K_{\tilde{Z}})) \ar[d]^{\otimes s}\\
H^0(\tilde{Z}, \mathcal{O}_{\tilde{D}}\oplus\mathcal{O}_{
\tilde{B}})\ar[r]^{\delta_2}& H^1(\tilde{Z}, \Omega_{\tilde{Z}}^1)
}\]
where the bottom arrow is injective. We see that $\delta$ is injective as long as the map on the left is injective. But this map simply takes a section of $(\mathcal{O}_{\tilde{D}}\oplus\mathcal{O}_{\tilde{B}})(K_{\tilde{Z}})$ and multiplies it by $s$. Since $s\neq0$, the map is injective.
\end{proof}

\begin{proof}[Proof of Theorem~\ref{vanishing for type 2a}]

We show that $H^2(X, T_X(\log C))=0$, where $X$ is the minimal resolution of $W$ and $C$ is the $(-4)$-curve on $X$. By Serre duality, it is enough to show that $H^0(X, \Omega_X^1(\log C)(K_X))=0$. Recall that $C=f^*\tilde{D}$ and $K_X=f^*(K_Y+\tilde{L})$. By the projection formula 
$$f_*(\Omega_X^1(\log C)(K_X))=(f_*\Omega_X^1(\log C))\otimes (K_Y+\tilde{L}).$$
Together with Lemma~\ref{decomposition into eigenspaces}, this gives
$$
f_*(\Omega_X^1(\log C)(K_S))=\Omega_Y^1(\log\tilde{D})(K_Y+\tilde{L})\oplus\Omega_Y^1(\log\tilde{D}+\tilde{B})(K_Y).
$$

By Lemma~\ref{another lemma for type 2a}, we have $H^0(Y, \Omega_Y^1(\log\tilde{D}+\tilde{B})(K_Y))=0$. 
It remains to show that $H^0(Y, \Omega_Y^1(\log\tilde{D})(K_Y+\tilde{L}))=0$, which we do via the projection formula.
By Lemma~\ref{blowup of two points}, we have $\sigma_*\Omega_Y^1(\log\tilde{D})=\Omega_Z^1(\log D)\otimes \mathfrak{M}_{p,q}$, where $\mathfrak{M}_{p,q}$ is the ideal sheaf of $p$ and $q$ which are the centers of $\sigma$. Noting that $(K_Y+\tilde{L})=f^*(\Delta)$, the projection formula gives
\begin{eqnarray*}
\sigma_*(\Omega_Y^1(\log\tilde{D})(K_Y+\tilde{L})) &=&(\Omega_{\mathbb{P}^1\times\mathbb{P}^1}^1(\log D)\otimes \mathfrak{M}_{p,q})\otimes \mathcal{O}(\Delta)\\
&=&[(p_1^*\Omega_{\mathbb{P}^1}^1(\log D)\otimes \mathfrak{M}_{p,q})\oplus(p_2^*\Omega_{\mathbb{P}^1}^1\otimes\mathfrak{M}_{p,q})]\otimes \mathcal{O}(\Delta)\\
&=&(\mathcal{O}(0,1)\otimes\mathfrak{M}_{p,q})\oplus(\mathcal{O}(1,-1)\otimes \mathfrak{M}_{p,q}).
\end{eqnarray*}

We have $H^0(\mathbb{P}^1\times\mathbb{P}^1, \mathcal{O}(1,-1)\otimes \mathfrak{M}_{p,q})=0$, because $H^0(\mathbb{P}^1\times\mathbb{P}^1, \mathcal{O}(a,b))=0$ for $a<0$ or $b<0$. And $H^0(\mathbb{P}^1\times\mathbb{P}^1, \mathcal{O}(0,1)\otimes\mathfrak{M}_{p,q})=0$, since $p$ and $q$  lie on $D\in |1,0|$. 
\end{proof}

By Theorem~\ref{counting}, the locus of 2a surfaces is $39$-dimensional. Moreover, Theorem~\ref{Friedman} shows that every 2a surfaces may be obtained as the stable limit of a family of numerical quintic surfaces of type IIa. Together with Theorem~\ref{vanishing for type 2a}, this implies the following

\begin{Cor}\label{2a is divisor} The closure of the locus of surfaces of type 2a is a generically smooth Cartier divisor in $\overline{\mathcal{M}}_{5,5}$, lying in the closure of the type IIa component of $\mathcal{M}_{5,5}.$ 
\end{Cor}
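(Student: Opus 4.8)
The plan is to deduce the corollary by combining the three results assembled above. First, I would recall from the discussion opening Section~\ref{sheaf calculations} (following~\cite{lee-park2007}) that for a surface $W$ whose only non Du Val singularity is a Wahl singularity, with minimal resolution $X$ and exceptional curve $C$, the obstructions to $\mathbb{Q}$-Gorenstein deformations of $W$ all lie in $H^2(X,T_X(\log C))$. Since Theorem~\ref{vanishing for type 2a} gives $H^2(X,T_X(\log C))=0$ when $W$ is of type 2a (and likewise 2a$'$, 2a$''$), the $\mathbb{Q}$-Gorenstein deformation functor of such a $W$ is unobstructed, and hence $\overline{\mathcal{M}}_{5,5}$ is smooth at the point $[W]$ for $W$ a general surface of type 2a.

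Second, I would invoke Theorem~\ref{counting}: the locus of stable numerical quintic surfaces of type 2a is irreducible of dimension $39$, which is one less than $\dim\overline{\mathcal{M}}_{5,5}=40$ (the latter being unchanged under KSBA compactification, by Horikawa's description of $\mathcal{M}_{5,5}$). Hence the closure $\overline{\mbox{2a}}$ of the type 2a locus is a prime divisor in $\overline{\mathcal{M}}_{5,5}$, and by the first step it is smooth at its general point. A prime Weil divisor on a scheme that is regular in a neighborhood of that divisor's generic point is Cartier there; so $\overline{\mbox{2a}}$ is a generically smooth Cartier divisor.

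Third, to place $\overline{\mbox{2a}}$ in the closure of the type IIa component, I would use Theorem~\ref{Friedman}: the (suitably generalized) Friedman construction produces a $\mathbb{Q}$-Gorenstein deformation $\mathcal{X}\to T$ over the disk whose generic fiber is a smooth numerical quintic surface of type IIa and whose special fiber is a 2a surface $W$. This exhibits an analytic arc through $[W]$ whose punctured part lies in the type IIa locus, so $[W]\in\overline{\mbox{IIa}}$. Carrying this out for a general 2a surface, and using that $\overline{\mbox{2a}}\cap\overline{\mbox{IIa}}$ is closed in the irreducible variety $\overline{\mbox{2a}}$ and contains its general point, we conclude $\overline{\mbox{2a}}\subset\overline{\mbox{IIa}}$.

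The substantive content is entirely contained in the theorems being quoted; the only assembly point needing attention is the last one — ensuring that it is the \emph{general} 2a surface, not merely some special one, that occurs as a type IIa degeneration. I would handle this either by running Friedman's family in families over the $39$-dimensional parameter space of triples $(Z,B,D)$ of type 2a, or, alternatively, by first using the first step to see that $\overline{\mathcal{M}}_{5,5}$ is smooth (hence locally irreducible) along $\overline{\mbox{2a}}$, so that a single 2a point lying on $\overline{\mbox{IIa}}$ forces an entire neighborhood — and therefore, by irreducibility, all — of $\overline{\mbox{2a}}$ into $\overline{\mbox{IIa}}$.
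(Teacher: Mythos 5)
Your proposal is correct and follows essentially the same route as the paper, which deduces the corollary by combining Lemma~\ref{counting} (dimension $39$), Theorem~\ref{Friedman} (every 2a surface is a stable limit of type IIa numerical quintics), and Theorem~\ref{vanishing for type 2a} (vanishing of the obstruction space $H^2(X,T_X(\log C))$). Your extra care at the final assembly point — checking that the \emph{general} 2a surface, not just one, degenerates from type IIa — is a reasonable refinement of what the paper asserts directly via its statement that the Friedman-type construction applies to every 2a surface.
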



\section{Deformations of 2b surfaces}\label{2b}

We study the versal $\mathbb{Q}$-Gorenstein deformation space $\textrm{Def}^{QG}(W)$~\cite{hacking2004} where $W$ is a general 2b surface. All deformation functors considered are functors of Artinian rings. However, because $W$ is a stable surface, we often abuse notation and view $\textrm{Def}^{QG}(W)$ as an analytic germ of a point $[W]$ in the KSBA moduli space. The same notational ambiguity applies to other deformation functors we consider which admit a moduli space. This enables us to study the moduli space $\overline{\mathcal{M}}$ using analytic methods of Horikawa~\cite{horikawa1975, horikawa1976}. The main theorem is

\begin{Theorem}\label{the best} The locus of stable numerical quintic surfaces whose unique non Du Val singularity is a $\frac{1}{4}(1,1)$ singularity forms a divisor in $\overline{\mathcal{M}}_{5,5}$ which consists of two 39-dimensional components $\bar{1}$ and $\overline{\mbox{2a}}$ meeting, transversally at a general point, in a 38-dimensional component $\overline{\mbox{2b}}$. This divisor is Cartier at general points of the $\bar{1}$, $\overline{\mbox{2a}}$, and  $\overline{\mbox{2b}}$ components. These components are the closures of the loci of 1, 2a, and 2b surfaces described at the beginning of Section~\ref{dimension counts}. Moreover, the type  $\bar{1}$, $\overline{\mbox{2a}}$, and  $\overline{\mbox{2b}}$ components belong to the closure of the components in $\mathcal{M}_{5,5}$ of types I, IIa, and IIb, respectively.
\end{Theorem}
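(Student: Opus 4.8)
The plan is to deduce the global structure from what is already proved, and to reduce everything else to a single local computation at a general $2$b surface. The parts that come for free: by Theorem~\ref{types 1 and 2a are main} the locus in the statement equals $\bar1\cup\overline{\mbox{2a}}$; by Lemma~\ref{counting} both $\bar1$ and $\overline{\mbox{2a}}$ are $39$-dimensional, hence divisors in the $40$-dimensional $\overline{\mathcal{M}}_{5,5}$; by Corollaries~\ref{1 is divisor} and~\ref{2a is divisor} each is a generically smooth Cartier divisor, with $\bar1$ in the closure of component $\mathrm I$ and $\overline{\mbox{2a}}$ in the closure of component $\mathrm{IIa}$; and by Theorem~\ref{Friedman} every $2$b surface is the stable limit of smooth numerical quintic surfaces of type $\mathrm{IIb}$, so $\overline{\mbox{2b}}$ lies in the closure of component $\mathrm{IIb}$. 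What remains is to show that $\bar1$ and $\overline{\mbox{2a}}$ meet, transversally at a general point, along the $38$-dimensional locus $\overline{\mbox{2b}}$, and that $\bar1\cup\overline{\mbox{2a}}$ is Cartier at a general point of $\overline{\mbox{2b}}$. All of these are statements about the germ of $\overline{\mathcal{M}}_{5,5}$ at $[W]$, i.e.\ about $\textrm{Def}^{QG}(W)$~\cite{hacking2004}, for $W$ a general $2$b surface.

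Next I would analyse $\textrm{Def}^{QG}(W)$ for such a $W$. The first input is a cohomology computation on the minimal resolution $X$ of $W$, using the double cover $f\colon X\to\mathbb{F}_2$ together with the eigenspace and blow-up decompositions of Lemmas~\ref{decomposition into eigenspaces} and~\ref{blowup of two points} (in the spirit of Theorems~\ref{vanishing for type 1} and~\ref{vanishing for type 2a}): it shows that the space $H^2(X,T_X(\log C))$ housing the obstructions to $\mathbb{Q}$-Gorenstein deformations (by~\cite{lee-park2007}) is one-dimensional. By Kuranishi theory $\textrm{Def}^{QG}(W)$ is then the zero locus of a single convergent power series $h$, vanishing to order $\ge2$, on the smooth germ $T^1_{QG}(W)$; since $\overline{\mathcal{M}}_{5,5}$ has dimension $40$ at $[W]$ (Horikawa), $\dim T^1_{QG}(W)=41$ and $\textrm{Def}^{QG}(W)$ is a hypersurface singularity. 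The second input is the subfunctor of deformations ``as double covers'', obtained by deforming the data $(\mathbb{F}_2,B,D)$: this subfunctor is unobstructed, so it cuts out a smooth $39$-dimensional subgerm of the equisingular locus whose general member is a type $2$a surface. The third input is that Friedman's family (Theorem~\ref{Friedman}) induces a versal local $\mathbb{Q}$-Gorenstein deformation of the $\frac14(1,1)$ germ, so the forgetful map from $\textrm{Def}^{QG}(W)$ to the versal local $\mathbb{Q}$-Gorenstein deformation space $\mathbb{A}^1_\lambda$ of that germ is given by a coordinate function $\lambda$ on $T^1_{QG}(W)$, and the equisingular locus is the slice $\{h=0,\ \lambda=0\}$. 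This already yields the Cartier statement: near $[W]$ the divisor $\bar1\cup\overline{\mbox{2a}}$ is cut out in $\overline{\mathcal{M}}_{5,5}=\{h=0\}$ by the single function $\lambda$.

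It then remains to understand $g:=h|_{\{\lambda=0\}}$, equivalently the primary-obstruction (quadratic) part of $h$. The germ $\{g=0\}$ is the germ at $[W]$ of $\bar1\cup\overline{\mbox{2a}}$; it is reducible, since $\bar1$ and $\overline{\mbox{2a}}$ lie in the distinct components $\overline{\mathrm I}\ne\overline{\mathrm{IIa}}$ of $\overline{\mathcal{M}}_{5,5}$, and it contains the smooth $39$-dimensional cover-deformation subgerm. Hence, once I show that the leading quadratic form $g_2$ of $g$ is nonzero and is not a square, it must be $g_2=\ell_1\ell_2$ with $\ell_1,\ell_2$ linearly independent; by the factorization of a power series whose leading form has coprime factors, $\{g=0\}$ then splits as two smooth $39$-dimensional hypersurface germs with distinct tangent hyperplanes, meeting transversally along a smooth $38$-dimensional germ. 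Comparing with Theorem~\ref{types 1 and 2a are main}, the dimension counts of Lemma~\ref{counting}, and the fact that the cover subgerm carries type $2$a surfaces, one identifies the two branches with the germs of $\bar1$ and $\overline{\mbox{2a}}$ and their intersection with the germ of $\overline{\mbox{2b}}$; combined with the first paragraph, this proves the theorem.

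The main obstacle is precisely this last input: computing $g_2$ and showing $g_2\ne0$ and $g_2$ is not a perfect square. I would handle it by extending Horikawa's explicit deformation-theoretic calculus of~\cite{horikawa1975} to the logarithmic, $\mathbb{Q}$-Gorenstein setting: describe $T^1_{QG}(W)$ through the double cover $X\to\mathbb{F}_2$, split it (via Lemma~\ref{decomposition into eigenspaces}) into the unobstructed ``cover'' directions, the remaining ``non-cover'' directions, and the single obstruction direction, and then evaluate the primary obstruction, which is the Schouten bracket $T^1_{QG}(W)\times T^1_{QG}(W)\to H^2(X,T_X(\log C))\cong\mathbb{C}$, as an explicit symmetric bilinear form by a \v{C}ech cocycle computation. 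The substance is to exhibit one pair of first-order deformations whose bracket is nonzero (so that $g_2\ne0$) and to check that the associated quadratic form has rank $\ge2$ — for instance by producing a nonvanishing mixed term pairing a cover direction with a non-cover direction, or two independent isotropic directions — which, together with reducibility of $\{g=0\}$, forces $g_2=\ell_1\ell_2$ with $\ell_1,\ell_2$ independent.
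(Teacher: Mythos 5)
Your proposal follows essentially the same route as the paper: reduce everything to the germ of $\textrm{Def}^{QG}(W)$ at a general 2b surface, use the one-dimensional obstruction space, the unobstructed cover-deformation subfunctor, and the versality of Friedman's family to obtain the product structure and the Cartier claim, and then show that the quadratic part of the Kuranishi map is a product of two independent linear forms. Your ``nonvanishing mixed term pairing a cover direction with a non-cover direction'' is precisely the paper's mechanism, implemented there via $\mathbb{Z}/2\mathbb{Z}$-equivariance of the Schouten bracket (which kills the diagonal blocks of the quadratic form) together with surjectivity of the map $\gamma$ and the criterion of Lemma~\ref{zero bracket gives restriction}.
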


The proof will consist of several pieces. Theorems~\ref{vanishing for type 1} and~\ref{vanishing for type 2a} showed that obstructions to deformations of surfaces of types 1 and 2a vanish, and so the closures of their corresponding $39$-dimensional loci in $\overline{\mathcal{M}}_{5,5}$ are generically smooth Cartier divisors. In Theorem~\ref{obstruction}, we show that deformations of 2b surfaces are obstructed and that the obstruction space is one-dimensional. This implies that the space $\textrm{Def}^{QG}(W)$ of $\mathbb{Q}$-Gorenstein deformations of a generic 2b surface $W$ is a hypersurface singularity. We show that the space of equisingular $\mathbb{Q}$-Gorenstein deformations of $W$ consists of irreducible components $\bar{1}$ and $\overline{\mbox{2a}}$ meeting, transversally at a general point, in the $\overline{\mbox{2b}}$ component. Together with Horikawa's description of $\mathcal{M}_{5,5}$, and the smoothings described in Theorems~\ref{quintic example} and~\ref{Friedman}, this implies that the space $\textrm{Def}^{QG}(W)$ has two irreducible components. 

By Theorem~\ref{Friedman}, there exists a $\mathbb{Q}$-Gorenstein smoothing of a 2b surface to a numerical quintic surface of type IIb which induces a versal deformation of the singularity.  Therefore, the map $\textrm{Def}^{QG}(W)\rightarrow\textrm{Def}^{QG}_{\mbox{loc}}(p)$ to local $\mathbb{Q}$-Gorenstein deformations of the $\frac{1}{4}(1,1)$ singularity $(p\in W)$ is surjective. This latter space is one-dimensional,  and since $\textrm{Def}^{QG}(W)$ has two irreducible components, the space $\textrm{Def}^{QG}(W)$ is analytically isomorphic to $\textrm{Def}^{QG}_{\mbox{e.s.}}(W)\times\mathbb{A}^1$.

The key to Theorem~\ref{the best} is the proof of the fact that the space $\textrm{Def}^{QG}_{\mbox{e.s.}}(W)$ of equisingular $\mathbb{Q}$-Gorenstein deformations of a general 2b surface $W$ consists of two irreducible components meeting transversally at a general point. This space is isomorphic to the deformation space of pairs $\textrm{Def}(X,C)$, where $X$ is the minimal resolution of  $W$, containing $(-4)$-curve $C$.
In~\ref{equisingular}, we describe a subfunctor of the deformation functor of pairs $\mathpzc{Def}(X,C)$, and show that this subfunctor has no obstructions. This will imply that the space $\textrm{Def}(X,C)$ contains a smooth component corresponding to the 2a locus. Thus, to prove Theorem~\ref{the best},
it suffices to show that the degree two part of the Kuranishi map, given by the Schouten bracket, is nonzero and not a square. Horikawa makes a similar argument in~\cite{horikawa1975} and~\cite{horikawa1976}. In~\ref{technicalities} and~\ref{main proof}, we extend his work to the log setting.

We use the following notation throughout this section.  Let $X$ be the minimal resolution of a surface of type 2b. We recall the construction of $X$. Let $\sigma: \tilde{\mathbb{F}}_2 \rightarrow \mathbb{F}_2$ be the blowup of $\mathbb{F}_2$ in two distinct points $p$ and $q$ lying on a fiber $D$. Denote by $\tilde{D}$ and $\Gamma$ the proper transforms of $D$ and a generic fiber, respectively, and let $E_1$ and $E_2$ be the exceptional divisors of $\sigma$. By abuse of notation, we denote by $\Delta_0$ the proper transform of the negative section $\Delta_0$ on $\mathbb{F}_2$. Let $B$ be a reduced, irreducible divisor in the linear system $|6\Delta_0+12\Gamma|$ on $\mathbb{F}_2$ with simple nodes at $p$ and $q$ and no other singularities. Let $\tilde{B}$ be its proper transform and note that $\tilde{B}\sim 2\tilde{L}$ for some smooth divisor $L$ on $\tilde{\mathbb{F}}_2$. Then $X$ is given by the double cover $f: X\rightarrow \tilde{\mathbb{F}}_2$ branched over $\tilde{B}$. The curve $C$ given by $f^*(\tilde{D})$ is the $(-4)$-curve on $X$. Moreover $X$ contains four $(-2)$-curves: $F_1$ and $F_2$ mapping to $\Delta_0$, and $\bar{E}_1$ and $\bar{E}_2$ mapping to $E_1$ and $E_2$, respectively. We denote by $\pi:\mathbb{F}_2\rightarrow \mathbb{P}^1$ and $g:X\rightarrow \mathbb{P}^1$ the projection maps to $\mathbb{P}^1$.

\subsection{The obstruction}\label{the obstruction}

To begin with, we show that the obstruction space is one-dimensional.

\begin{Theorem}\label{obstruction} Let $X$ be the minimal resolution of a 2b surface, and let $C$ denote the $(-4)$-curve on $X$. Then $H^2(X, T_X(\log C))=\mathbb{C}$.
\end{Theorem}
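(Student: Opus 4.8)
The plan is to compute $H^2(X, T_X(\log C))$ by Serre duality, reducing to a computation of the global sections $H^0(X, \Omega_X^1(\log C)(K_X))$, exactly as in the proofs of Theorems~\ref{vanishing for type 1} and~\ref{vanishing for type 2a}, except that now the answer is one-dimensional rather than zero. First I would use that $f\colon X\to \tilde{\mathbb{F}}_2$ is the double cover branched over $\tilde B\sim 2\tilde L$, that $C=f^*(\tilde D)$, and that $K_X=f^*(K_{\tilde{\mathbb{F}}_2}+\tilde L)$. By the projection formula together with Lemma~\ref{decomposition into eigenspaces} (noting that $\tilde D$ meets $\tilde B$ transversally, since the $(-4)$-curve $C$ is smooth), the pushforward decomposes into invariant and anti-invariant parts:
$$
f_*\bigl(\Omega_X^1(\log C)(K_X)\bigr)=\Omega_{\tilde{\mathbb{F}}_2}^1(\log \tilde D)(K_{\tilde{\mathbb{F}}_2}+\tilde L)\;\oplus\;\Omega_{\tilde{\mathbb{F}}_2}^1\bigl(\log(\tilde D+\tilde B)\bigr)(K_{\tilde{\mathbb{F}}_2}).
$$
So it suffices to compute $H^0$ of each summand on $\tilde{\mathbb{F}}_2$.

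For the first (anti-invariant) summand, I expect $H^0(\tilde{\mathbb{F}}_2,\Omega_{\tilde{\mathbb{F}}_2}^1(\log \tilde D)(K_{\tilde{\mathbb{F}}_2}+\tilde L))=0$, by the same method used in Theorem~\ref{vanishing for type 2a}: one has $K_{\tilde{\mathbb{F}}_2}+\tilde L = f_*$-image of a low-degree class, and one pushes down further to $\mathbb{F}_2$ via $\sigma$, using Lemma~\ref{blowup of two points} to replace $\sigma_*\Omega_{\tilde{\mathbb{F}}_2}^1(\log \tilde D)$ by $\Omega_{\mathbb{F}_2}^1(\log D)\otimes\mathfrak{M}_{p,q}$, and then the residue sequence on $\mathbb{F}_2$ reduces everything to explicit line bundles on $\mathbb{P}^1$ and on $B$ with no sections (negativity of the relevant degrees, together with $q(\mathbb{F}_2)=0$ and the fact that $p,q$ lie on the fiber $D$). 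The only place where the computation must differ from Theorem~\ref{vanishing for type 2a} is the second (invariant) summand: there, I expect $H^0(\tilde{\mathbb{F}}_2,\Omega_{\tilde{\mathbb{F}}_2}^1(\log(\tilde D+\tilde B))(K_{\tilde{\mathbb{F}}_2}))=\mathbb{C}$. The analysis runs via the residue sequence
$$
0\to \Omega_{\tilde{\mathbb{F}}_2}^1(K_{\tilde{\mathbb{F}}_2})\to \Omega_{\tilde{\mathbb{F}}_2}^1\bigl(\log(\tilde D+\tilde B)\bigr)(K_{\tilde{\mathbb{F}}_2})\to (\mathcal{O}_{\tilde D}\oplus\mathcal{O}_{\tilde B})(K_{\tilde{\mathbb{F}}_2})\to 0,
$$
where $H^0(\Omega_{\tilde{\mathbb{F}}_2}^1(K_{\tilde{\mathbb{F}}_2}))=0$ (since $-K_{\tilde{\mathbb{F}}_2}$ is effective and $q=0$), $H^0((\mathcal{O}_{\tilde D}\oplus\mathcal{O}_{\tilde B})(K_{\tilde{\mathbb{F}}_2}))$ is computed from $K_{\tilde{\mathbb{F}}_2}\cdot\tilde B<0$ and $K_{\tilde{\mathbb{F}}_2}\cdot\tilde D$; so the answer is governed by the rank of the connecting map $\delta$ to $H^1(\tilde{\mathbb{F}}_2,\Omega_{\tilde{\mathbb{F}}_2}^1(K_{\tilde{\mathbb{F}}_2}))$, which as in Lemma~\ref{another lemma for type 2a} is controlled by the first Chern class map and hence by linear (in)dependence of $\tilde D$ and $\tilde B$ in $\operatorname{Pic}(\tilde{\mathbb{F}}_2)$.

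The key difference — and the main obstacle — is that on $\tilde{\mathbb{F}}_2$ (blowup of $\mathbb{F}_2$ rather than of $\mathbb{P}^1\times\mathbb{P}^1$) one of the relevant intersection numbers vanishes in a way that forces the connecting map $\delta$ to drop rank by exactly one, producing a one-dimensional kernel. Concretely, I expect $K_{\tilde{\mathbb{F}}_2}\cdot\tilde D=0$ to remain true (as in Type 2a), so $H^0((\mathcal{O}_{\tilde D}\oplus\mathcal{O}_{\tilde B})(K_{\tilde{\mathbb{F}}_2}))$ is again one-dimensional, but the behaviour of the section $s\in H^0(\Omega_{\tilde{\mathbb{F}}_2}^1(-K_{\tilde{\mathbb{F}}_2}))$ and of the first-Chern-class map differs: because $\Delta_0^2=-2$ on $\mathbb{F}_2$, the divisor $\tilde D$ (a fiber) together with the negative section $\Delta_0$ and the curve $\tilde B$ no longer satisfy the independence needed to make $\delta$ injective, leaving a one-dimensional cokernel contribution. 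So the crux of the proof will be a careful bookkeeping of the map
$$
\delta\colon H^0\bigl(\tilde{\mathbb{F}}_2,(\mathcal{O}_{\tilde D}\oplus\mathcal{O}_{\tilde B})(K_{\tilde{\mathbb{F}}_2})\bigr)\longrightarrow H^1\bigl(\tilde{\mathbb{F}}_2,\Omega_{\tilde{\mathbb{F}}_2}^1(K_{\tilde{\mathbb{F}}_2})\bigr),
$$
tracing it through the multiplication-by-$s$ diagram down to the first Chern class map on $\operatorname{Pic}(\tilde{\mathbb{F}}_2)$ and showing its kernel is exactly $\mathbb{C}$; combined with the vanishing of the anti-invariant summand and of $H^0(\Omega_{\tilde{\mathbb{F}}_2}^1(K_{\tilde{\mathbb{F}}_2}))$, Serre duality then gives $H^2(X,T_X(\log C))=\mathbb{C}$.
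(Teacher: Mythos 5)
Your overall framework is the same as the paper's (Serre duality, then the eigenspace decomposition of $f_*(\Omega_X^1(\log C)(K_X))$ into $\Omega_{\tilde{\mathbb{F}}_2}^1(\log \tilde D)(K_{\tilde{\mathbb{F}}_2}+\tilde L)\oplus\Omega_{\tilde{\mathbb{F}}_2}^1(\log(\tilde D+\tilde B))(K_{\tilde{\mathbb{F}}_2})$), and you predict the correct total. But you attribute the one-dimensional contribution to the wrong summand, and the mechanism you propose for it would fail. Concretely: the classes $\tilde B\sim 6\Delta_0+12\Gamma-2E_1-2E_2$ and $\tilde D\sim\Gamma-E_1-E_2$ (and $\Delta_0$) remain linearly independent in $\operatorname{Pic}(\tilde{\mathbb{F}}_2)$ — the coefficient of $\Delta_0$ in $\tilde B$ forces this — so the first Chern class map and hence the connecting map $\delta$ stay injective, and $H^0(\tilde{\mathbb{F}}_2,\Omega_{\tilde{\mathbb{F}}_2}^1(\log(\tilde D+\tilde B))(K_{\tilde{\mathbb{F}}_2}))=0$ exactly as in the 2a case (the paper gets this by embedding the sheaf into $\Omega_{\tilde{\mathbb{F}}_2}^1(\log(\tilde D+\tilde B+\Delta_0))(K_{\tilde{\mathbb{F}}_2})$ and running the residue-sequence argument with all three classes, Lemma~\ref{coh vanishing on the blowup of F2}). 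There is no rank drop of $\delta$.

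The one-dimensional piece actually lives in the summand you claim vanishes: $H^0(\tilde{\mathbb{F}}_2,\Omega_{\tilde{\mathbb{F}}_2}^1(\log \tilde D)(K_{\tilde{\mathbb{F}}_2}+\tilde L))=\mathbb{C}$ (Lemma~\ref{where the obstruction happens}). Pushing down by $\sigma$ via Lemma~\ref{blowup of two points} and using $K_{\mathbb{F}_2}+L\sim\Delta_0+2\Gamma$, the computation reduces through the fibration sequence for $\pi:\mathbb{F}_2\to\mathbb{P}^1$ to $H^0(\mathbb{F}_2,\mathcal{O}(\Delta_0+\Gamma)\otimes\mathfrak{M}_{p,q})$. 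Here is where $\Delta_0^2=-2$ really enters: since $\Delta_0\cdot(\Delta_0+\Gamma)=-1$, the curve $\Delta_0$ is a fixed component of $|\Delta_0+\Gamma|$, and $\Delta_0+D$ is the unique member of that system through both $p$ and $q$ (as $D$ is the unique fiber containing them); this gives exactly one section. In the 2a case ($\mathbb{F}_0$) the corresponding bundles are $\mathcal{O}(0,1)\otimes\mathfrak{M}_{p,q}$ and $\mathcal{O}(1,-1)\otimes\mathfrak{M}_{p,q}$, both without sections, which is why that obstruction space vanishes. So the distinction between 2a and 2b is indeed driven by the negative section of $\mathbb{F}_2$, but through the base locus of a linear system in the $\log\tilde D$ summand, not through a dependency in $\operatorname{Pic}$ affecting the $\log(\tilde D+\tilde B)$ summand. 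As written, your plan would attempt to prove two false intermediate statements.
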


The proof of Theorem~\ref{obstruction} requires two lemmas.

\begin{Lemma}\label{coh vanishing on the blowup of F2} Let $Z=\mathbb{F}_2$ and $\tilde{Z}$ the blowup of $Z$ in $p$ and $q$. Then $H^0(\tilde{Z}, \Omega^1_{\tilde{Z}}(\log(\tilde{D}+\tilde{B}+\Delta_0))(K_{\tilde{Z}}))=0$.
\end{Lemma}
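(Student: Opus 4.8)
The plan is to mimic the structure of the proof of Lemma~\ref{another lemma for type 2a}, using the residue exact sequence for the log differentials with respect to the normal crossings divisor $\tilde D + \tilde B + \Delta_0$ on $\tilde Z$, twisted by $K_{\tilde Z}$. Writing $N = \tilde D + \tilde B + \Delta_0$, we have
\[
0 \to \Omega^1_{\tilde Z}(K_{\tilde Z}) \to \Omega^1_{\tilde Z}(\log N)(K_{\tilde Z}) \to \big(\mathcal{O}_{\tilde D}\oplus \mathcal{O}_{\tilde B}\oplus \mathcal{O}_{\Delta_0}\big)(K_{\tilde Z}) \to 0.
\]
First I would record the relevant intersection numbers on $\tilde Z$, the blowup of $\mathbb{F}_2$ at the two points $p,q\in D$. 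Using $\mathrm{Pic}(\tilde Z)=\langle \Gamma,\tilde D+E_1+E_2, E_1,E_2\rangle$ together with $\tilde D\sim \Gamma - E_1 - E_2$, $\Delta_0\sim$ (proper transform of the negative section, which misses $p,q$ since $p,q$ lie on a fiber $D$ — I would note $\Delta_0^2=-2$, $\Delta_0\cdot\tilde D=1$), $\tilde B\sim \sigma^*(6\Delta_0+12\Gamma) - 2E_1 - 2E_2$, and
\[
K_{\tilde Z}=\sigma^*K_{\mathbb{F}_2}+E_1+E_2 = \sigma^*(-2\Delta_0-4\Gamma)+E_1+E_2,
\]
I would compute $K_{\tilde Z}\cdot \tilde D$, $K_{\tilde Z}\cdot \tilde B$, $K_{\tilde Z}\cdot \Delta_0$, the genera of these curves, and hence $H^0$ of the restriction $(\mathcal{O}_{\tilde D}\oplus\mathcal{O}_{\tilde B}\oplus\mathcal{O}_{\Delta_0})(K_{\tilde Z})$. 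The expectation (as in Lemma~\ref{another lemma for type 2a}) is that $K_{\tilde Z}\cdot \tilde B<0$ and $K_{\tilde Z}\cdot\tilde D = 0$, and that $K_{\tilde Z}\cdot\Delta_0$ is likewise small; if $K_{\tilde Z}|_{\Delta_0}$ has nonnegative degree one gets a contribution from $\Delta_0\cong\mathbb{P}^1$, giving $H^0$ of the quotient of dimension $1$ or $2$.

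Next I would handle the left-hand term: since $\sigma^* K_{\mathbb{F}_2}$ is anti-effective and $K_{\tilde Z}=\sigma^*K_{\mathbb{F}_2}+E_1+E_2$, I would argue $-K_{\tilde Z}$ is effective (or at least that $H^0(\tilde Z,\Omega^1_{\tilde Z}(K_{\tilde Z}))\hookrightarrow H^0(\tilde Z,\Omega^1_{\tilde Z})=0$, using $q(\tilde Z)=0$), so $H^0(\tilde Z,\Omega^1_{\tilde Z}(K_{\tilde Z}))=0$. Then the long exact sequence reduces the claim to injectivity of the connecting map
\[
\delta\colon H^0\!\big(\tilde Z,(\mathcal{O}_{\tilde D}\oplus\mathcal{O}_{\tilde B}\oplus\mathcal{O}_{\Delta_0})(K_{\tilde Z})\big)\to H^1(\tilde Z,\Omega^1_{\tilde Z}(K_{\tilde Z})).
\]
As in Lemma~\ref{another lemma for type 2a}, I would twist by a nonzero section $s\in H^0(\tilde Z,\Omega^1_{\tilde Z}(-K_{\tilde Z}))$ (available since $-K_{\tilde Z}$ is effective) to build a commutative square comparing $\delta$ with the untwisted connecting map $\delta_2\colon H^0(\mathcal{O}_{\tilde D}\oplus\mathcal{O}_{\tilde B}\oplus\mathcal{O}_{\Delta_0})\to H^1(\tilde Z,\Omega^1_{\tilde Z})$, which is the first Chern class map $1_{\tilde D}\mapsto c_1(\tilde D)$, etc. The vertical map $\otimes s$ is injective because $s\neq0$, so $\delta$ is injective provided $\delta_2$ is injective on the sub-$\mathbb{C}$-vector-space spanned by those generators actually hit (only $\tilde D$ and $\Delta_0$ if $K_{\tilde Z}\cdot\tilde B<0$, say). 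Thus it suffices to check that the classes $[\tilde D]$ and $[\Delta_0]$ — and $[\tilde B]$ if needed — are linearly independent in $\mathrm{Pic}(\tilde Z)$, which is immediate since $\tilde D\sim\Gamma-E_1-E_2$ and $\Delta_0$ are visibly independent there (and $\tilde B$ too).

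The main obstacle I anticipate is bookkeeping rather than conceptual: precisely pinning down $K_{\tilde Z}\cdot \Delta_0$ and $H^0$ of $\Omega^1_{\tilde Z}(K_{\tilde Z})$ restricted to each component, and in particular making sure that every generator of $H^0$ of the skyscraper quotient that survives to a potential nonzero class in $H^0(\Omega^1_{\tilde Z}(\log N)(K_{\tilde Z}))$ is killed by $\delta$. A secondary subtlety is verifying that $N=\tilde D+\tilde B+\Delta_0$ is genuinely simple normal crossings on $\tilde Z$ (the curves $\tilde D$, $\Delta_0$ meet transversally; $\tilde B$ is smooth after the blowup resolving its nodes at $p,q$, and $\tilde B$ meets $\tilde D$ transversally and meets $\Delta_0$ — I would need $\tilde B\cap\Delta_0$ to be transversal, which follows from genericity of $B$), so that the residue sequence is valid as stated. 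Once the numerology is in place, the argument closes exactly as in Lemma~\ref{another lemma for type 2a}.
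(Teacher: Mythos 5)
Your proposal follows the paper's own proof essentially verbatim: the same residue exact sequence for $\tilde D+\tilde B+\Delta_0$ twisted by $K_{\tilde Z}$, the vanishing of $H^0(\tilde Z,\Omega^1_{\tilde Z}(K_{\tilde Z}))$ via effectivity of $-K_{\tilde Z}$ and $q(\tilde Z)=0$, and injectivity of the connecting map $\delta$ by comparing it (via multiplication by a section of $\mathcal{O}(-K_{\tilde Z})$) with the first Chern class map and checking linear independence of $[\tilde D]$, $[\tilde B]$, $[\Delta_0]$ in $\mathrm{Pic}(\tilde Z)$. The numerology works out exactly as you anticipate ($K_{\tilde Z}\cdot\tilde B=-24$, $K_{\tilde Z}\cdot\tilde D=K_{\tilde Z}\cdot\Delta_0=0$, so the quotient has $H^0=\mathbb{C}^2$), and the argument closes as you describe.
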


\begin{proof}
The proof is very similar to that of Lemma~\ref{another lemma for type 2a}.

We have the following exact sequence of sheaves on $\tilde{Z}$:
$$0\rightarrow\Omega_{\tilde{Z}}^1(K_{\tilde{Z}})\rightarrow\Omega_{\tilde{Z}}^1(\log(\tilde{D}+\tilde{B}+\Delta_0))(K_{\tilde{Z}})\rightarrow (\mathcal{O}_{\tilde{D}}\oplus\mathcal{O}_{\tilde{B}}\oplus\mathcal{O}_{\Delta_0})(K_{\tilde{Z}})\rightarrow 0.$$
where $\Omega_{\tilde{Z}}^1(\log{\tilde{D}+\tilde B})\rightarrow\mathcal{O}_{\tilde{D}+\tilde{B}}$ is the residue map. Twisting by $K_{\tilde{Z}}$ gives the exact sequence
\begin{equation}\label{SES with K for F2}
0\rightarrow \Omega_{\tilde{Z}}^1(K_{\tilde{Z}})\rightarrow \Omega_{\tilde{Z}}^1(\log{\tilde{D}+\tilde B+\Delta_0})(K_{\tilde{Z}})\rightarrow(\mathcal{O}_{\tilde{D}}\oplus\mathcal{O}_{\tilde{B}}\oplus\mathcal{O}_{\Delta_0})(K_{\tilde{Z}})\rightarrow 0.
\end{equation}
Note that 
\begin{equation*}
K_{\tilde{Z}}=\sigma^*(K_{\mathbb{F}_2})+E_1+E_1\sim -2\Delta_0-4\tilde{D}-3E_1-3E_2,
\end{equation*}
and so $-K_{\tilde{Z}}$ is effective. Thus $H^0(\tilde{Z}, \Omega_{\tilde{Z}}^1(K_{\tilde{Z}}))\subset H^0(\tilde{Z}, \Omega_{\tilde{Z}}^1)$. Since the irregularity of ${\tilde{Z}}$ is zero, we have
$H^0({\tilde{Z}}, \Omega_{\tilde{Z}}^1)(K_{\tilde{Z}})=0$. Moreover, because $\sigma^*(B)=\tilde{B}+2E_1+2E_2$ and $\sigma^*(K_Z)=K_{\tilde{Z}}-E_1-E_2$, we have $K_{\tilde{Z}}\cdot\tilde{B}=-24<0$, $K_{\tilde{Z}}\cdot \tilde{D}=0$,
and 
$K_{\tilde{Z}}\cdot \Delta_0=0.$
Therefore $$H^0(\tilde{Z}, (\mathcal{O}_{\tilde{D}}\oplus\mathcal{O}_{\tilde{B}}\oplus\mathcal{O}_{\Delta_0})(K_{\tilde{Z}}))=\mathbb{C}^2,$$ so the cohomology group 
$$H^0(\tilde{Z}, \Omega_{\tilde{Z}}^1(\log(\tilde{D}+\tilde{B}+\Delta_0))(K_{\tilde{Z}}))$$
is $0$ if and only if the connecting homomorphism 
$$\delta:H^0(\tilde{Z}, (\mathcal{O}_{\tilde{D}}\oplus\mathcal{O}_{\tilde{B}}\oplus\mathcal{O}_{\Delta_0})(K_{\tilde{Z}}))\rightarrow H^1(\tilde{Z}, \Omega_{\tilde{Z}}^1(K_{\tilde{Z}}))$$
is injective.

Since $-K_{\tilde{Z}}$ is effective, we have a section $s\in H^0(\tilde{Z}, \Omega_{\tilde{Z}}^1(-K_{\tilde{Z}}))$, so we have a map from the short exact sequence~\eqref{SES with K for F2} to the short exact sequence
$$0\rightarrow \Omega_{\tilde{Z}}^1\rightarrow \Omega_{\tilde{Z}}^1(\log(\tilde{D}+\tilde{B}+\Delta_0))\rightarrow\mathcal{O}_{\tilde{D}}\oplus\mathcal{O}_{\tilde{B}\oplus\mathcal{O}_{\Delta_0}}\rightarrow 0$$
given by tensoring with $s$.
The connecting homomorphism 
$$\delta_2: H^0(\tilde{Z}, \mathcal{O}_{\tilde{D}}\oplus\mathcal{O}_{\tilde{B}}\oplus\mathcal{O}_{\Delta_0})\rightarrow H^1(\tilde{Z}, \Omega_{\tilde{Z}}^1)$$ 
of the corresponding short exact sequence
is the first Chern class map. That is,  if $1_{\tilde{D}}$, $1_{\tilde{B}}$, and $1_{\Delta_0}$ are generators of $H^0(\tilde{Z}, \mathcal{O}_{\tilde{D}}\oplus\mathcal{O}_{\tilde{B}}\oplus\mathcal{O}_{\Delta_0})$, then $\delta_2(1_{\tilde{D}})=c_1(\tilde{D})$, $\delta_2(1_{\tilde{B}})=c_1(\tilde{B})$, and $\delta_2(1_{\Delta_0})=c_1(\Delta_0)$. Thus, the map $\delta_2$ is injective if and only if the curves $\tilde{D}$, $\tilde{B}$, and $\Delta_0$ are linearly independent in the Picard group of $\tilde{Z}$. Recalling that Pic$(\tilde{Z})$ is generated by $\Delta_0$, $\Gamma$, $E_1$ and $E_2$, that $\tilde{B}\sim 6\Delta_0+12\Gamma-2E_1-2E_2$ and that $\tilde{D}\sim \Gamma-E_1-E_2$, we see that the three divisors are indeed linearly independent.

Thus, we have a diagram
\[\xymatrix{
H^0(\tilde{Z}, (\mathcal{O}_{\tilde{D}}\oplus\mathcal{O}_{\tilde{B}}\oplus\mathcal{O}_{\Delta_0})(K_{\tilde{Z}}))\ar[r]^{\delta} \ar[d]_{\otimes s}  & H^1(\tilde{Z}, \Omega_{\tilde{Z}}^1(K_{\tilde{Z}})) \ar[d]^{\otimes s}\\
H^0(\tilde{Z}, \mathcal{O}_{\tilde{D}}\oplus\mathcal{O}_{
\tilde{B}}\oplus\mathcal{O}_{\Delta_0})\ar[r]^{\delta_2}& H^1(\tilde{Z}, \Omega_{\tilde{Z}}^1)
}\]
where the bottom arrow is injective. We see that $\delta$ is injective as long as the map on the left is injective. But this map simply takes a section of $(\mathcal{O}_{\tilde{D}}\oplus\mathcal{O}_{\tilde{B}})(K_{\tilde{Z}})$ and multiplies it by $s$. Since $s\neq0$, the map is injective.
\end{proof}

\begin{Lemma}\label{where the obstruction happens} $H^0(\tilde{\mathbb{F}}_2,\Omega_{\tilde{\mathbb{F}}_2}^1(\log \tilde{D})(K_{\tilde{\mathbb{F}}_2}+\tilde{L}))=\mathbb{C}.$
\end{Lemma}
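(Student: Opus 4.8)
The plan is to compute $H^0(\tilde{\mathbb{F}}_2,\Omega_{\tilde{\mathbb{F}}_2}^1(\log \tilde{D})(K_{\tilde{\mathbb{F}}_2}+\tilde{L}))$ by pushing forward along the blowup $\sigma:\tilde{\mathbb{F}}_2\to\mathbb{F}_2$ and then along the ruling projection $\pi:\mathbb{F}_2\to\mathbb{P}^1$, reducing everything to line bundles on $\mathbb{P}^1$. First, I would apply Lemma~\ref{blowup of two points} (which says $\sigma_*\Omega_{\tilde{\mathbb{F}}_2}^1(\log\tilde{D})=\Omega_{\mathbb{F}_2}^1(\log D)\otimes\mathfrak{M}_{p,q}$, where $\mathfrak{M}_{p,q}$ is the ideal sheaf of the two centers $p,q\in D$) together with the projection formula, using that $K_{\tilde{\mathbb{F}}_2}+\tilde{L}=\sigma^*(K_{\mathbb{F}_2}+L)$ since $\tilde{B}=\sigma^*B-2E_1-2E_2$ and $\tilde{B}\sim 2\tilde L$. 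This identifies the desired cohomology group with $H^0(\mathbb{F}_2,\Omega_{\mathbb{F}_2}^1(\log D)\otimes\mathfrak{M}_{p,q}\otimes\mathcal{O}(K_{\mathbb{F}_2}+L))$. Here $B\sim 6\Delta_0+12\Gamma$ so $L\sim 3\Delta_0+6\Gamma$ and $K_{\mathbb{F}_2}\sim -2\Delta_0-4\Gamma$, giving $K_{\mathbb{F}_2}+L\sim \Delta_0+2\Gamma\sim\Delta$.

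Next I would resolve $\Omega_{\mathbb{F}_2}^1(\log D)$ using the residue sequence $0\to\Omega_{\mathbb{F}_2}^1\to\Omega_{\mathbb{F}_2}^1(\log D)\to\mathcal{O}_D\to0$, tensor by $\mathfrak{M}_{p,q}\otimes\mathcal{O}(\Delta)$, and take cohomology. For the sub-term $\Omega_{\mathbb{F}_2}^1\otimes\mathfrak{M}_{p,q}(\Delta)$, I would further use the relative cotangent sequence $0\to\pi^*\Omega_{\mathbb{P}^1}^1\to\Omega_{\mathbb{F}_2}^1\to\Omega_{\mathbb{F}_2/\mathbb{P}^1}^1\to0$ (with $\pi^*\Omega_{\mathbb{P}^1}^1\sim-2\Gamma$ and $\Omega_{\mathbb{F}_2/\mathbb{P}^1}^1\sim-2\Delta_0-2\Gamma$, or the appropriate twists), and compute each piece by Leray for $\pi$: push forward to $\mathbb{P}^1$ and read off $H^0$. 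The ideal-sheaf twist $\mathfrak{M}_{p,q}$ kills sections: since $D=\Gamma$ is a fiber and both $p,q$ lie on it, restricting $\mathcal{O}(\Delta)$-type bundles to $D$ and requiring vanishing at two points of $D\cong\mathbb{P}^1$ will, after the Leray/pushforward bookkeeping, leave either $0$ or a one-dimensional space. For the quotient term $\mathcal{O}_D\otimes\mathfrak{M}_{p,q}(\Delta)=\mathcal{O}_D(\Delta)\otimes\mathfrak{m}_p\mathfrak{m}_q$, note $\Delta\cdot D=\Delta\cdot\Gamma=1$, so $\mathcal{O}_D(\Delta)\cong\mathcal{O}_{\mathbb{P}^1}(1)$, and twisting down by the two points $p,q$ gives $\mathcal{O}_{\mathbb{P}^1}(-1)$, which has no sections — so the whole contribution to $H^0$ comes (via the connecting map) from $H^1$ of the sub-term, and tracking this is what produces the single $\mathbb{C}$.

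The main obstacle will be the careful bookkeeping of the connecting homomorphisms and the ideal-sheaf twists: I must identify exactly which one-dimensional piece survives and verify it is not further killed, i.e. that the relevant connecting map $H^0(D,\mathcal{O}_D(\Delta)\otimes\mathfrak{m}_p\mathfrak{m}_q)\to H^1(\mathbb{F}_2,\Omega_{\mathbb{F}_2}^1\otimes\mathfrak{M}_{p,q}(\Delta))$ and the corresponding Leray edge maps behave as expected. I expect this to parallel closely the structure of Lemma~\ref{another lemma for type 2a} and Lemma~\ref{coh vanishing on the blowup of F2}, except that here linear (in)dependence in the Picard group fails in precisely one direction — reflecting the class that gives the obstruction — so instead of injectivity of the Chern-class map I will get a one-dimensional cokernel, hence $H^0=\mathbb{C}$. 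I would finish by assembling the long exact sequences and confirming all other cohomology contributions vanish by the $\mathbb{P}^1$-computations above.
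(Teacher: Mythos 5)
Your opening reduction is exactly the paper's: Lemma~\ref{blowup of two points} and the projection formula (with $K_{\tilde{\mathbb{F}}_2}+\tilde{L}=\sigma^*(\Delta_0+2\Gamma)$) bring the problem down to $H^0(\mathbb{F}_2,\Omega^1_{\mathbb{F}_2}(\log D)\otimes\mathfrak{M}_{p,q}\otimes\mathcal{O}(\Delta_0+2\Gamma))$. The gap is in the next step. You tensor the residue sequence $0\to\Omega^1_{\mathbb{F}_2}\to\Omega^1_{\mathbb{F}_2}(\log D)\to\mathcal{O}_D\to0$ with $\mathfrak{M}_{p,q}\otimes\mathcal{O}(\Delta)$ and assert that the quotient term is $\mathcal{O}_D(\Delta)\otimes\mathfrak{m}_p\mathfrak{m}_q\cong\mathcal{O}_{\mathbb{P}^1}(-1)$, hence has no sections. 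But $p$ and $q$ lie on $D$, so $\mathcal{O}_D\otimes_{\mathcal{O}_{\mathbb{F}_2}}\mathfrak{M}_{p,q}$ is not the invertible sheaf $\mathcal{O}_D(-p-q)$: a local Koszul computation gives $\mathrm{Tor}_1^{\mathcal{O}_{\mathbb{F}_2}}(\mathcal{O}_D,\mathcal{O}_p)\cong\mathcal{O}_p$, so $\mathcal{O}_D\otimes\mathfrak{M}_{p,q}$ carries a length-two torsion subsheaf over $\mathcal{O}_D(-p-q)$, and the quotient term has $h^0=2$, not $0$. This is not a removable technicality. The honest sub-term $\Omega^1_{\mathbb{F}_2}\otimes\mathfrak{M}_{p,q}(\Delta)$ has $h^0=0$ (use $0\to\mathcal{O}(\Delta_0)\otimes\mathfrak{M}_{p,q}\to\Omega^1_{\mathbb{F}_2}(\Delta)\otimes\mathfrak{M}_{p,q}\to\mathcal{O}(-\Delta_0)\otimes\mathfrak{M}_{p,q}\to0$ and note $p,q\notin\Delta_0$ since $B\cdot\Delta_0=0$), so if your identification of the quotient were correct the lemma would yield $0$ rather than $\mathbb{C}$; the one surviving section lives precisely in the piece your bookkeeping discards, and extracting it along your route forces you to compute the connecting map $\mathbb{C}^2\to H^1(\mathbb{F}_2,\Omega^1_{\mathbb{F}_2}\otimes\mathfrak{M}_{p,q}(\Delta))$, which you do not do. (Your sentence that the contribution to $H^0$ comes ``via the connecting map from $H^1$ of the sub-term'' is also not meaningful: the connecting map goes from $H^0$ of the quotient into $H^1$ of the sub, and contributes nothing to $H^0$ of the middle term.)

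The paper sidesteps all of this by using, instead of the residue sequence, the log relative cotangent sequence $0\to\pi^*\Omega^1_{\mathbb{P}^1}(\log a)\to\Omega^1_{\mathbb{F}_2}(\log D)\to\mathcal{O}_{\mathbb{F}_2}(-2\Delta_0-2\Gamma)\to0$ with $a=\pi(D)$, in which all three terms are locally free, so tensoring with $\mathfrak{M}_{p,q}\otimes\mathcal{O}(\Delta)$ stays exact. The quotient becomes $\mathcal{O}(-\Delta_0)\otimes\mathfrak{M}_{p,q}$, which genuinely has no sections, and the sub becomes $\mathcal{O}(\Delta_0+\Gamma)\otimes\mathfrak{M}_{p,q}$; since $\Delta_0$ is a fixed component of $|\Delta_0+\Gamma|$ and $D$ is the unique fiber through both $p$ and $q$, this space is spanned by the single divisor $\Delta_0+D$, giving the claimed $\mathbb{C}$. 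If you wish to salvage your route you must retain the torsion of $\mathcal{O}_D\otimes\mathfrak{M}_{p,q}$ and evaluate the connecting homomorphism; it is far less work to switch to the paper's exact sequence.
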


\begin{proof}
By the projection formula we have
\begin{eqnarray*}
\sigma_*(\Omega_{\tilde{\mathbb{F}}_2}^1(\log \tilde{D})(K_{\tilde{\mathbb{F}}_2}+\tilde{L}))&=&\sigma_*(\Omega_{\tilde{\mathbb{F}}_2}^1(\log \tilde{D}))(K_{\mathbb{F}_2}+L)\\
&=& \sigma_*(\Omega_{\tilde{\mathbb{F}}_2}^1(\log \tilde{D}))\otimes\mathcal{O}(\Delta_0+2\Gamma).
\end{eqnarray*}
Lemma~\ref{blowup of two points} gives
$$\sigma_*(\Omega_{\tilde{\mathbb{F}}_2}^1(\log \tilde{D}))=\Omega^1_{\mathbb{F}_2}(\log D)\otimes\mathfrak{M}_{p,q}.$$
Thus,
$$ \sigma_*(\Omega_{\tilde{\mathbb{F}}_2}^1(\log \tilde{D})(K_{\tilde{\mathbb{F}}_2}+\tilde{L}))=\Omega^1_{\mathbb{F}_2}(\log D)\otimes\mathfrak{M}_{p,q}\otimes\mathcal{O}(\Delta_0+2\Gamma).$$

Let $\pi:\mathbb{F}_2\rightarrow \mathbb{P}^1$ be the projection map, and suppose that $\pi(D)=a$. We have the short exact sequence
$$0\rightarrow\pi^*\Omega_{\mathbb{P}^1}^1(\log a)\rightarrow\Omega_{\mathbb{F}_2}^1(\log D)\rightarrow\mathcal{O}_{\mathbb{F}_2}(-2\Delta_0-2\Gamma)\rightarrow 0.$$
The sheaf $\mathcal{O}_{\mathbb{F}_2}(-2\Delta_0-2\Gamma)$ is free, so $\textrm{Tor}_1(\mathfrak{M}_{p,q}\otimes \mathcal{O}(\Delta_0+2\Gamma), \mathcal{O}_{\mathbb{F}_2}(-2\Delta_0-2\Gamma))=0$. 
Thus,  tensoring $\mathfrak{M}_{p,q}\otimes \mathcal{O}(\Delta_0+2\Gamma)$ with the above short exact sequence yields the new short exact sequence
\begin{eqnarray*}
0\rightarrow\pi^*\Omega_{\mathbb{P}^1}^1(\log a)\otimes \mathfrak{M}_{p,q}\otimes \mathcal{O}(\Delta_0+2\Gamma)&\rightarrow&\Omega_{\mathbb{F}_2}^1(\log D)\otimes \mathfrak{M}_{p,q}\otimes \mathcal{O}(\Delta_0+2\Gamma)\\
&\rightarrow& \mathcal{O}_{\mathbb{F}_2}(-\Delta_0)\otimes\mathfrak{M}_{p,q}\rightarrow 0.
\end{eqnarray*}
Since $\mathbb{F}_2$ is projective, the sheaf $\mathcal{O}_{\mathbb{F}_2}(-\Delta_0)\otimes\mathfrak{M}_{p,q}$ has no global holomorphic sections, and so
$$H^0(\mathbb{F}_2, \pi^*\Omega_{\mathbb{P}^1}^1(\log a)\otimes \mathfrak{M}_{p,q}\otimes \mathcal{O}(\Delta_0+2\Gamma))\cong H^0(\mathbb{F}_2,\Omega_{\mathbb{F}_2}^1(\log D)\otimes \mathfrak{M}_{p,q}\otimes \mathcal{O}(\Delta_0+2\Gamma)).$$
The sheaf $\Omega^1_{\mathbb{P}^1}$ is isomorphic to $\mathcal{O}_{\mathbb{P}^1}(-2)$, so by the projection formula we have
$$H^0(\mathbb{F}_2, \pi^*\Omega_{\mathbb{P}^1}^1(\log a)\otimes \mathfrak{M}_{p,q}\otimes \mathcal{O}(\Delta_0+2\Gamma))=H^0(\mathbb{F}_2, \mathcal{O}_{\mathbb{F}_2}(\Delta_0+\Gamma)\otimes\mathfrak{M}_{p,q}).$$
The divisor $\Delta_0$ satisfies $\Delta_0\cdot (\Delta_0+\Gamma)=-1,$
so that $\Delta_0$ is a fixed part of the linear system $|\Delta_0+\Gamma|$. Since $D$ is the only fiber containing both $p$ and $q$, this implies that 
$$H^0(\mathbb{F}_2, \mathcal{O}_{\mathbb{F}_2}(\Delta_0+\Gamma)\otimes\mathfrak{M}_{p,q})=\mathbb{C},$$
completing the proof.
\end{proof}

We can now prove the theorem.

\begin{proof}[Proof of Theorem~\ref{obstruction}] By Serre duality, it suffices to show that  $$H^0(X, \Omega_X^1(\log C)(K_X))=\mathbb{C}.$$

By Lemma~\ref{decomposition into eigenspaces} and the projection formula, we have
$$f_*(\Omega_X^1(\log C)(K_X))=\Omega_{\tilde{\mathbb{F}}_2}^1(\log \tilde{D})(K_{\tilde{\mathbb{F}}_2}+\tilde{L})\oplus\Omega^1_{\tilde{\mathbb{F}}_2}(\log(\tilde{D}+\tilde{B}))(K_{\tilde{\mathbb{F}}_2}).$$

By Lemma~\ref{where the obstruction happens}, we have $H^0(\Omega_{\tilde{\mathbb{F}}_2}^1(\log \tilde{D})(K_{\tilde{\mathbb{F}}_2}+\tilde{L}))=\mathbb{C}.$ Moreover, 
$$\Omega^1_{\tilde{\mathbb{F}}_2}(\log(\tilde{D}+\tilde{B}))(K_{\tilde{\mathbb{F}}_2})\subset \Omega^1_{\tilde{\mathbb{F}}_2}(\log(\tilde{D}+\tilde{B}+\Delta_0))(K_{\tilde{\mathbb{F}}_2}).$$ 
Thus, $H^0(\Omega^1_{\tilde{\mathbb{F}}_2}(\log(\tilde{D}+\tilde{B}))(K_{\tilde{\mathbb{F}}_2}))=0$ by Lemma~\ref{coh vanishing on the blowup of F2}.
\end{proof}

\subsection{Deformations of pairs and the equisingular locus}\label{equisingular}

Let $f:X\rightarrow Y$ be the double cover of a smooth surface $Y$ branched over a smooth curve $B$. Define $\textrm{Def}_{X\rightarrow Y}$ to be the space of deformations of $X$ that are double covers of deformations of $Y$. The group $\mathbb{Z}/2\mathbb{Z}$ acts on $X$ by deck transformations, and the sheaf $f_*T_X$ decomposes into invariant and anti-invariant subspaces as
$$f_*T_X\simeq T_Y(\log B) \oplus T_Y(-L),$$
where $2L\sim B$~\cite{pardini1991}.

\begin{Theorem}\label{cvs}~\cite{cynk-vanstraten2006} Via the decomposition of $f_*T_X$ into its invariant and anti-invariant subspaces, the deformation space $\textrm{Def}(X\rightarrow Y)$ of double covers of deformations of $Y$ may be identified with the deformation space $\textrm{Def}(Y, B)$ of deformations of pairs, where $B$ is the branch divisor of $f$. 
\end{Theorem}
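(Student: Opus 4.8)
The plan is to reduce the statement to the comparison of two deformation functors via their tangent--obstruction theories, realized by an explicit quasi-isomorphism of the controlling complexes. First I would set up the functor $\textrm{Def}(X\to Y)$ carefully: an infinitesimal deformation over an Artinian base $A$ consists of a flat deformation $\mathcal{Y}/A$ of $Y$ together with a flat deformation $\mathcal{L}/A$ of the line bundle $L$ (equivalently a line bundle $\mathcal{L}$ on $\mathcal{Y}$ restricting to $L$) and an isomorphism $\mathcal{L}^{\otimes 2}\cong \mathcal{O}_{\mathcal{Y}}(\mathcal{B})$ for a deformation $\mathcal{B}/A$ of the branch divisor $B$, the double cover $\mathcal{X}$ being recovered as $\mathbf{Spec}_{\mathcal{Y}}(\mathcal{O}_{\mathcal{Y}}\oplus\mathcal{L}^{-1})$ with algebra structure twisted by the section cutting out $\mathcal{B}$. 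On the other side, $\textrm{Def}(Y,B)$ parametrizes deformations of the pair, i.e. flat $\mathcal{Y}/A$ together with a flat $\mathcal{B}\subset\mathcal{Y}$. The key structural remark is that, because $B$ is smooth (hence a Cartier divisor) and $2L\sim B$, the datum of $(\mathcal{Y},\mathcal{B})$ together with a square root $\mathcal{L}$ of $\mathcal{O}(\mathcal{B})$ differs from the datum of $(\mathcal{Y},\mathcal{B})$ alone only by a torsor under $H^1(Y,\mathcal{O}_Y)\oplus H^0$-type terms, which one shows contribute nothing new to first order (and the obstruction) since $L$ is rigid relative to its square. I would phrase this as: the forgetful map $\textrm{Def}(X\to Y)\to\textrm{Def}(Y,B)$ is an isomorphism of functors.

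The main computational content is to identify the tangent spaces. I would show $T^1\textrm{Def}(Y,B)=\mathbb{H}^1$ of the two-term complex $[T_Y\to N_{B/Y}]$ (the cone of $T_Y\to i_*T_Y|_B\to i_*N_{B/Y}$, equivalently $T_Y(-\log B)$ shifted appropriately, but here one wants the sheaf of vector fields that need not be tangent to $B$ paired with the normal deformations), and that this equals $H^1(Y,T_Y(\log B))$ when $B$ is smooth, using the short exact sequence $0\to T_Y(\log B)\to T_Y\to N_{B/Y}\to 0$ — wait, more precisely $0\to T_Y(-\log B)\to T_Y\to i_*N_{B/Y}\to 0$, whose hypercohomology computes deformations of the pair. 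Simultaneously, from the Cynk--van~Straten description $f_*T_X\cong T_Y(\log B)\oplus T_Y(-L)$ (which the excerpt takes as given, citing~\cite{pardini1991}), one gets $\textrm{Def}(X\to Y)$ controlled by $H^*(Y,T_Y(\log B)\oplus T_Y(-L))$; here the anti-invariant summand $T_Y(-L)=T_Y\otimes\mathcal{O}(-L)$ accounts exactly for deformations of the section cutting out $B$ modulo those coming from automorphisms, i.e. for the ``$B$ moves inside $|2L|$'' directions, while the invariant summand $T_Y(\log B)$ accounts for simultaneous deformations of $Y$ preserving $B$. I would then write down the natural map between the two deformation functors on the level of cochain complexes (Čech, with respect to a common affine cover), check it is a morphism of functors compatible with base change, and verify it induces isomorphisms on $T^1$ and compatible injections on $T^2$ (obstruction spaces), which by the standard Schlessinger/deformation-theory comparison criterion forces the functors to be isomorphic.

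The step I expect to be the main obstacle is bookkeeping the anti-invariant part correctly: making precise that deforming the double cover $X\to Y$ with $Y$ fixed is the same as deforming the pair $(B\subset Y)$ with $Y$ fixed, i.e. that the ``extra'' choice of square root $\mathcal{L}$ of $\mathcal{O}(\mathcal{B})$ does not enlarge the functor. Concretely one must show the obstruction to lifting a square root is absorbed, and that two square roots differ by $2$-torsion in $\textrm{Pic}$, which is discrete and hence irrelevant to the formal/infinitesimal functor; equivalently, that the map $H^1(Y,T_Y\otimes\mathcal{O}(-L))\to H^1(Y, \mathcal{O}(2L)\,|\,\text{sections})$ identifying anti-invariant first-order deformations with first-order motions of $B$ is an isomorphism modulo the image of $H^0(Y,T_Y)$. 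Once this identification of the anti-invariant summand with normal deformations of $B$ is pinned down and shown to be base-change compatible, gluing it with the (straightforward) invariant-part identification $H^*(Y,T_Y(\log B))=\textrm{Def}(Y,B)_{\text{with }B\text{ fixed in }Y\text{'s deformation}}$ and invoking the functorial comparison completes the proof. I would also remark that this is precisely the log-analogue needed later: replacing $(Y,B)$ by $(Y, D+B)$ and $X$ by the pair $(X,C=f^{-1}D)$ gives, by the same argument together with Lemma~\ref{decomposition into eigenspaces}, the identification $\textrm{Def}(X,C)\cong\textrm{Def}(Y,D+B)$ restricted to covers, which is what is used in Section~\ref{2b}.
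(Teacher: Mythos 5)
Your overall strategy---compare the two functors through their tangent--obstruction theories and dispose of the square-root ambiguity using $H^1(Y,\mathcal{O}_Y)=0$---is in the spirit of the cited Cynk--van Straten argument, and the paper itself only sketches this step: it identifies the first-order deformations of double covers with the invariant subspace of $H^1(X,T_X)$, which by the eigenspace decomposition of $f_*T_X$ equals $H^1(Y,T_Y(\log B))$, the tangent space of $\textrm{Def}(Y,B)$. But there is a concrete error at the heart of your tangent-space identification. You assert that $\textrm{Def}(X\rightarrow Y)$ is controlled by all of $H^*(Y,T_Y(\log B)\oplus T_Y(-L))=H^*(X,T_X)$, with the anti-invariant summand $T_Y(-L)$ accounting for the motions of $B$ inside $|2L|$ and the invariant summand accounting for deformations of $Y$ ``preserving'' $B$. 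Both identifications are wrong. The invariant summand $T_Y(\log B)$ by itself already controls deformations of the \emph{pair}: the exact sequence $0\rightarrow T_Y(\log B)\rightarrow T_Y\rightarrow N_{B/Y}\rightarrow 0$ shows that $H^1(Y,T_Y(\log B))$ receives $H^0(B,N_{B/Y})$ (the motions of $B$ with $Y$ fixed, modulo automorphisms of $Y$) and maps to $H^1(Y,T_Y)$, so it packages both the deformations of $Y$ and the motions of $B$. The anti-invariant summand $H^1(Y,T_Y(-L))$ has nothing to do with moving $B$; it parametrizes precisely the first-order deformations of $X$ that destroy the double-cover structure.

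The consequence is not cosmetic: with your identification the statement you would be proving is $\textrm{Def}(X)\cong\textrm{Def}(Y,B)$, i.e.\ that every deformation of $X$ is a deformation of the cover, which is false in general and would trivialize the way the theorem is used later (the forgetful map $\alpha$ is an analytic \emph{embedding} whose image is the invariant subspace, and for 2b surfaces the one-dimensional obstruction is detected exactly because it lives in the anti-invariant part of $H^2$). The repair is to show that the tangent space of $\textrm{Def}(X\rightarrow Y)$ is only the invariant summand: a first-order deformation of the covering data $(\mathcal{Y},\mathcal{L},\mathcal{B})$ induces a $\mathbb{Z}/2\mathbb{Z}$-equivariant deformation of $X$, hence a Kodaira--Spencer class in $H^1(X,T_X)^{\mathbb{Z}/2\mathbb{Z}}=H^1(Y,(f_*T_X)_+)=H^1(Y,T_Y(\log B))$, and conversely every class there arises this way. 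Once that is corrected, the remainder of your plan---base-change compatibility, the matching of obstruction spaces inside the invariant part of $H^2$, and the uniqueness of the square root $\mathcal{L}$ on a regular surface with torsion-free Picard group---goes through and recovers the intended isomorphism $\textrm{Def}(X\rightarrow Y)\cong\textrm{Def}(Y,B)$.
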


The proof of Theorem~\ref{cvs} involves identifying the space of first order infinitesimal deformations of double covers of deformations of $Y$ with the anti-invariant subspace $H^1_+(X, T_X)\subset H^1(X, T_X)$. Then using the decomposition of $f_*(T_X)$ above, this space is isomorphic to $H^1(Y, T_Y(\log B))$.

Using Lemma~\ref{decomposition into eigenspaces}, the same analysis works in the presence of the curves $C\subset X$ and $D\subset Y$, as long as $D$ intersects $B$ transversally. More explicitly, define $\mathpzc{Def}_{(X, C)\rightarrow(Y,D)}$ to be the functor of Artinian local rings which associates to an Artinian local ring $A$ the set of isomorphism classes of deformations over $A$ of squares
$$\xymatrix{
X \ar[r]  & Y \\
C \ar@{^{(}->}[u]\ar[r]& D \ar@{^{(}->}[u]
}$$
where the top and bottom maps are double covers and the left and right maps are embeddings of the smooth curves $C$ and $D$ into $X$ and $Y$, respectively.
Then the functor $\mathpzc{Def}_{(X, C)\rightarrow(Y,D)}$ may be identified with the functor $\mathpzc{Def}_{(Y, B, D)}$ of deformations of triples. The space of first-order infinitesimal deformations of triples $(Y, B, D)$ is therefore $H^1_+(X, T_X(\log C))$. By Lemma~\ref{decomposition into eigenspaces}, we have 
$$H^1_+(X, T_X(\log C))\simeq H^1(Y, T_Y(\log (B+D))).$$

There is a forgetful map $\alpha:\mathpzc{Def}_{(X, C)\rightarrow(Y,D)}\rightarrow\mathpzc{Def}_{(X,C)}$. This map is an analytic embedding, because the differential 
$$d\alpha: H^1(Y, T_Y(\log (B+D)))\rightarrow H^1_+(X, T_X(\log C))\subset H^1(X, T_X(\log C))$$
is an isomorphism onto its image. 

Suppose now that $W$ is a stable numerical quintic surface of type 2b, $X$ its minimal resolution, and $C$ the $(-4)$ curve on $X$. Then we have the commutative square
$$\xymatrix{
X \ar[r]^{\tilde{f}}  & \tilde{Z} \\
C \ar@{^{(}->}[u]\ar[r]& \tilde{D} \ar@{^{(}->}[u]
}$$
where $\tilde{f}$ is the double cover of $\tilde{Z}$, where $\tilde{Z}$ is the blowup of $Z=\mathbb{F}_2$ in two points lying on a fiber $D$. The branch curve of $\tilde{f}$ is a smooth curve $\tilde{B}$, which intersects the proper transform $\tilde{D}$ of $D$ transversally. 
Deformations of this square can be identified with deformations of the triple $(\tilde{Z}, B, \tilde{D})$. The following lemma shows that in this case, the image of $\alpha$ is a neighborhood of $[W]$ in the $\overline{\mbox{2a}}$ component of $\overline{\mathcal{M}}_{5,5}$. We note that Lemma~\ref{coh vanishing on the blowup of F2} implies that there are no obstructions, so the image of $\alpha$ is smooth. 

\begin{Theorem}\label{2a to 2b come from double covers} Let $\mathcal{W}\rightarrow T$ be a stable family whose fibers are all 2a or 2b surfaces and $\mathcal{X}\rightarrow \mathcal{W}$ be its simultaneous minimal resolution over $T$, which exists by~\cite[Theorem 7.68]{kollarmori}. Then there exists a double cover $j:\mathcal{X}\rightarrow \tilde{\mathcal{Z}}$ of schemes smooth over $T$, where $\tilde{\mathcal{Z}}$ is a smooth family of Hirzebruch surfaces of type $\mathbb{F}_2$ or $\mathbb{F}_0$ blown up at two points on a fiber. 
\end{Theorem}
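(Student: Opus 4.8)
The plan is to reconstruct the double cover $\tilde f\colon X\to\tilde{\mathbb F}_d$ of Theorem~\ref{classify} intrinsically from the canonical map, and then carry each step of that reconstruction over to the family. Recall that for a $2$a or $2$b surface the canonical model $\hat Y$ of the minimal resolution $X$ has at worst $A_1$ singularities, and by Horikawa~\cite{horikawa1976} the canonical morphism $\hat Y\to\mathbb P(H^0(\hat Y,\omega_{\hat Y})^\vee)=\mathbb P^3$ factors as a finite morphism of degree $2$ onto a quadric surface $\hat Z\subset\mathbb P^3$, with $\hat Z\cong\mathbb F_0$ when $X$ is a $2$a surface and $\hat Z$ a quadric cone when $X$ is a $2$b surface. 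Then $\tilde{\mathbb F}_d$ is obtained from $\hat Z$ by minimally resolving the vertex (when $\hat Z$ is a cone) to get $\mathbb F_d$ with $d=0$ or $2$, locating the branch curve $B\sim 6\Delta$ together with its two nodes on the distinguished ruling $D$, and blowing those two points up; and $X$ is recovered as the double cover of $\tilde{\mathbb F}_d$ branched over the proper transform of $B$. Since each step is canonical, the task is to perform it in the relative setting over $T$.

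First I would form the relative canonical model $\mathcal Y=\operatorname{Proj}_T\bigoplus_{m\ge 0}(\pi_{\mathcal X})_*\,\omega_{\mathcal X/T}^{\otimes m}$. As every fibre of $\mathcal X\to T$ is a minimal surface of general type with $K^2=4$ and $\chi=5$ (the minimal resolution of a $2$a or $2$b surface), this is flat over $T$ with fibre $\hat Y_t$ over $t$, and $\mathcal X\to\mathcal Y$ is the relative minimal resolution; its relative exceptional locus is the union of the four relative $(-2)$-curves extending the $(-2)$-curves on each fibre, which stay irreducible precisely because $\mathcal X\to\mathcal W$ is assumed to be a simultaneous minimal resolution. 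Because $q=0$ we have $h^1(\hat Y_t,\omega_{\hat Y_t})=h^1(\hat Y_t,\mathcal O_{\hat Y_t})=0$, so $(\pi_{\mathcal Y})_*\omega_{\mathcal Y/T}$ is locally free of rank $4$ with formation commuting with base change; passing to the associated $\mathbb P^3$-bundle $\mathbb P\to T$, the relative canonical morphism $\Phi\colon\mathcal Y\to\mathbb P$ restricts on fibres to the canonical maps $\hat Y_t\to\mathbb P^3$, each of which, by the previous paragraph, factors through a finite double cover of a quadric. Hence $\Phi$ factors as $\mathcal Y\xrightarrow{\;j_0\;}\hat{\mathcal Z}\hookrightarrow\mathbb P$, where $\hat{\mathcal Z}\to T$ is a family of quadric surfaces and $j_0$ is finite of fibrewise degree $2$; $j_0$ is flat (miracle flatness, using that $\mathcal Y$ is Cohen--Macaulay and that $\hat{\mathcal Z}$ is regular, a local computation which may require the finite base change of the next step), so it is a genuine double cover with branch divisor $\hat{\mathcal B}\subset\hat{\mathcal Z}$ restricting to $B_t$ on each fibre.

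Next I would resolve the family of quadrics and of branch curves. Off the locus in $T$ where $X_t$ is a $2$b surface, the fibre $\hat{\mathcal Z}_t$ is smooth, $\cong\mathbb F_0$; over that locus it is a quadric cone with an $A_1$ at the vertex, and these vertices sweep out a section $\Sigma$ of $\hat{\mathcal Z}$ over it. Blowing up $\hat{\mathcal Z}$ along $\Sigma$ directly produces non-reduced central fibres, so instead, after a finite base change $T'\to T$ suitably ramified over the $2$b-locus, the total space acquires ordinary double points along $\Sigma$, and the family of quadric cones admits a small resolution (blow up the Weil divisor cut out by a cone fibre); this yields a smooth family $\mathcal Z_1\to T'$ whose fibres are the minimal resolutions $\mathbb F_0$ or $\mathbb F_2$ of the quadrics. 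Pulling $j_0$ back to $\mathcal Z_1$ and normalising, and then blowing up $\mathcal Z_1$ along the two disjoint sections (disjoint after a further finite base change if necessary) swept out by the two nodes of the family of branch curves $\mathcal B_1\subset\mathcal Z_1$, I obtain $\tilde{\mathcal Z}\to T'$, a smooth family of copies of $\mathbb F_d$ blown up at two points of a fibre. The double cover of $\tilde{\mathcal Z}$ branched over the proper transform of $\mathcal B_1$ is smooth over $T'$ and, fibre by fibre, equals the minimal resolution $X_t$; by uniqueness of minimal resolutions it is isomorphic over $T'$ to $\mathcal X\times_T T'$, which gives the double cover $j\colon\mathcal X\to\tilde{\mathcal Z}$ after the asserted finite base change.

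The main obstacle is the resolution of the degenerating family of quadric cones in the third step: one must pass from a family with a cone fibre to a \emph{smooth} family of Hirzebruch surfaces without introducing extra components or nilpotents in the central fibre. This is exactly the ``$(-4)$-curve breaks on the special fibre'' difficulty behind the flopping arguments of Section~\ref{1 and 2a closure}, and dealing with it honestly forces both the finite base change and a careful choice of small resolution, after which one must check that the new central fibre is $\mathbb F_2$ rather than $\mathbb F_2$ glued to a ruled surface, and that the nodes of the branch curve genuinely organise into sections --- a properness argument for the relevant relative Hilbert scheme. The remaining ingredients (flatness and base change for $(\pi_{\mathcal Y})_*\omega_{\mathcal Y/T}$, flatness of $j_0$, existence of simultaneous minimal resolutions, uniqueness of minimal resolutions) are standard once these points are settled.
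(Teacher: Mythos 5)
There is a genuine gap: you prove a weaker statement than the theorem. The theorem asserts a double cover $j\colon\mathcal{X}\rightarrow\tilde{\mathcal{Z}}$ of schemes smooth \emph{over $T$}, with no base change, and your closing sentence (``after the asserted finite base change'') misreads the statement --- no base change is asserted. The absence of a base change is not cosmetic: in the proof of Theorem~\ref{the best} this result is used to identify the image of the unobstructed subfunctor $\mathpzc{Def}_{(X,C)\rightarrow(\tilde Z,\tilde D)}\hookrightarrow\mathpzc{Def}_{(X,C)}$ with a smooth $39$-dimensional component of the equisingular locus, and for that one needs every stable family of 2a/2b surfaces over $T$ itself to arise from a double cover.

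The idea you are missing is how the paper exploits the hypothesis that $\mathcal{X}\rightarrow\mathcal{W}$ (equivalently, $\mathcal{X}\rightarrow\mathcal{Y}$, the relative canonical model) is \emph{already} a simultaneous resolution over $T$. Because the fibers are 2a or 2b surfaces, the branch divisor $\mathcal{B}$ is disjoint from the vertices of the cone fibers of $\mathcal{Z}\rightarrow T$, so the double cover $f\colon\mathcal{Y}\rightarrow\mathcal{Z}$ is \'etale over a neighborhood of each vertex; hence each vertex of $\mathcal{Z}$ is locally analytically isomorphic to either of the two points of $\mathcal{Y}$ above it, and the given simultaneous resolution of $\mathcal{Y}$ transfers to a simultaneous resolution $\mathcal{Z}_1\rightarrow\mathcal{Z}$ over $T$ with no base change --- this replaces your ODP/small-resolution step and also disposes of your worry about the central fiber breaking into $\mathbb{F}_2$ glued to a ruled surface. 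Likewise, for the two nodes of the branch curve one blows up $\mathcal{Z}_1$ along the length-two subscheme of $\mathcal{B}_1$ over $T$ swept out by the nodes; this is defined over $T$ whether or not monodromy swaps the two points, so your ``further finite base change to make the two sections disjoint'' is also unnecessary. With these two corrections your construction (relative canonical model, factorization through a family of quadrics, resolution, embedded resolution of the branch curve) matches the paper's and yields the theorem as stated.
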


\begin{proof}
Let $\psi:\mathcal{X}\rightarrow \mathcal{Y}$ be the canonical model of $\mathcal{X}$ over $T$. Then the canonical map given by the linear system $|\omega_{\mathcal{Y}/T}|$ is a double cover $f: \mathcal{Y}\rightarrow \mathcal{Z}$ over $T$, where fibers of $\mathcal{Z}\rightarrow T$ are either smooth or singular quadrics. Let $\mathcal{B}$ denote the branch divisor of $f$ and suppose that $\mathcal{Z}_{t_0}$ is singular for some $t_0\in T$. Then because the fibers of $\mathcal{X}\rightarrow T$ are 2a or 2b surfaces, the branch divisor $\mathcal{B}_{t_0}$ of the map $f|_{t_0}$ is disjoint from the node in $\mathcal{Z}_{t_0}$. 

Let $\sigma_1: \mathcal{Z}_1\rightarrow \mathcal{Z}$ be a simultaneous resolution of singularities of $\mathcal{Z}$. Then the simultaneous resolutions $\sigma_1$ and $\psi$ are locally analytically isomorphic in a neighborhood of each singularity of $\mathcal{Z}$, because the branch divisor $\mathcal{B}$ does not intersect the singularities of $\mathcal{Z}$. Thus, no finite base change of $T$ is required to construct $\mathcal{Z}_1$. Letting $\mathcal{Y}_1$ denote the double cover $f_1$ of $\mathcal{Z}_1$ branched over the preimage $\mathcal{B}_1$ of $\mathcal{B}$, there is a map $\psi_1:\mathcal{Y}_1\rightarrow\mathcal{Y}$ such that the following diagram is commutative:

\[\xymatrix{
\mathcal{Y}_1\ar[r]^{f_1} \ar[d]_{\psi_1} & \mathcal{Z}_1 \ar[d]^{\sigma_1}\\
\mathcal{Y}\ar[r]^f& \mathcal{Z}
}\]

Now let $\mathcal{B}_1$ denote the preimage of $\mathcal{B}$ under $\sigma_1$. On each fiber, $\mathcal{B}_1$ has two $A_1$ singularities. Let $X$ denote the double section of $\mathcal{B}_1\rightarrow T$ passing through these singularities, and let $\sigma_2:\tilde{\mathcal{Z}}\rightarrow \mathcal{Z}_1$ be the blowup of $\mathcal{Z}_1$ in $X$. Then $\sigma_2$ is a simultaneous embedded resolution of singularities of $\mathcal{B}_1$. Thus, no finite base change of $T$ is required to construct $\mathcal{Z}$, and the map $j:\mathcal{X}\rightarrow\tilde{\mathcal{Z}}$ defined by $\sigma_1\circ\sigma_2\circ j=f\circ\psi$ is a double cover over $T$.
\end{proof}

Thus, the space of equisingular deformations of $W$ contains a smooth $39$-dimensional component corresponding to the closure of the 2a locus in $\overline{\mathcal{M}}_{5,5}$.

\subsection{Three technical lemmas}\label{technicalities}

Our goal is to describe the degree two part of the Schouten bracket. We use a method similar to~\cite{horikawa1975, horikawa1976}. In this section, we prove three technical lemmas, analogous to Lemmas 24, 29, and 31 in~\cite{horikawa1975}.

We recall the definition of the Kuranishi deformation space in more generality. Suppose that $X$ is a smooth surface, and let $\textrm{Def}(X)$ be the space of deformations of $X$. The tangent space to $\textrm{Def}(X)$, that is the space of first order infinitesimal deformations of $X$, is isomorphic via the Kodaira--Spencer map to the cohomology group $H^1(X, T_X)$. Let $\rho_1,\ldots, \rho_{n}$ be a basis of $H^1(X, T_X)$, and let $t_1,\ldots t_n$ be a dual basis. Then $\textrm{Def}(X)$ is locally analytically isomorphic to a subspace of $\mathbb{C}^{n}$ with coordinates $t_1,\ldots, t_{n}$, and is given by the kernel of the Kuranishi map $k: H^1(X, T_X)\rightarrow H^2(X, T_X)$, which is a certain infinite series in $t_1,\ldots t_n$. Catanese's article \cite{cataneseguide} gives an excellent exposition of the construction of the Kuranishi map. For us, the important part is that the degree two part of the Kuranishi map is given by the Schouten bracket, which we now describe.

The Schouten bracket is the bilinear map
$$[,]: H^1(X, T_X)\otimes H^1(X, T_X)\rightarrow H^2(X, T_X)$$
defined as the composition of the cup product
$\cup: H^1(X, T_X)\otimes H^1(X, T_X) \rightarrow H^2(X, T_X\otimes T_X)$ followed by the Lie bracket $H^2(X, T_X\otimes T_X)\rightarrow H^2(X, T_X)$.
If $S_{\rho}$ is the infinitesimal first order deformation corresponding, via the Kodaira--Spencer map, to $\rho\in H^1(X, T_X)$, then $[\rho, \rho]$ is the cohomology class corresponding to the obstruction to extending the deformation $S_{\rho}$ to the second order.

\begin{Lemma}\label{zeta is surjective} The map 
$$\zeta_*:H^1(X, T_X(\log C))\rightarrow H^1(F_1\amalg F_2, \mathcal{N}_{F_1\amalg F_2})$$
induced by the surjection $T_X|_{F_1\amalg F_2}\rightarrow \mathcal{N}_{F_1\amalg F_2}$ is surjective.
\end{Lemma}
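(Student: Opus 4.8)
The plan is to reduce the surjectivity of $\zeta_*$ to a single cohomology vanishing on the blown-up Hirzebruch surface $\tilde{\mathbb{F}}_2$. First I would record the elementary geometry of the configuration. The curves $F_1,F_2$ are the two components of $f^{-1}(\Delta_0)$, and since $\Delta_0\cdot\tilde B=0$ they are disjoint smooth rational $(-2)$-curves, so $\mathcal{N}_{F_i}\cong\mathcal{O}_{\mathbb{P}^1}(-2)$ and the target $H^1(F_1\amalg F_2,\mathcal{N}_{F_1\amalg F_2})$ is two-dimensional; moreover $C=f^*\tilde D$ meets each $F_i$ transversally in a single point $r_i$, because $\tilde D\cdot\Delta_0=1$ and $\tilde D\cap\Delta_0$ lies off $\tilde B$. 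A local coordinate computation then shows that on $F_i$ one has an exact sequence $0\to T_{F_i}(-r_i)\to T_X(\log C)|_{F_i}\to\mathcal{N}_{F_i}\to 0$ with $T_{F_i}(-r_i)\cong\mathcal{O}_{\mathbb{P}^1}(1)$; since $H^1(\mathbb{P}^1,\mathcal{O}(1))=0$ this identifies $H^1(F_i,T_X(\log C)|_{F_i})\xrightarrow{\ \sim\ }H^1(F_i,\mathcal{N}_{F_i})$. Because the sheaf map $T_X(\log C)\to\mathcal{N}_{F_1\amalg F_2}$ inducing $\zeta_*$ factors through $T_X(\log C)|_{F_1\amalg F_2}$, the map $\zeta_*$ is the restriction map $H^1(X,T_X(\log C))\to H^1(F_1\amalg F_2,T_X(\log C)|_{F_1\amalg F_2})$ followed by an isomorphism, so it suffices to prove this restriction is surjective.

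For that I would use $0\to T_X(\log C)(-F_1-F_2)\to T_X(\log C)\to T_X(\log C)|_{F_1\amalg F_2}\to 0$. Since $F_1\amalg F_2$ is a curve, $H^2(T_X(\log C)|_{F_1\amalg F_2})=0$, and by Theorem~\ref{obstruction} we have $H^2(X,T_X(\log C))=\mathbb{C}$; the long exact sequence then shows the restriction is surjective exactly when $\dim H^2(X,T_X(\log C)(-F_1-F_2))=1$. Now $F_1+F_2=f^*\Delta_0$, so the projection formula and Lemma~\ref{decomposition into eigenspaces} give
$$H^2(X,T_X(\log C)(-F_1-F_2))=H^2(\tilde{\mathbb{F}}_2,T_{\tilde{\mathbb{F}}_2}(\log(\tilde D+\tilde B))(-\Delta_0))\oplus H^2(\tilde{\mathbb{F}}_2,T_{\tilde{\mathbb{F}}_2}(\log\tilde D)(-\tilde L-\Delta_0)),$$
and by Serre duality these are the duals of $H^0(\tilde{\mathbb{F}}_2,\Omega^1_{\tilde{\mathbb{F}}_2}(\log(\tilde D+\tilde B))(K_{\tilde{\mathbb{F}}_2}+\Delta_0))$ and $H^0(\tilde{\mathbb{F}}_2,\Omega^1_{\tilde{\mathbb{F}}_2}(\log\tilde D)(K_{\tilde{\mathbb{F}}_2}+\tilde L+\Delta_0))$. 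The second of these is $\mathbb{C}$ by repeating the argument of Lemma~\ref{where the obstruction happens} with an extra twist by $\Delta_0$: since $K_{\tilde{\mathbb{F}}_2}+\tilde L+\Delta_0=\sigma^*(K_{\mathbb{F}_2}+L+\Delta_0)$, Lemma~\ref{blowup of two points} and the relative cotangent sequence of $\mathbb{F}_2\to\mathbb{P}^1$ reduce it to $H^0(\mathbb{F}_2,\mathcal{O}(2\Delta_0+\Gamma)\otimes\mathfrak{M}_{p,q})$, which is one-dimensional: the system $|2\Delta_0+\Gamma|$ has fixed part $2\Delta_0$ and moving part $|\Gamma|$, and the only fiber through both $p$ and $q$ is $D$, so the sections vanishing at $p$ and $q$ are the multiples of $2\Delta_0+D$.

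The heart of the argument, and the main obstacle, is the vanishing of the first group, $H^0(\tilde{\mathbb{F}}_2,\Omega^1_{\tilde{\mathbb{F}}_2}(\log(\tilde D+\tilde B))(K_{\tilde{\mathbb{F}}_2}+\Delta_0))=0$. Here I would first take residues along $\tilde B$: as $(K_{\tilde{\mathbb{F}}_2}+\Delta_0)\cdot\tilde B=-24<0$ (using $K_{\tilde{\mathbb{F}}_2}\cdot\tilde B=-24$ from the proof of Lemma~\ref{coh vanishing on the blowup of F2} and $\Delta_0\cdot\tilde B=0$) and $\tilde B$ is integral, $H^0(\mathcal{O}_{\tilde B}(K_{\tilde{\mathbb{F}}_2}+\Delta_0))=0$, so the group equals $H^0(\tilde{\mathbb{F}}_2,\Omega^1_{\tilde{\mathbb{F}}_2}(\log\tilde D)(K_{\tilde{\mathbb{F}}_2}+\Delta_0))$. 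Since $-(K_{\tilde{\mathbb{F}}_2}+\Delta_0)=\Delta_0+4\tilde D+3E_1+3E_2$ is effective, this embeds into $H^0(\tilde{\mathbb{F}}_2,\Omega^1_{\tilde{\mathbb{F}}_2}(\log\tilde D))$, and the latter vanishes: in $0\to\Omega^1_{\tilde{\mathbb{F}}_2}\to\Omega^1_{\tilde{\mathbb{F}}_2}(\log\tilde D)\to\mathcal{O}_{\tilde D}\to 0$ one has $H^0(\Omega^1_{\tilde{\mathbb{F}}_2})=0$ ($\tilde{\mathbb{F}}_2$ rational), while the connecting map $\mathbb{C}=H^0(\mathcal{O}_{\tilde D})\to H^1(\Omega^1_{\tilde{\mathbb{F}}_2})$ sends $1$ to $c_1(\mathcal{O}(\tilde D))\ne 0$ and so is injective. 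Hence $\dim H^2(X,T_X(\log C)(-F_1-F_2))=0+1=1$, the restriction map is surjective, and therefore so is $\zeta_*$. The steps needing the most care are the transversality and normal-bundle claims of the first paragraph (a routine but necessary local computation) and the bookkeeping of exceptional divisors and log twists under $\sigma$ in the two cohomology computations on $\tilde{\mathbb{F}}_2$.
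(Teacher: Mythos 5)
Your proof is correct, but it takes a genuinely different route from the paper's. The paper pushes everything down to $\tilde{\mathbb{F}}_2$ at the outset, splits $f_*(T_X(\log C))$ and $f_*(\mathcal{N}_{F_1\amalg F_2})$ into $\mathbb{Z}/2\mathbb{Z}$-eigenspaces via Lemma~\ref{decomposition into eigenspaces}, and proves surjectivity of $\zeta_+$ and $\zeta_-$ separately using the sequences $0\to T_{\tilde{\mathbb{F}}_2}(\log(\Delta_0+D'))\to T_{\tilde{\mathbb{F}}_2}(\log D')\to\mathcal{N}_{\Delta_0}\to 0$ — i.e.\ it \emph{adds $\Delta_0$ to the log structure} — reducing to the vanishing of Lemma~\ref{coh vanishing on the blowup of F2} for the invariant part and to $h^0(\mathbb{F}_2,\mathcal{O}(\Delta_0+D)\otimes\mathfrak{M}_{p,q})=1$ for the anti-invariant part. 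You instead stay on $X$, \emph{twist by $\mathcal{O}(-F_1-F_2)=f^*\mathcal{O}(-\Delta_0)$}, and exploit the already-established fact that $h^2(X,T_X(\log C))=1$ (Theorem~\ref{obstruction}) to reduce the whole lemma to the single dimension count $h^2(X,T_X(\log C)(-F_1-F_2))=1$; your two summands $\Omega^1_{\tilde{\mathbb{F}}_2}(\log(\tilde D+\tilde B))(K+\Delta_0)$ and $\Omega^1_{\tilde{\mathbb{F}}_2}(\log\tilde D)(K+\tilde L+\Delta_0)$ are therefore different sheaves from the paper's, and you handle the first by a residue along $\tilde B$ plus effectivity of $-(K_{\tilde{\mathbb{F}}_2}+\Delta_0)$ rather than by the Chern-class/connecting-map argument of Lemma~\ref{coh vanishing on the blowup of F2}. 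Both routes cost about the same; yours has the appeal of reusing Theorem~\ref{obstruction} and collapsing the two eigenspace surjectivity statements into one numerical equality, while the paper's makes visible exactly which eigenspace supplies which part of $H^1(\mathcal{N}_{F_1\amalg F_2})$. Two small remarks: the identification of the kernel of $T_X(\log C)|_{F_i}\to\mathcal{N}_{F_i}$ with $T_{F_i}(-r_i)\cong\mathcal{O}(1)$ is more than you need — surjectivity of the sheaf map already gives surjectivity on $H^1$ over a curve — and the value $K_{\tilde{\mathbb{F}}_2}\cdot\tilde B=-24$ you quote from the paper should really be $-20$ (the blowup contributes $+4$), but only its negativity is used, so nothing is affected.
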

\begin{proof}
It suffices to show that 
$$H^1(\tilde{\mathbb{F}}_2, f_*(T_X(\log C)))\rightarrow H^1(\Delta_0, f_*(\mathcal{N}_{F_1\amalg F_2}))$$
is surjective. To do this, recall that the surface $\tilde{\mathbb{F}}_2$ admits an action of $\mathbb{Z}/2\mathbb{Z}$ via deck transformations. By Lemma~\ref{decomposition into eigenspaces}, the sheaf $f_*(T_X(\log C))$  decomposes into invariant and anti-invariant eigenspaces as
\begin{eqnarray*}
f_*(T_X(\log C))_+= T_{\tilde{\mathbb{F}}_2}(\log (\tilde{D}+\tilde{B})) & \textrm{  and  }& f_*(T_X(\log C))_-= T_{\tilde{\mathbb{F}}_2}(\log \tilde{D})\otimes \mathcal{O}(-\tilde{L}).
\end{eqnarray*}

We have a similar decomposition of $f_*(\mathcal{N}_{F_1\amalg F_2})$ as follows.  By the projection formula, we have 
$$f_*(\mathcal{N}_{F_1\amalg F_2})=f_*(f^*(\mathcal{N}_{\Delta_0}))= \mathcal{N}_{\Delta_0}\otimes(\mathcal{O}_{\tilde{\mathbb{F}}_2} \oplus \mathcal{O}_{\tilde{\mathbb{F}}_2}(-\tilde{L})).$$
Thus,
\begin{eqnarray*}
 f_*(\mathcal{N}_{F_1\amalg F_2}))_+=\mathcal{N}_{\Delta_0} & \textrm{  and  } &  f_*(\mathcal{N}_{F_1\amalg F_2}))_-=\mathcal{N}_{\Delta_0}\otimes\mathcal{O}(-\tilde{L}) \simeq \mathcal{N}_{\Delta_0}.
\end{eqnarray*}

We show that the maps
$$\zeta_+:H^1(\tilde{\mathbb{F}}_2,  T_{\tilde{\mathbb{F}}_2}(\log (\tilde{D}+\tilde{B})))\rightarrow H^1(\Delta_0, \mathcal{N}_{\Delta_0}) $$
and
$$\zeta_-:H^1(\tilde{\mathbb{F}}_2, T_{\tilde{\mathbb{F}}_2}(\log \tilde{D})\otimes \mathcal{O}(-\tilde{L}))\rightarrow H^1(\Delta_0, \mathcal{N}_{\Delta_0})$$
are surjective.

To show the first, we have the exact sequence
$$0\rightarrow T_{\tilde{\mathbb{F}}_2}(\log(\Delta_0+\tilde{D}+\tilde{B}))\rightarrow T_{\tilde{\mathbb{F}}_2}(\log(\tilde{D}+\tilde{B}))\rightarrow \mathcal{N}_{\Delta_0}\rightarrow 0$$ 
and so it suffices to show that $H^2(\tilde{\mathbb{F}}_2, T_{\tilde{\mathbb{F}}_2}(\log(\Delta_0+\tilde{D}+\tilde{B})))=0$. By Serre duality, this is equivalent to the vanishing of $H^0(\tilde{\mathbb{F}}_2, \Omega^1_{\tilde{\mathbb{F}}_2}(\log(\Delta_0+\tilde{D}+\tilde{B}))\otimes\mathcal{O}(K))$. This is the statement of Lemma~\ref{coh vanishing on the blowup of F2}.

For the second, note that we have the exact sequence
$$0\rightarrow T_{\tilde{\mathbb{F}}_2}(\log\tilde{D}+\Delta_0)\otimes \mathcal{O}(-\tilde{L})\rightarrow T_{\tilde{\mathbb{F}}_2}(\log\tilde{D})\otimes \mathcal{O}(-\tilde{L})\rightarrow \mathcal{N}_{\Delta_0}\rightarrow 0.$$
By Lemma~\ref{where the obstruction happens}, we have $H^2(T_{\tilde{\mathbb{F}}_2}(\log\tilde{D})\otimes \mathcal{O}(-\tilde{L}))=\mathbb{C}$. Moreover, $H^2(\mathcal{N}_{\Delta_0})=0$, and thus the map $\zeta_-$ is surjective as long as $H^2(\tilde{\mathbb{F}}_2, T_{\tilde{\mathbb{F}}_2}(\log \tilde{D}+\Delta_0)\otimes\mathcal{O}(-\tilde{L}))=\mathbb{C}$. Equivalently, we show that  $H^0(\tilde{\mathbb{F}}_2, \Omega^1_{\tilde{\mathbb{F}}_2}(\log \tilde{D}+\Delta_0)\otimes\mathcal{O}(K+\tilde{L}))=\mathbb{C}$.

By Lemma~\ref{blowup of two points} and the projection formula, we have 
$$\sigma_*\Omega^1_{\tilde{\mathbb{F}}_2}(\log (\tilde{D}+\Delta_0))\otimes\mathcal{O}(K+\tilde{L})=\Omega^1_{\mathbb{F}_2}(\log(D+\Delta_0))\otimes\mathcal{O}(\Delta)\otimes\mathfrak{M}_{p,q}.$$
So we now want 
$$H^0(\mathbb{F}_2, \Omega^1_{\mathbb{F}_2}(\log(D+\Delta_0))\otimes\mathcal{O}(\Delta)\otimes\mathfrak{M}_{p,q})=\mathbb{C}.$$

We claim that the sheaf $T_{\mathbb{F}_2}(\log(D+\Delta_0))$ fits into an exact sequence as 
$$0\rightarrow\mathcal{O}(G)\rightarrow T_{\mathbb{F}_2}(\log(D+\Delta_0))\rightarrow \pi^*T_{\mathbb{P}^1}(-a)\rightarrow 0$$
where $G$ is a divisor on $\mathbb{F}_2$ and  $\pi(D)=a\in\mathbb{P}^1$. To see this, note first that $\pi^*T_{\mathbb{P}^1}(-a)\simeq \mathcal{O}_{\mathbb{F}_2}(D)$. Let $U\subset \mathbb{F}_2$ be an open neighborhood of the point $0\in D\cap \Delta_0$ with coordinates $(x,y)$ so that $D$ has local equation $x$ and $\Delta_0$ has local equation $y$. Then the map 
$$T_{\mathbb{F}_2}(\log(D+\Delta_0))\rightarrow \mathcal{O}_{\mathbb{F}_2}(D)$$
is locally given by 
$$x\frac{\partial}{\partial x} \mapsto x\textrm{\hspace{1cm}    } y\frac{\partial}{\partial y}\mapsto 0.$$
Thus the map is surjective. Since $T_{\mathbb{F}_2}(\log(D+\Delta_0))$ is a vector bundle of rank two and $\mathcal{O}_{\mathbb{F}_2}(D)$ is a line bundle, the kernel of the map $T_{\mathbb{F}_2}(\log(D+\Delta_0))\rightarrow \mathcal{O}_{\mathbb{F}_2}(D)$ is a vector bundle of rank one. All such vector bundles are given by $\mathcal{O}_{\mathbb{F}_2}(G)$ for some divisor $G$ on $\mathbb{F}_2$. We find $G$ by calculating the Chern class of $T_{\mathbb{F}_2}(\log(D+\Delta_0))$. 
The determinant line bundle $\bigwedge^2T_{\mathbb{F}_2}(\log(D+\Delta_0))$ is given by $-\mathcal{O}(-K_{\mathbb{F}_2}-D-\Delta_0)=\mathcal{O}(\Delta_0+3\Gamma)$, so $c_1(T_{\mathbb{F}_2}(\log( D+\Delta_0)))=\Delta_0+3\Gamma$. 
Thus, $G = \Delta_0+2\Gamma$. 

Dualizing the above exact sequence and tensoring with $\mathcal{O}(\Delta)\otimes \mathfrak{M}_{p,q}$ results in the exact sequence
$$0\rightarrow \mathcal{O}(\Delta_0+\Gamma)\otimes \mathfrak{M}_{p,q}\rightarrow \Omega^1_{\mathbb{F}_2}(\log D+\Delta_0)\otimes \mathcal{O}(\Delta)\otimes \mathfrak{M}_{p,q}\rightarrow \mathcal{O}_{\mathbb{F}_2}\otimes \mathfrak{M}_{p,q}\rightarrow 0.$$
The sheaf on the right has no global sections, since the only section of $\mathcal{O}_{\mathbb{F}_2}$ vanishing at $p$ and $q$ is zero. Moreover, since $\Delta_0\cdot (\Delta_0+\Gamma)=-1$ every divisor in the linear system $|\Delta_0+\Gamma|$ is a union of two divisors $\Delta_0$ and $\Gamma$. Since there is only one such divisor passing through $p$ and $q$, namely the divisor $\Delta_0+D$, we have 
$$ H^0(\mathbb{F}_2, \Omega^1_{\mathbb{F}_2}(\log D+\Delta_0)\otimes \mathcal{O}(\Delta)\otimes \mathfrak{M}_{p,q})\simeq H^0(\mathbb{F}_2, \mathcal{O}(\Delta_0+D)\otimes \mathfrak{M}_{p,q} )=\mathbb{C},$$
as we wished to show.
\end{proof}

A key ingredient of Horikawa's description in~\cite{horikawa1976} is a map 
$$\gamma: H^1(X, T_X)\rightarrow H^0(G, \mathcal{O}(K_X|_{G}) ),$$
where $K_X=2G+F$ and $G$ is a generic fiber of the map $g: X\rightarrow \mathbb{P}^1$. 

\begin{Lemma}\label{ideal} Let $X$ be a smooth surface with a surjective map $g: X\rightarrow \mathbb{P}^1$ such that $g_*\mathcal{O}_X=\mathcal{O}_{\mathbb{P}^1}$ and let $G$ denote a generic fiber of $g$. Suppose that $K_X=2G+F$ for some smooth divisor $F$ on $X$ such that $G\not\subset F$ and let 
$$\zeta_*:H^1(X, T_X)\rightarrow H^1(F, \mathcal{N}_{F})$$
be the map induced by the surjection $T_X|_{F}\rightarrow \mathcal{N}_{F}$. 
If the irregularity $q(X)=0$,  $h^1(X, \mathcal{O}(G))=0$, 
and $h^0(F, \mathcal{O}(K-G)|_{F})=0$, then there is a map
$\gamma: H^1(X, T_X)\rightarrow H^0(F, \mathcal{O}(K_X|_{F}) )$, defined in~\cite[4.8]{mythesis}
with the property that $\textrm{Ker }\gamma=\textrm{Ker }\zeta_*$.
\end{Lemma}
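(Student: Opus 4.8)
\emph{Proof plan.} The strategy is to prove that $\ker\gamma$ and $\ker\zeta_*$ coincide by identifying each of them with the image of $H^1(X,T_X(\log F))$ in $H^1(X,T_X)$, i.e.\ with the space of first-order deformations of $X$ along which the curve $F$ persists. Observe first that because every component $F_i$ of $F$ is smooth and $K_X\cdot F_i=0$, adjunction identifies $H^0(F,\mathcal O(K_X|_F))$ with $H^1(F,\mathcal N_F)^{\vee}$; in particular the targets of $\gamma$ and $\zeta_*$ have the same (small) dimension, equal to the number of components of $F$. Thus the substance of the lemma is that two a priori unrelated constructions out of a Kodaira--Spencer class have the same kernel.

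First I would record the log-tangent sequence $0\to T_X(\log F)\to T_X\xrightarrow{\zeta}\bigoplus_i\mathcal N_{F_i}\to 0$. Its long exact cohomology sequence exhibits the map induced by $T_X|_F\to\mathcal N_F$ as $\zeta_*$ and gives $\ker\zeta_*=\operatorname{im}\bigl(H^1(X,T_X(\log F))\to H^1(X,T_X)\bigr)$; equivalently, $\rho\in\ker\zeta_*$ exactly when the first-order deformation of $X$ it represents lifts to a deformation of the pair $(X,F)$. Next I would recall the construction of $\gamma$ from \cite[4.8]{mythesis}. It uses the fibration $g\colon X\to\mathbb P^1$ — so that $\mathcal O_X(G)\cong g^*\mathcal O_{\mathbb P^1}(1)$, because $g_*\mathcal O_X=\mathcal O_{\mathbb P^1}$ — together with the decomposition $K_X=2G+F$, which rewrites $\mathcal O_X(2G)$ as $\omega_X(-F)$ and is what allows the output to be read off along $F$: a class $\rho\in H^1(X,T_X)$ is pushed forward along $g$ and produces a section of $\mathcal O(K_X)|_F$. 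The three hypotheses enter exactly at the level of well-definedness: $q(X)=0$ gives $H^1(X,\mathcal O_X)=0=H^0(X,\Omega^1_X)$ and removes the ambiguity from the structure sheaf and global one-forms, $h^1(X,\mathcal O_X(G))=0$ makes the push-forward along $g$ independent of the chosen cocycle representatives, and $h^0(F,\mathcal O(K-G)|_F)=0$ pins down the resulting section uniquely. Verifying these three points is routine once the construction is written out.

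The core of the proof is the equality $\ker\gamma=\ker\zeta_*$. One inclusion is short once the definition of $\gamma$ is unwound: if $\rho\in\ker\zeta_*$, a lift to a first-order deformation of the pair $(X,F)$ keeps $F$ in place, and since $\gamma$ measures exactly the motion of $F$ transverse to the fibres of $g$, it follows that $\gamma(\rho)=0$. The reverse inclusion $\ker\gamma\subseteq\ker\zeta_*$ is the step I expect to be the main obstacle: one must run a diagram chase through the construction of $\gamma$, playing the normal-bundle sequence along $F$ against the vertical/horizontal sequence $0\to T_{X/\mathbb P^1}\to T_X\to\mathcal O_X(2G)\to 0$ (with $\mathcal O_X(2G)=\omega_X(-F)$), and use precisely the vanishings $h^1(\mathcal O_X(G))=0$ and $h^0(\mathcal O(K-G)|_F)=0$ to conclude that $\gamma(\rho)=0$ forces $\rho$ to lift to $H^1(X,T_X(\log F))$. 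Since the opposite inclusion is already in hand and both kernels have codimension at most the number of components of $F$, an alternative to this delicate chase is a dimension count: one checks that $\zeta_*$ is surjective onto $H^1(F,\mathcal N_F)$ by the method of Lemma~\ref{zeta is surjective} (adapted from $T_X(\log C)$ to $T_X$), and then $\dim H^0(F,\mathcal O(K_X|_F))=\dim H^1(F,\mathcal N_F)$ forces the two codimensions to agree. The whole argument parallels Horikawa's Lemmas~24, 29 and~31 in \cite{horikawa1975}.
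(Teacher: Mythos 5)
Your identification of $\ker\zeta_*$ with the image of $H^1(X,T_X(\log F))\rightarrow H^1(X,T_X)$ via the residue sequence is correct, and your account of where the three vanishing hypotheses should enter the construction of $\gamma$ is reasonable. But the proposal has a genuine gap exactly at the step you yourself flag as ``the main obstacle'': the inclusion $\ker\gamma\subseteq\ker\zeta_*$ is never carried out, only described as a diagram chase to be performed. The fallback dimension count does not close this gap. From $\ker\zeta_*\subseteq\ker\gamma$ and surjectivity of $\zeta_*$ you get $\operatorname{codim}\ker\zeta_*=h^1(F,\mathcal{N}_F)=h^0(F,\mathcal{O}(K_X|_F))\ge\dim\operatorname{im}\gamma=\operatorname{codim}\ker\gamma$, which is perfectly consistent with $\ker\gamma$ being strictly larger than $\ker\zeta_*$; to force equality you would need surjectivity of $\gamma$, which is established nowhere in your argument. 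Worse, in the paper the surjectivity of $\gamma$ (restricted to $H^1(X,T_X(\log C))$) is \emph{deduced} in the proof of Theorem~\ref{the best} from this very lemma together with Lemma~\ref{zeta is surjective}, so assuming it here would be circular. A smaller inaccuracy: your claim that both targets have dimension equal to the number of components of $F$ invokes $K_X\cdot F_i=0$, which holds for the $(-2)$-curves $F_1,F_2$ in the application but is not among the hypotheses of the lemma; in general only the Serre-duality identification $H^1(F,\mathcal{N}_F)^{\vee}\cong H^0(F,\mathcal{O}(K_X|_F))$ is available. Likewise the forward inclusion $\ker\zeta_*\subseteq\ker\gamma$ is asserted from an informal reading of what $\gamma$ ``measures'' rather than from its definition.

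For comparison, the paper does not prove this lemma internally either: it defines $\gamma$ by generalizing the map Horikawa constructs in Section~5 of \cite{horikawa1976} (after his Lemma~24) and refers to \cite{mythesis} for the verification that $\ker\gamma=\ker\zeta_*$. The content a complete proof must supply is precisely that cocycle-level construction and computation — using the fibration $g$, the splitting $K_X=2G+F$, and the vanishings $q(X)=0$, $h^1(\mathcal{O}(G))=0$, $h^0(\mathcal{O}(K-G)|_F)=0$ — and that computation is the part your plan leaves blank.
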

\begin{proof}

This is a generalization of the map defined in~\cite{horikawa1976}, Section 5, after the proof of Lemma 24. See~\cite{mythesis} for details.
\end{proof}

\begin{Lemma}\label{zero bracket gives restriction} With the same hypotheses as Lemma~\ref{ideal}, if $[\rho, \rho]=0$ then $(\gamma(\rho))^2$ is in the image of the restriction map $H^0(X, \mathcal{O}(2K))\rightarrow H^0(F, \mathcal{O}(2K|_{F}))$.
\end{Lemma}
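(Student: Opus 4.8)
The plan is to follow the proof of Horikawa's~\cite[Lemma 31]{horikawa1975}, running a \v{C}ech computation on $X$. Fix a suitable affine (equivalently, Stein) open cover $\{U_i\}$ of $X$ and represent $\rho$ by a cocycle $\{\theta_{ij}\}\in Z^1(\{U_i\},T_X)$. First I would recall the construction of $\gamma$ from~\cite[4.8]{mythesis} (cf.~\cite[\S5]{horikawa1976}): using the three vanishings $q(X)=0$, $h^1(X,\mathcal{O}(G))=0$ and $h^0(F,\mathcal{O}(K-G)|_F)=0$ of Lemma~\ref{ideal}, one extracts from $\{\theta_{ij}\}$ a canonical $0$-cochain $\{\gamma_i\}$, with $\gamma_i\in\Gamma(U_i\cap F,\mathcal{O}(K_X|_F))$, glueing to $\gamma(\rho)\in H^0(F,\mathcal{O}(K_X|_F))$; concretely $\gamma_i$ is obtained by contracting $\theta_{ij}$ against the relative one-form $\omega_{\mathrm{rel}}\in H^0(X,\Omega^1_X(2G))$ dual to $g^{*}\Omega^1_{\mathbb{P}^1}\hookrightarrow\Omega^1_X$ together with the local datum cutting out $F$, and then correcting by the unique lift that the three vanishings make available. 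The crucial reformulation is that $\gamma(\rho)^2\in H^0(F,\mathcal{O}(2K_X|_F))$ lies in the image of the restriction map $H^0(X,\mathcal{O}(2K))\to H^0(F,\mathcal{O}(2K|_F))$ if and only if its image $\delta(\gamma(\rho)^2)$ under the connecting homomorphism of
\[
0\longrightarrow\mathcal{O}_X(2K-F)\longrightarrow\mathcal{O}_X(2K)\longrightarrow\mathcal{O}_F(2K|_F)\longrightarrow 0
\]
vanishes in $H^1(X,\mathcal{O}(2K-F))$. So the lemma reduces to showing that $[\rho,\rho]=0$ forces $\delta(\gamma(\rho)^2)=0$.

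Next I would compute $\delta(\gamma(\rho)^2)$ directly. Here $2K-F=4G+F\sim K+2G$, so by Serre duality and $q(X)=0$ one finds $H^1(X,\mathcal{O}(2K-F))\cong H^1(X,\mathcal{O}(-2G))$ is one-dimensional (a general member of $|2G|$ being a disjoint union of two connected fibres), so the obstruction lives in a very small space. Lifting each $\gamma_i^{\,2}$ to a local section $s_i\in\Gamma(U_i,\mathcal{O}(2K))$ restricting to $\gamma(\rho)^2$ along $F$, one has $\delta(\gamma(\rho)^2)=\{s_i-s_j\}\in Z^1(\{U_i\},\mathcal{O}(2K-F))$. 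Applying the Leibniz rule to the identities defining the $\gamma_i$---which carry the information of $\theta_{ij}$---one gets, modulo a coboundary, that $\{s_i-s_j\}$ is the image of the obstruction cocycle $\{\tfrac12[\theta_{ij},\theta_{jk}]\}$ under the natural sheaf homomorphism $T_X\to\mathcal{O}(2K-F)$ given by contraction with $\omega_{\mathrm{rel}}$ and the canonical datum cutting out $F$. Thus $\delta(\gamma(\rho)^2)$ is the image of $[\rho,\rho]\in H^2(X,T_X)$ under a fixed linear map, and $[\rho,\rho]=0$ completes the proof. An equivalent, more geometric route: $[\rho,\rho]=0$ provides a second-order deformation $\mathcal{X}\to S=\operatorname{Spec}\mathbb{C}[t]/(t^3)$ with Kodaira--Spencer class $\rho$; since $F$ is intrinsic to $X$ it spreads to a relative divisor $\mathcal{F}\subset\mathcal{X}$, and $\gamma(\rho)$ records the first-order part of the relative bicanonical data along $\mathcal{F}$. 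Because $h^1(X,\mathcal{O}(2K))=0$, the restriction $H^0(\mathcal{X},\omega_{\mathcal{X}/S}^{\otimes 2})\to H^0(X,\mathcal{O}(2K))$ is surjective with free source, and tracing $\gamma(\rho)^2$ through this identification exhibits it as the restriction of an honest section of $\mathcal{O}(2K)$ on $X$.

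The hard part will be the normalization bookkeeping in the \v{C}ech computation: one has to line up the exact recipe for $\gamma$ from~\cite[4.8]{mythesis} with the connecting-map calculation so that the two cochains agree up to the single scalar factor that vanishes precisely when $[\rho,\rho]=0$. This is the real content and is exactly parallel to~\cite[Lemma 31]{horikawa1975}. A minor auxiliary task is to confirm the cohomology vanishings invoked along the way ($h^1(X,\mathcal{O}(2K))=0$, $h^1(X,\mathcal{O}(G))=0$, and the computation of $h^1(X,\mathcal{O}(2K-F))$), all immediate from Riemann--Roch and standard vanishing for the minimal Horikawa surface $X$ with $p_g=4$, $q=0$, $K^2=4$. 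Finally, I would note that $\gamma$ and the Schouten bracket here involve only $T_X$, not $T_X(\log C)$, so the curve $C$ does not enter this lemma and the logarithmic generalization occupying the rest of the section is irrelevant to this step.
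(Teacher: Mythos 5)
Your overall strategy---generalize Horikawa's Lemma 31 by a \v{C}ech computation and reduce the statement to the vanishing of $\delta(\gamma(\rho)^2)$ under the connecting homomorphism of $0\to\mathcal{O}_X(2K-F)\to\mathcal{O}_X(2K)\to\mathcal{O}_F(2K|_F)\to 0$---is exactly the route the paper indicates (its own proof is only a pointer to~\cite{horikawa1975} and to the thesis), and your side computations are fine: $2K-F=4G+F\sim K+2G$, the target $H^1(X,\mathcal{O}(2K-F))$ is one-dimensional, and the closing observation that $C$ and the log structure play no role in this lemma is correct. The problem is that the decisive step is not actually carried out, and the mechanism you propose for it cannot work as written. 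You assert that the $1$-cocycle $\{s_i-s_j\}$ representing $\delta(\gamma(\rho)^2)$ is ``the image of the obstruction cocycle $\{\tfrac12[\theta_{ij},\theta_{jk}]\}$ under the natural sheaf homomorphism $T_X\to\mathcal{O}(2K-F)$.'' A sheaf homomorphism preserves \v{C}ech degree: it sends the $2$-cochain $\{[\theta_{ij},\theta_{jk}]\}$ to a $2$-cochain, not to a $1$-cochain, so the claimed identity does not typecheck; moreover the contraction you describe (against $\omega_{\mathrm{rel}}\in H^0(X,\Omega^1_X(2G))$ followed by multiplication by the equation of $F$) lands in $\mathcal{O}(2G+F)=\mathcal{O}(K)$, not in $\mathcal{O}(2K-F)=\mathcal{O}(4G+F)$. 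There is also no natural linear map $H^2(X,T_X)\to H^1(X,\mathcal{O}(2K-F))$ through which $[\rho,\rho]$ could be pushed, so the sentence ``$\delta(\gamma(\rho)^2)$ is the image of $[\rho,\rho]$ under a fixed linear map'' is an assertion rather than an argument. This identity is the entire content of the lemma; it is not ``normalization bookkeeping.''

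Your fallback geometric route has a more basic flaw. You claim that ``since $F$ is intrinsic to $X$ it spreads to a relative divisor $\mathcal{F}\subset\mathcal{X}$'' over $\operatorname{Spec}\mathbb{C}[t]/(t^3)$. But $F=F_1+F_2$ is a pair of $(-2)$-curves, and by Lemma~\ref{ideal} the map $\gamma$ has the same kernel as $\zeta_*\colon H^1(X,T_X)\to H^1(F,\mathcal{N}_F)$; that is, $\gamma(\rho)\neq 0$ is precisely the obstruction to $F$ deforming to first order along $\rho$. So in exactly the cases where the conclusion of the lemma has content, no relative divisor $\mathcal{F}$ exists and the argument collapses. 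What is needed is the explicit cocycle identity, coming from the construction of the local data $\gamma_i$ out of $\theta_{ij}$ in~\cite[4.8]{mythesis}, that expresses the differences $s_i-s_j$ in terms of $\theta_{ij}$ applied to local liftings of $\gamma(\rho)$ and thereby ties the obstruction to gluing the $s_i$ to the bracket $[\rho,\rho]$. Your sketch names this computation but does not contain it, so the proof has a genuine gap at its central step.
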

\begin{proof}
The proof is a generalization of that of Lemma 31 in~\cite{horikawa1975}. See~\cite{mythesis} for details.
\end{proof}

\subsection{Proof of the main theorem}\label{main proof}

We describe the space $\textrm{Def}^{QG}(W)$ of $\mathbb{Q}$-Gorenstein deformations of a general 2b surface $W$. 

\begin{Lemma}\label{coh works well}\cite[Lemma 6.3]{horikawa1976} Let $X$ be the minimal resolution of a surface of type 2b. Then  $h^1(X, \mathcal{O}(G))=2$ and $h^1(X, \mathcal{O}(G+F_1+F_2))=0$.
\end{Lemma}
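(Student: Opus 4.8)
The plan is to compute both cohomology groups via the projection formula and the eigenspace decomposition of $f_*$ from Lemma~\ref{decomposition into eigenspaces}, reducing each computation to cohomology on the blown-up Hirzebruch surface $\tilde{\mathbb{F}}_2$. Recall that $K_X = f^*(K_{\tilde{\mathbb{F}}_2} + \tilde{L})$. For the first statement, I would write
$$f_*\mathcal{O}_X(G) = \mathcal{O}_{\tilde{\mathbb{F}}_2}(g^*\mathcal{O}_{\mathbb{P}^1}(1)) \oplus \mathcal{O}_{\tilde{\mathbb{F}}_2}(\text{pullback of } \Gamma - \tilde{L}),$$
since $G = f^*\Gamma$ and the anti-invariant twist is by $-\tilde{L}$. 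Here $\Gamma$ is the proper transform of a generic fiber, so $g^*\mathcal{O}_{\mathbb{P}^1}(1)$ pulls back from $\Gamma$ on $\tilde{\mathbb{F}}_2$. The invariant part contributes $h^1(\tilde{\mathbb{F}}_2, \mathcal{O}(\Gamma))$, which I would compute by pushing forward to $\mathbb{F}_2$ via $\sigma$ (the class $\Gamma$ is disjoint from the exceptional curves $E_1, E_2$, so $\sigma_*\mathcal{O}_{\tilde{\mathbb{F}}_2}(\Gamma) = \mathcal{O}_{\mathbb{F}_2}(\Gamma)$ and $R^1\sigma_* = 0$) and then using the standard fact that $h^1(\mathbb{F}_2, \mathcal{O}(\Gamma)) = h^1(\mathbb{P}^1, \mathcal{O}(1) \otimes \pi_*\mathcal{O}_{\mathbb{F}_2}) = 0$ — so this summand is $0$. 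The anti-invariant part $\mathcal{O}_{\tilde{\mathbb{F}}_2}(\Gamma - \tilde{L})$ should have $h^1 = 2$; I would compute $\tilde{L}$ explicitly from $\tilde{B} \sim 6\Delta_0 + 12\Gamma - 2E_1 - 2E_2$, so $\tilde{L} \sim 3\Delta_0 + 6\Gamma - E_1 - E_2$, hence $\Gamma - \tilde{L} \sim -3\Delta_0 - 5\Gamma + E_1 + E_2$, and push this down to $\mathbb{F}_2$ (now with a correction for the $E_i$) via Leray for $\sigma$, reducing to an explicit Riemann–Roch / relative-$\mathrm{Ext}$ computation on $\mathbb{F}_2$ that yields $2$.

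For the second statement, $h^1(X, \mathcal{O}(G + F_1 + F_2)) = 0$, I note that $F_1$ and $F_2$ are the two $(-2)$-curves over $\Delta_0$, so $F_1 + F_2 = f^*\Delta_0$ and thus $G + F_1 + F_2 = f^*(\Gamma + \Delta_0)$. The eigenspace decomposition now gives
$$f_*\mathcal{O}_X(G + F_1 + F_2) = \mathcal{O}_{\tilde{\mathbb{F}}_2}(\Gamma + \Delta_0) \oplus \mathcal{O}_{\tilde{\mathbb{F}}_2}(\Gamma + \Delta_0 - \tilde{L}).$$
The invariant summand has $h^1 = 0$: push down to $\mathbb{F}_2$ and observe $\Gamma + \Delta_0$ is nef with $h^1 = 0$ on a Hirzebruch surface (a short exact sequence $0 \to \mathcal{O}(\Gamma) \to \mathcal{O}(\Gamma + \Delta_0) \to \mathcal{O}_{\Delta_0}(\Gamma + \Delta_0) \to 0$ on $\mathbb{F}_2$ handles it, with $\mathcal{O}_{\Delta_0}(\Gamma + \Delta_0) = \mathcal{O}_{\mathbb{P}^1}(-1)$ contributing nothing to $h^1$). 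The anti-invariant summand is $\mathcal{O}(\Gamma + \Delta_0 - \tilde{L}) \sim \mathcal{O}(-2\Delta_0 - 5\Gamma + E_1 + E_2)$; this is a negative line bundle away from the $E_i$, and the argument is that its $h^1$ vanishes by Kodaira–Serre duality — dually one needs $h^1$ of $\mathcal{O}(K_{\tilde{\mathbb{F}}_2} - (\Gamma + \Delta_0 - \tilde{L}))$, which is a large effective divisor with $h^0$ and $h^2$ matching Euler characteristic, forcing $h^1 = 0$. I would verify the numerics with Riemann–Roch on $\tilde{\mathbb{F}}_2$ (or push to $\mathbb{F}_2$ accounting for the two blown-up points).

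The main obstacle I anticipate is the anti-invariant computation in the first statement: confirming that $h^1(\tilde{\mathbb{F}}_2, \mathcal{O}(\Gamma - \tilde{L})) = 2$ exactly (not $1$ or $3$). Unlike the vanishing statements, here I must pin down a nonzero value, so Riemann–Roch alone is insufficient — I need to control both $h^0$ and $h^2$ of the relevant bundle. I expect $h^0 = 0$ (the divisor is not effective, as $-3\Delta_0 - 5\Gamma + E_1 + E_2$ has negative intersection with $\Gamma$) and $h^2 = h^0(K_{\tilde{\mathbb{F}}_2} - \Gamma + \tilde{L}) = h^0(\mathcal{O}(2\Delta_0 + 1\cdot\Gamma - 2E_1 - 2E_2))$-type expression, which I would evaluate by pushing to $\mathbb{F}_2$: the relevant space on $\mathbb{F}_2$ is $h^0(\mathcal{O}_{\mathbb{F}_2}(2\Delta_0 + \Gamma))$ cut by vanishing to order $2$ at $p, q$. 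Since $\Delta_0$ is the fixed negative section, $|2\Delta_0 + \Gamma|$ has $\Delta_0$ (or $2\Delta_0$) in its base locus, and a careful count of sections vanishing doubly at the two points $p, q$ on the fiber $D$ should give exactly $2$. Getting this bookkeeping right — especially the interaction of the base locus with the imposed double point conditions — is the delicate part; everything else is routine Leray spectral sequence and Riemann–Roch on rational ruled surfaces.
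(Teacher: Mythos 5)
Your overall route --- splitting $f_*\mathcal{O}_X(G)$ into eigenspaces via Lemma~\ref{decomposition into eigenspaces} and the projection formula, then pushing down to $\mathbb{F}_2$ --- is genuinely different from the paper's proof, which simply cites Horikawa's Lemma 6.3 and notes that his Riemann--Roch/Serre duality argument depends only on the numerical data $K_X=2G+F_1+F_2$. Your treatment of the second group is essentially right. The gap is in the first one: the anti-invariant summand $\mathcal{O}_{\tilde{\mathbb{F}}_2}(\Gamma-\tilde{L})$ does \emph{not} have $h^1=2$; it has $h^1=0$. With $K_{\tilde{\mathbb{F}}_2}=-2\Delta_0-4\Gamma+E_1+E_2$ and $\tilde{L}=3\Delta_0+6\Gamma-E_1-E_2$ one gets $K_{\tilde{\mathbb{F}}_2}-\Gamma+\tilde{L}=\Delta_0+\Gamma$: the exceptional curves cancel, so your ``$2\Delta_0+\Gamma-2E_1-2E_2$'' and the ensuing count of sections vanishing doubly at $p,q$ are off. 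The correct bookkeeping gives $h^0(\mathcal{O}(\Gamma-\tilde{L}))=0$, $h^2=h^0(\mathcal{O}(\Delta_0+\Gamma))=2$, and $\chi(\mathcal{O}(\Gamma-\tilde{L}))=1+\tfrac{1}{2}(10-8)=2$ by Riemann--Roch, hence $h^1=0$. So your computation, carried out correctly, yields $h^1(X,\mathcal{O}(G))=0$, not $2$.

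This is not only a slip on your side: the statement as printed cannot hold as written. Since $K_X-G=G+F_1+F_2$, Serre duality on $X$ forces $h^1(X,\mathcal{O}(G))=h^1(X,\mathcal{O}(G+F_1+F_2))$, so the two numbers asserted in the lemma must coincide; the ``$2$'' is evidently a misprint (most plausibly for $h^0(X,\mathcal{O}(G))=2$, which is exactly what your invariant-part computation produces, and which together with $h^1(X,\mathcal{O}(G))=0$ is what the hypotheses of Lemma~\ref{ideal} and the later count $h^0(X,\mathcal{O}(2G))=3$ actually require). This one-line duality check is the cheapest consistency test available here, and it would have told you immediately that trying to force the anti-invariant $h^1$ to equal $2$ was doomed. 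Once the target value is corrected to $0$, your framework goes through and in fact gives a clean, self-contained proof, whereas the paper's argument is a citation plus the observation that Horikawa's numerics are unchanged.
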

\begin{proof}Horikawa proves this in the case that $X$ is a double cover of $\mathbb{F}_2$ with a smooth branch divisor. The proof uses Riemann-Roch and Serre duality together with the fact that the canonical divisor on $X$ is given by $K_X=2G+F_1+F_2$. Because it only relies on numerical characteristics of $X$, $G$, $F_1$ and $F_2$, Horikawa's proof works in our case as well.
\end{proof}

By Lemma~\ref{coh works well}, we can define the map $\gamma$ as in Lemma~\ref{ideal}, where $F=F_1+F_2$. By abuse of notation, we let 
$$\gamma: H^1(X, T_X(\log C))\rightarrow H^0(F_1\amalg F_2, \mathcal{O}_{F_1\amalg F_2})$$
be the restriction of this map to $H^1(X, T_X(\log C))$. We note that this map is the restriction  to $H^1(X, T_X(\log C))\subset H^1(X, T_X)$ of the corresponding map defined in~\cite{horikawa1976} under the assumption that the branch locus is smooth.

\begin{proof}[Proof of Theorem~\ref{the best}]
The deformation space $\textrm{Def}^{QG, \mbox{e.s}}(W)$ is locally analytically isomorphic to the zero-set of the Kuranishi map 
$$k: H^1(X, T_X(\log C))\rightarrow H^2(X, T_X(\log C))=\mathbb{C}.$$ Choose a basis $\rho_1,\rho_2,\ldots, \rho_{40}$ of $H^1(X, T_X(\log C))$.
Let $t_1, t_2,\ldots, t_{40}$ be the dual basis. A priori, the Kuranishi map is some power series in $t_1,\ldots, t_{40}$. However in this case, Theorems~\ref{2a to 2b come from double covers} and~\ref{counting} imply that $\textrm{Def}^{QG, \mbox{e.s}}(W)$ contains a smooth $39$-dimensional subspace corresponding to deformations of a 2b surface to a 2a surface.
This implies that if we choose a basis $\rho_1,\rho_2,\ldots, \rho_{40}$ of $H^1(X, T_X(\log C))$ such that $\rho_1\in H^1_-(X,T_X(\log C))$ and $\rho_i\in H^1_+(X, T_X(\log C))$ for $i>2$, then the corresponding dual basis has the property that the Kuranishi function factors into at least two terms, one of which has linear term $t_1$. To show that $\textrm{Def}^{QG, \mbox{e.s}}(W)$ is locally a product of two smooth $39$-dimensional components meeting transversally in a 38-dimensional component, it therefore suffices to show that the degree two part of the Kuranishi map is nonzero and not a square. The degree two part is given by the Schouten bracket, defined above.

We restrict the Schouten bracket $[,]$ to $H^1(X, T_X(\log C))\otimes H^1(X, T_X(\log C))$. We claim that the Lie bracket $H^2(X, T_X(\log C)\otimes T_X(\log C))\rightarrow H^2(X, T_X)$ has image in $H^2(X, T_X(\log C))$. Let $\{U_i\}$ be a sufficiently fine open covering of $X$ and let $U_{ijk}=U_i\cap U_j\cap U_k$. Let $\rho$ be an element of $H^2(X, T_X(\log C)\otimes T_X(\log C))$, represented by a 2-cocycle $\{\rho_{ij}\otimes\rho_{jk}\}$, where $\{\rho_{ij}\}$ is a 1-cocycle with coefficients in $T_X(\log C)$. Then $\rho_{ij}$ is a vector field that fixes the ideal sheaf of the $C$. Thus, $\rho_{ij}\otimes\rho_{jk}$ also fixes the ideal sheaf of $C$. Therefore the Lie bracket
$[\rho_{ij}, \rho_{jk}]$ gives a vector field on $U_{ijk}$ which also fixes the ideal sheaf of $C$. Thus, the form $[,]$ gives a $2$-cocycle with coefficients in $T_X(\log C)$; that is
$$[,]: H^1(X, T_X(\log C))\otimes H^1(X, T_X(\log C))\rightarrow H^2(X, T_X(\log C))\simeq \mathbb{C}.$$

Because $\rho_1,\ldots, \rho_{40}$ and $t_1,\ldots t_{40}$ are dual bases, the degree two part of the Kuranishi map $k$ is given by 
$$\sum_{1\le i,j\le 40}[\rho_i,\rho_j]t_it_j.$$ 
Moreover, because $k$ factors into a product, one term of which has linear term $t_1$, we have that $[\rho_i, \rho_j]=0$ for $2\le i, j\le 40$. It therefore suffices to show that $[\rho_1, \rho_1]=0$ and $[\rho_1, \rho_i]$ is nonzero for some $i>1$. 

Recall that $K_X=2G+F_1+F_2$ and consider the exact sequence 
$$0\rightarrow  \mathcal{O}_X(2G)\rightarrow \mathcal{O}_X(K)\rightarrow \mathcal{O}_{F_1}\oplus\mathcal{O}_{F_2}\rightarrow 0.$$
On $X$, we have $p_g=4$ and $h^0(2G)=3$, so the image of the map $r: H^0(X, \mathcal{O}_X(K))\rightarrow H^0(X, \mathcal{O}_{F_1}\oplus\mathcal{O}_{F_2})$ is one-dimensional. 
Moreover, the image of $r$ is contained in the ``diagonal" in $H^0(X, \mathcal{O}_{F_1}\oplus\mathcal{O}_{F_2})\simeq \mathbb{C}^2$. That is, if $X$ is a nonzero global section of $\mathcal{O}_{F_1}\oplus\mathcal{O}_{F_2}$ in the image of $r$, then $s|_{F_1}\neq 0$ and $s|_{F_2}\neq 0$. More precisely, we have the commutative diagram below, where the arrow on the left is an isomorphism and the inclusion of $H^0(\tilde{\mathbb{F}_2}, \Delta_0)$ into $H^0(X, \mathcal{O}_{F_1}\oplus\mathcal{O}_{F_2})$ sends a section to the section of $\mathcal{O}_{F_1}\oplus\mathcal{O}_{F_2}$ whose restrictions to $F_1$ and $F_2$ are equal.
$$
\xymatrix{ H^0(X, \mathcal{O}_X(K))\ar[r]^r& H^0(X, \mathcal{O}_{F_1}\oplus\mathcal{O}_{F_2})\\
H^0(\tilde{\mathbb{F}}_2, \Delta_0+2\Gamma)\ar@{^{(}->}[u]_{\simeq}\ar[r]&H^0(\tilde{\mathbb{F}}_2, \Delta_0)\ar@{^{(}->}[u]
}
$$

By Lemmas~\ref{ideal}, \ref{coh works well} and~\ref{zeta is surjective}, the map $\gamma: H^1(X, T_X(\log C))\rightarrow H^0(X,\mathcal{O}_{F_1}\oplus\mathcal{O}_{F_2})$ is surjective. Thus, we can choose $\rho\in H^1(X, T_X(\log C))$ such that $\gamma(\rho)\neq 0$, and $\gamma(\rho)^2$ is not in the image of $r$. But then $\gamma(\rho)^2$ is not a restriction of an element of $H^0(X, \mathcal{O}(2K))$, so by Lemma~\ref{zero bracket gives restriction}, we conclude that $[\rho,\rho]\neq0$. Thus, the Schouten bracket
$$[,]:H^1(X, T_X(\log C))\times H^1(X,T_X( \log C))\rightarrow H^2(X, T_X(\log C))\simeq \mathbb{C}$$ is surjective. 

Because it is locally given by the composition of the cup product followed by the Lie bracket of vector fields, the Schouten bracket is $\mathbb{Z}/2\mathbb{Z}$-equivariant under the action of $\mathbb{Z}/2\mathbb{Z}$ by deck transformations. By Lemma~\ref{obstruction}, the invariant part of $H^2(X, T_X(\log C))$ is zero, and so $[\rho_i,\rho_j]$ is nonzero if and only if $[\rho_i,\rho_j]$ is anti-invariant under the action of $\mathbb{Z}/2\mathbb{Z}$. Suppose that $\rho\otimes\eta$ is an element of $H^1(X,T_X(\log C))\otimes H^1(X, T_X(\log C))$, where $\rho$ and $\eta$ are either both invariant or both anti-invariant. Then $[\rho, \eta]$ is invariant, that is $[\rho,\eta]\in H_+^2(X, T_X(\log C))$. By Lemma~\ref{obstruction}, this space is zero, so $[\rho,\eta]=0$. Thus, by choice of basis, $[\rho_i, \rho_i]=0$ for all $i$; in particular, $[\rho_1, \rho_1]=0$. 

Now suppose that $\rho\in H^1_+(X, T_X(\log C))$ is invariant and $\eta\in H^1_-(X, T_X(\log C))$ is anti-invariant. Then $[\rho,\eta]\in H^2_-(X, T_X(\log C))$ is anti-invariant. Since $[, ]$ is surjective, there exists, by choice of basis, $i>1$ such that $[\rho_1, \rho_i]\neq0$, completing the proof.
\end{proof}


\bibliographystyle{alpha}
\bibliography{myrefsquintics}

\end{document}